\newtheorem{theorem}{Theorem}[section]
\newtheorem{lemma}[theorem]{Lemma}
\newtheorem{proposition}[theorem]{Proposition}
\newtheorem{corollary}[theorem]{Corollary}
{ \theoremstyle{definition}
\newtheorem{definition}[theorem]{Definition}}
{ \theoremstyle{remark}
\newtheorem{remark}[theorem]{Remark}}
\numberwithin{equation}{section}
\definecolor{dullmagenta}{rgb}{0.4,0,0.4} 
\definecolor{darkblue}{rgb}{0,0,0.4}
\title{KMT coupling for random walk bridges}
\date{\today}
\author{Evgeni Dimitrov}
\author{Xuan Wu}
\begin{document}

\maketitle 

\begin{abstract}
In this paper we prove an analogue of the Koml{\'o}s-Major-Tusn{\'a}dy (KMT) embedding theorem for random walk bridges. The random bridges we consider are constructed through random walks with i.i.d jumps that are conditioned on the locations of their endpoints. We prove that such bridges can be strongly coupled to Brownian bridges of appropriate variance when the jumps are either continuous or integer valued under some mild technical assumptions on the jump distributions. Our arguments follow a similar dyadic scheme to KMT's original proof, but they require more refined estimates and stronger assumptions necessitated by the endpoint conditioning. In particular, our result does not follow from the KMT embedding theorem, which we illustrate via a counterexample.
\end{abstract}

\tableofcontents

\section{Introduction and main results}\label{Section1}  Let $X$ be a random variable with $\mathbb{E}[X]=0$ and $\mathbb{E}[X^2] =1$. Suppose that $X_1, X_2,\dots$ is an i.i.d sequence of random variables with the same law as $X$ and let $S_n = X_1+X_2+\cdots+X_n$ for $n \geq 1$. A classical problem in probability theory, called the {\em embedding problem}, asks to construct the process $\{S_{m} \}_{m = 1}^n$ and a standard Brownian motion $(B_t)_{ t \geq 0}$ on the same probability space so that 
\begin{equation}
\Delta_n = \max_{1 \leq k \leq n} | S_k - B_k|
\end{equation}
grows as slowly as possible in $n$. The first major results about the above embedding problem, or {\em strong approximation/coupling problem}, were obtained in the works of Skorokhod \cite{Skorohod1, Skorohod2} and Strassen \cite{Strassen}, who showed that if $\mathbb{E}[X^4] < \infty$ then with high probability
$$\Delta_n = O(n^{1/4} (\log n )^{1/2} (\log \log n)^{1/4}).$$
 In fact, this rate of growth was shown to be optimal under the fourth moment assumption in \cite{Kiefer}. For more than a decade the above rate for strong approximation was the best available result, and the method of obtaining it is now known as the {\em Skorokhod embedding}. For a more detailed account of the history of the Skorokhod embedding and its various applications we refer the reader to the comprehensive survey \cite{JO} and the monograph \cite{CR}. 

Nearly fifteen years after Skorokhod's original work,  Koml{\'o}s, Major and Tusn{\'a}dy showed using completely different techniques that one can achieve $\Delta_n = O(\log n )$ for the rate of strong coupling, provided that $X$ has a finite moment generating function in a neighborhood of zero \cite{KMT1,KMT2}. The construction used to achieve this celebrated result is now referred to as the {\em KMT approximation or coupling}. The results in \cite{Bartfai}, see \cite{Zai02}, show that unless $X$ is normally distributed the $\log n$ rate of approximation is optimal. Since its inception, the KMT coupling has become an invaluable tool in probability theory and statistics, see e.g. \cite{CHall, CR, SW}.

In the last few decades, the KMT approximation has been extended in many different directions. We discuss a few of them here, remarking that the list is very far from complete. A multidimensional version of the KMT coupling was proved in \cite{Einmahl} and the best result was later obtained in \cite{Zai96, Zai98}. See \cite{GZ} for more on the history and references regarding the KMT approximation for $X \in \mathbb{R}^d$. \cite{Sak} generalized and essentially sharpened the KMT results in the case of non-identically distributed independent random variables, see also \cite{Sak11, Shao95} and the references therein. Somewhat more recently, \cite{Cha12} proposed a new proof of the KMT result for the simple random walk via {\em Stein's method}. The main motivation of \cite{Cha12}, as admitted by the author, was to gain a more conceptual understanding of the KMT result so that it could be generalized to cases for sums of dependent random variables. Using different techniques, \cite{BLW} extended the KMT coupling for a large class of dependent stationary processes, successfully breaking away from the independent variables setting.\\

In the present paper we consider a different, albeit related problem to the embedding problem above, which we now describe. Let $\{S_m^{(n,z)}\}_{m = 1}^n$ denote the random process with law equal to that of the random walk $\{S_m\}_{m = 0}^n$ conditioned on $S_n = z$. In order for the latter law to be well-defined we assume one of the following situations.
\begin{itemize}
\item {\bf Continuous jumps.} There are constants $\alpha \in [-\infty, \infty)$ and $\beta \in (\alpha, \infty]$ such that $X$ is a continuous random variable with density $f_X(\cdot)$ such that $f_X(\cdot)$ is positive and continuous on $(\alpha, \beta)$ and zero outside of this interval. Under this assumption the process $\{S_m^{(n,z)}\}_{m= 1}^n$  makes sense for all $n \geq 1$ and $z \in L_n = (n\alpha, n \beta)$.
\item  {\bf Discrete jumps.} There are constants $\alpha \in \mathbb{Z} \cup \{-\infty\}$ and $\beta \in ((\alpha, \infty] \cap \mathbb{Z}) \cup \{\infty \}$ such that $X$ is an integer-valued random variable with probability mass function $p_X(\cdot)$ such that $p_X(\cdot)$ is positive on $(\alpha - 1, \beta + 1) \cap \mathbb{Z}$ and zero for all other values. Under this assumption the process $\{S_i^{(n,z)}\}_{i = 1}^n$  makes sense for all $n \geq 1$ and $z \in L_n = (n \alpha - 1, n \beta + 1) \cap \mathbb{Z}$.
\end{itemize}
In the case of continuous jumps we call the process $\{S_m^{(n,z)}\}_{m = 1}^n$ a {\em continuous random walk bridge} between the points $(0,0)$ and $(n,z)$. Similarly, in the case of discrete jumps we call the process $\{S_m^{(n,z)}\}_{m = 1}^n$ a {\em discrete random walk bridge} between the points $(0,0)$ and $(n,z)$. As a natural extension we define $S_t^{(n,z)}$ for non-integer $t$ by linear interpolation, i.e. if $t \in (m, m+1)$ we have
$$S_t^{(n,z)} = (m+ 1 - t) \cdot S_m^{(n,z)} +  (t - m)\cdot S_{m+1}^{(n,z)}.$$

Our main goal in this paper is to demonstrate that given a reference slope $p \in (\alpha, \beta)$ and $z$, which is close to $np$, we can construct a probability space that supports the process $\{S_t^{(n,z)} \}_{t \in [0,n]}$ and a suitable Brownian bridge $B^{(n,z)}_t$ conditioned on $B^{(n,z)}_0 = 0$ and $B^{(n,z)}_n = z$ such that 
$$ \sup_{0 \leq t \leq n} | S_t^{(n,z)} - B^{(n,z)}_t| = O(\log n)$$ 
with exponentially high probability. In particular, we are interested in establishing the above statement under general conditions on the density $f_X(\cdot)$ and the probability mass function $p_X(\cdot)$ in the continuous and discrete case respectively. 

Somewhat surprisingly, despite its inherent probabilistic interest and its direct connection to the well studied problem of KMT approximations, the problem of finding strong couplings between random walk bridges and Brownian bridges has received very little attention. We believe that the present paper is the first one that considers this problem for general jump distributions. To the authors' knowledge, the only case of the above setup that has been previously considered is when $X$ is a Bernoulli random variable. The latter result can be found in \cite[Theorem 6.3]{LF} and \cite[Theorem 4.1]{Cha12} for the case $p = 1/2$ (in both papers the authors consider the case when $\mathbb{P}(X = 1) = \mathbb{P}(X = -1) = 1/2$ and $p = 0$, but the latter is equivalent to the Bernoulli case and $p = 1/2$ after a simple affine transformation). For arbitrary $p \in (0,1)$ the result was proved in \cite[Theorem 8.1]{CD}. \\

Before we turn to our results, we introduce a bit of notation. If $W_t$ denotes a standard one-dimensional Brownian motion and $\sigma > 0$, then the process
$$B^{\sigma}_t = \sigma(W_t - t W_1), \hspace{5mm} 0 \leq t \leq 1,$$
is called a {\em Brownian bridge (conditioned on $B_0 = 0, B_1 = 0$)  with variance $\sigma^2$.} In the following two statements we present our main results about the random processes $\{S_m^{(n,z)}\}_{1\leq m \leq n}$ when the jump distribution $X$  is continuous and discrete respectively. We forgo stating the results in their full generality as this requires more notation. We refer the reader to Theorems \ref{S2ContKMT} and \ref{S2DiscKMT} in the main body of text for the more general formulations as well as to Section \ref{Section8} for the proofs of the two theorems below.
\begin{theorem}\label{S1ContKMT}
Suppose that $X$ is a continuous random variable with a density function $f_X(\cdot)$. Suppose that the support of $f_X$ is a compact interval $[\alpha, \beta] \subset \mathbb{R}$ and that $f_X$ is continuously differentiable and positive on $(\alpha, \beta)$ with a bounded derivative. Then for every $b > 0$ and $p \in (\alpha, \beta)$, there exist constants $0 < C, a, \alpha' < \infty$ (depending on $b$, $p$ and the function $f_X(\cdot)$) such that the following holds. For every positive integer $n$, there is a probability space on which are defined a Brownian bridge $B^\sigma$ with variance $\sigma^2 = \sigma^2_p$ that explicitly depends on $p$ and $f_X(\cdot)$ and a family of processes $S^{(n,z)} $ for $z\in L_n = (n\alpha, n \beta)$ such that
\begin{equation}\label{S1ContKMTE}
\mathbb{E}\left[ e^{a \Delta(n,z)} \right] \leq C e^{\alpha' (\log n)} e^{b|z- p n|^2/n},
\end{equation}
where $\Delta(n,z) = \Delta(n,z,B^{\sigma}, S^{(n,z)})=  \sup_{0 \leq t \leq n} \left| \sqrt{n} B^\sigma_{t/n} + \frac{t}{n}z - S^{(n,z)}_t \right|.$
\end{theorem}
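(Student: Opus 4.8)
\medskip

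The plan is to follow KMT's dyadic (or conditional quantile) scheme, but adapted to bridges and with all estimates made uniform in the endpoint $z$ near $pn$. I will work on the dyadic grid: assume first $n=2^N$, and reveal the bridge $S^{(n,z)}$ by successive midpoint decompositions. At the coarsest level the only information is $S^{(n,z)}_0=0$, $S^{(n,z)}_n=z$. Given the values at the endpoints of a dyadic subinterval $[\ell 2^k, (\ell+1)2^k]$ of length $2^k$, the conditional law of the midpoint value is that of $S_{m}^{(2^k, z')}$ (a bridge increment) for the appropriate $m=2^{k-1}$ and endpoint difference $z'$; I will couple this conditional midpoint increment to the corresponding Gaussian increment of the target Brownian bridge via the quantile coupling. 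Iterating down to unit scale produces the joint coupling of the whole process with $B^\sigma$. The key analytic input is a \emph{local central limit theorem with error bounds for bridges}: the density (resp. mass function) of $S_m^{(n,z)}$ at its midpoint, suitably centered and scaled by $\sqrt{m(n-m)/n}$, is Gaussian up to multiplicative error of order $O(1/\sqrt{m} + |\text{deviation}|^3/m)$ and, crucially, this holds with constants that are uniform as long as the relevant slopes stay in a compact subinterval of $(\alpha,\beta)$. This is where the hypotheses on $f_X$ (compact support, $C^1$, positive on $(\alpha,\beta)$, bounded derivative) enter: they guarantee an Edgeworth-type expansion for the tilted walk and control of the tilting parameter as a smooth function of the slope. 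These refined LCLT-for-bridges estimates are the main obstacle, and I expect the bulk of the work (and of the earlier sections of the paper) to go into them; the endpoint conditioning is what makes the plain KMT LCLT insufficient, consistent with the counterexample mentioned in the abstract.

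\medskip

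Given the LCLT, the per-scale coupling error is controlled as follows. Fix a dyadic interval of length $2^k$ with endpoint displacement $y$ from the reference slope; the quantile coupling between the conditional midpoint increment and its Gaussian counterpart gives a coupling error bounded (on an event of exponentially high probability in the increment) by something like $C\big(1 + y^2/2^k\big)/\sqrt{2^k}$ — a bounded term plus a term quadratic in the rescaled displacement, divided by the square root of the scale. Writing $\Delta(n,z)$ as a sum of contributions over the $N$ dyadic scales, the bounded terms sum to $O(N)=O(\log n)$, which is the target rate. The displacement-dependent terms require a uniform bound on how far the partially-revealed bridge wanders: here one uses that $S^{(n,z)}$ is a bridge, so its fluctuations around the straight line from $(0,0)$ to $(n,z)$ are $O(\sqrt{n})$ with Gaussian tails, and the same holds at every scale by the nesting of bridges. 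This yields the exponential moment bound: on the good event $\Delta(n,z)=O(\log n + |z-pn|^2/n \cdot (\text{small}))$, and the excess $e^{b|z-pn|^2/n}$ factor on the right of \eqref{S1ContKMTE} absorbs the contribution of the endpoint displacement (and of the rare events where some increment is atypically large).

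\medskip

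To assemble \eqref{S1ContKMTE}, I would: (i) set up the tilted/centered walk and the explicit variance $\sigma_p^2$ (the variance of $X$ under the exponential tilt that makes its mean equal to $p$, which is what makes the midpoint-decomposition variances match those of a Brownian bridge of that variance); (ii) prove the uniform bridge LCLT and its Edgeworth correction; (iii) build the quantile coupling at a single dyadic step and bound its error in terms of scale and displacement on a high-probability event, tracking the dependence on $b$; (iv) iterate over the $N$ scales, using a union bound and the bridge fluctuation estimate to control the accumulation of errors and of displacements; (v) handle general (non-dyadic) $n$ by embedding into the next power of two, or by a slightly more careful non-dyadic decomposition, and pass from the high-probability event to the stated exponential moment by integrating the tail. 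The main obstacle, to repeat, is step (ii): obtaining the LCLT for bridge increments with an \emph{effective} error term that is uniform over slopes in compact subsets of $(\alpha,\beta)$ and strong enough to feed a quantile coupling at every dyadic scale, since it is precisely the failure of the naive reduction to KMT that this step must overcome.
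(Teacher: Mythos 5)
Your proposal follows essentially the same route as the paper: a dyadic/midpoint induction in which the conditional midpoint of each bridge segment is quantile-coupled to the Gaussian midpoint of the Brownian bridge, driven by a local CLT for the bridge midpoint with cubic (Edgeworth-type) error and Gaussian tail bounds that are uniform as long as the slopes stay in a compact subinterval of $(\alpha,\beta)$, with $\sigma_p^2$ the variance of the exponentially tilted law of mean $p$, and with per-scale errors of size $O\bigl(1+(\text{displacement})^2/\text{scale}\bigr)$ summing to $O(\log n)$ in exponential moment. The one point you leave implicit — endpoints $z$ with $|z-pn|$ of order $n$, where the uniform LCLT and hence the quantile coupling are simply unavailable — is handled in the paper by a separate crude bound (their Assumption C6, which for the compactly supported jumps of this theorem is immediate since $\max_k|S_k|\leq |z|+n\max(|\alpha|,|\beta|)$ deterministically), and your remark that such contributions are absorbed by the factor $e^{b|z-pn|^2/n}$ is exactly the mechanism used there.
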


\begin{theorem}\label{S1DiscKMT}
Suppose that $X$ is an integer valued random variable with probability mass function $p_X(\cdot)$. Suppose that $\alpha, \beta \in \mathbb{Z}$ with $\alpha < \beta$ are such that $\mathbb{P}(X \in [\alpha, \beta ]) = 1$ and $p_X(x) > 0$ for all $x \in \mathbb{Z} \cap [\alpha, \beta]$. Then for every $b > 0$ and $p \in (\alpha, \beta)$, there exist constants $0 < C, a, \alpha' < \infty$ (depending on $b$, $p$ and $p_X(\cdot)$) such that the following holds. For every positive integer $n$, there is a probability space on which are defined a Brownian bridge $B^\sigma$ with variance $\sigma^2 = \sigma^2_p$ that explicitly depends on $p$ and $p_X(\cdot)$ and a family of processes $S^{(n,z)} $ for $z \in L_n = (n \alpha - 1, n \beta + 1) \cap \mathbb{Z}$ such that
\begin{equation}\label{S1DiscKMTE}
\mathbb{E}\left[ e^{a \Delta(n,z)} \right] \leq C e^{\alpha' (\log n)} e^{b|z- p n|^2/n},
\end{equation}
where $\Delta(n,z) = \Delta(n,z,B^{\sigma}, S^{(n,z)})=  \sup_{0 \leq t \leq n} \left| \sqrt{n} B^\sigma_{t/n} + \frac{t}{n}z - S^{(n,z)}_t \right|.$
\end{theorem}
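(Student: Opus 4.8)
\emph{Proof strategy.} The plan is to adapt the dyadic bisection scheme of Koml\'os--Major--Tusn\'ady to the bridge setting; I describe it for the discrete case, the continuous case being parallel. First I would exploit the fact that the conditional law of a random walk bridge is invariant under exponential tilting of the jump distribution: replacing $X$ by its Cram\'er tilt $X^{(p)}$, with probability mass function proportional to $e^{\theta x} p_X(x)$ where $\theta$ is chosen so that $\mathbb{E}[X^{(p)}] = p$ (possible since $p \in (\alpha,\beta)$ and $p_X > 0$ on $\mathbb{Z} \cap [\alpha,\beta]$), leaves the law of $S^{(n,z)}$ unchanged but turns the underlying walk into one of mean slope $p$ with step variance $\sigma_p^2 := \mathrm{Var}(X^{(p)})$. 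Writing $\tilde B^{(n,z)}_t = \sqrt n\, B^\sigma_{t/n} + \tfrac{t}{n} z$ for the Brownian bridge of variance $\sigma_p^2$ from $(0,0)$ to $(n,z)$, one checks that $\mathrm{Var}(\tilde B^{(n,z)}_t) = \sigma_p^2\, t(n-t)/n$ matches the variance profile of $S^{(n,z)}$. It then suffices to treat $n = 2^K$ (a general $n$ is handled by the usual decomposition into dyadic blocks) and to replace $\sup_{t \in [0,n]}$ by the maximum over dyadic integers, the difference being handled routinely (bounded increments for $S^{(n,z)}$ and Gaussian modulus-of-continuity estimates for $\tilde B^{(n,z)}$, which contribute only a polynomial factor to the exponential moment).

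The construction proceeds recursively over a binary tree of depth $K$, using the structural fact that, conditionally on the value of a bridge at the midpoint of a dyadic interval, its two halves are independent bridges of half the length --- a property shared verbatim by the Brownian bridge. At each internal node, corresponding to a segment of length $2^j$ whose endpoint values have already been coupled, I must produce a joint realization of the random-walk-bridge midpoint $M^{\mathrm{rw}}$ and the Brownian-bridge midpoint $M^{\mathrm{bb}}$ making $M^{\mathrm{rw}} - M^{\mathrm{bb}}$ small, and then recurse independently on the two children. Concatenating these midpoint couplings over all $2^K - 1$ nodes yields a coupling of the full processes on a common probability space.

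The analytic core is the one-step estimate at a node. For a bridge of length $2m$ from $0$ to $w$, with $w$ not too far from its typical value $2mp$, the conditional law of the midpoint increment $S_m - w/2$ must be shown comparable --- up to a multiplicative factor $1 + O(1/m)$ and with sub-Gaussian corrections --- to the centered Gaussian of variance $\sigma_p^2 m/2$. This reduces to an Edgeworth-refined local central limit theorem controlling the ratio $p^{\mathrm{rw}}_m(a)\, p^{\mathrm{rw}}_m(w-a) / p^{\mathrm{rw}}_{2m}(w)$ uniformly in $a$, which holds under the present hypotheses --- the walk is strongly aperiodic thanks to the full support on an integer interval, and it has finite exponential moments by the boundedness of the support --- but \emph{fails} under KMT's weaker assumptions; this is precisely why the theorem does not follow from the KMT coupling. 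Feeding this comparison into a quantile coupling --- Tusn\'ady's lemma in the discrete case --- furnishes $M^{\mathrm{rw}}, M^{\mathrm{bb}}$ on one space with $|M^{\mathrm{rw}} - M^{\mathrm{bb}}| = O\big(1 + (M^{\mathrm{bb}} - w/2)^2/m\big)$ and exponential tails, the constants being uniform over the admissible range of $w$.

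Finally, summing the one-step discrepancies along a root-to-leaf path, the error at a dyadic point is a sum of $K$ terms, the level-$j$ term being $O\big(1 + (\text{local fluctuation})^2/2^j\big)$ with the fluctuation sub-Gaussian of scale $2^{j/2}$ on the relevant good event; hence the total is $O(K) = O(\log n)$. To upgrade this to the exponential-moment bound~\eqref{S1DiscKMTE} I would split the target exponent $a$ geometrically across levels, combine the independence of nodes within a level with H\"older's inequality across levels, and absorb the rare event that some intermediate bridge strays far from its local slope, using that an excursion of size $r$ over a length-$\ell$ segment costs $e^{-c r^2/\ell}$, which is summable against the polynomially many nodes. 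The factor $e^{b|z - pn|^2/n}$ enters exactly once, at the root: when $z$ is atypical the top segment is itself off-slope, and returning to the regime where the one-step estimate applies costs a large-deviation price of order $|z - pn|^2/n$. I expect the main obstacle to be the \emph{uniformity} of the local-CLT/Edgeworth comparison in the one-step estimate over all conditioning profiles that arise deep in the recursion --- short segments carrying moderately atypical endpoint data --- which is exactly what forces hypotheses stronger than those of KMT; a secondary difficulty is the bookkeeping of exponential moments across the $\log n$ dyadic levels while keeping every constant free of $n$ and $z$.
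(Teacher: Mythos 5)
Your proposal follows essentially the same route as the paper: a KMT-style dyadic midpoint bisection, a saddle-point/local-CLT comparison of the bridge's midpoint law with a Gaussian uniformly over the admissible endpoint slopes (your exponential tilt is exactly the paper's saddle point $u_z$ with $\sigma_z^2=\Lambda''(u_z)$), a Tusn\'ady-type quantile coupling at each node, and a recursive exponential-moment bookkeeping that produces the $e^{O(\log n)}$ prefactor, with atypical endpoints handled by the trivial bound available for bounded jumps. The only differences are in bookkeeping details: the paper fixes the exponent $a$ and carries a multiplicative constant per dyadic scale by conditioning on the midpoint (rather than your H\"older splitting across levels), and the factor $e^{b|z-pn|^2/n}$ is propagated through every level of the induction and absorbed against the Gaussian tails of the midpoint distribution, not incurred only once at the root.
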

\begin{remark}
From  Theorems \ref{S1ContKMT} and \ref{S1DiscKMT} applied to $b = 1$ and Chebyshev's inequality one readily observes that there are constants $M,K,\lambda > 0$ depending on $a, \alpha'$ and $C$ such that if $z = np$ then
\begin{equation}
\mathbb{P} \left (\Delta(n,z) \geq M \log n + x \right) \leq K e^{-\lambda x}.
\end{equation}
\end{remark}

As mentioned before, Theorems \ref{S1ContKMT} and \ref{S1DiscKMT} are representative of the more general Theorems \ref{S2ContKMT} and \ref{S2DiscKMT} given in Sections \ref{Section2.1} and \ref{Section2.2} respectively. The latter are formulated for random variables $X$ whose density $f_X$ satisfies a certain set of Assumptions C1-C6 in the continuous case, or whose mass function $p_X$ satisfies a certain set of Assumptions D1-D5 in the discrete case. In Section \ref{Section2.3} we give a brief description of the significance of these assumptions. Our approach for proving Theorems \ref{S2ContKMT} and \ref{S2DiscKMT}, developed in Sections \ref{Section5} and \ref{Section6}, is inspired by the proof of \cite[Theorem 6.3]{LF}, which is based on an inductive dyadic construction in the same spirit as KMT's original work \cite{KMT1, KMT2}. The main technical challenges lie in obtaining detailed asymptotic estimates for the distributions of $S_n$ and $S^{(n,z)}_{n/2}$, which are presented in Sections \ref{Section3} and \ref{Section4}. Since we are dealing with generic distributions, the asymptotic statements we need are notably harder to obtain than those in \cite{LF}, which deals with the Bernoulli case. Furthermore, in the process of establishing our results we obtain numerous constants that depend on $f_X$ in the continuous and on $p_X$ in the discrete case. We quantify the dependence of these constants on $f_X$ and $p_X$ through various observables of the latter, which further complicates our arguments. The purpose of this quantification is for example to show that the coupling constants $C,a,\alpha'$ in Theorems \ref{S1ContKMT} and \ref{S1DiscKMT} can be chosen uniformly even if $f_X$ or $p_X$ are allowed to depend on some external parameter or $n$, see also Remark \ref{S2Remark}. Obtaining such a uniformity is important for some of the applications we have in mind and a representative example is given in Section \ref{Section8.3}.

It is worth noting that the random walk bridge is a less well-behaved object than the random walk itself, because of the possibility of conditioning on an atypical endpoint. The latter motivates the introduction of the (rather technical) Assumptions C6 and D5 in Section \ref{Section2}, which are novel to our setting and did not appear in KMT's original work \cite{KMT1,KMT2}. In Section \ref{Section7.1} we discuss some easy to check conditions, under which Assumptions C6 and D5 would follow. Moreover, in Section \ref{Section7.2} we construct an example of a discrete random walk bridge, such that the jump distribution satisfies the conditions of \cite{KMT1,KMT2} but for which our coupling result fails. This example illustrates that one necessarily needs to impose stronger assumptions when dealing with random walk bridges compared to random walks, and in particular shows that our result are not a consequence of \cite{KMT1,KMT2}. It is quite possible that one can relax or remove some of the assumptions we make, but one would need to implement different arguments than the ones we present. We believe that it may be possible to prove the results of the present paper using Stein's method, similarly to the proof of \cite[Theorem 4.1]{Cha12} in the Bernoulli case. The immediate obstacle in generalizing the arguments of that paper, which the author also acknowledges, is the difficulty of finding general smoothening techniques that automatically generate Stein coefficients. Nevertheless, it would be nice to have a less technical derivation of our results using such ideas.\\

We end this section with a brief discussion of the possible applications of our results, specifically to integrable probability, which goes to our initial motivation for considering the present problem. There is a large class of stochastic integrable models that naturally carry the structure of random non-intersecting paths with some {\em Gibbsian} resampling invariance. To give a concrete example, one can consider the case of $a$ random walks with jump size $X$ satisfying $\mathbb{P}(X = 0) = \mathbb{P}(X = 1) = 1/2$. If the walks are started at $j-1$, $1 \leq j \leq a$ and conditioned to not intersect in the time interval $[0, b+c]$, and end at $c - b + j -1$ at time $b+c$ then the trajectories of the walks give rise to $a$ random up-right paths. This model has a natural interpretation as a uniform random lozenge tiling of the $a \times b \times c$ hexagon, see Figure \ref{Fig1}. 
\begin{figure}[h]
\centering
\begin{tikzpicture}[>=stealth,
brxy/.style={fill=yellow!30!white, draw=yellow!30!black},
bryz/.style={fill=blue!18!white, draw=blue!30!black},
brxz/.style={fill=red!60!white, draw=red!30!black},
pth/.style={very thick, draw=black},
intpt/.style={circle, draw=white!100, fill=purple!70!black, very thick, inner sep=1pt, minimum size=2.5mm},
scale=0.7
]
\def\brxy(#1:#2:#3){
\filldraw[brxy]
(${(#1)+0}*(1,0) + {(#2)+0}*({sqrt(2)*cos(deg(pi/4))},{sqrt(2)*sin(deg(pi/4))}) + {(#3)-0.5}*(0,1)$) --
(${(#1)+1}*(1,0) + {(#2)+0}*({sqrt(2)*cos(deg(pi/4))},{sqrt(2)*sin(deg(pi/4))}) + {(#3)-0.5}*(0,1)$) --
(${(#1)+1}*(1,0) + {(#2)+1}*({sqrt(2)*cos(deg(pi/4))},{sqrt(2)*sin(deg(pi/4))}) + {(#3)-0.5}*(0,1)$) --
(${(#1)+0}*(1,0) + {(#2)+1}*({sqrt(2)*cos(deg(pi/4))},{sqrt(2)*sin(deg(pi/4))}) + {(#3)-0.5}*(0,1)$) -- cycle;
}
\def\bryz(#1:#2:#3){
\filldraw[bryz]
(${(#1)+0}*(1,0) + {(#2)+0}*({sqrt(2)*cos(deg(pi/4))},{sqrt(2)*sin(deg(pi/4))}) + {(#3)-0.5}*(0,1)$) --
(${(#1)+0}*(1,0) + {(#2)+1}*({sqrt(2)*cos(deg(pi/4))},{sqrt(2)*sin(deg(pi/4))}) + {(#3)-0.5}*(0,1)$) --
(${(#1)+0}*(1,0) + {(#2)+1}*({sqrt(2)*cos(deg(pi/4))},{sqrt(2)*sin(deg(pi/4))}) + {(#3)+0.5}*(0,1)$) --
(${(#1)+0}*(1,0) + {(#2)+0}*({sqrt(2)*cos(deg(pi/4))},{sqrt(2)*sin(deg(pi/4))}) + {(#3)+0.5}*(0,1)$) -- cycle;
\draw[pth]
(${(#1)+0}*(1,0) + {(#2)+0}*({sqrt(2)*cos(deg(pi/4))},{sqrt(2)*sin(deg(pi/4))}) + {(#3)}*(0,1)$) --
(${(#1)+0}*(1,0) + {(#2)+1}*({sqrt(2)*cos(deg(pi/4))},{sqrt(2)*sin(deg(pi/4))}) + {(#3)}*(0,1)$);
}
\def\brxz(#1:#2:#3){
\filldraw[brxz]
(${(#1)+0}*(1,0) + {(#2)+0}*({sqrt(2)*cos(deg(pi/4))},{sqrt(2)*sin(deg(pi/4))}) + {(#3)-0.5}*(0,1)$) --
(${(#1)+1}*(1,0) + {(#2)+0}*({sqrt(2)*cos(deg(pi/4))},{sqrt(2)*sin(deg(pi/4))}) + {(#3)-0.5}*(0,1)$) --
(${(#1)+1}*(1,0) + {(#2)+0}*({sqrt(2)*cos(deg(pi/4))},{sqrt(2)*sin(deg(pi/4))}) + {(#3)+0.5}*(0,1)$) --
(${(#1)+0}*(1,0) + {(#2)+0}*({sqrt(2)*cos(deg(pi/4))},{sqrt(2)*sin(deg(pi/4))}) + {(#3)+0.5}*(0,1)$) -- cycle;
\draw[pth]
(${(#1)+0}*(1,0) + {(#2)+0}*({sqrt(2)*cos(deg(pi/4))},{sqrt(2)*sin(deg(pi/4))}) + {(#3)}*(0,1)$) --
(${(#1)+1}*(1,0) + {(#2)+0}*({sqrt(2)*cos(deg(pi/4))},{sqrt(2)*sin(deg(pi/4))}) + {(#3)}*(0,1)$);
}
\brxy(0:1:3);\brxy(0:2:3);\brxy(1:2:3);\brxz(2:3:2);\brxz(3:3:2);
\bryz(0:0:2);\brxz(0:1:2);\bryz(1:1:2);\brxy(1:1:2);\brxz(1:2:2);\brxy(2:2:2);\bryz(2:2:2);
\bryz(0:0:1);\brxz(0:1:1);\brxz(1:1:1);\bryz(2:1:1);\brxz(2:2:1);\bryz(3:2:1);\brxz(3:3:1);
\brxy(0:0:1);\brxy(1:0:1);\brxy(2:0:1);\brxy(2:1:1);\brxy(3:0:1);\brxy(3:1:1);\brxy(3:2:1);
\brxz(0:0:0);\brxz(1:0:0);\brxz(2:0:0);\brxz(3:0:0);\bryz(4:0:0);\bryz(4:1:0);\bryz(4:2:0);
\draw[->] (-0.5,0) -- (8,0) node[right] {$t$};
\draw[->] (0,-0.5) -- (0,6.5) node[above] {$x$};
\draw[-,line width=0pt] (3,6) -- (3,-1) node[below] {$t=3$};
\node[intpt] at (3,0) {}; \node[intpt] at (3,2) {}; \node[intpt] at (3,4) {};
\end{tikzpicture}
\caption{Lozenge tiling of the hexagon and corresponding up-right path configuration. The dots represent the location of the random walks at time $t = 3$.} \label{Fig1}
\end{figure} 

Let us number the random paths from top to bottom by $L_1, L_2, \dots, L_a$, and denote the position of the $k$-th random walk at time $t$ by $L_k(t)$. Then law of $\{L_m\}_{m = 1}^a$ enjoys the following Gibbs property. Suppose that we sample $\{L_m\}_{m = 1}^a$ and fix two times $0 \leq s < t \leq b+c$ and an index $k \in \{1, \dots, a\}$. We can erase the part of the path $L_k$ between the points $(s, L_k(s))$ and $(t, L_k(t))$ and sample independently a new up right path between these two points uniformly from the set of all such paths that do not intersect the lines $L_{k-1}$ and $L_{k + 1}$ with the convention that $L_{0} = \infty$ and $L_{a+1} = -\infty$. In this way we obtain a new random collection of paths $\{L'_m\}_{m = 1}^a$ whose law is readily seen to be the same as that of $\{L_m\}_{m = 1}^a$. 

The above is a simple example of a discrete Gibbsian line ensemble. A (notably more complex) continuous Gibbsian line ensemble is given by the {\em Airy line ensemble}, introduced in \cite{CH}. The Airy line ensemble is a certain collection of countably many random continuous curves $\{\mathcal{L}_m \}_{m = 1}^\infty$, such that each $\mathcal{L}_i$ is a random continuous function on $\mathbb{R}$ and for each $i \geq 1$ and $x \in \mathbb{R}$ one has $\mathcal{L}_i(x) \geq \mathcal{L}_{i+1}(x)$. The top curve $\mathcal{L}_1$ is the Airy$_2$ process and the ensemble satisfies the following {\em Brownian Gibbs property}. Suppose we sample $\{\mathcal{L}_m \}_{m = 1}^\infty$ and fix two times $s, t \in \mathbb{R}$ with $ s < t$ and an index $k \in \mathbb{N}$. We can erase the part of the path $\mathcal{L}_k$ between the points $(s, \mathcal{L}_k(s))$ and $(t, \mathcal{L}_k(t))$ and sample independently a Brownian bridge between these two points, which is conditioned on not crossing $\mathcal{L}_{k-1}$ and $\mathcal{L}_{k + 1}$ with the convention that $\mathcal{L}_{0} = \infty$. In this way we obtain a new random collection of paths $\{\mathcal{L}'_m\}_{m = 1}^\infty$ and the essense of the Brownian Gibbs property is that this new random line ensemble has the same law as $\{\mathcal{L}_m \}_{m = 1}^\infty$. 

In \cite{CH} the authors heavily rely on the Brownian Gibbs property to construct and establish various properties of the Airy line ensemble. In a remarkable series of recent papers \cite{Ham1, Ham2, Ham3, Ham4} one of the authors of \cite{CH} significantly strengthened the arguments from that paper to obtain a multitude of results about the Airy line ensemble and {\em Brownian last passage percolation} (this is a different random line ensemble that enjoys the same Brownian Gibbs property we described above). These results are more qualitative in nature, e.g. estimating the modulus of continuity of the models, establishing refined regularity properties for them and finding critical exponents; however, a marked advantage of the arguments in \cite{Ham1, Ham2, Ham3, Ham4} is that they depend mostly on tools from analysis and geometry. The latter is important, as it makes the arguments (for the most part) free of exact computations and hence more easily extendable to other settings.

One of the directions we are interested in exploring is bringing some of the ideas from the continuous Gibbsian line ensemble setting to the discrete one. A particularly successful instance of the latter is \cite{CD}, where the authors investigated a discrete Gibbsian line ensemble related to the {\em ascending Hall-Littlewood process} (a special case of the {\em Macdonald processes} \cite{BorCor}). By developing discrete analogues of the arguments in \cite{CH}, \cite{CD} were successful in establishing the long-predicted $2/3$ critical exponent for the {\em asymmetric simple exclusion process (ASEP)}. A critical component of the argument in that paper is the strong coupling of Bernoulli random walk bridges to Brownian bridges, which enabled the translation of ideas from the continuous to the discrete line ensemble setting. We believe that the same could be done for other discrete models in integrable probability, whose line ensemble structure is linked to random walks with jumps that are not Bernoulli.  To give a few examples, through various versions of the {\em Robinson-Schensted-Knuth (RSK) correspondence}, one can link {\em geometric last passage percolation (LPP)} to random walk bridges with geometric jumps, {\em exponential LPP} to  random walk bridges with exponential jumps (see \cite{KJ}) and the {\em log-gamma polymer model} to  random walk bridges with log-gamma jumps  \cite{COSZ}. We remark that while the correspondence of the latter integrable models to discrete Gibbsian line ensembles is known to experts in the field, to our best knowledge the exact formulation does not appear in the literature.

We hope that many of the ideas in \cite{CH} and \cite{Ham1, Ham2, Ham3, Ham4} can be adapted to all of the examples we listed above and more. Achieving this would require strong couplings of the underlying random walk bridges in these models to Brownian bridges, and we hope that the results in the present paper will be a valuable tool for obtaining such couplings. We have attempted to make the statements in this paper as generic as possible with this goal in mind.

%
\subsection*{Acknowledgments} The authors are deeply grateful to Ivan Corwin for many useful suggestions and comments as well as Julien Dubedat, Alisa Knizel and Konstantin Matetski for numerous fruitful discussions. The first author is partially supported by the Minerva Foundation Fellowship. For the second author partial financial support was available through the NSF grants DMS:1811143, DMS:1664650 and the Minerva Foundation Summer Fellowship program.

\section{General setup}\label{Section2}
In this section we describe the general setting of a random walk bridge that we consider and the specific assumptions we make about it. Our discussion naturally splits into two parts, depending on whether the jump of the random walk is continuous or discrete. In each case we formulate a precise list of assumptions and present the statements we can prove for the corresponding random walk bridges that satisfy them. In the last part of this section we give a brief explanation of the significance of our assumptions.

%
\subsection{Continuous random walk bridges}\label{Section2.1}

We start by fixing some notation. Suppose that $X$ is a continuous random variable with density $f_X(\cdot)$ and $X_i$ are i.i.d. random variables with density $f_X$. For $n \in \mathbb{N}$ we define $S_n := X_1 + \cdots + X_n$ and also let $f_n(x)$ be the density of $S_n$.  

For any random variable $X$ and $t \in \mathbb{R}$ we define
\begin{equation}\label{S2S1E1}
M_X(t) := \mathbb{E} \left[ e^{t X} \right], \hspace{3mm}  \phi_X(t) := \mathbb{E} \left[ e^{itX} \right], \hspace{3mm} \Lambda(t) :=\log M_X(t), \hspace{3mm} \Lambda^*(t) := \sup_{x \in \mathbb{R}} \{ t x - \Lambda(x ) \}.
\end{equation}
Let $\mathcal{D}_\Lambda := \{ x: \Lambda(x) < \infty\} $ and $\mathcal{D}_{\Lambda^*} := \{x: \Lambda^*(x) < \infty\}$. 

We make the following assumptions on the function $f_X(x)$. \\

{\raggedleft \bf Assumption C1.} We assume that there are $\alpha \in [-\infty ,\infty)$ and $\beta \in (\alpha, \infty]$ and that $f_X(x)$ is positive and continuous on $(\alpha, \beta)$ and zero outside this interval. In addition, we assume that $f_X(x)$ has a continuous extension to $\alpha$ if $\alpha > -\infty$ and to $\beta$ if $\beta < \infty$.\\

{\raggedleft \bf Assumption C2.} We assume that there is a $\lambda > 0$ such that $\mathbb{E} \left[e^{\lambda |X|} \right] < \infty.$\\

For each $n \geq 1$ we set $L_n = (n \alpha, n \beta)$, where $\alpha, \beta$ are as in Assumption C1. For $z \in L_n$ we let $S^{(n,z)} = \{S_m^{(n,z)} \}_{m = 0}^n$ denote the process with the law of $\{S_m\}_{m = 0}^n$ conditioned so that $S_n = z$. We call this process a {\em continuous random walk bridge} between the points $(0,0)$ and $(n,z)$. Notice that this law is well-defined by Assumption C1.  As a natural extension of this definition we define $S_t^{(n,z)}$ for non-integer $t$ by linear interpolation.  In addition, we will denote the density of $S_m^{(n,z)}$ by $f_{m, n -m}(\cdot | z)$.

If $f_X$ satisfies Assumption C2 then $\mathcal{D}_\Lambda$  contains a neighborhood of $0$. In addition, it is easy to see that $\mathcal{D}_{\Lambda}$ is a connected set and hence an interval. 
We denote $(A_{\Lambda}, B_{\Lambda})$ the interior of $\mathcal{D}_\Lambda$ where $A_{\Lambda} \in [-\infty, - \lambda]$ and $B_{\Lambda} \in [\lambda, \infty]$. We isolate some properties for the functions in (\ref{S2S1E1}) under the above assumptions in the following lemma.

\begin{lemma}\label{S2S1L1}
Suppose that $X$ is a random variable with density $f_X$, which satisfies Assumptions C1 and C2. Then $M_X(u)$ has an analytic continuation to the vertical strip $D:= \{z:  A_{\Lambda} < Re(z) < B_{\Lambda} \}$. Moreover, $\Lambda(\cdot)$ is a smooth function on $(A_{\Lambda}, B_{\Lambda})$ and $\Lambda''(x) > 0$ for all $x \in (A_{\Lambda}, B_{\Lambda})$.  
\end{lemma}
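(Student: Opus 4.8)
The plan is to establish the three assertions in order: the analytic continuation of $M_X$ to the strip $D$, then the smoothness of $\Lambda$, and finally the strict bound $\Lambda'' > 0$. Throughout I would use the fact, already noted in the excerpt, that Assumption C2 places a neighbourhood of $0$ inside $\mathcal{D}_\Lambda$ and that $\mathcal{D}_\Lambda$ is an interval with interior $(A_\Lambda, B_\Lambda)$.

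\emph{Analytic continuation.} For $z = u + iv$ with $u \in (A_\Lambda, B_\Lambda)$ one has $|e^{zx}| = e^{ux}$, so the integral $\widetilde{M}_X(z) := \int_{\mathbb{R}} e^{zx} f_X(x)\,dx$ converges absolutely and agrees with $M_X$ for real $z$. To see $\widetilde{M}_X$ is holomorphic on $D$, fix a compact subset, choose $u_1 < u_2$ in $(A_\Lambda, B_\Lambda)$ with $\mathrm{Re}(z) \in [u_1, u_2]$ on it, and note $|e^{zx}| f_X(x) \le \bigl(e^{u_1 x} + e^{u_2 x}\bigr) f_X(x)$, whose right-hand side is integrable since $u_1, u_2 \in \mathcal{D}_\Lambda$. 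This single domination does all the work: it justifies continuity of $\widetilde{M}_X$ by dominated convergence, and it lets me apply Fubini to any closed triangle $T \subset D$, writing $\oint_{\partial T} \widetilde{M}_X(z)\,dz = \int_{\mathbb{R}} \bigl(\oint_{\partial T} e^{zx}\,dz\bigr) f_X(x)\,dx = 0$ because $z \mapsto e^{zx}$ is entire. Morera's theorem then yields analyticity of $\widetilde{M}_X$ on $D$, which is the desired continuation of $M_X$. (Equivalently, one differentiates under the integral using the same bound.)

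\emph{Smoothness of $\Lambda$.} On $(A_\Lambda, B_\Lambda) \subset \mathbb{R}$ the function $M_X$ is strictly positive, since $M_X(t) = \int_\alpha^\beta e^{tx} f_X(x)\,dx$ is the integral of a positive function over a non-null set. As $M_X$ extends holomorphically to $D$ and is nonzero on this real segment, $\Lambda = \log M_X$ is real-analytic, in particular smooth, on $(A_\Lambda, B_\Lambda)$. Differentiating under the integral (again by the domination above) gives $M_X'(t) = \mathbb{E}[X e^{tX}]$ and $M_X''(t) = \mathbb{E}[X^2 e^{tX}]$ for $t$ in this interval. For the final claim, introduce the exponentially tilted probability measure $d\mathbb{P}_t = \frac{e^{tX}}{M_X(t)}\,d\mathbb{P}$, which is equivalent to $\mathbb{P}$. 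A direct computation gives $\Lambda''(t) = \frac{M_X''(t)}{M_X(t)} - \bigl(\frac{M_X'(t)}{M_X(t)}\bigr)^2 = \mathbb{E}_t[X^2] - \mathbb{E}_t[X]^2 = \mathrm{Var}_t(X)$. Since $\mathbb{P}_t$ is equivalent to $\mathbb{P}$ and $f_X$ is positive on the non-degenerate interval $(\alpha,\beta)$, $X$ is non-constant under $\mathbb{P}_t$, so $\mathrm{Var}_t(X) > 0$ and hence $\Lambda''(t) > 0$.

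The only step that requires genuine care is the uniform domination underpinning the Fubini/Morera argument and the differentiation under the integral sign; everything else is a standard fact about cumulant generating functions and exponential tilting. I do not anticipate any real obstacle beyond this bookkeeping, since the needed integrability is handed to us directly by the definition of $(A_\Lambda,B_\Lambda)$ as the interior of $\mathcal{D}_\Lambda$.
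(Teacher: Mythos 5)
Your proposal is correct, and it reaches the same three conclusions by a mildly different route than the paper at two points. For holomorphy of $M_X$ on the strip, you use a single dominating function $\bigl(e^{u_1x}+e^{u_2x}\bigr)f_X(x)$ together with Fubini over triangles and Morera's theorem; the paper instead truncates the integral to $[a_n,b_n]$ with $a_n\downarrow\alpha$, $b_n\uparrow\beta$, invokes a textbook result to get holomorphy of each truncated integral, and then passes to the limit using the same kind of domination to get uniform convergence on compacts, so that the limit is holomorphic. The two arguments are equivalent in substance (the same integrability bound does all the work), with yours being somewhat more self-contained and the paper's leaning on cited theorems in place of the Fubini--Morera step. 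For strict convexity, you identify $\Lambda''(t)$ as the variance of $X$ under the exponentially tilted measure and argue it is positive because $X$ is non-degenerate there; the paper instead writes $\Lambda''(y)=\frac{1}{2g^2(y)}\int\int e^{(x_1+x_2)y}\bigl[x_1^2+x_2^2-2x_1x_2\bigr]f_X(x_1)f_X(x_2)\,dx_1dx_2$, which is manifestly positive — this is just the symmetrized form of the same variance computation, so the two are interchangeable. One small point of bookkeeping in your write-up: when you differentiate under the integral to get $M_X'(t)=\mathbb{E}[Xe^{tX}]$ and $M_X''(t)=\mathbb{E}[X^2e^{tX}]$, the dominating function needs the extra polynomial factors absorbed, e.g. via $|x|^k e^{tx}\le k!\,\epsilon^{-k}\bigl(e^{(t+\epsilon)x}+e^{(t-\epsilon)x}\bigr)$ with $t\pm\epsilon$ still inside $(A_\Lambda,B_\Lambda)$; this is routine (and the paper likewise dispatches it with a citation), but it is not literally "the same bound as above," so it deserves the one extra line.
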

\begin{proof}
Let $[a_n, b_n]$ be such that $a_n$ strictly decreases to $\alpha$ and $b_n$ strictly increases to $\beta$. For each $z \in D$ and $x \in (\alpha,\beta)$ we define $F(z,x) = e^{xz} f_X(x)$ and note that $F(z,x)$ is holomorphic in $z$ for each $x$ and continuous on $D \times [a_n, b_n]$. It follows from \cite[Theorem 2.5.4]{Stein} that the function
$$g_n(z) = \int_{a_n}^{b_n} F(z,x) dx$$
is holomorphic on $D$. If $K$ is a compact subset of $D$, and $z \in K$ we note that
$$g(z):= \int_{\alpha}^{\beta} e^{xz} f_X(x) dx$$
is well defined because 
$$\int_{\alpha}^{\beta} \left|e^{xz}\right| f_X(x) dx = \int_{\alpha}^{\beta} e^{x Re(z)} f_X(x)dx = M_X(Re(z)) < \infty.$$
 which is true as $Re(z) \in (A_{\Lambda}, B_{\Lambda})$. 

Note that there is $[c,d] \subset (A_{\Lambda}, B_{\Lambda})$ such that if $z \in K$ then $Re(z) \in [c,d]$. In particular, we see that $e^{xRe(z) } \leq e^{cx} + e^{dx}$ and so by the dominated convergence theorem with dominating function $f_X(x) \cdot [e^{cx} + e^{dx}]$ we get that
$$\lim_{n \rightarrow \infty} g_n(z) = g(z),$$
where the convergence is uniform over $K$. It follows from \cite[Theorem 2.5.2]{Stein} that $g(z)$ is holomorphic in $D$. Clearly, $g(z) = M_X(z)$ when $z \in (A_{\Lambda}, B_{\Lambda})$, which proves the first part of the lemma.

One can use further applications of the dominated convergence theorem to show that the derivatives of $g(z)$ are given by
$$g^{(n)}(z) = \int_{\alpha}^{\beta} \left[ \frac{d^n}{dz^n} e^{xz} \right] f_X(x)dx = \int_{\alpha}^{\beta} x^n e^{xz} f_X(x) dx,$$
and the latter integral is absolutely convergent for $Re(z) \in (A_{\Lambda}, B_{\Lambda})$. For example, see \cite{Mattner}. We next observe that for $x \in (A_{\Lambda}, B_{\Lambda})$, using the continuity and positivity of $f_X$, we know that $g(x) > 0$ and so $\Lambda(x) = \log [g(x)]$ is a smooth function on $(A_{\Lambda}, B_{\Lambda})$. From the Chain rule, we see that
$$\Lambda''(y) = \frac{g''(y) g(y) - [g'(y)]^2}{g^2(y)} = \frac{1}{2g^2(y)} \int_{\alpha}^{\beta} \int_{\alpha}^{\beta}  e^{(x_1 + x_2)y}\left[ x_1^2 + x_2^2 - 2x_1x_2 \right]f_X(x_1) f_X(x_2)      dx_1 dx_2,$$
which is clearly positive. This suffices for the proof.
\end{proof}

If $f_X$ satisfies Assumptions C1 and C2 then in view of Lemma \ref{S2S1L1} we know that $\Lambda'(x)$ is a strictly increasing function on $(A_\Lambda, B_{\Lambda})$. We let $(A^*,B^*)$ denote the image of $(A_\Lambda, B_\Lambda)$ under the map $\Lambda'(\cdot)$. In addition, we write $M_X(u)$ for all $u \in D = \{z \in \mathbb{C}: A_\Lambda< Re(z) < B_\Lambda \}$ to mean the (unique) analytic extension of $M_X(x)$ to $D$ afforded by Lemma \ref{S2S1L1}.\\

{\raggedleft \bf Assumption C3.} We assume that the function $\Lambda(\cdot)$ is lower semi-continuous on $\mathbb{R}$.\\

\begin{lemma}\label{S2S1L2} Suppose that $X$ is a random variable with density $f_X$, which satisfies Assumptions C1-C3. Then $(\alpha, \beta) \subset (A^*, B^*) \subset \mathcal{D}_{\Lambda^*}$ and for all $y \in (A^*,B^*)$ we have $\Lambda^*(y) = \eta y - \Lambda(\eta)$, where $\eta = (\Lambda')^{-1}(y)$.
\end{lemma}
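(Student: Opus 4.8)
The plan is to read both claimed inclusions off the Legendre duality between $\Lambda$ and $\Lambda^*$, using the strict convexity and smoothness of $\Lambda$ on $(A_\Lambda,B_\Lambda)$ provided by Lemma \ref{S2S1L1}. For fixed $y\in\mathbb R$ I will analyze the function $h_y(t):=ty-\Lambda(t)$ (with the convention $h_y\equiv-\infty$ off $\mathcal D_\Lambda$), which is concave on $\mathbb R$ since $\Lambda$ is convex, satisfies $h_y(0)=-\Lambda(0)=0$, and is upper semi-continuous by Assumption C3; by definition $\Lambda^*(y)=\sup_t h_y(t)$.

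\emph{Step 1: the inclusion $(A^*,B^*)\subseteq\mathcal D_{\Lambda^*}$ together with the formula.} Fix $y\in(A^*,B^*)$. By Lemma \ref{S2S1L1} the map $\Lambda'$ is continuous and strictly increasing on $(A_\Lambda,B_\Lambda)$, hence a bijection onto $(A^*,B^*)$, so there is a unique $\eta=(\Lambda')^{-1}(y)\in(A_\Lambda,B_\Lambda)$. On $(A_\Lambda,B_\Lambda)$ the function $h_y$ is smooth with $h_y'(t)=y-\Lambda'(t)$, which is positive for $t<\eta$ and negative for $t>\eta$; thus $\eta$ is a local maximizer of $h_y$ (a neighborhood of $\eta$ lying inside $(A_\Lambda,B_\Lambda)$), and since $h_y$ is concave on $\mathbb R$ every local maximizer is global. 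Therefore $\Lambda^*(y)=h_y(\eta)=\eta y-\Lambda(\eta)<\infty$, which is exactly the asserted identity and shows $y\in\mathcal D_{\Lambda^*}$.

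\emph{Step 2: the inclusion $(\alpha,\beta)\subseteq(A^*,B^*)$.} Fix $y\in(\alpha,\beta)$ and pick $a,b$ with $\alpha<a<y<b<\beta$. By Assumption C1 the density $f_X$ is positive throughout $(\alpha,\beta)$, so $p_1:=\mathbb P(X\le a)>0$ and $p_2:=\mathbb P(X\ge b)>0$. Bounding $M_X(t)\ge e^{tb}p_2$ for $t\ge0$ and $M_X(t)\ge e^{ta}p_1$ for $t\le0$ gives
\[ h_y(t)\le t(y-b)-\log p_2 \text{ for } t\ge 0,\qquad h_y(t)\le t(y-a)-\log p_1 \text{ for } t\le 0, \]
and since $y-b<0<y-a$ we conclude $h_y(t)\to-\infty$ as $|t|\to\infty$. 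As $h_y$ is upper semi-continuous (Assumption C3) with $h_y(0)=0$, it therefore attains its supremum at some finite point $\eta^*$, which must lie in $\mathcal D_\Lambda$ (else $h_y(\eta^*)=-\infty<0$), so $\Lambda^*(y)=h_y(\eta^*)\in[0,\infty)$. If $\eta^*\in(A_\Lambda,B_\Lambda)$ then $h_y$ is differentiable there with $h_y'(\eta^*)=0$, i.e. $y=\Lambda'(\eta^*)\in(A^*,B^*)$, which finishes the proof.

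\emph{The main obstacle.} Everything rests on showing that the maximizer $\eta^*$ cannot escape to a finite endpoint of $\mathcal D_\Lambda$ — equivalently, that $\Lambda'$ sweeps out all of $(\alpha,\beta)$ as $t$ runs over $(A_\Lambda,B_\Lambda)$, a steepness-type property of the cumulant generating function. Suppose $\eta^*=B_\Lambda$; since $\eta^*\in\mathcal D_\Lambda$ this forces $B_\Lambda<\infty$ and $M_X(B_\Lambda)<\infty$. Concavity of $h_y$ with maximum at $B_\Lambda$ forces $h_y$ nondecreasing on $(A_\Lambda,B_\Lambda)$, i.e. $\Lambda'(t)\le y$ there; since $M_X(B_\Lambda)<\infty$, the exponentially tilted measures $e^{tX}M_X(t)^{-1}\,d\mathbb P$ are genuine probability laws for $t\le B_\Lambda$ with (nondecreasing) means $\Lambda'(t)$. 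The input one must draw from Assumptions C1--C3 is that these tilted means approach the essential supremum $\beta$ of $X$ as $t\uparrow B_\Lambda$ (if $\mathbb E[|X|e^{B_\Lambda X}]=\infty$ one even gets $\Lambda'(t)\to\infty$), contradicting $\Lambda'(t)\le y<\beta$; the case $\eta^*=A_\Lambda>-\infty$ is symmetric, while if $B_\Lambda=\infty$ (resp. $A_\Lambda=-\infty$) the Chernoff bound $M_X(t)\ge e^{tb}p_2$ already contradicts $M_X(t)\le e^{ty}$ as $t\to\infty$. I expect this verification — that under our hypotheses exponential tilting can push the mean arbitrarily close to the edge of the support — to be the technical heart of the argument, and it is precisely where Assumption C3 is needed.
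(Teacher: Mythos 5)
Your Step 1 is correct and is essentially the paper's own first step, with the citation of \cite[Lemma 2.2.5]{DZ} replaced by a direct concavity argument; no issue there. The genuine gap is the part of Step 2 that you yourself flag and postpone: excluding a maximizer of $h_y(t)=ty-\Lambda(t)$ sitting at a \emph{finite} endpoint of $(A_\Lambda,B_\Lambda)$. Moreover, the route you sketch for closing it cannot be completed from Assumptions C1--C2 alone, because the fact you want to "draw from the assumptions" is false in that generality: it is not true that the tilted means $\Lambda'(t)$ approach the essential supremum $\beta$ as $t\uparrow B_\Lambda$ whenever $M_X(B_\Lambda)<\infty$. For instance, $f_X(x)=c\,e^{-x}(1+x)^{-3}$ on $(0,\infty)$ satisfies C1 and C2, has $\beta=\infty$ and $B_\Lambda=1$ with $M_X(1)<\infty$, yet $\Lambda'(t)=M_X'(t)/M_X(t)$ stays bounded as $t\uparrow 1$ since $\int_0^\infty x(1+x)^{-3}\,dx<\infty$. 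For such a non-steep law $B^*<\infty$ while $\beta=\infty$, so the conclusion $(\alpha,\beta)\subset(A^*,B^*)$ itself fails; the boundary case is therefore not a removable technicality but exactly the place where an additional hypothesis must enter.

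The paper closes this case differently and more cheaply: Assumption C3 is invoked precisely at a finite endpoint, in the form $\liminf_{x_n\to A_\Lambda}\Lambda(x_n)=\infty$ (and symmetrically at $B_\Lambda<\infty$), so that $h_z(x)=zx-\Lambda(x)\to-\infty$ as $x$ tends to a finite endpoint of $(A_\Lambda,B_\Lambda)$, just as your Chernoff bounds (or the paper's bound using the mass of a small interval below $z$) give $h_z\to-\infty$ along $x\to\pm\infty$ when the endpoint is infinite. Then the smooth function $h_z$ attains its maximum at an interior point $x_z\in(A_\Lambda,B_\Lambda)$, where $0=z-\Lambda'(x_z)$, i.e. $z\in(A^*,B^*)$. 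In other words, no statement about tilted means approaching $\beta$ is ever needed: a potential boundary maximizer is ruled out by blowing up $\Lambda$ itself at finite endpoints, which is the operative content the paper extracts from C3. To repair your proposal, replace your "main obstacle" paragraph by this use of C3; as written, your argument leaves the key step unproved and points toward a claim that is false without that assumption.
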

\begin{proof}
By Lemma \ref{S2S1L1} we know that $\Lambda'(\cdot)$ is a strictly increasing smooth function from $(A_\Lambda, B_\Lambda)$ to $(A^*, B^*)$, which implies that $(\Lambda')^{-1}(\cdot)$ is also a smooth increasing function from $(A^*, B^*) $ to $(A_\Lambda, B_\Lambda)$. The statements $(A^*, B^*) \subset \mathcal{D}_{\Lambda^*}$ and $\Lambda^*(y) = \eta y - \Lambda(\eta)$ for all $y \in (A^*,B^*)$ follow from \cite[Lemma 2.2.5]{DZ}. In the remainder we show that $(\alpha, \beta) \subset (A^*, B^*)$.

Let $z \in (\alpha, \beta)$ and suppose that $\epsilon > 0$ is such that $(z- \epsilon, z+ \epsilon) \subset (\alpha, \beta)$. Suppose first that $A_\Lambda > - \infty$. Then by Assumption C3, we know that $\liminf_{x_n \rightarrow A_\Lambda} \Lambda(x_n) = \infty$. This implies that 
$$\lim_{x_n \rightarrow A_{\Lambda}} z x_n - \Lambda(x_n) = -\infty.$$
Conversely, if $A_{\Lambda} = -\infty$ and $x_n \rightarrow  A_\Lambda$ then 
$$\limsup_{x_n \rightarrow A_{\Lambda}} z x_n - \Lambda(x_n) = \limsup_{x_n \rightarrow A_{\Lambda}} z x_n - \log \left[ \mathbb{E} \left[ e^{x_n X} \right] \right]\leq $$
$$  \limsup_{x_n \rightarrow A_{\Lambda}} z x_n - \log \left[ e^{x_n (z - \epsilon/2)} \cdot \mathbb{P}(X \in \left[z- \epsilon, z- \epsilon/2 \right] ) \right]\leq \frac{x_n \epsilon}{2} - \log( \mathbb{P}(X \in \left[z- \epsilon, z- \epsilon/2 \right] ) ) = -\infty.$$
Similar considerations show that $\lim_{x_n \rightarrow B_{\Lambda}} zx_n - \Lambda(x_n) = -\infty$. 

By Lemma \ref{S2S1L1} $zx - \Lambda(x)$ is smooth in $(A_\Lambda, B_\Lambda)$ and from the above we conclude that its maximum is achieved at a point $x_z \in (A_\Lambda, B_\Lambda)$ with $0 = \frac{d}{dx} [ zx - \Lambda(x)] = z - \Lambda'(x_z)$. This shows that $z \in (A^*, B^*)$.

\end{proof}

{\raggedleft \bf Assumption C4.} We assume that for every $B_\Lambda > t> s > A_\Lambda$ there exist positive constants $K_1(s,t)$ and $p(s,t) > 0$ such that $\left|M_X(z)\right| \leq \frac{K_1(s,t)}{(1 + |Im(z)|)^{p(s,t)}}$, provided $s\leq Re(z) \leq t$.\\

{\bf \raggedleft Assumption C5.} We suppose that there are constants $L, D, d > 0$ such that $f_X(x) \leq L$ for all $x \in \mathbb{R}$ and at least one of the following statements holds
\begin{equation}\label{S2S1E2}
\mbox{1. }f_X(x) \leq De^{-dx^2} \mbox{ for all $x \geq 0$  or 2. } f_X(x) \leq De^{-dx^2} \mbox{ for all $x \leq 0$}.
\end{equation}

{\raggedleft \bf Assumption C6.} We assume that there are functions $\hat{C}: \mathbb{R}_{> 0} \rightarrow \mathbb{R}_{> 0} $ and $\hat{a}: \mathbb{R}_{> 0} \rightarrow \mathbb{R}_{> 0} $ such that the following holds. For all $n \geq 1$, $z \in L_n$ and $\hat{b} > 0$ we have
\begin{equation}\label{S2S1E3}
\mathbb{E} \left[ \exp \left(\hat{a}(\hat{b}) \max_{1 \leq k \leq n} |S_k| \right) \Big{|} S_n = z\right]\leq \hat{C}(\hat{b}) \exp \left( \hat{b} (n + z^2/n)\right).
\end{equation}

In the sequel we denote $u_z = (\Lambda')^{-1}(z)$, $\sigma_z^2 = \Lambda''(u_z)$ -- these are well defined for densities $f_X$ that satisfy Assumptions C1-C3 as follows from Lemmas \ref{S2S1L1}  and \ref{S2S1L2}. Using this notation we can formulate the main theorem we prove for continuous random walk bridges.
\begin{theorem}\label{S2ContKMT}
Suppose that $X$ is a random variable whose density function $f_X$ satisfies Assumptions C1-C6 and fix $p \in (\alpha ,\beta)$. For every $b > 0$, there exist constants $0 < C, a, \alpha' < \infty$ (depending on $b$, $p$ and the function $f_X(\cdot)$) such that for every positive integer $n$, there is a probability space on which are defined a Brownian bridge $B^\sigma$ with variance $\sigma^2 = \sigma^2_p$ and the family of processes $S^{(n,z)} $ for $z \in L_n$ such that
\begin{equation}\label{S2ContKMTE}
\mathbb{E}\left[ e^{a \Delta(n,z)} \right] \leq C e^{\alpha' (\log n)} e^{b|z- p n|^2/n},
\end{equation}
where $\Delta(n,z) = \Delta(n,z,B^{\sigma}, S^{(n,z)})=  \sup_{0 \leq t \leq n} \left| \sqrt{n} B^\sigma_{t/n} + \frac{t}{n}z - S^{(n,z)}_t \right|.$
\end{theorem}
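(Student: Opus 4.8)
The plan is to carry out a dyadic recursive coupling in the spirit of Koml{\'o}s--Major--Tusn{\'a}dy, modified throughout to accommodate the endpoint conditioning. I would start with two reductions. First, an exponential tilt: replacing $f_X$ by $\tilde f_X(x)=e^{u_p x-\Lambda(u_p)}f_X(x)$ with $u_p=(\Lambda')^{-1}(p)$ (well defined by Lemmas \ref{S2S1L1}--\ref{S2S1L2}) does not change the law of $S^{(n,z)}$, since the Radon--Nikodym factor between the two $n$-fold increment laws is a deterministic function of $S_n=z$ and is absorbed by the conditioning; after this reduction $\mathbb{E}[X]=p$ and $\mathrm{Var}(X)=\sigma_p^2=\sigma^2$, and one checks that Assumptions C1--C6 persist. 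Second, a standard padding/near-dyadic device reduces to $n=2^N$. The object to be approximated is then the Brownian bridge $Z_t:=\sqrt n\,B^{\sigma}_{t/n}+(t/n)z$ from $(0,0)$ to $(n,z)$, and $\Delta(n,z)=\sup_{0\le t\le n}|Z_t-S^{(n,z)}_t|$.

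Both $Z$ and $S^{(n,z)}$, restricted to the dyadic skeleton $t_{\ell,j}=jn/2^\ell$, have a recursive Markov description: conditionally on the values at level $\ell$, the level-$(\ell+1)$ midpoints are independent across intervals, those of $Z$ Gaussian and those of $S^{(n,z)}$ distributed as the midpoint of a random walk bridge of half-length $m_\ell=n/2^{\ell+1}$ between the already fixed endpoints. I would build the coupling top-down along this skeleton, at each node $[u,v]$ with midpoint $w$ coupling the two midpoints by the quantile transform $S_w:=F_S^{-1}\!\big(F_Z(Z_w)\big)$, $F_Z,F_S$ being the relevant conditional CDFs. Once the skeleton is exhausted $S^{(n,z)}$ is already determined at all integer times (and linearly interpolated by definition), while $Z$ is completed by independent Brownian bridges over the unit intervals.

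The heart of the matter --- and what I expect to be by far the hardest part, which is precisely why Sections \ref{Section3} and \ref{Section4} are devoted to asymptotic analysis --- is the one-step estimate. One needs an Edgeworth-type local limit theorem for the density $f_n$ of $S_n$ and for the bridge-midpoint density $f_{m,m}(\cdot\mid\zeta)$, uniform over the conditioning point $\zeta\in L_n$ (atypical $\zeta$ included) and with all constants quantified in terms of $f_X$. Fed into the quantile coupling this gives, off an event of exponentially small probability, a bound of the shape $|S_w-Z_w|\le c\big(\max(|S_u-Z_u|,|S_v-Z_v|)+1+D_I^2/m_\ell\big)$ with $D_I=Z_w-\tfrac12(Z_u+Z_v)$, obtained by combining a change-of-endpoints comparison of two equal-length bridges with nearby endpoints and a Tusn{\'a}dy-type bound on the quantile gap between the centered bridge midpoint and a centered Gaussian of matching scale. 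Proving the uniform local limit theorems for a generic jump law, and tracking the dependence of every error term on $f_X$ through all scales, is where essentially all the work lies: there is no closed form as in the Bernoulli case of \cite{LF}; Assumption C4 is used to control the characteristic function through the analytic continuation of $M_X$ from Lemma \ref{S2S1L1}; Assumption C5 supplies the Gaussian-type tail bounds on $f_X$; and Assumption C6, whose necessity is exhibited by the counterexample of Section \ref{Section7.2}, is what keeps the estimates uniform when the bridge passes through atypical locations.

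Finally I would sum over scales, exploiting the tree structure rather than a naive union bound so as to land on $e^{\alpha'\log n}$ instead of $e^{c(\log n)^2}$. For fixed integer $k$ one has $|S^{(n,z)}_k-Z_k|\le\sum_{\ell=1}^{N}\varepsilon_{k,\ell}$, where $\varepsilon_{k,\ell}$ is the coupling error at the unique level-$\ell$ node on the dyadic path to $k$; conditioning level by level, each $\varepsilon_{k,\ell}$ has conditional exponential moment at most $K\exp(\text{const}\cdot D^2/m_\ell)$ with $D$ sub-Gaussian at scale $\sqrt{m_\ell}$, so the $N=\log_2 n$ factors multiply to $\mathbb{E}\big[e^{a|S_k-Z_k|}\big]\le n^{\alpha''}$, and a union bound over the $n$ values of $k$ together with the unit-interval oscillations of $Z$ (sub-Gaussian, $n$ of them) yields $\mathbb{E}\big[e^{a\Delta(n,z)}\big]\le Cn^{\alpha'}$. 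The factor $e^{b|z-pn|^2/n}$ appears because the one-step estimate near an atypical point, and correspondingly the bound in Assumption C6, carries a Gaussian prefactor in the endpoint deviation; propagated through the top level (where the deviation is the global $|z-pn|$) this produces an overall $e^{c|z-pn|^2/n}$, and one finally picks $a$ small depending on $b$, $p$ and $f_X$ so that $c\le b$ and all the exponential moments above converge with room to spare. The discrete statement, Theorem \ref{S2DiscKMT}, follows the same blueprint with lattice local limit theorems in place of their continuous counterparts.
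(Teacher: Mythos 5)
Your architecture is essentially the paper's: a dyadic midpoint construction in which the bridge midpoint is quantile-coupled to a Gaussian midpoint, fed by uniform local-limit asymptotics for $f_N$ and for the conditional midpoint density $f_{n,m}(\cdot|Nz)$ (Propositions \ref{S3S1P1}--\ref{S3S1P3}), a Tusn\'ady-type quantile inequality (Corollary \ref{C2} / Lemma \ref{S6S1L1}), a per-scale multiplicative constant that yields the $e^{\alpha'\log n}$ factor, and Assumption C6 reserved for endpoints with $|z-pn|$ of order $n$. The paper runs this bottom-up as an induction on the scale $s$ with the recursion $\Delta(n,z)\le |Z^z-W^z|+\max(\Delta_1,\Delta_2)$ rather than summing errors along dyadic paths and unioning over $k$, but these are interchangeable bookkeeping devices.

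Two of your preliminary reductions, however, do not hold as stated. First, the exponential tilt: while tilting by $e^{u_p x-\Lambda(u_p)}$ indeed leaves the law of $S^{(n,z)}$ unchanged (and C6 is tilt-invariant), your claim that ``Assumptions C1--C6 persist'' fails for C5 in general. C5 demands $f_X\le L$ globally together with a Gaussian bound on \emph{one} half-line; if, say, $u_p>0$ but the Gaussian decay in C5 holds only for $x\le 0$, then $e^{u_p x}f_X(x)$ need not be bounded for $x\ge 0$ (C2 and C4 give only integrated control, not pointwise decay beating $e^{-u_p x}$), so the tilted density can violate C5. Second, there is no ``standard padding'' to $n=2^N$ for bridges: the restriction of a length-$2^N$ bridge to $[0,n]$ is a mixture over the value at time $n$, not a length-$n$ bridge, so this reduction would itself need a coupling argument. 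Both steps are avoidable rather than fatal: drop the tilt and work, as the paper does, with the local quantities $u_z,\sigma_z$ (which is precisely why Lemma \ref{S6S1L1} must absorb the mismatch between $\sigma_{z/N}$ and $\sigma_p$ into the $e^{c(z-pN)^2/N}$ factor), and run the dyadic splitting with halves satisfying $|m-n|\le 1$ instead of forcing $n$ to be a power of two. With those repairs your outline matches the paper's proof of Theorem \ref{ContKMTA}, and the remaining (and genuinely hard) content is exactly where you located it: the uniform midpoint asymptotics and the constant-tracking across scales.
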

In Section \ref{Section2.3} we provide some explanation of the significance of Assumptions C1-C6.

%
\subsection{Discrete random walk bridges}\label{Section2.2}

We start by fixing some notation. Suppose that $X$ is a random variable such that $\mathbb{P}(X \in \mathbb{Z}) = 1$ and let $p_X(n) = \mathbb{P}(X = n)$ for $n \in \mathbb{Z}$ denote its probability mass function. We let $X_i$ be an i.i.d. sequence of random variables with distribution function $p_X$. For $n \in \mathbb{N}$ we define $S_n = X_1 + \cdots + X_n$ and also let $p_n(\cdot)$ be the probability mass function of $S_n$.

Similarly to Section \ref{Section2.1} we define
\begin{equation}\label{S2S2E1}
M_X(t) := \mathbb{E} \left[ e^{t X} \right], \hspace{3mm}  \phi_X(t) := \mathbb{E} \left[ e^{itX} \right], \hspace{3mm} \Lambda(t) :=\log M_X(t) \hspace{3mm} \Lambda^*(t) := \sup_{x \in \mathbb{R}} \{ t x - \Lambda(x) \}.
\end{equation}
Let $\mathcal{D}_\Lambda := \{ x: \Lambda(x) < \infty\} $ and $\mathcal{D}_{\Lambda^*} := \{x: \Lambda^*(x) < \infty\}$. 

We make the following assumptions on the function $p_X(x)$. \\

{\raggedleft \bf Assumption D1.} We assume that $p_X(x)$ has a single interval of support, i.e. $I = \{ x \in \mathbb{Z} : p_X(x) > 0 \} = (\alpha- 1,\beta + 1) \cap \mathbb{Z}$ for some $\alpha \in \mathbb{Z} \cup \{-\infty\}$ and $\beta \in \left((\alpha, \infty] \cap \mathbb{Z} \right)\cup \{ \infty\}$.\\

{\raggedleft \bf Assumption D2.} We assume that there is a $\lambda > 0$ such that $\mathbb{E} \left[e^{\lambda |X|} \right] < \infty.$\\

For each $n \geq 1$ we set $L_n =  (n \alpha - 1, n \beta + 1) \cap \mathbb{Z}$, where $\alpha, \beta$ are as in Assumption D1. For $z \in L_n$ we let $S^{(n,z)} = \{S_m^{(n,z)} \}_{m = 0}^n$ denote the process with the law of $\{S_m\}_{m = 0}^n$ conditioned so that $S_n = z$. We call this process a {\em discrete random walk bridge} between the points $(0,0)$ and $(n,z)$. Notice that this law is well-defined by Assumption D1.  As a natural extension of this definition we define $S_t^{(n,z)}$ for non-integer $t$ by linear interpolation.  In addition, we will denote the distribution function of $S_m^{(n,z)}$ by $p_{m, n -m}(\cdot | z)$.

If $p_X$ satisfies Assumption D2 then $\mathcal{D}_\Lambda$  contains a neighborhood of $0$. In addition, it is easy to see that $\mathcal{D}_{\Lambda}$ is a connected set and hence an interval. We denote $(A_{\Lambda}, B_{\Lambda})$ the interior of $\mathcal{D}_\Lambda$ where $A_{\Lambda} \in [-\infty, - \lambda]$ and $B_{\Lambda} \in [\lambda, \infty]$. We isolate some properties for the functions in (\ref{S2S2E1}) under the above assumptions in the following lemma.

\begin{lemma}\label{S2S2L1}
Suppose that $X$ is a random variable whose distribution function $p_X$ satisfies Assumptions D1 and D2. Then $M_X(u)$ has an analytic continuation to the vertical strip $D:= \{z:  A_{\Lambda} < Re(z) < B_{\Lambda} \}$. Moreover, $\Lambda(\cdot)$ is a smooth function on $(A_{\Lambda}, B_{\Lambda})$ and $\Lambda''(x) > 0$ for all $x \in (A_{\Lambda}, B_{\Lambda})$.  
\end{lemma}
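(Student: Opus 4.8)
The plan is to follow the same route as the proof of Lemma~\ref{S2S1L1}, replacing the integral $\int_\alpha^\beta e^{xz} f_X(x)\,dx$ there by the series $g(z) := \sum_{x \in I} e^{xz} p_X(x)$, where $I = \{x \in \mathbb{Z} : p_X(x) > 0\}$ is the (possibly infinite) interval of support furnished by Assumption~D1. First I would fix a compact set $K \subset D$ and pick $[c,d] \subset (A_\Lambda, B_\Lambda)$ with $\mathrm{Re}(z) \in [c,d]$ for all $z \in K$. For each $x \in I$ the map $z \mapsto e^{xz} p_X(x)$ is entire, and on $K$ we have $|e^{xz} p_X(x)| = e^{x\,\mathrm{Re}(z)} p_X(x) \le (e^{cx} + e^{dx}) p_X(x)$, whose sum over $x \in I$ equals $M_X(c) + M_X(d) < \infty$ because $[c,d] \subset (A_\Lambda, B_\Lambda) \subset \mathcal{D}_\Lambda$. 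Hence by the Weierstrass $M$-test the partial sums $\sum_{x \in I,\, |x| \le N} e^{xz} p_X(x)$ converge uniformly on $K$, and a uniform limit of holomorphic functions is holomorphic (by Morera's theorem, or \cite[Theorem 2.5.2]{Stein}), so $g$ is holomorphic on $D$. Since $g(x) = M_X(x)$ for real $x \in (A_\Lambda, B_\Lambda)$, this is the asserted analytic continuation of $M_X$.

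Next, the same uniform-convergence/dominated-convergence argument applied to the termwise derivatives gives $g^{(k)}(z) = \sum_{x \in I} x^k e^{xz} p_X(x)$, with the series absolutely convergent whenever $\mathrm{Re}(z) \in (A_\Lambda, B_\Lambda)$; in particular all moments of the exponentially tilted laws are finite. Because $p_X$ is positive on $I$ and $I \ne \emptyset$, one has $g(x) > 0$ for every real $x \in (A_\Lambda, B_\Lambda)$, so $\Lambda(x) = \log g(x)$ is smooth on $(A_\Lambda, B_\Lambda)$. Differentiating twice and clearing denominators yields, exactly as in the continuous case,
$$\Lambda''(y) = \frac{g''(y) g(y) - g'(y)^2}{g(y)^2} = \frac{1}{2 g(y)^2} \sum_{x_1 \in I} \sum_{x_2 \in I} e^{(x_1 + x_2) y}\, (x_1 - x_2)^2\, p_X(x_1)\, p_X(x_2).$$
This expression is manifestly nonnegative, and it is strictly positive because Assumption~D1 forces $\beta > \alpha$, so $I$ contains at least two distinct integers, each carrying positive mass; equivalently, $\Lambda''(y)$ is the variance of $X$ under the nondegenerate tilted law $x \mapsto e^{xy} p_X(x)/M_X(y)$, which is strictly positive since that law is not a point mass.

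There is no essential obstacle here: the only step requiring care is the passage to the holomorphic limit and the interchange of summation with differentiation, both of which are controlled uniformly on compacts by the elementary bound $e^{x\,\mathrm{Re}(z)} \le e^{cx} + e^{dx}$ together with the finiteness of $M_X$ on $[c,d] \subset (A_\Lambda, B_\Lambda)$. In fact the discrete case is marginally simpler than the continuous one, since the dominating summable bound is available with no integrability hypothesis beyond Assumption~D2.
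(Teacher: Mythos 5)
Your proposal is correct and follows essentially the same route the paper intends: the paper proves Lemma \ref{S2S2L1} simply by declaring it analogous to Lemma \ref{S2S1L1}, and your argument is exactly that adaptation, with the truncated integrals replaced by partial sums, the dominated-convergence bound $e^{x\,\mathrm{Re}(z)} \le e^{cx} + e^{dx}$ reused as a Weierstrass-$M$ bound, and the same variance identity for $\Lambda''$. Your explicit observation that Assumption D1 forces at least two support points (so the tilted law is nondegenerate and $\Lambda'' > 0$ strictly) is a welcome clarification of a step the paper leaves as "clearly positive."
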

\begin{proof}
The proof is analogous to that of Lemma \ref{S2S1L1}.
\end{proof}

If $p_X$ satisfies Assumptions D1 and D2 then in view of Lemma \ref{S2S2L1} we know that $\Lambda'(x)$ is a strictly increasing function on $(A_\Lambda, B_{\Lambda})$. We let $(A^*,B^*)$ denote the image of $(A_\Lambda, B_\Lambda)$ under the map $\Lambda'(\cdot)$. In addition, we write $M_X(u)$ for all $u \in D = \{z \in \mathbb{C}: A_\Lambda< Re(z) < B_\Lambda \}$ to mean the (unique) analytic extension of $M_X(x)$ to $D$ afforded by Lemma \ref{S2S2L1}.\\

{\raggedleft \bf Assumption D3.} We assume that the function $\Lambda(\cdot)$ is lower semi-continuous on $\mathbb{R}$.\\

\begin{lemma}\label{S2S2L2} Suppose that $X$ is a random variable whose distribution function $p_X$ satisfies satisfies Assumptions D1-D3. Then $(\alpha,\beta) \subset (A^*, B^*) \subset \mathcal{D}_{\Lambda^*}$ and for all $y \in (A^*,B^*)$ we have $\Lambda^*(y) = \eta y - \Lambda(\eta)$, where $\eta = (\Lambda')^{-1}(y)$. Furthermore, $\Lambda^*(x)$ is lower semi-continuous. If $\alpha > -\infty$ then $\alpha \in \mathcal{D}_{\Lambda^*}$ and $\Lambda^*(\alpha) = - \log p_X( \alpha)$. Similarly, if $\beta < \infty$ then $\beta \in \mathcal{D}_{\Lambda^*}$ and $\Lambda^*(\beta) = - \log p_X( \beta)$. 
\end{lemma}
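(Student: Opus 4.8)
The statement is the discrete counterpart of Lemma~\ref{S2S1L2}, enriched by the lower semi-continuity of $\Lambda^*$ and the two endpoint identities at $\alpha$ and $\beta$, and the plan is to transcribe the first part and handle the new items by hand. For the inclusion $(A^*,B^*)\subset\mathcal{D}_{\Lambda^*}$ together with $\Lambda^*(y)=\eta y-\Lambda(\eta)$, $\eta=(\Lambda')^{-1}(y)$, on $(A^*,B^*)$: by Lemma~\ref{S2S2L1} the map $\Lambda'$ is a smooth strictly increasing bijection of $(A_\Lambda,B_\Lambda)$ onto $(A^*,B^*)$, so this is exactly \cite[Lemma~2.2.5]{DZ}, just as in the continuous case. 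Lower semi-continuity of $\Lambda^*$ needs no hypotheses at all: $\Lambda^*(y)=\sup_{x\in\mathcal{D}_\Lambda}\{yx-\Lambda(x)\}$ is a pointwise supremum of affine functions of $y$, hence convex and lower semi-continuous.

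For $(\alpha,\beta)\subset(A^*,B^*)$ I would repeat the proof of Lemma~\ref{S2S1L2}, replacing integrals by sums over the lattice. Fix $z\in(\alpha,\beta)$ and, using Assumption~D1, pick integers $k_-<z<k_+$ in the support of $p_X$ (take $k_-=\alpha$ if $\alpha>-\infty$, otherwise any sufficiently negative integer, and symmetrically for $k_+$). From $M_X(x)\ge p_X(k_\pm)e^{k_\pm x}$ one obtains $zx-\Lambda(x)\le(z-k_+)x-\log p_X(k_+)\to-\infty$ as $x\to B_\Lambda=+\infty$, and likewise as $x\to A_\Lambda=-\infty$; when an endpoint of $\mathcal{D}_\Lambda$ is finite one invokes Assumption~D3, exactly as in Lemma~\ref{S2S1L2}, to again force $zx-\Lambda(x)\to-\infty$ there. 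Thus the smooth function $x\mapsto zx-\Lambda(x)$ on $(A_\Lambda,B_\Lambda)$ tends to $-\infty$ at both ends of $\mathcal{D}_\Lambda$ and so attains its maximum at an interior critical point $x_z$, where $z=\Lambda'(x_z)$; this puts $z$ in $(A^*,B^*)$ and re-derives $\Lambda^*(z)=zx_z-\Lambda(x_z)<\infty$. Controlling the behaviour of $\Lambda$ near the boundary of $\mathcal{D}_\Lambda$ is the one delicate point here, and it is exactly where lower semi-continuity of $\Lambda$ enters.

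It remains to compute $\Lambda^*$ at the finite endpoints. Suppose $\alpha>-\infty$; by D1, $\alpha$ is in the support, so $p_X(\alpha)>0$. Since $X\ge\alpha$ a.s.\ we have $M_X(x)=\sum_{k\ge\alpha}p_X(k)e^{kx}\ge p_X(\alpha)e^{\alpha x}$ for every $x\in\mathbb{R}$, hence $\alpha x-\Lambda(x)\le-\log p_X(\alpha)$ for all $x$, so $\Lambda^*(\alpha)\le-\log p_X(\alpha)<\infty$ and $\alpha\in\mathcal{D}_{\Lambda^*}$. For the matching lower bound, write
\[
M_X(x)e^{-\alpha x}=p_X(\alpha)+\sum_{k>\alpha}p_X(k)e^{(k-\alpha)x};
\]
for $x\le0$ each summand is bounded by $p_X(k)$ and tends to $0$ as $x\to-\infty$ since $k-\alpha\ge1$, so dominated convergence yields $M_X(x)e^{-\alpha x}\to p_X(\alpha)$, i.e.\ $\alpha x-\Lambda(x)\to-\log p_X(\alpha)$, whence $\Lambda^*(\alpha)\ge-\log p_X(\alpha)$ and equality holds. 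The claim for $\beta<\infty$ is obtained in the same way using $X\le\beta$ a.s.\ and letting $x\to+\infty$.
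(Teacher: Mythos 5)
Your proof is correct and follows essentially the same route as the paper's: \cite[Lemma 2.2.5]{DZ} for the conjugate formula on $(A^*,B^*)$, the same case analysis at the ends of $(A_\Lambda,B_\Lambda)$ (using support points $k_\pm$ when $A_\Lambda=-\infty$ or $B_\Lambda=+\infty$, and Assumption D3 at a finite endpoint) to get $(\alpha,\beta)\subset(A^*,B^*)$, and the same two-sided bounds on $M_X(x)e^{-\alpha x}$ as $x\to-\infty$ (resp.\ at $\beta$) for the endpoint identities. The only cosmetic difference is that you prove lower semi-continuity of $\Lambda^*$ directly as a supremum of affine functions, whereas the paper reads it off from the same \cite{DZ} lemma.
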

\begin{proof}
By Lemma \ref{S2S2L1} we know that $\Lambda'(\cdot)$ is a strictly increasing smooth function from $(A_\Lambda, B_\Lambda)$ to $(A^*, B^*)$, which implies that $(\Lambda')^{-1}(\cdot)$ is also a smooth increasing function from $(A^*, B^*) $ to $(A_\Lambda, B_\Lambda)$. The statements $(A^*, B^*) \subset \mathcal{D}_{\Lambda^*}$, $\Lambda^*(y) = \eta y - \Lambda(\eta)$ for all $y \in (A^*,B^*)$ and the lower semi-continuity of $\Lambda^*$ follow from \cite[Lemma 2.2.5]{DZ}. We next show that $(\alpha, \beta) \subset (A^*, B^*)$.

Let $z \in (\alpha, \beta)$ and fix $k,m \in \mathbb{Z}$ such that $ \alpha \leq k < z$ and $z > m \geq \beta$. Suppose first that $A_\Lambda > - \infty$. Then by Assumption D3, we know that $\liminf_{x_n \rightarrow A_\Lambda} \Lambda(x_n) = \infty$. This implies that 
$$\lim_{x_n \rightarrow A_{\Lambda}} z x_n - \Lambda(x_n) = -\infty.$$
Conversely, if $A_{\Lambda} = -\infty$ and $x_n \rightarrow  A_\Lambda$ then 
$$\limsup_{x_n \rightarrow A_{\Lambda}} z x_n - \Lambda(x_n) = \limsup_{x_n \rightarrow A_{\Lambda}} z x_n - \log \left[ \mathbb{E} \left[ e^{x_n X} \right] \right]\leq $$
$$  \limsup_{x_n \rightarrow A_{\Lambda}} z x_n - \log \left[ e^{x_n k} \cdot \mathbb{P}(X = k ) \right]\leq x_n(z - k) - \log( p_X(k)) = -\infty.$$
Similar considerations show that $\lim_{x_n \rightarrow B_{\Lambda}} zx_n - \Lambda(x_n) = -\infty$. 

By Lemma \ref{S2S2L1} $zx - \Lambda(x)$ is smooth in $(A_\Lambda, B_\Lambda)$ and from the above we conclude that its maximum is achieved at a point $x_z \in (A_\Lambda, B_\Lambda)$ with $0 = \frac{d}{dx} [ zx - \Lambda(x)] = z - \Lambda'(x_z)$. This shows that $z \in (A^*, B^*)$.\\

Next suppose that $\alpha > -\infty$. Then we have $A_\Lambda = -\infty$. We have for any $x \in \mathbb{R}$ that
$$\alpha x - \Lambda(x) \leq \alpha x - \log \left[ \mathbb{E} \left[ e^{x X} \right]\right] \leq \alpha x - \log \left[e^{\alpha x} p_X( \alpha)  \right] \leq - \log p_X(\alpha).$$
Furthermore, we have
$$\liminf_{x_n \rightarrow -\infty} \alpha x_n - \Lambda(x_n)  \geq  \liminf_{x_n \rightarrow -\infty}  \alpha x_n - \log \left[ e^{\alpha x_n} p_X(\alpha) + e^{(\alpha +1)x_n }\cdot (1 - \mathbb{P}(X = \alpha)) \right] = $$
$$ \liminf_{x_n \rightarrow -\infty}  - \log \left[ p_X(\alpha) + e^{x_n }\cdot (1 - p_X(\alpha)) \right]  = - \log p_X(\alpha).$$
Thus $\alpha \in \mathcal{D}_{\Lambda^*}$ and $\Lambda^*(\alpha) = - \log p_X(\alpha)$. Analogous arguments prove the statement for $\beta < \infty$. 
\end{proof}

{\bf \raggedleft Assumption D4.} We suppose that there are constants $D, d > 0$ such that at least one of the following statements holds
\begin{equation}\label{S2S2E2}
\mbox{1. } p_X(x) \leq De^{-dx^2} \mbox{ for all $x \geq 0$  or 2. } p_X(x) \leq De^{-dx^2} \mbox{ for all $x \leq 0$}.
\end{equation}

{\raggedleft \bf Assumption D5.} We assume that there are functions $\hat{C}: \mathbb{R}_{> 0} \rightarrow \mathbb{R}_{> 0} $ and $\hat{a}: \mathbb{R}_{> 0} \rightarrow \mathbb{R}_{> 0} $ such that the following holds. For all $n \geq 1$, $z \in L_n$ and $\hat{b} > 0$ we have
\begin{equation}\label{S2S2E3}
\mathbb{E} \left[ \exp \left(\hat{a}(\hat{b}) \max_{1 \leq k \leq n} |S_k| \right) \Big{|} S_n = z\right]\leq \hat{C}(\hat{b}) \exp \left( \hat{b} (n + z^2/n)\right).
\end{equation}

In the sequel we denote $u_z = (\Lambda')^{-1}(z)$, $\sigma_z^2 = \Lambda''(u_z)$ -- these are well defined for distribution functions $p_X$ that satisfy Assumptions D1-D3 as follows from Lemmas \ref{S2S2L1}  and \ref{S2S2L2}. Using this notation we can formulate the main theorem we prove for discrete random walk bridges.
\begin{theorem}\label{S2DiscKMT}
Suppose that $X$ is a random variable whose probability distribution function $p_X$ satisfies Assumptions D1-D5 and fix $p \in (\alpha ,\beta)$. For every $b > 0$, there exist constants $0 < C, a, \alpha' < \infty$ (depending on $b$, $p$ and the function $p_X(\cdot)$) such that for every positive integer $n$, there is a probability space on which are defined a Brownian bridge $B^\sigma$ with variance $\sigma^2 = \sigma^2_p$ and the family of processes $S^{(n,z)} $ for $z \in L_n$ such that
\begin{equation}\label{S2DiscKMTE}
\mathbb{E}\left[ e^{a \Delta(n,z)} \right] \leq C e^{\alpha' (\log n)} e^{b|z- p n|^2/n},
\end{equation}
where $\Delta(n,z) = \Delta(n,z,B^{\sigma}, S^{(n,z)})=  \sup_{0 \leq t \leq n} \left| \sqrt{n} B^\sigma_{t/n} + \frac{t}{n}z - S^{(n,z)}_t \right|.$
\end{theorem}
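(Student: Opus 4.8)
The plan is to follow the dyadic KMT scheme, in close parallel with the continuous case, so I will only outline the main steps; the details occupy Sections \ref{Section3}--\ref{Section6}. Two preliminary reductions come first. Since increasing $b$ only weakens \eqref{S2DiscKMTE}, it suffices to prove the bound for all sufficiently small $b>0$. Next, I would exploit the invariance of the law of $S^{(n,z)}$ under replacing $p_X$ by the exponentially tilted mass function $x\mapsto e^{ux-\Lambda(u)}p_X(x)$: choosing $u=u_p=(\Lambda')^{-1}(p)$, which is well defined by Assumptions D1--D3 and Lemmas \ref{S2S2L1} and \ref{S2S2L2}, one may assume the jump has mean $p$ and variance $\sigma_p^2=\Lambda''(u_p)$, so that $pn$ is the typical value of $S_n$ and $z/2$ the typical value of the midpoint $S^{(n,z)}_{n/2}$ (recall $\mathbb{E}[S^{(n,z)}_{n/2}]=z/2$ for any i.i.d.\ walk). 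Writing $n=2^q$, with a routine modification for general $n$ (splitting $[0,n]$ into nearly equal dyadic halves), it is then enough to control the maximal discrepancy at the $2^q$ dyadic times, since the leftover supremum over each unit subinterval is the oscillation of a Brownian bridge at scale $1/n$, contributing only $O(\sqrt{\log n})$.

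The technical core is a one-step coupling lemma, the bridge analogue of Tusn\'ady's inequality. For a bridge of even length $L$ with endpoint displacement $w$, the midpoint $S^{(L,w)}_{L/2}$ has mass function $p_{L/2,L/2}(\cdot\,|\,w)$, and I would couple it, by a quantile coupling, to the Gaussian $V\sim N(w/2,\sigma_p^2 L/4)$ --- the midpoint of the matching variance-$\sigma_p^2$ Brownian bridge --- so that the discrepancy $\xi=|S^{(L,w)}_{L/2}-V|$ obeys a bound of the schematic form $|\xi|\lesssim 1+|S^{(L,w)}_{L/2}-w/2|^2/L+|w-pL|\cdot|S^{(L,w)}_{L/2}-w/2|/L+\cdots$ with an exponential moment $\mathbb{E}[e^{a\xi}]\le C\exp(b'|w-pL|^2/L)$. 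Establishing this requires a two-term Edgeworth-type local expansion of $p_{L/2,L/2}(\cdot\,|\,w)$ that is uniform in the endpoint $w$, supplied by the asymptotic analysis of Sections \ref{Section3} and \ref{Section4}; in the regime where $w$ is atypical --- so that the conditioned midpoint is forced far from $w/2$ and the Gaussian comparison degrades, a subtlety absent for unconditioned walks --- one falls back on the a priori tail control of $\max_{1\le k\le n}|S_k|$ under the bridge measure given by Assumption D5, with Assumption D4 playing the same role as the one-tail hypothesis in \cite{KMT1,KMT2}.

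Given the one-step lemma, I would build the coupled processes by the dyadic recursion of \cite{KMT1,KMT2,LF}. The spatial Markov property of the bridge says that, conditionally on $S^{(n,z)}_{n/2}=y$, the two halves are independent sub-bridges with laws $S^{(n/2,y)}$ and (time-reversed) $S^{(n/2,z-y)}$; the Brownian bridge has the matching decomposition, its restriction to a half-interval being, conditionally on the midpoint, again a variance-$\sigma_p^2$ Brownian bridge, while two Brownian bridges driven by the same Brownian motion but aimed at different endpoints differ by an affine function and hence by at most the endpoint gap. This yields $\Delta(n,z)\le \xi+\max(\Delta_L,\Delta_R)$, where $\xi=|S^{(n,z)}_{n/2}-V|$ comes from the one-step lemma and $\Delta_L,\Delta_R$ are the discrepancies of the two half-bridges, which by construction satisfy the $(q-1)$-level estimate conditionally on $S^{(n,z)}_{n/2}$. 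I would then run an induction on $q$ proving $\mathbb{E}[e^{a\Delta(n,z)}]\le Ce^{\alpha' q}e^{b|z-pn|^2/n}$: bounding $e^{a\max(\Delta_L,\Delta_R)}\le e^{a\Delta_L}+e^{a\Delta_R}$, inserting the inductive hypothesis, and integrating over the midpoint, the quadratic endpoint-displacement exponents combine through the identity $\tfrac{|y-pn/2|^2}{n/2}+\tfrac{|z-y-pn/2|^2}{n/2}=\tfrac{4|y-z/2|^2}{n}+\tfrac{|z-pn|^2}{n}$; after integrating the (Gaussian-like, variance $\sim\sigma_p^2n/4$) fluctuation of the midpoint around $z/2$ against $e^{a\xi}$ --- which is finite exactly because $b$ was chosen small relative to $\sigma_p^{-2}$, and where $a$ is shrunk accordingly --- the per-level cost is a finite constant $e^{\alpha'}$ uniform in $n$ and $z$. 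Hence the $q$ levels produce the polynomial factor $e^{\alpha' q}=n^{\alpha'/\log 2}$ of \eqref{S2DiscKMTE} and the residual $|z-pn|^2/n$ is the last factor; the base case of bounded $n$ follows from Assumption D5 directly. Finally, to place the whole family $\{S^{(n,z)}\}_{z\in L_n}$ on one probability space it suffices to drive the recursion from a single Brownian bridge $B^\sigma$ together with an independent collection of auxiliary uniform randomness indexed by $z$.

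The main obstacle is the one-step coupling lemma. In \cite{LF} the Bernoulli structure makes the midpoint distribution an explicit hypergeometric object and the Gaussian comparison classical; here one must instead produce sharp, uniform local asymptotics for the conditioned midpoint of a bridge with an \emph{arbitrary} jump law and control them uniformly in the endpoint, including atypical endpoints where the naive Gaussian comparison fails and one must invoke the nonperturbative Assumption D5. Carrying this out while tracking the dependence of all constants on observables of $p_X$ --- so that $C,a,\alpha'$ in \eqref{S2DiscKMTE} are genuinely uniform, as discussed after Theorem \ref{S2ContKMT} --- is the bulk of the work.
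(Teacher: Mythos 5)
Your proposal follows essentially the same route as the paper: local limit asymptotics and Gaussian tail bounds for the conditioned midpoint (Sections \ref{Section3}--\ref{Section4}), a Tusn\'ady-type quantile coupling of the midpoint to $N(z/2,\sigma_p^2 n/4)$ with error $O(1+|W-z/2|^2/n)$ and the exponential-moment bound of Lemma \ref{S6S2L1}, the dyadic recursion with $\Delta(n,z)\le|Z-W|+\max(\Delta_L,\Delta_R)$ and the Brownian bridge midpoint decomposition, the per-level constant cost giving $e^{\alpha'\log n}$, and Assumption D5 to dispose of atypical endpoints and the base case. The only cosmetic differences (the preliminary exponential tilting to slope $p$, and stating the exact quadratic identity where the paper instead splits over the events $|W-z/2|\gtrless|z-pn|/6$ using Lemma \ref{S6S2L2}) do not change the argument.
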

In Section \ref{Section2.3} we provide some explanation of the significance of Assumptions D1-D5.

%
\subsection{Significance of assumptions}\label{Section2.3}
Let us explain the role of the different Assumptions C1-C6 and D1-D5 that we made in the previous sections.  Assumption C1 (resp. D1) ensures that the law of the random walk bridge $S^{(n,z)}$ is well defined. Without the assumption that the support of $f_X(\cdot)$ (resp. $p_X(\cdot)$) is a single interval one runs into the possibility of conditioning on events of zero probability (in the density sense for the continuous bridges). It is possible to relax this condition, by requiring that sufficiently many convolutions of $f_X(\cdot)$ (resp. $p_X$) with itself satisfy this assumption, but we will assume that $f_X(\cdot)$ (resp. $p_X$) satisfies it instead, as this somewhat simplifies our discussion.

Assumptions C2 and C4 (resp. D2) are essentially the same as those used in KMT's original work \cite{KMT1, KMT2}. Since our results are analogues of \cite[Theorem 1]{KMT1} it is natural to have these assumptions.

In the process of proving Theorem \ref{S2ContKMT} (resp. Theorem \ref{S2DiscKMT}) we will require detailed estimates on the conditional distributions $f_{m,n}(\cdot| z)$ (resp. $p_{m,n}(\cdot |z)$) for $m,n \geq 1$, which in turn would require estimates on $f_{n + m}(z)$ (resp. $p_{n+m}(z)$). Consequently, we will require large deviation estimates for the latter densities, which involve the rate function $\Lambda$. For this reason, it will be convenient for us to assume that $\Lambda$ is lower semi-continuous, which is Assumption C3 (resp. D3). 

Assumptions C5 and C6 (resp. D4 and D5) are more technical and more directly tied to the particular approach we take to proving Theorem \ref{S2ContKMT} (resp. Theorem \ref{S2DiscKMT}). It is possible that one can relax (or entirely remove) some of these assumptions, but one would need to implement different ideas than the ones we use. Our argument goes through a comparison of the distribution $f_{n,n}(\cdot|z)$ (resp. $p_{n,n}(\cdot |z)$) with a suitable Gaussian density, for which it is useful to know that $f_{n,n}(\cdot| z)$ (resp. $p_{n,n}(\cdot |z)$) has Gaussian tails -- this is the essence of Assumption C5 (resp. D5). Our proof of Theorems \ref{S2ContKMT} and \ref{S2DiscKMT} relies on an inductive argument on $n$. When we go from $n/2$ to $n$, Assumptions C1-C5 (resp. D1-D4) are enough to complete the induction step, provided $z$ is close to the reference slope $pn$, but for points that are macroscopically away from this point, we require the estimates in Assumption C6 (resp. D5). Later in Section \ref{Section7} we provide several easy to check conditions that imply Assumption C6 (resp. D5).

We want to emphasize that it is not enough to assume Assumptions C1-C5 (resp D1-D4), and obtain Theorem \ref{S2ContKMT} (resp. Theorem \ref{S2DiscKMT}) as we demonstrate in Section \ref{Section7.2}, by providing a counterexample. The counterexample is for the discrete setting of our problem but can be naturally adapted to the continuous one. This indicates that one should make additional assumptions on $f_X(\cdot)$ (resp. $p_X(\cdot)$) and our choice of Assumption C6 (resp. D5) is made because it is somewhat natural and satisfied by the distributions in the particular applications that we have in mind.

We end this section with the following remark.
\begin{remark}\label{S2Remark}
In the process of establishing the results necessary for the proofs of Theorems \ref{S2ContKMT} and \ref{S2DiscKMT} we will obtain numerous constants that depend on the jump distribution $f_X$ in the continuous and $p_X$ in the discrete case. Some of the applications we have in mind are to situations when the jump distribution depends on a parameter that is allowed to vary in some (possibly infinite) interval. Consequently, we are interested in quantifying the dependence of our coupling constants on the functions $f_X$ and $p_X$, through various observables of these distributions. In words, we are interested in showing that the coupling constants $a, C$ and $\alpha'$ in Theorems \ref{S2ContKMT} and \ref{S2DiscKMT} can be taken uniformly even if $f_X$ or $p_X$ depend on some parameter so long as one has uniform control of several observables for $f_X$ or $p_X$ that will be made explicit in later sections. These more quantified versions of  Theorems \ref{S2ContKMT} and \ref{S2DiscKMT} can be found in Section \ref{Section6} as  Theorems \ref{ContKMTA} and \ref{KMTA} respectively. We provide an example of the situation described in this remark in Section \ref{Section8.3}.
\end{remark}

\section{Midpoint distribution: Continuous case}\label{Section3}

We continue with the same notation as in Section \ref{Section2.1}. To ease the notation a bit we will write $M, \phi$ and $\Lambda$ instead of $M_X, \phi_X$ and $\Lambda_X$. Let $f_{n,m}(x|y)$ be the density of $S_m$ conditioned on $S_{n+m} = y$. Our goal in this section is to obtain several asymptotic statements about the distribution $f_{m,n}(\cdot | (m+n) z)$ and we start by analyzing $f_N(Nz)$. 

%

\subsection{Asymptotics of $f_N(Nz)$}\label{Section3.1}
In this section we assume that $f_X(\cdot)$ satisfies Assumptions C1-C4. For a fixed $z \in (A^*, B^*)$ we define 
\begin{equation}\label{S3S1E1}
G_z(u) = \Lambda(u) - z \cdot u, \mbox{ for $u \in (A_{\Lambda}, B_{\Lambda})$.}
\end{equation}

\begin{definition}\label{DefDelta} Suppose that we are given $s,t \in \mathbb{R}$ such that $\alpha < s < t < \beta$, where $\alpha, \beta$ are as in Assumption C1. In addition, we denote $S = (\Lambda')^{-1}(s)$ and $T = (\Lambda')^{-1}(t)$ -- these quantities are well-defined in view of Lemma \ref{S2S1L2}. By Lemma \ref{S2S1L1} there exist $\infty > M_{s,t} \geq m_{s,t} > 0$ such that $M_{s,t} \geq \Lambda''(y) \geq m_{s,t}$ for all $y \in [S,T]$. We can pick $\delta_{s,t} > 0 $ sufficiently small (depending on $s, t$ and $f_X(\cdot)$) so that  
\begin{enumerate}
\item If $D_{\delta_{s,t}}(S,T) : = \{ z \in \mathbb{C}: d(z, [S,T]) < \delta_{s,t}\}$ then $\overline{D}_{\delta_{s,t}}(S,T) \subset \{z \in \mathbb{C}: A_{\Lambda} < Re(z) < B_{\Lambda} \}$;
\item $Re [M_X(u)] > 0$ for all $u \in  \overline{D}_{\delta_{s,t}}(S,T) $;
\item $\delta_{s,t} < 1/2$;
\item $8\delta_{s,t} \cdot |\log (M_X(u))| < m_{s,t}$ for all  $u \in \overline{D}_{\delta_{s,t}}(S,T)$.
\end{enumerate}
\end{definition}

\begin{definition}\label{DefK} Suppose that we are given $s,t \in \mathbb{R}$ such that $\alpha < s < t < \beta$, where $\alpha, \beta$ are as in Assumption C1. In view of Assumption C4 there exists a constant $K_{s,t} \geq 1$ sufficiently large (depending on $s,t$ and $f_X(\cdot)$) and $p_{s,t} > 0$ so that for every $u_z \in [\min(u_s, 0),\max(u_t,0)]$ we have
$$\left| M(u_z + iy) \cdot e^{-z(u_z+iy)} e^{-G_z(u_z)} \right| \leq \frac{K_{s,t}}{(1 + |y|)^{p_{s,t}}}.$$
\end{definition}

\begin{definition}\label{DefQ} Suppose that we are given $s,t \in \mathbb{R}$ such that $\alpha < s < t < \beta$, where $\alpha, \beta$ are as in Assumption C1. Suppose that $\delta_{s,t}$ and $K_{s,t}, p_{s,t}$ satisfy the conditions in Definitions \ref{DefDelta} and \ref{DefK}. Denote $\epsilon_{s,t} = \delta_{s,t}^4$ and $R_{s,t} =  [4 K_{s,t}]^{2/ p_{s,t}}$. Then we can find $q_{s,t} \in (0,1)$ (depending on $s,t, \delta_{s,t}, K_{s,t}, p_{s,t}$ and $f_X(\cdot)$)such that for every $z \in [s,t]$ and $y \in [\epsilon_{s,t}, R_{s,t}]$ we have
$$\left| \mathbb{E} \left[e^{(u_z + iy)X}\right] \right|  e^{-zu_z} e^{-G_z(u_z)} \leq q_{s,t}. $$
To see why the above is true, notice that
$$\left| \mathbb{E} \left[e^{(u_z + iy)X}\right] \right|  e^{-zu_z} e^{-G_z(u_z)} < \mathbb{E} \left[ \left|e^{(u_z + iy)X} \right|\right] e^{-zu_z} e^{-G_z(u_z)} = 1,$$
where the above inequality is strict for any $y \neq 0$ as the contrary would imply $X \in 2\pi y^{-1} \cdot \mathbb{Z}$ almost surely, which is not true. This combined with the continuity of $ \mathbb{E} \left[e^{(u_z + iy)X}\right]$ in $y$ and $z$ ensures the existence of $q_{s,t}$ with the desired properties.
\end{definition}

We are interested in proving the following statement.
\begin{proposition}\label{S3S1P1} Suppose that $f_X$ satisfies Assumptions C1-C4. Fix $\beta > t > s > \alpha$ and $z \in [s,t]$. Then there exists $N_0\in \mathbb{N}$ such that if $N \geq N_0$ one has
\begin{equation}\label{S3S1E3}
f_N(Nz) = \frac{1}{\sqrt{2\pi N }\sigma_z} \cdot \exp \left( N G_z(u_z) + \delta_1(z,N) \right), \mbox{ where $\delta_1(z,N) = O(N^{-1/2})$}.
\end{equation}
The number $N_0$ and the constant in the big $O$ notation depend on $f_X, s$ and $t$ only through the constants in Definitions \ref{DefDelta}, \ref{DefK} and \ref{DefQ}. 
\end{proposition}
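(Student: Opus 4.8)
The plan is to prove a tilted local central limit theorem for the density $f_N$. First I would apply an Esscher tilt at $u_z=(\Lambda')^{-1}(z)$: set $h(x)=e^{u_zx-\Lambda(u_z)}f_X(x)$, which by Lemmas \ref{S2S1L1} and \ref{S2S1L2} is a probability density with mean $\Lambda'(u_z)=z$ and variance $\Lambda''(u_z)=\sigma_z^2$. Writing $h^{*N}$ for its $N$-fold convolution, a direct computation gives $f_N(Nz)=e^{N(\Lambda(u_z)-zu_z)}h^{*N}(Nz)=e^{NG_z(u_z)}h^{*N}(Nz)$, so it suffices to prove the local limit statement $h^{*N}(Nz)=(2\pi N\sigma_z^2)^{-1/2}\bigl(1+O(N^{-1/2})\bigr)$; the asserted form then follows with $\delta_1(z,N)=\log\bigl(1+O(N^{-1/2})\bigr)=O(N^{-1/2})$.

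Next I would use Fourier inversion. The characteristic function of $h$ is $\psi(\xi)=M(u_z+i\xi)/M(u_z)$, and $\psi(\xi)e^{-iz\xi}=M(u_z+i\xi)\,e^{-z(u_z+i\xi)}\,e^{-G_z(u_z)}$ is precisely the quantity bounded in Definitions \ref{DefK} and \ref{DefQ}. In particular $|\psi(\xi)|\le K_{s,t}(1+|\xi|)^{-p_{s,t}}$ for all $\xi$, so for $N>1/p_{s,t}$ the function $\psi^N$ is integrable and $h^{*N}(Nz)=\frac{1}{2\pi}\int_{\mathbb{R}}\bigl(\psi(\xi)e^{-iz\xi}\bigr)^N\,d\xi$. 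I would split this over $|\xi|\le\epsilon_{s,t}$, $\epsilon_{s,t}\le|\xi|\le R_{s,t}$, and $|\xi|\ge R_{s,t}$. On the middle band Definition \ref{DefQ} gives $|\psi(\xi)|\le q_{s,t}<1$, so that piece is $O(R_{s,t}\,q_{s,t}^N)$; on the outer band the choice $R_{s,t}=(4K_{s,t})^{2/p_{s,t}}$ forces $|\psi(\xi)|\le\tfrac12$, and combining with integrability of $\psi^m$ for $m=\lceil 2/p_{s,t}\rceil$ gives an $O(2^{-N})$ bound. Both are negligible next to $N^{-1/2}$.

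The real content is the inner band $|\xi|\le\epsilon_{s,t}=\delta_{s,t}^4$. There $u_z+i\xi$ lies in $D_{\delta_{s,t}}(S,T)$, on which $\mathrm{Re}\,M>0$ and $|\log M|$ is bounded by $m_{s,t}/(8\delta_{s,t})$ (Definition \ref{DefDelta}), so $\Psi(\xi):=\log M(u_z+i\xi)$ is analytic there and I can Taylor expand: $\Psi(\xi)-\Lambda(u_z)-iz\xi=-\tfrac12\sigma_z^2\xi^2-\tfrac{i}{6}\Lambda'''(u_z)\xi^3+R(\xi)$, where Cauchy estimates on $D_{\delta_{s,t}}(S,T)$ — using exactly the normalizations $\delta_{s,t}<1/2$ and $8\delta_{s,t}|\log M|<m_{s,t}$ — give $|R(\xi)|=O\bigl(m_{s,t}\delta_{s,t}^{-5}|\xi|^4\bigr)$ and control all Taylor coefficients through $m_{s,t},M_{s,t},\delta_{s,t}$. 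Since the cubic term is purely imaginary and $m_{s,t}\delta_{s,t}^{-5}\epsilon_{s,t}^2\lesssim\sigma_z^2$, this yields the Gaussian domination $|\psi(\xi)e^{-iz\xi}|\le e^{-\sigma_z^2\xi^2/4}$ throughout the inner band. Splitting it further at $|\xi|=N^{-2/5}$ (on $N^{-2/5}\le|\xi|\le\epsilon_{s,t}$ the domination makes the integrand stretched-exponentially small) and rescaling $\xi=w/(\sigma_z\sqrt N)$ on $|\xi|<N^{-2/5}$, the exponent becomes $-w^2/2+O(|w|^3N^{-1/2})+O(|w|^4N^{-1})$; expanding the exponential and integrating against $e^{-w^2/2}$ over the essentially full line gives $\frac{1}{2\pi\sigma_z\sqrt N}\bigl(\sqrt{2\pi}+O(N^{-1/2})\bigr)$, the odd cubic contribution in fact vanishing by symmetry. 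Assembling the three bands proves the estimate, and since $N_0$ and every constant produced are built only from $m_{s,t},M_{s,t},\delta_{s,t},K_{s,t},p_{s,t},q_{s,t},R_{s,t}$ and the threshold $N>1/p_{s,t}$, so is the implied constant.

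The hard part is not any single estimate but making the inner-band analysis uniform over all $z\in[s,t]$, equivalently all $u_z\in[S,T]$: the Cauchy bound on $R(\xi)$, the constant in the Gaussian domination, and the coefficients $\sigma_z^2,\Lambda'''(u_z)$ must all be controlled by $m_{s,t},M_{s,t},\delta_{s,t}$ alone, and the tail pieces by $K_{s,t},p_{s,t},q_{s,t},R_{s,t}$ alone. This uniform quantification — rather than the local limit theorem itself, which is classical — is what the proof must deliver, so that the estimate can be fed into the inductive dyadic construction of later sections and so that the coupling constants can ultimately be taken uniformly in families of jump distributions.
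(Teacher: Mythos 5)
Your proposal is correct and follows essentially the same route as the paper: a saddle-point analysis of the Fourier inversion formula along the vertical line through $u_z$, split into the same three regions (a neighborhood of $u_z$ handled by a Taylor expansion with Cauchy estimates on $\overline{D}_{\delta_{s,t}}(S,T)$, the band $[\epsilon_{s,t},R_{s,t}]$ handled by $q_{s,t}$ from Definition \ref{DefQ}, and the tail handled by the polynomial decay of Definition \ref{DefK}), with all constants traced back to Definitions \ref{DefDelta}, \ref{DefK}, \ref{DefQ}. The only cosmetic difference is that you implement the contour shift via the Esscher tilt rather than Cauchy's theorem, which is the same computation in different packaging.
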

\begin{proof}
From Definition \ref{DefK} and \cite[Theorem 3.3.5]{Durrett} we know that for $N$ sufficiently large (specifically it suffices to take $N > p^{-1}_{s , t}$) then
$$f_N(Nz) = \frac{1}{2\pi} \int_{\mathbb{R}} e^{-iy Nz} \left( \phi(y) \right)^N dy.$$
Performing the change of variables $u = iy$ we see that 
\begin{equation}\label{S3S1P1E1}
f_N(Nz) = \frac{1}{2\pi i} \int_{-i \infty}^{ i \infty} M^N(u) e^{ - u N z} du.
\end{equation}

Let us shift the $u$ contour in (\ref{S3S1P1E1}) to the vertical contour passing through $u_z$. In view of Lemma \ref{S2S1L1}, we do not pass any poles in the process of deformation and so by Cauchy's theorem the value of the integral remains unchanged. The decay necessary to deform the contours near $\pm i \infty$ comes from Definition \ref{DefK} and our assumption that $N$ is sufficiently large. The result is 
\begin{equation}\label{S3S1P1E3}
f_N(Nz) = \frac{e^{NG_z(u_z)}}{2\pi i} \int_{u_z -i \infty}^{u_z + i \infty} M(u)^N e^{-uNz}e^{-NG_z(u_z)} du.
\end{equation}

For the given $s,t$ as in the statement of the proposition we define $\delta_{s,t}, m_{s,t},  K_{s,t}, \epsilon_{s,t}, R_{s,t}, p_{s,t}$ and $q_{s,t}$ as in Definitions \ref{DefDelta}, \ref{DefK} and \ref{DefQ}. To ease notation we will drop $s,t$ from the notation of these quantities. We will also denote by $C_{s,t}$ the supremum of $|\log(M(u))| $ as $u$ varies over $\overline{D_\delta}$. Notice that by construction we have
$$\epsilon < \delta/2 \mbox{ and } \epsilon \cdot 8C_{s,t} \cdot \delta^{-3} < m.$$

From (\ref{S3S1P1E3}) we have $f_N(Nz) = (I) + (II), \mbox{ where }$
\begin{equation}\label{S3S1P1E4}
\begin{split}
&(I) = \frac{e^{NG_z(u_z)}}{2\pi i} \int_{u_z -i \epsilon}^{u_z + i \epsilon} e^{N[ G_z(u) - G_z(u_z)]} du, (II) = \frac{e^{NG_z(u_z)}}{2\pi i} \int_{u_z -i \infty}^{u_z - i \epsilon}\left[M(u)e^{-uz}e^{-G_z(u_z)} \right]^N \hspace{-2mm}du \\
& + \frac{e^{NG_z(u_z)}}{2\pi i} \int^{u_z  + i \infty}_{u_z + i \epsilon}\left[M(u)e^{-uz}e^{-G_z(u_z)} \right]^N  \hspace{-2mm} du.
\end{split}
\end{equation}

We will first obtain estimates on (I), which will require analyzing the power series expansion of $G_z(u_z + ir) - G_z(u_z)$ around the point $u_z$. Note that by definition 
$$G_z(u_z + ir) - G_z(u_z) = - \frac{r^2\sigma_z^2}{2} + \sum_{n = 3}^\infty\frac{\Lambda^{(n)}(u_z)}{n!} (ir)^n.$$
From the Cauchy inequalities \cite[Corollary 2.4.3]{Stein} and our choice of $\epsilon$ we conclude that for $|r| \leq \epsilon$
\begin{equation}\label{S3S1P1E5}
\left|  G_z(u_z + ir) - G_z(u_z) + \frac{r^2 \sigma_z^2}{2} \right| \leq C_{s,t}|r|^3 \sum_{n = 3}^\infty \frac{|\epsilon|^{n-3}}{\delta^n}  \leq 2\delta^{-3} C_{s,t} |r|^3 =:C(s,t,\delta)|r|^3.
\end{equation}
Changing variables in (\ref{S3S1P1E4}) and using (\ref{S3S1P1E5}) we obtain
\begin{equation*}
 \frac{e^{NG_z(u_z)}}{2\pi \sqrt{N}} \int_{-\epsilon N^{1/2}}^{\epsilon N^{1/2}} \exp \left[ - \frac{x^2\sigma_z^2}{2} - \frac{C(s,t,\delta)}{\sqrt{N}} |x|^3  \right] dx \leq (I) \leq \frac{e^{NG_z(u_z)}}{2\pi \sqrt{N}} \int_{-\epsilon N^{1/2}}^{\epsilon N^{1/2}} \exp \left[ - \frac{x^2\sigma_z^2}{2}  + \frac{C(s,t,\delta)}{\sqrt{N}}  |x|^{3} \right] dx.
\end{equation*}

Using the inequality $|e^{A} - 1| \leq |A|e^{|A|} \mbox{ for all $A \in \mathbb{R}$},$ we obtain
$$\left| (I) -  \frac{e^{NG_z(u_z)}}{2\pi \sqrt{N}} \int_{-\epsilon N^{1/2}}^{\epsilon N^{1/2}} \exp \left[ - \frac{\sigma_z^2 x^2}{2}   \right] dx\right| \leq   \frac{e^{NG_z(u_z)}}{2\pi \sqrt{N}} \int_{-\epsilon N^{1/2}}^{\epsilon N^{1/2}} \frac{C(s,t,\delta) |x|^3}{\sqrt{N}}  \exp \left[ - \frac{\sigma_z^2 x^2}{2}   + \frac{C(s,t,\delta)}{\sqrt{N}}|x|^3\right] dx.$$
Notice that by our choice of $\epsilon$ we have for $|x| \leq \epsilon N^{1/2}$ that
$$ - \frac{\sigma_z^2 x^2}{2}   + C(s,t,\delta)|x|^3N^{-1/2} \leq - \frac{\sigma_z^2 x^2}{4},$$
which implies from above that
\begin{equation}\label{S3S1P1E6}
\left| (I) -  \frac{e^{NG_z(u_z)}}{2\pi \sigma_z \sqrt{N}} \cdot \left( 1 - 2 \bar{\Phi}\left(\epsilon \sqrt{N} \right)\right)  \right| \leq \frac{e^{NG_z(u_z)}}{2\pi \sqrt{N}} \int_{\mathbb{R}} \frac{C(s,t,\delta)}{\sqrt{N}} |x|^3  \exp \left[  - \frac{\sigma_z^2x^2}{4}  \right]dx,
\end{equation}
where $\bar{\Phi}(x) = \mathbb{P}( Z > x)$ with $Z$ being a Gaussian variable with mean zero and variance $1$.

Using a simple change of variables we have
$$\int_{\mathbb{R}}  |x|^3  \exp \left[  - \frac{\sigma_z^2}{4} x^2 \right]dx = \frac{4}{\sigma_z}  \int_0^\infty y^3e^{-y^2} dy = \frac{2}{\sigma_z}.$$
Combining the latter with the inequality $\bar{\Phi}(x) \leq 2 e^{-x^2/2}$ for all $x \geq 0$ and (\ref{S3S1P1E6}) we get
\begin{equation}\label{P1E6v2}
\left|(I) - \frac{e^{NG_z(u_z)}}{2\pi \sigma_z \sqrt{N}} \right| \leq \frac{e^{NG_z(u_z)}}{2\pi \sigma_z \sqrt{N}}  \cdot \left( \frac{2C(s,t,\delta)}{\sqrt{N}} +  4 \exp \left( -\epsilon N/2 \right) \right) 
\end{equation}
We can now make $N_0$ sufficiently large so that for all $z \in[s,t]$ and $N \geq N_0$
\begin{equation}\label{S3S1P1E7}
(I) =\frac{e^{NG_z(u_z)}}{2\pi \sigma_z \sqrt{N}} \left[1 + O \left( \frac{1}{\sqrt{N}} \right) \right].
\end{equation}

We next forcus on estimating (II). We first note by construction we have
$$\left| M(u_z + iy) \cdot e^{-z(u_z+iy)} e^{-G_z(u_z)} \right| \leq \frac{K}{(1 + |y|)^{p}}.$$
 The latter implies that if $N \geq N_0 > 2/p$ we have
\begin{equation}\label{S3S1P1E8v0}
 \int_{|y| > R} \left| M(u_z + iy) \cdot e^{-z(u_z+iy)} e^{-G_z(u_z)} \right|^N dy \leq  2  K^N \frac{R^{1 - pN}}{pN - 1} \leq 2 K^N R^{ - pN/2} =  2\cdot 4^{-N}.
\end{equation}

Suppose next that $y \in [\epsilon, R]$. Then by definition we have 
\begin{equation}\label{S3S1P1E8v1}
 \int_{\epsilon \leq |y| \leq R} \left| M(u_z + iy) \cdot e^{-z(u_z+iy)} e^{-G_z(u_z)} \right|^N dy \leq  2R q^N.
\end{equation}
Combining (\ref{S3S1P1E8v0}) and (\ref{S3S1P1E8v1}) we get
\begin{equation}\label{S3S1P1E8}
|(II)| \leq \frac{e^{NG_z(u_z)}}{2\pi }\cdot [ 2R q^{N} + 2\cdot 4^{-N}] \leq \frac{e^{NG_z(u_z)}}{2\pi \sigma_z N } ,
\end{equation}
where the last inequality holds provided $N_0$ is sufficiently large and $N \geq N_0$. Combining (\ref{S3S1P1E7}) and (\ref{S3S1P1E8}) yields (\ref{S3S1E3}).
\end{proof}

%
\subsection{Asymptotics of $f_{n,m}(\cdot|(m+n)z)$}\label{Section3.2}

We start with a useful definition.
\begin{definition}\label{DefM34}
Suppose that $f_X(\cdot)$ satisfies Assumptions C1-C4 and that $\beta > t > s > \alpha$ are given. Then in view of Lemmas \ref{S2S1L1} and \ref{S2S1L2} we know that $F(z):= G_z(u_z)$ is smooth on $(\alpha, \beta)$ and so for each $k \geq 0$ exists $M^{(k)}_{s,t} > 0$ such that $| F^{(k)}(z)| \leq M^{(k)}_{s,t}$ for all $z \in [s,t]$. 
\end{definition}

 We have the following asymptotic estimate for $f_{n,m}(\cdot|(m+n)z)$. 
\begin{proposition}\label{S3S1P2} Suppose that $f_X$ satisfies Assumptions C1-C4. Fix $s,t$ such that $\beta > t > s > \alpha$ and let $N_0$ be as in the statement of Proposition \ref{S3S1P1}. Then there exists $M > 0$ such that the following holds. Suppose that $m ,n \geq N_0$ are such that $|m-n| \leq 1$ and denote $N = n + m$. In addition, let $z, x$ be such that $xN/n, (z - x)N/m, z \in [s, t]$. Then we have 
\begin{equation}\label{S3S1E11}
f_{n,m}(Nx|Nz) = \frac{2}{\sqrt{2\pi N} \sigma_z} \cdot \exp \left(- N\cdot \frac{4}{2\sigma_z^2}\left[x-\frac{z}{2}\right]^2 + \delta_2(N,x,z) \right),
\end{equation}
where
\begin{equation}
\left| \delta_2(N,x,z) \right| \leq M \cdot \left(\frac{1}{\sqrt{N}} + N\left|x-\frac{z}{2}\right|^3 \right).
\end{equation}
The constant $M$ depends on $s,t$ and also on $f_X(\cdot)$, where the dependence on the latter is only through the constants in Definitions \ref{DefDelta}, \ref{DefK} and \ref{DefQ} as well as $M^{(3)}_{s,t}, M^{(4)}_{s,t}$ in Definition \ref{DefM34}.
\end{proposition}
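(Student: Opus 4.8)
The idea is to realize the bridge density as a quotient of ordinary densities, feed each factor into Proposition~\ref{S3S1P1}, and then read off the Gaussian and bound the remainder.

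\textbf{Step 1 (reduction to a quotient).} Since $S_{n+m}-S_m$ is independent of $S_m$ and distributed as $S_n$, I would first write
\[
f_{n,m}(Nx\,|\,Nz)=\frac{f_m(Nx)\,f_n\big(N(z-x)\big)}{f_N(Nz)}.
\]
Because $|m-n|\le 1$ and $m,n\ge N_0$, we have $n/m=1+O(1/N)$, so the three relevant slopes $Nx/m$, $N(z-x)/n$ and $z$ all lie in a fixed compact subinterval $[s',t']\subset(\alpha,\beta)$ obtained by slightly enlarging $[s,t]$ — here one uses the hypotheses $Nx/n,\ (z-x)N/m,\ z\in[s,t]$ to pin down the slopes that actually appear after the $m\leftrightarrow n$ mismatch. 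Proposition~\ref{S3S1P1} then applies to each of $f_m(Nx)$, $f_n(N(z-x))$ and $f_N(Nz)$, with a threshold and $O$-constant uniform over $[s',t']$ and depending on $f_X$ only through the constants of Definitions~\ref{DefDelta}--\ref{DefQ}; it produces $f_m(Nx)=(2\pi m)^{-1/2}\sigma_{Nx/m}^{-1}\exp(mG_{Nx/m}(u_{Nx/m})+O(N^{-1/2}))$ and the analogous expressions for the other two factors, all three error terms being $O(N^{-1/2})$.

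\textbf{Step 2 (the exponent and the midpoint expansion).} Set $F(w):=G_w(u_w)$, which is smooth on $(\alpha,\beta)$ by Lemmas~\ref{S2S1L1}--\ref{S2S1L2}, with $F'(w)=-u_w$ and $F''(w)=-\sigma_w^{-2}$. The $G$-terms contribute $E:=mF(u)+nF(v)-NF(z)$ with $u:=Nx/m$, $v:=N(z-x)/n$. The key point is that $mu+nv=Nz$, so writing $\Delta_u=u-z$, $\Delta_v=v-z$ we get $m\Delta_u+n\Delta_v=0$; Taylor-expanding $F$ about $z$ kills the linear term and leaves
\[
E=\tfrac12 F''(z)\,(m\Delta_u^2+n\Delta_v^2)+O\!\big(M^{(3)}_{s,t}\,(m|\Delta_u|^3+n|\Delta_v|^3)\big).
\]
From $\Delta_v=-(m/n)\Delta_u$ one has $m\Delta_u^2+n\Delta_v^2=(mN/n)\Delta_u^2$, and from $N=m+n$, $|m-n|\le1$ one computes $\Delta_u=2(x-z/2)+O(1/N)$ and $mN/n=N+O(1)$. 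Substituting and using $F''(z)=-\sigma_z^{-2}$ gives $E=-N\cdot\frac{4}{2\sigma_z^2}(x-z/2)^2+O\big(N|x-z/2|^3+|x-z/2|^2+|x-z/2|+N^{-1}\big)$.

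\textbf{Step 3 (the prefactor and collecting errors).} Using $\sqrt{mn}=\tfrac N2(1+O(N^{-2}))$, the prefactor equals $\tfrac{2}{\sqrt{2\pi N}\,\sigma_z}\cdot\tfrac{\sigma_z^2}{\sigma_u\sigma_v}(1+O(N^{-2}))$. Since $z=(m/N)u+(n/N)v$ is a near-midpoint average of $u,v$, a first-order Taylor expansion of $w\mapsto\log\sigma_w$ about $z$ (with remainder controlled by $M^{(3)}_{s,t},M^{(4)}_{s,t}$ and the lower bound on $\sigma^2_w$ over $[s',t']$) shows $\log(\sigma_z^2/(\sigma_u\sigma_v))=O(|\Delta_u+\Delta_v|+\Delta_u^2+\Delta_v^2)=O(|x-z/2|^2+|x-z/2|/N+N^{-2})$. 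Adding this, the exponent estimate of Step 2, and the three $O(N^{-1/2})$ errors, the proposition follows once we note the elementary inequalities $|x-z/2|\le N^{-1/2}+N|x-z/2|^3$ and $(x-z/2)^2\le N^{-1/2}+N|x-z/2|^3$, valid for $N\ge1$, which absorb all $O(|x-z/2|)$ and $O(|x-z/2|^2)$ contributions into $M(N^{-1/2}+N|x-z/2|^3)$.

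\textbf{Main obstacle.} There is little conceptual difficulty — the identity is ``local limit theorem times local limit theorem divided by local limit theorem'' plus a midpoint Taylor expansion — and the real work is bookkeeping: one must carry the three perturbations $\Delta_u$, the $O(1/N)$ distortion coming from $|m-n|\le1$, and the deviation $x-z/2$ simultaneously, and verify that every cross term collapses into the stated bound. The two points requiring genuine care are (i) checking that the slopes $Nx/m$ and $N(z-x)/n$ that actually occur still lie in a region where Proposition~\ref{S3S1P1} and Definitions~\ref{DefDelta}--\ref{DefM34} supply uniform constants, and (ii) the elementary inequalities in Step 3 that dominate the linear and quadratic deviation errors by $N^{-1/2}+N|x-z/2|^3$.
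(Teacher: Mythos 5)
Your proposal is correct and follows essentially the same route as the paper: the bridge density is written as a quotient of three local-limit estimates from Proposition \ref{S3S1P1}, the exponent is Taylor-expanded (the paper expands the one-variable function $h(r)$ about the exact center $z/(1+m/n)$, which is equivalent to your constrained expansion of $F$ about $z$ using $m\Delta_u+n\Delta_v=0$), the variance prefactor is handled by a separate low-order expansion of $\log\sigma_w$, and the same absorption inequalities $|x-z/2|,\,(x-z/2)^2\le N^{-1/2}+N|x-z/2|^3$ collect the errors. The only cosmetic discrepancy is that you pair $f_m(Nx)f_n(N(z-x))$ whereas the paper's proof uses $f_n(Nx)f_m(N(z-x))$, so the hypothesized slopes $xN/n,\ (z-x)N/m\in[s,t]$ apply verbatim there, while your version needs the slight enlargement of $[s,t]$ that you already flag — harmless since $|m-n|\le 1$ makes the shift $O(1/N)$.
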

\begin{proof}
Set $\phi = \frac{m}{n}$ and $\psi = \frac{n}{m}$. From Proposition \ref{S3S1P1} we know that for $m,n \geq N_0$ we have
\begin{equation}\label{S3S1P2E1}
\begin{split}
&f_{n,m}(Nx|Nz) = \frac{f_n(Nx) f_m(N(z-x))}{f_{N}(Nz)} = \frac{f_n(n[xN/n]) f_m(m[(z-x)N/m])}{f_{N}(Nz)} =\\
& e^{N \left( \frac{F[x(1+\phi)] }{1 + \phi}+ \frac{F[ (z-x)(1 + \psi)]}{1 + \psi} -  F(z)\right)} \cdot \frac{2 \sigma_z }{\sqrt{2 \pi N} \sigma_{x(1 + \phi)} \cdot \sigma_{(z-x)(1 + \psi)}} \cdot \exp \left[ O \left(\frac{1}{\sqrt{N}}\right) \right],
\end{split}
\end{equation}
where the constant in the big $O$ notation depends on $s,t$ and the constants in the statement of Proposition \ref{S3S1P1}.

Notice that $ F'(z) = \partial_z [\Lambda(z) - z u_z] = - u_z$, where the last equality used that $\Lambda'(u_z) = z$. In addition, differentiating the last expression shows that $\partial_z u_z = \frac{1}{\Lambda''(u_z ) } = \frac{1}{\sigma_z^2}$. This means that $F''(z) = - \frac{1}{\sigma_z^2}$ and $F'(z) = - u_z$. This shows that $F$ is a strictly concave function in $z$ and its second derivative is bounded from above by $-1/M_{s,t}$ as in Definition \ref{DefDelta}. 

Let us write $x = \frac{z}{1+ \phi} + r$ and denote 
$$h(r) := \frac{F(z + (1+\phi)r) }{1 + \phi}+ \frac{F( z- r(1 + \psi))}{1 + \psi} -  F(z).$$
Then $h(0) = h'(0) = 0$ and 
$$h''(r) = (1+\phi) F''(z + (1 + \phi)r) + (1+\psi)F''(z +(1+\psi)r), \mbox{ hence } h''(0)=  - \frac{2 + \phi + \psi}{\sigma_z^2} .$$ 
Next we have
$$h'''(r) = (1 + \phi)^2 F'''(z + (1+\phi) r) + (1 + \psi)^2 F'''(z +(1+\psi)r),$$
In view of Definition \ref{DefM34} there exists a constant $K$ depending only on $M^{(3)}_{s,t}$ such that $\left|h^{(3)}(r) \right| \leq K,$ provided $z + (1 + \phi)r, z +(1+\psi)r \in [s,t]$.
Then we see that 
\begin{equation}\label{S3S1P2E2}
\begin{split}
&e^{N \left( \frac{F[x(1+\phi)] }{1 + \phi}+ \frac{F[ (z-x)(1 + \psi)]}{1 + \psi} -  F(z)\right)} = \exp \left(N h \left( x - \frac{z}{1 +\phi} \right)\right) =  \\
&\exp \left(  -N \frac{2 + \phi + \psi}{2\sigma_z^2}\left[x-\frac{z}{1+ \phi}\right]^2 + O \left( N \left|x-\frac{z}{1+ \phi}\right|^3 \right) \right),
\end{split}
\end{equation}
where the constant in the big $O$ notation is just $K$.

We claim that
\begin{equation}\label{S3S1P2E3}
 \frac{\sigma_z^2}{ \sigma_{x(1 + \phi)} \cdot \sigma_{(z-x)(1 + \psi)}}=   \exp \left[O\left( \frac{1}{\sqrt{N}} + N \left| x - \frac{z}{1+\phi} \right|^3 \right) \right] .
\end{equation}
Combining (\ref{S3S1P2E1}), (\ref{S3S1P2E2}) and (\ref{S3S1P2E3}) gives (\ref{S3S1E11}). In the remainder we establish (\ref{S3S1P2E3}).\\

Squaring the left side of (\ref{S3S1P2E3}) and taking logarithm gives
$$ \log[-F''(x(1 + \phi))] + \log[-F''((z-x)(1 + \psi))] - 2\log[ -F''(z)].$$
Let us set $x = \frac{z}{1+\phi} + r$ and denote 
$$g(r) = \log[-F''(z + r(1 + \phi))] + \log[-F''(z - r(1 + \psi))] - 2\log[ -F''(z)].$$
Then $g(0) = 0$ and 
$$g'(r) = -\frac{(1 + \phi)F'''(z + r(1 + \phi))}{F''(z + r(1 + \phi))} +  \frac{(1 + \psi)F'''(z - r(1 + \psi))}{F''(z + r(1 + \psi))}.$$
This implies that 
$$g'(0)= (\psi - \phi)\frac{F^{'''}(z )}{F''(z )}.$$
As discussed before $|F''(z)|\geq 1/M_{s,t}$ for all $z\in \left[ s,t\right]$ and so we conclude that $|g'(0)| \leq \frac{K_2}{N}$ for some constant $K_2$ that depends on $s,t, M_{s,t}$ and $M^{(3)}_{s,t}$. On the other hand, it is easy to see that $|g''(r)|\leq K_3$ for some constant that depends on $s,t, M_{s,t}, M^{(3)}_{s,t}$ and $M^{(4)}_{s,t}$. This implies 
$$|g(r)|  \leq r \cdot\frac{K_2}{N} + r^2 K_3 ,$$
which implies that 
$$ \frac{\sigma_z^2}{ \sigma_{x(1 + \phi)} \cdot \sigma_{(z-x)(1 + \psi)}} =    \exp \left[O\left(  \frac{1}{N} \left| x - \frac{z}{1+\phi} \right| +\left| x - \frac{z}{1+\phi} \right|^2 \right) \right] .$$
The latter inequality implies (\ref{S3S1P2E3}) and concludes the proof of the proposition.
\end{proof}

%
\subsection{Tails of $f_{n,m}(\cdot|(m+n)z)$}\label{Section3.3}
In this section we will further assume that $f_X(\cdot)$ satisfies Assumption C5 and use that to deduce tail estimates for $f_{n,m}(\cdot|(m+n)z)$. We start with a couple of lemmas.

\begin{lemma}\label{S3S1L1} Suppose that $f_X$ satisfies Assumption C5. Then for all  $N \geq 1$
\begin{equation}\label{DensDec}
f_N(x) \leq \begin{cases} W^N e^{-dN^{-1} x^2} \mbox{ for all $x \geq 0$ if C5.1 holds and } \\
                                       W^N  e^{-d N^{-1} x^2} \mbox{ fro all $x \leq 0$ if C5.2 holds,}
                       \end{cases}
\end{equation}
where $W = D\frac{\sqrt{\pi}}{\sqrt{d}} + 1 + D $.
\end{lemma}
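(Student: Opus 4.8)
The statement is a pointwise bound on the density $f_N$ of the $N$-fold convolution $S_N = X_1 + \cdots + X_N$, given only that one tail of $f_X$ has Gaussian decay (Assumption C5) together with the uniform bound $f_X \leq L$. The plan is to induct on $N$, or equivalently to split $S_N$ into two halves and use the convolution identity $f_N(x) = \int_{\mathbb{R}} f_{N-1}(x - y) f_X(y)\, dy$. Without loss of generality assume C5.1 holds, so $f_X(y) \leq D e^{-dy^2}$ for $y \geq 0$; the case C5.2 is symmetric under $x \mapsto -x$. I will prove the claim $f_N(x) \leq W^N e^{-d x^2 / N}$ for $x \geq 0$ by induction on $N$, the base case $N = 1$ being immediate since $f_1 = f_X \leq D e^{-dx^2} \leq W e^{-dx^2}$ for $x \geq 0$.

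\textbf{Induction step.} Assume the bound for $N$; we prove it for $N+1$. Write $f_{N+1}(x) = \int_{\mathbb{R}} f_N(x-y) f_X(y)\, dy$ and fix $x \geq 0$. Split the integral according to the sign of $y$ and of $x - y$. On the region $y \geq 0$ and $x - y \geq 0$ we may use both Gaussian bounds: $f_N(x-y) f_X(y) \leq W^N D e^{-d(x-y)^2/N} e^{-dy^2}$, and since $(x-y)^2/N + y^2 \geq x^2/(N+1)$ (this is the elementary inequality $\frac{a^2}{N} + b^2 \geq \frac{(a+b)^2}{N+1}$ applied with $a = x-y,\ b = y$, valid for all real $a,b$), the integrand is at most $W^N D e^{-dx^2/(N+1)} e^{-c(\cdots)}$ and integrates to something of order $W^N D \sqrt{\pi/d}\cdot e^{-dx^2/(N+1)}$ after bounding the residual Gaussian by $1$ and integrating. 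On the region $y < 0$ (so $x - y > x \geq 0$) we instead bound $f_X(y) \leq L$ is too weak directly, so here we use the Gaussian decay of $f_N(x-y)$: $f_N(x-y) \leq W^N e^{-d(x-y)^2/N} \leq W^N e^{-dx^2/N} \leq W^N e^{-dx^2/(N+1)}$ since $x - y \geq x$, and then $\int_{-\infty}^0 f_X(y)\, dy \leq 1$, contributing at most $W^N e^{-dx^2/(N+1)}$. Finally on the region $y \geq 0,\ x - y < 0$ we bound $f_N(x - y) \leq L$ (available since $f_N \leq L$: as $f_N = f_{N-1} * f_X$ and $f_X \leq L$, actually $f_N \leq L$ for all $N \geq 1$) wait — more carefully, here use $f_N(x-y) \leq \|f_N\|_\infty \le L$ and $f_X(y) \leq D e^{-dy^2}$ with $y > x \geq 0$, so $f_X(y) \le D e^{-dx^2} \le D e^{-dx^2/(N+1)}$, and $\int f_N(x-y)\,$ over this range is at most $\int_{\mathbb R} f_N = 1$... one must be a little careful to get a clean constant, but the point is each of the three pieces is bounded by a constant multiple of $W^N e^{-dx^2/(N+1)}$, and adding them gives at most $W^N \big( D\sqrt{\pi/d} + 1 + D\big) e^{-dx^2/(N+1)} = W^{N+1} e^{-dx^2/(N+1)}$, exactly the choice $W = D\sqrt{\pi/d} + 1 + D$.

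\textbf{Main obstacle.} The crux is organizing the region decomposition so that in each piece one has \emph{either} a usable Gaussian factor from $f_X$ (when its argument is nonnegative) \emph{or} a usable Gaussian factor from $f_N$ (when its argument has the right sign), and so that the leftover integral is bounded by $1$ (using that $f_X$ and $f_N$ are probability densities) rather than blowing up; the bookkeeping of which of the three constants $D\sqrt{\pi/d}$, $1$, $D$ comes from which region, and verifying the quadratic inequality $\frac{(x-y)^2}{N} + y^2 \geq \frac{x^2}{N+1}$ uniformly, is where all the care goes. Everything else is routine. I would also note that the case C5.2 follows by applying the C5.1 argument to the reflected variable $-X$, whose density $\tilde f_X(x) = f_X(-x)$ satisfies C5.1, and then reflecting back; this yields the stated dichotomy in (\ref{DensDec}).
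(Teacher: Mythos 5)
Your proposal is correct and follows essentially the same route as the paper: an induction on $N$ in which the convolution $f_{N+1}=f_N*f_X$ is split into the three sign regions, with the both-arguments-nonnegative piece handled by completing the square (equivalently, the Gaussian convolution identity, yielding the $D\sqrt{\pi/d}$ term) and the other two pieces handled by a pointwise Gaussian bound on one factor times total mass $\leq 1$ of the other (yielding the $1$ and $D$ terms), exactly as in the paper's auxiliary inequality (\ref{DensIneq}). The only cosmetic slip is the phrase ``bounding the residual Gaussian by $1$ and integrating'': one should integrate the residual Gaussian $e^{-d(x-(N+1)y)^2/(N(N+1))}$ over $y$, which gives $\sqrt{\pi N/(d(N+1))}\leq\sqrt{\pi/d}$, and your brief detour through $f_N\leq L$ in the third region is unnecessary, as you yourself note.
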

\begin{proof}By symmetry it is clearly enough to consider the case when C5.A.1 holds. Suppose that $ C_1, C_2, c_1, c_2 > 0$ and $h_1, h_2$ are probability density functions such that
$$h_i(x) \leq C_i e^{-c_ix^2} \mbox{ for all $x \geq 0$  and $i = 1,2$}.$$
In addition, set $g(y) = \int_{\mathbb{R}} h_1(y-x) h_2(x) dx$ and $h_i^1(x) = h_i(x) \cdot {\bf 1}_{x \geq 0}$ and $h_i^2 = h_i(x) \cdot {\bf 1}_{x < 0}$ for $i = 1,2$. We thus obtain for $y \geq 0$  
$$g(y) = \int_0^\infty h^1_1(y-x) h_2^1(x)dx +  \int_0^\infty h^2_1(y-x) h_2^1(x)dx +  \int_0^\infty h^1_1(x) h_2^2(y-x)dx \leq $$
$$ C_1 C_2 \int_0^y e^{-c_1 (y-x)^2} e^{-c_2x^2}dx + C_2\int_y^\infty h_1^2(y-x)e^{-c_2x^2}dx + C_1\int_y^\infty e^{-c_1 x^2} h_2^2(y-x) dx.$$ 
Using that $h_i$ are probability density functions we get
$$\int_y^\infty e^{-c_ix^2}h_j^2(y-x) dx \leq e^{-c_i y^2} .$$
Using that the convolution of two Gaussian densities is again a Gaussian density we get
\begin{equation}\label{GSNconv}
\int_0^y e^{-c_1 (y-x)^2} e^{-c_2x^2}dx \leq \int_{\mathbb{R}}e^{-c_1 (y-x)^2} e^{-c_2x^2}dx =  \frac{\sqrt{\pi}}{\sqrt{c_1+c_2}} \exp \left( - \frac{y^2 c_1 c_2}{c_1 + c_2} \right).
\end{equation}
Combining all of the above we get
\begin{equation}\label{DensIneq}
g(y) \leq C_1C_2 \frac{\sqrt{\pi}}{\sqrt{c_1+c_2}} \exp \left( - \frac{y^2 c_1 c_2}{c_1 + c_2} \right) + C_2 e^{-c_2 y^2} + C_1 e^{-c_1y^2}.
\end{equation}

We now proceed to prove (\ref{DensDec}) by induction on $N$ with base case $N = 1$, being true by assumption. Suppose the result holds true for $N$. Setting $h_1(x) = f_X(x)$ and $h_2(x) = f_N(x)$ and applying (\ref{DensIneq}) we obtain for any $y \geq 0$ that
$$f_{N+1}(y)\leq \frac{D W^{N}  \sqrt{\pi}}{\sqrt{d+d/N}} \exp \left( - \frac{y^2 d (d/N)}{d + d/N} \right) + W^{N} e^{-(d/N) y^2} + D e^{-dy^2}\leq $$
$$\leq W^N\left[D\frac{\sqrt{\pi}}{\sqrt{d}} + 1 + D \right]e^{-d(N+1)^{-1} y^2} = W^{N+1} e^{-d(N+1)^{-1} y^2}.$$
This proves (\ref{DensDec}) for the case $N+1$ and the general result proceeds by induction on $N$.
\end{proof}

\begin{lemma}\label{S3S1L2} Suppose that $f_X$ satisfies Assumption C5 and $\alpha > -\infty$ or $\beta < \infty$ or both. Then for all  $N \geq 1$
$$f_N(x) \leq \begin{cases} \frac{L^N}{(N-1)!} \left(x-N\alpha  \right)^{N-1}  \mbox{ for all $x > N \alpha$ if $\alpha > -\infty$ } \\
                                              \frac{L^N}{(N-1)!} \left(N\beta - x  \right)^{N-1}  \mbox{ for all $x < N \beta$ if $\beta < \infty$ }.
                       \end{cases}$$
\end{lemma}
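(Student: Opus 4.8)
The plan is to prove the density bound $f_N(x) \leq \frac{L^N}{(N-1)!}(x - N\alpha)^{N-1}$ for $x > N\alpha$ (the case $\beta < \infty$ being symmetric after replacing $X$ by $-X$) by induction on $N$, exactly paralleling the structure of the proof of Lemma \ref{S3S1L1}. The base case $N=1$ is immediate: by Assumption C5 we have $f_X(x) \leq L$ for all $x$, and since $f_X$ is supported on $[\alpha, \beta]$, for $x > \alpha$ the trivial bound $f_1(x) = f_X(x) \leq L = L^1 (x-\alpha)^0/0!$ holds. (Note that positivity $x - N\alpha > 0$ makes the right-hand side well-defined and nonnegative; if $\alpha = -\infty$ the hypothesis is vacuous for that branch.)

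For the inductive step, I would write $f_{N+1}(y) = \int_{\mathbb{R}} f_X(y - x) f_N(x)\, dx$ and exploit the support constraint: $f_X(y-x)$ vanishes unless $y - x \geq \alpha$, i.e.\ $x \leq y - \alpha$, and $f_N(x)$ vanishes (or rather, the inductive bound only applies) when $x > N\alpha$; below $x = N\alpha$ the density $f_N$ is zero since $S_N \geq N\alpha$ almost surely. Hence the integral runs over $x \in [N\alpha,\, y - \alpha]$, which is nonempty precisely when $y > (N+1)\alpha$. On this range I bound $f_X(y-x) \leq L$ and $f_N(x) \leq \frac{L^N}{(N-1)!}(x - N\alpha)^{N-1}$, giving
\begin{equation*}
f_{N+1}(y) \leq \frac{L^{N+1}}{(N-1)!} \int_{N\alpha}^{y-\alpha} (x - N\alpha)^{N-1}\, dx = \frac{L^{N+1}}{(N-1)!} \cdot \frac{(y - (N+1)\alpha)^N}{N} = \frac{L^{N+1}}{N!} (y - (N+1)\alpha)^N,
\end{equation*}
which is exactly the claimed bound at level $N+1$. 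The substitution $u = x - N\alpha$ turns the integral into $\int_0^{y - (N+1)\alpha} u^{N-1} du$, making the computation transparent.

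The case $\beta < \infty$ follows by applying the already-established case to the random variable $-X$, whose density $f_{-X}(x) = f_X(-x)$ is supported on $[-\beta, -\alpha]$, bounded by $L$, so that $-S_N$ has density bounded by $\frac{L^N}{(N-1)!}(x - N(-\beta))^{N-1}$ for $x > -N\beta$; rewriting in terms of $S_N$ gives $f_N(x) \leq \frac{L^N}{(N-1)!}(N\beta - x)^{N-1}$ for $x < N\beta$. There is no serious obstacle here — the argument is a routine induction — but the one point requiring a little care is the bookkeeping of the integration limits: one must correctly observe that $f_N$ is supported on $[N\alpha, \infty)$ (so the lower limit is $N\alpha$, not $-\infty$) and that $f_X(y-\cdot)$ forces the upper limit $y - \alpha$, and then check that the resulting interval collapses to a point exactly at the boundary $y = (N+1)\alpha$, so the bound degenerates consistently. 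A minor subtlety is that the statement uses $L$ from Assumption C5 only through the uniform bound $f_X \leq L$; the Gaussian-tail part of C5 plays no role in this lemma, so it suffices to invoke just that piece.
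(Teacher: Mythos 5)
Your proposal is correct and follows essentially the same argument as the paper: an induction on $N$ using the convolution $f_{N+1}(y)=\int f_N(x)f_X(y-x)\,dx$, the support constraint to restrict the integration range, the uniform bound $f_X\leq L$, and the elementary integral of $(x-N\alpha)^{N-1}$, with the $\beta<\infty$ case handled by symmetry. The only cosmetic difference is that the paper first shifts $X$ by $-\alpha$ to reduce to $\alpha=0$, whereas you carry the $N\alpha$ bookkeeping explicitly; both are fine.
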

\begin{proof}By symmetry it is clearly enough to consider the case $\alpha > -\infty$ and prove the first statement of the lemma. By shifting $X$ by $-\alpha$ we may assume that $\alpha = 0$. We proceed by induction on $N$ with base case $N = 1$ being true by assumption. We now suppose that the result holds true for $N$ and let $y > 0$. Then
$$f_{N+1}(y) = \int_0^y f_N(x) f_1(y-x) \leq \int_0^y \frac{L^N}{(N-1)!}  x^{N-1} \cdot L dx =  \frac{L^{N+1}}{N!} y^{N} .$$
This proves the induction step and the general result follows by induction.
\end{proof}

We next summarize a couple of parameter choices for future use.

\begin{definition}\label{DefN2} Suppose that $f_X(\cdot)$ satisfies Assumptions C1-C5. Fix $t,s$ such that $\beta > t > s > \alpha$. Then in view of Proposition \ref{S3S1P1} we can find $C_1 > 1$ sufficiently large depending on the constants  in Definitions \ref{DefDelta}, \ref{DefK} and \ref{DefQ} and $M^{(0)}_{s,t}$ in Definition \ref{DefM34} so that
$$C_1^{-N} \leq f_N(Nz)$$
for all $z \in [s,t]$ and $N \geq N_0$ (where $N_0$ is as in the statement of Proposition \ref{S3S1P1}). 

We can also find $\epsilon_1 > 0$ sufficiently small so that $48 C_1^2 L \cdot \epsilon_1 \leq 1$, $s \geq \alpha + 3\epsilon_1$ and $t \leq \beta - 3\epsilon_1$, where $L$ is as in Assumption C5. 

We can also find $R_1 > 1$ sufficiently large so that 
$$[s,t] \subset [ - R_1, R_1 ] \mbox{ and } W C_1 e^{-d R_1^2/2} \leq 1,$$
where $W = D \frac{\sqrt{\pi}}{\sqrt{d}} + 1 + D$ with $D,d$ as in Assumption C5.

Finally, given the above choice of $\epsilon_1$ and $R_1$ we can define the variables $\hat{s}, \hat{t}$ as follows:
\begin{itemize}
\item $\hat{s} = \alpha + \epsilon_1$ and $\hat{t} = \beta - \epsilon_1$ if $\alpha > -\infty$ and $\beta < \infty$;
\item $\hat{s} = \alpha + \epsilon_1$ and $\hat{t} = 3\max(t,0) - \alpha - \epsilon_1$ if $\alpha > -\infty$ and $\beta = \infty$; 
\item $\hat{s} = 3 \min(0, s) - \beta+ \epsilon_1$ and $\hat{t} = \beta - \epsilon_1$ if $\alpha = -\infty$ and $\beta < \infty$; 
\item $\hat{s} = -6R_1$ and $\hat{t} = 6R_1$ if $\alpha = -\infty$ and $\beta =\infty$.
\end{itemize}  
\end{definition}

\begin{definition}\label{DefParC}  Suppose that $f_X(\cdot)$ satisfies Assumptions C1-C5. Fix $t,s$ such that $\beta > t > s > \alpha$ and let $C_1, \epsilon_1, R_1, \hat{s}$ and $\hat{t}$ be as in Definition \ref{DefN2}. For future reference we summarize the following list of constants:
\begin{enumerate}
\item the constants in Assumptions C1 and C5;
\item $C_1, \epsilon_1, R_1, \hat{t}, \hat{s}$ as in Definition \ref{DefN2};
\item $M_{\hat{s},\hat{t}}, m_{\hat{s},\hat{t}}$, $\delta_{\hat{s}, \hat{t}}$ as in Definition \ref{DefDelta};
\item $K_{\hat{s}, \hat{t}}, p_{\hat{s}, \hat{t}}$ as in Definition \ref{DefK};
\item $q_{\hat{s},\hat{t}} $ as in Definition \ref{DefQ};
\item $M_{\hat{s}, \hat{t}}^{(0)}, M_{\hat{s}, \hat{t}}^{(1)}, M_{\hat{s}, \hat{t}}^{(2)}, M_{\hat{s}, \hat{t}}^{(3)}, M^{(4)}_{\hat{s}, \hat{t}}$ from Definition \ref{DefM34}.
\end{enumerate}
\end{definition}

We can now prove the following complement to Proposition \ref{S3S1P2}, which establishes tail estimates for the midpoint density of a continuous random walk bridge. 
\begin{proposition}\label{S3S1P3}
Suppose that $f_X(\cdot)$ satisfies Assumptions C1-C5. Fix $s,t$ such that $\beta > t > s > \alpha$. There exist constants $A,a > 0$ and $N_1 \in \mathbb{N}$, such that the following holds. Suppose that $m ,n \geq  N_1$ are such that $|m-n| \leq 1$ and denote $N = n + m$. In addition, let $z \in [s,t]$. Then we have for any $x \in \mathbb{R}$
\begin{equation}\label{S3S1E12}
f_{n,m}(Nx|Nz) \leq A \cdot \exp \left(-  aN \left[x-\frac{z}{2}\right]^2  \right).
\end{equation}
The constants $a,A$ and $N_1$ depend on the values $s,t$ and the function $f_X(\cdot)$, where the dependence on the latter is through the constants in Definition \ref{DefParC}.
\end{proposition}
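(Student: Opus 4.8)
The proof rests on the identity $f_{n,m}(Nx\,|\,Nz)=f_n(Nx)\,f_m(N(z-x))/f_N(Nz)$ together with the lower bound $f_N(Nz)\ge C_1^{-N}$ for $z\in[s,t]$, $N\ge N_0$, supplied by Definition \ref{DefN2}; so it suffices to bound $C_1^{N}f_n(Nx)f_m(N(z-x))$. By the symmetry $X\mapsto -X$ I may assume the first alternative in Assumption C5 holds, i.e.\ $f_X$ — and hence each $f_N$, by Lemma \ref{S3S1L1} — decays like a Gaussian on $[0,\infty)$. Writing $y_1:=Nx/n$ and $y_2:=N(z-x)/m$, the product vanishes unless $y_1,y_2\in(\alpha,\beta)$; moreover, since each jump lies in $(\alpha,\beta)$, the value $S_n$ conditioned on $S_{n+m}=Nz$ is confined to $\bigl(\max(n\alpha,Nz-m\beta),\,\min(n\beta,Nz-m\alpha)\bigr)$, so whenever $\alpha$ or $\beta$ is finite the range of relevant $x$ is bounded uniformly in $N$. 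I would then split according to whether $x$ lies in the central region $\mathcal C:=\{x:\ y_1,y_2\in[\hat s,\hat t]\}$, with $\hat s,\hat t$ as in Definition \ref{DefN2}, or in its complement.

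On $\mathcal C$ the plan is to revisit the proof of Proposition \ref{S3S1P2}: its first display (\ref{S3S1P2E1}) invokes only Proposition \ref{S3S1P1}, so, applied with the interval $[\hat s,\hat t]$, it requires merely $y_1,y_2,z\in[\hat s,\hat t]$ and gives $f_{n,m}(Nx\,|\,Nz)=\frac{2\sigma_z}{\sqrt{2\pi N}\,\sigma_{y_1}\sigma_{y_2}}\,e^{Nh(r)}\,e^{O(1/\sqrt N)}$ with $r=x-z/(1+\phi)$ and $h$ as there. Rather than Taylor-expanding $h$ (whose cubic remainder is controlled only for $|x-z/2|$ small), I would use the global strict concavity of $F$ established in that proof: since $F''\le -1/M_{\hat s,\hat t}<0$ on $[\hat s,\hat t]$ and $z$ is interior to $[\hat s,\hat t]$, the segments from $z$ to $y_1$ and from $z$ to $y_2$ stay in $[\hat s,\hat t]$, so $h(0)=h'(0)=0$ and $h''\le-2/M_{\hat s,\hat t}$ along $[0,r]$, whence $h(r)\le-r^2/M_{\hat s,\hat t}$. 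Combined with $\sigma^2=\Lambda''\in[m_{\hat s,\hat t},M_{\hat s,\hat t}]$ on $[\hat s,\hat t]$, with $|r-(x-z/2)|=O(N^{-1})$, and with the boundedness of $|x-z/2|$ on $\mathcal C$, this yields $f_{n,m}(Nx\,|\,Nz)\le A_0\,e^{-a_0N(x-z/2)^2}$ on $\mathcal C$ for all large $N$.

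On the complement, $y_1,y_2\in(\alpha,\beta)$ with at least one of them outside $[\hat s,\hat t]$. The key combinatorial observation — for which the factors $3$ in the definitions of $\hat s,\hat t$ are tuned together with the smallness of $\epsilon_1$ — is that then at least one of $y_1,y_2$ lies within a fixed multiple of $\epsilon_1$ of a finite endpoint of $(\alpha,\beta)$, unless both $\alpha=-\infty$ and $\beta=\infty$, in which case at least one of $y_1,y_2$ is large in modulus. In the first case, say $y_1$ is near a finite endpoint; Lemma \ref{S3S1L2} and Stirling give $f_n(Nx)\le C\,(eL\epsilon_1)^n\le C\,(e/48)^nC_1^{-2n}$ using $48C_1^2L\epsilon_1\le1$, so $C_1^Nf_n(Nx)f_m(N(z-x))\le CLC_1\,(e/48)^n$, which is exponentially small in $N$; since $|x-z/2|$ is bounded here, this is $\le A\,e^{-aN(x-z/2)^2}$ for $a$ small. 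In the second case, Lemma \ref{S3S1L1} gives Gaussian decay of $f_N$ on $[0,\infty)$; if $Nx$ is the large positive argument one applies it to $f_n(Nx)$, and if $Nx$ is large negative then $N(z-x)$ is large positive and one applies it to $f_m(N(z-x))$ while bounding $f_n(Nx)\le L$. Either way, $WC_1e^{-dR_1^2/2}\le1$ and $[s,t]\subset[-R_1,R_1]$ make the Gaussian factor dominate $C_1^NW^N$, giving $\le A\,e^{-aN(x-z/2)^2}$, now genuinely using that $(x-z/2)^2$ is comparable to $x^2$ there. Choosing $A,a,N_1$ compatibly over $\mathcal C$ and the two sub-cases, and over the four configurations of $(\alpha,\beta)$ in Definition \ref{DefN2}, finishes the proof.

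The main obstacle is the complement region. First, the tails in Lemmas \ref{S3S1L1} and \ref{S3S1L2} must decay fast enough to absorb the factor $C_1^N$ coming from the lower bound on $f_N(Nz)$; this is exactly why $\epsilon_1,R_1$ in Definition \ref{DefN2} are chosen so that $48C_1^2L\epsilon_1\le1$ and $WC_1e^{-dR_1^2/2}\le1$, so that the factorial (near a finite endpoint) or the Gaussian (near an infinite one) wins with margin. Second, when an endpoint is infinite one cannot bound the corresponding increment directly — $f_X$ is only assumed to decay like a Gaussian on one half-line — so one must pass the decay to the other increment, exploiting that $y_1$ far on one side forces $y_2$ far on the other. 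Verifying the combinatorial observation above in each of the four endpoint configurations, with all constants uniform in $z\in[s,t]$ and tracked through the list in Definition \ref{DefParC}, is the bookkeeping-heavy heart of the argument.
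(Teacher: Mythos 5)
Your proposal is correct and follows essentially the same route as the paper's own proof: the ratio identity with the lower bound $f_N(Nz)\ge C_1^{-N}$ from Definition \ref{DefN2}, Lemma \ref{S3S1L2} plus Stirling near a finite endpoint (exploiting $48C_1^2L\epsilon_1\le1$), Lemma \ref{S3S1L1} with $WC_1e^{-dR_1^2/2}\le1$ when $\alpha=-\infty$ and $\beta=\infty$ (passing the Gaussian decay to whichever increment has large positive argument), and Proposition \ref{S3S1P1} combined with the uniform negativity of $F''$ on $[\hat s,\hat t]$ in the central region. The only difference is organizational (your central-region/complement split versus the paper's case split on the finiteness of $\alpha$ and $\beta$), so no genuine gap remains beyond the bookkeeping you already flag.
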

\begin{proof}
Denote $\phi = \frac{m}{n}$ and $\psi = \frac{n}{m}$. For clarity we will split the proof into several cases.

{\bf \raggedleft Case 1.} Suppose first that $\alpha > -\infty$. From the first line of (\ref{S3S1P2E1}) we know that  
\begin{equation}\label{S3S1P3E1}
f_{n,m}(Nx|Nz) =  \frac{f_n(Nx) \cdot f_m(N(z-x))}{f_N(Nz)},
\end{equation}
and the latter expression is zero unless $Nx \geq n\alpha$ and $N(z-x) \geq m\alpha$. We will assume that $x$ satisfies these inequalities as otherwise (\ref{S3S1E12}) trivially holds for any $A, a > 0$. From Definition \ref{DefN2} we know that for all $N \geq N_0$ we have
\begin{equation}\label{S3S1P3E2}
f_{n,m}(Nx|Nz) \leq  C_1^N f_n(Nx) \cdot f_m(N(z-x)).
\end{equation}
In particular, since $f_n$ and $f_m$ are uniformly bounded by a constant (namely $L$), we see that we can make (\ref{S3S1E12}) true for all small $N \geq N_0$ by choosing $A$ sufficiently large and $a \leq 1$. We will thus focus on showing (\ref{S3S1E12}) for sufficiently large $N \geq N_0$. \\

Suppose that $Nx \leq  n\alpha + n\epsilon_1,$ where $\epsilon_1$ is as in Definition \ref{DefN2}. From Lemma \ref{S3S1L2} and the inequality
\begin{equation}\label{ineqGamma}
 \frac{1}{(N-1)!} = \frac{1}{\Gamma(N)} \leq \frac{e^{N-1}}{N^{N-1}},
\end{equation}
which can be found in \cite{LC} we conclude that 
$$f_{n}(Nx) \leq L \left(\frac{L e n\epsilon_1}{n}\right)^{n-1} \leq L \left (L \epsilon_1e\right)^{n-1} .$$
The above, combined with the definition of $\epsilon_1$ and (\ref{S3S1P3E2}) imply
$$f_{n,m}(Nx|Nz) \leq  C_1^N \cdot L \left(L \epsilon_1 e \right)^{n-1} \cdot L \leq 16 C_1^4 L^2 2^{-N},$$
while for $N \geq N_1$ with $N_1$ sufficiently large depending on $\alpha$ we have
$$A \cdot \exp \left(-  aN \left[x-\frac{z}{2}\right]^2  \right) \geq  A \cdot  \exp \left( - a N \frac{\epsilon_1^2}{4} \right).$$
It follows from the above inequalities that (\ref{S3S1E12}) holds provided we take $A \geq  16 C_1^4 L^2$, $a$ sufficiently small and $Nx \in [n\alpha, n\alpha + n\epsilon_1]$. Analogous arguments applied to $z-x$ in place of $x$ show that for the same $A$ and $a$ we have (\ref{S3S1E12}) provided that $N(z-x) \in [m\alpha, m\alpha + m\epsilon_1]$. We may thus assume that $Nx\geq n\alpha + n\epsilon_1$ and $N(z-x) \geq m\alpha + m\epsilon_1$.\\

We next consider the cases $\beta = \infty$ and $\beta < \infty$ separately starting with the former. 

{\bf \raggedleft Case 1.A.} If $\beta = \infty$ then we let $N_1$ be sufficiently large so that 
 $N_1 \geq N_0$, where $N_0$ is as in the statement of Proposition \ref{S3S1P1} for the values $\hat{s} = \alpha + \epsilon_1$ and $\hat{t} = 3\max(t,0) - \alpha - \epsilon_1$.

Then from Proposition \ref{S3S1P1} (see also equation (\ref{S3S1P2E1})) we know that we have for $m,n \geq N_1$ and $Nx\geq n\alpha + n\epsilon_1$ and $N(z-x) \geq m\alpha + m\epsilon_1$ that
\begin{equation}\label{S3S1P3E3}
\begin{split}
&f_{n,m}(Nx|Nz) \leq C_2 \exp\left[N \left( \frac{F(x(1+\phi)) }{1 + \phi}+ \frac{F ((z-x)(1 + \psi))}{1 + \psi} -  F(z)\right) \right],
\end{split}
\end{equation}
where the constant $C_2$ depends on $m_{\hat{s}, \hat{t}}$ and $M_{\hat{s}, \hat{t}}$ as in Definition \ref{DefDelta} for the values $\hat{s} = \alpha + \epsilon_1$ and $\hat{t} = 3\max(t,0) - \alpha - \epsilon_1$. As in the proof of Proposition \ref{S3S1P2} we write $x = \frac{z}{1+ \phi} + r$ and denote 
$$h(r) = \frac{F[z + (1+\phi)r] }{1 + \phi}+ \frac{F[ z- r(1 + \psi)]}{1 + \psi} -  F(z).$$
Then $h(0) = h'(0) = 0$ and 
$$h''(r) = (1+\phi) F''(z + (1 + \phi)r) + (1+\psi)F''(z +(1+\psi)r) = - \left[ \frac{1 + \phi}{\sigma^2_{z + (1 +\phi)r}} +  \frac{1 + \psi}{\sigma^2_{z + (1 +\psi)r}} \right] .$$ 
The above shows that $h(r)$ is strictly concave and its second derivative is less than $-d_2$ for some $d_2 > 0$ (depending on $M_{\hat{s}, \hat{t}}$ alone) on the interval $z + (1 +\phi)r, z + (1 +\psi)r \in [\hat{s}, \hat{t}]$. Putting this in (\ref{S3S1P3E3}) we conclude
$$f_{n,m}(Nx|Nz) \leq C_2 \exp \left( - \frac{d_2}{2} \cdot N \cdot \left[x - \frac{z}{1 + \phi} \right]^2 \right),$$
which implies (\ref{S3S1E12}) in this case.\\

{\bf \raggedleft Case 1.B.} We suppose that $\beta < \infty$. As before we know that (\ref{S3S1E12}) holds for any $A, a > 0$ if $Nx > n\beta$ or $N(z-x) > m \beta$ and so we may assume that $Nx \leq n \beta$ and $N(z-x) \leq m\beta$.

 Suppose that $Nx \geq n\beta - n\epsilon_1$. Then from Lemma \ref{S3S1L2}, (\ref{S3S1P3E2}) and (\ref{ineqGamma}) we know that 
$$f_{n,m}(Nx|Nz) \leq  C_1^N \cdot L \left(L e \epsilon_2 \right)^{n-1} \cdot L \leq 16 C^4 L^2 2^{-N},$$
while for $N \geq N_1$ with $N_1$ sufficiently large depending on $\beta$ we have
$$A \cdot \exp \left(-  aN \left[x-\frac{z}{2}\right]^2  \right) \geq  A \exp \left( - a N \frac{\epsilon_1^2}{4} \right).$$
It follows from the above inequalities that (\ref{S3S1E12}) holds provided we take $A \geq  16 C^4 L^2$, $a$ sufficiently small and $Nx \in [ n\beta - n\epsilon_1, n\beta]$. Analogous arguments applied to $z-x$ in place of $x$ show that for the same $A$ and $a$ we have (\ref{S3S1E12}) provided that $N(z-x) \in [ m\beta - m\epsilon_1, m\beta]$. We may thus assume that $Nx \in [n\alpha + n \epsilon_1, n\beta - n \epsilon_1]$ and $N(z-x) \in [ m\alpha + m \epsilon_1, m\beta - m\epsilon_1].$

We let $N_1$ be sufficiently large so that $N_1 \geq N_0$, where $N_0$ is as in the statement of Proposition \ref{S3S1P1} for the values $\hat{s} = \alpha + \epsilon_1$ and $\hat{t} = \beta - \epsilon_1$.

 Then from Proposition \ref{S3S1P1} (see also equation (\ref{S3S1P2E1}) ) we know that for $m,n \geq N_1$ and  $Nx \in [n\alpha + n \epsilon_1, n\beta - n \epsilon_1]$ and $N(z-x) \in [ m\alpha + m \epsilon_1, m\beta - m\epsilon_1]$ that
\begin{equation}\label{S3S1P3E4}
\begin{split}
&f_{n,m}(Nx|Nz) \leq  C_2 \exp\left[N \left( \frac{F(x(1+\phi)) }{1 + \phi}+ \frac{F ((z-x)(1 + \psi))}{1 + \psi} -  F(z)\right) \right],
\end{split}
\end{equation}
where the constant $C_2$ depends on $m_{\hat{s}, \hat{t}}$ and $M_{\hat{s}, \hat{t}}$ as in Definition \ref{DefDelta} for the values $\hat{s} = \alpha + \epsilon_1$ and $\hat{t} = \beta - \epsilon_1$.

Repeating the same arguments that follow (\ref{S3S1P3E3}) and using the strict negativity of $F''(z)$ for $z \in [\hat{s}, \hat{t}]$ we conclude that 
$$f_{n,m}(Nx|Nz) \leq C_2 \exp \left( - \frac{d_2}{2}\cdot N \cdot \left[x - \frac{z}{1 + \phi} \right]^2 \right),$$
which implies (\ref{S3S1E12}) in this case. Overall, we conclude (\ref{S3S1E12}) under the condition that $\alpha > -\infty$. 

{\bf \raggedleft Case 2.} Suppose now that $\alpha = -\infty$. 

{\bf \raggedleft Case 2.A.} If $\beta < \infty$ then we can conclude (\ref{S3S1E12}) by the same arguments as those in Case 1.A.

{\bf \raggedleft Case 2.B.} Suppose that $\beta = \infty$. By symmetry it suffices to consider the case when Assumption C5.1 holds. Let $R_1$ be as in Definition \ref{DefN2}. Then from Lemma \ref{S3S1L1} and (\ref{S3S1P3E2}) we know that for $x \geq R_1$  and $N \geq N_0$ 
$$f_{n,m}(Nx|Nz) \leq  C_1^N \cdot W^n e^{-d  N x^2}\cdot L \leq L \cdot e^{-dNx^2/2},$$
while 
$$A \cdot \exp \left(-  aN \left[x-\frac{z}{2}\right]^2  \right) \geq  A \exp \left( - a N [ x + R_1/2 ]^2 \right).$$
It follows from the above inequalities that (\ref{S3S1E12}) holds provided we take $A \geq  L$, $a$ sufficiently small (say $a \leq d/8$) and $x \geq R_1$. Analogous arguments applied to $z-x$ in place of $x$ show that for the same $A$ and $a$ we have (\ref{S3S1E12}) provided that $z-x \geq R_1$. We may thus assume that $x, z-x \in [ -2R_1, 2R_1]$. 

We let $N_1$ be sufficiently large so that  $N_1 \geq N_0$, where $N_0$ is as in the statement of Proposition \ref{S3S1P1} for the values $\hat{s} = -6R_1$ and $\hat{t} = 6R_1$.
Then from Proposition \ref{S3S1P1} (see also equation (\ref{S3S1P2E1}))  we know that for $m,n \geq N_1$ and $x \in[-2R_1, 2R_1]$
\begin{equation}\label{S3S1P3E5}
\begin{split}
&f_{n,m}(Nx|Nz) \leq  C_2 \exp\left[N \left( \frac{F(x(1+\phi)) }{1 + \phi}+ \frac{F ((z-x)(1 + \psi))}{1 + \psi} -  F(z)\right) \right],
\end{split}
\end{equation}
where the constant $C_2$ depends on $m_{\hat{s}, \hat{t}}$ and $M_{\hat{s}, \hat{t}}$ as in Definition \ref{DefDelta} for the values $\hat{s} = -6R_1$ and $\hat{t} = 6R_1$. Repeating the same arguments that follow (\ref{S3S1P3E3}) and using the strict negativity of $F''(z)$ for $z \in [\hat{s}, \hat{t}]$ we conclude that 
$$f_{n,m}(Nx|Nz) \leq C \exp \left( - \frac{d_2}{2}\cdot N\cdot  \left[x - \frac{z}{1 + \phi} \right]^2 \right),$$
which implies (\ref{S3S1E12}) in this case. Overall, we conclude (\ref{S3S1E12}) when $\alpha = -\infty$ and $\beta = \infty$.
\end{proof}

\section{Midpoint distribution: Discrete case}\label{Section4}

We continue with the same notation as in Section \ref{Section2.2}. To ease the notation a bit we will write $M, \phi$ and $\Lambda$ instead of $M_X, \phi_X$ and $\Lambda_X$. Let $p_{n,m}(\cdot|l)$ be the distribution of $S_m$ conditioned on $S_{n+m} = l$. Our goal in this section is to obtain several asymptotic statements about the distribution $p_{m,n}(\cdot | l )$ and we start by analyzing $p_N(l)$.

%
\subsection{Asymptotics of $p_N(l)$}\label{Section4.1}
In this section we assume that $p_X(\cdot)$ satisfies Assumptions D1-D3. For a fixed $z \in (A^*, B^*)$ we define 
\begin{equation}\label{S4S1E1}
G_z(u) = \Lambda(u) - z \cdot u, \mbox{ for $u \in (A_{\Lambda}, B_{\Lambda})$.}
\end{equation}

\begin{definition}\label{DefDeltaDisc} Suppose that we are given $s,t \in \mathbb{R}$ such that $\alpha < s < t < \beta$, where $\alpha, \beta$ are as in Assumption D1. In addition, we denote $S = (\Lambda')^{-1}(s)$ and $T = (\Lambda')^{-1}(t)$ -- these quantities are well-defined in view of Lemma \ref{S2S2L2}. By Lemma \ref{S2S2L1} there exist $\infty > M_{s,t} \geq m_{s,t} > 0$ such that $M_{s,t} \geq \Lambda''(y) \geq m_{s,t}$ for all $y \in [S,T]$. We can pick $\delta_{s,t} > 0 $ sufficiently small (depending on $s, t$ and $p_X(\cdot)$) so that  
\begin{enumerate}
\item If $D_{\delta_{s,t}}(S,T) : = \{ z \in \mathbb{C}: d(z, [S,T]) < \delta_{s,t}\}$ then $\overline{D}_{\delta_{s,t}}(S,T) \subset \{z \in \mathbb{C}: A_{\Lambda} < Re(z) < B_{\Lambda} \}$;
\item $Re [M_X(u)] > 0$ for all $u \in  \overline{D}_{\delta_{s,t}}(S,T) $;
\item $\delta_{s,t} < 1/2$;
\item $8\delta_{s,t} \cdot |\log (M_X(u))| \leq m_{s,t}$ for all  $u \in \overline{D}_{\delta_{s,t}}(S,T)$.
\end{enumerate}
\end{definition}

\begin{definition}\label{DefQDisc} Suppose that we are given $s,t \in \mathbb{R}$ such that $\alpha < s < t < \beta$, where $\alpha, \beta$ are as in Assumption D1. Suppose that $\delta_{s,t}$ satisfies the conditions in Definitions \ref{DefDeltaDisc} and let $\epsilon_{s,t} = \delta_{s,t}^4$. Then we can find $q_{s,t} \in (0,1)$ (depending on $s,t, \delta_{s,t}$ and $f_X(\cdot)$)such that for every $z \in [s,t]$ and $y \in [\epsilon_{s,t}, \pi]$ we have
$$\left| \mathbb{E} \left[e^{(u_z + iy)X}\right] \right|  e^{-zu_z} e^{G_z(u_z)} \leq q_{s,t}. $$
To see why the above is true, notice that
$$\left| \mathbb{E} \left[e^{(u_z + iy)X}\right] \right|  e^{-zu_z} e^{G_z(u_z)} < \mathbb{E} \left[ \left|e^{(u_z + iy)X} \right|\right] e^{-zu_z} e^{G_z(u_z)} = 1,$$
where the above inequality is strict for any $y \neq 0$ as the contrary would imply $X \in 2\pi y^{-1} \cdot \mathbb{Z}$ almost surely, which is not true. This combined with the continuity of $ \mathbb{E} \left[e^{(u_z + iy)X}\right]$ in $y$ and $z$ ensures the existence of $q_{s,t}$ with the desired properties.
\end{definition}

We are interested in proving the following statement.
\begin{proposition}\label{S4S1P1} Suppose that $p_X$ satisfies Assumptions D1-D3. Fix $\beta > t > s > \alpha$. Then there exists $N_0$ such that if $N \geq N_0$, $l \in \mathbb{Z}$ and $z = l/N \in [s,t]$ one has
\begin{equation}\label{S4S1E3}
p_N(l) = \frac{1}{\sqrt{2\pi N }\sigma_z} \cdot \exp \left( N G_z(u_z) + \delta_1(z,N) \right), \mbox{ where $\delta_1(z,N) = O(N^{-1/2})$}.
\end{equation}
The number $N_0$ and the constant in the big $O$ notation depend on $f_X, s$ and $t$ only through the constants $\delta_{s,t}, m_{s,t}$ and $q_{s,t}$ as in Definitions \ref{DefDeltaDisc} and \ref{DefQDisc}.
\end{proposition}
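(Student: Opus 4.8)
The plan is to follow the dyadic Fourier-analytic scheme of the proof of Proposition \ref{S3S1P1}, exploiting the simplification that for an integer-valued $X$ the relevant inversion integral runs over the compact interval $[-\pi,\pi]$ rather than all of $\mathbb{R}$. First I would start from the exact lattice inversion formula $p_N(l) = \frac{1}{2\pi}\int_{-\pi}^{\pi} e^{-iyl}\phi(y)^N\,dy$, valid for every integer-valued random variable, and perform the substitution $u = iy$ to rewrite it as $\frac{1}{2\pi i}\int_{-i\pi}^{i\pi} M(u)^N e^{-ul}\,du$ along a segment of the imaginary axis. Using the analytic continuation of $M$ to the strip $D$ from Lemma \ref{S2S2L1}, I would then shift this contour to the vertical segment through $u_z = (\Lambda')^{-1}(z)$. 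The one new ingredient relative to the continuous case is that $M(u)^N e^{-ul}$ is $2\pi i$-periodic in $u$ --- $M$ because $X$ is integer valued, and $e^{-ul}$ because $l\in\mathbb{Z}$ --- so that the two horizontal connectors joining $\pm i\pi$ to $u_z\pm i\pi$ contribute equal and opposite amounts and cancel. Cauchy's theorem then gives, using $l = Nz$ and $e^{G_z(u_z)} = M(u_z)e^{-zu_z}$,
\[
p_N(l) = \frac{e^{NG_z(u_z)}}{2\pi i}\int_{u_z - i\pi}^{u_z+i\pi}\Big[M(u)e^{-uz}e^{-G_z(u_z)}\Big]^N du,
\]
which I would split, with $u = u_z+iy$, into a central part $(I)$ over $|y|\le \epsilon_{s,t}$ and a remainder $(II)$ over $\epsilon_{s,t}\le|y|\le\pi$.

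For the central part $(I)$ I would copy the argument of Proposition \ref{S3S1P1} essentially verbatim. Writing $G_z(u_z+ir) - G_z(u_z) = -r^2\sigma_z^2/2 + \sum_{n\ge 3}\frac{\Lambda^{(n)}(u_z)}{n!}(ir)^n$ and bounding the cubic-and-higher tail by the Cauchy inequalities on $\overline{D}_{\delta_{s,t}}(S,T)$ together with the inequality $\epsilon_{s,t}\cdot 8C\,\delta_{s,t}^{-3} < m_{s,t}$ from Definition \ref{DefDeltaDisc}, the exponent is controlled by $-\sigma_z^2 r^2/2$ up to a cubic error on $|r|\le\epsilon_{s,t}$; the change of variables $x = r\sqrt N$, the elementary bound $|e^A-1|\le|A|e^{|A|}$, a Gaussian moment computation and the tail estimate $\bar\Phi(x)\le 2e^{-x^2/2}$ then yield $(I) = \frac{e^{NG_z(u_z)}}{2\pi\sigma_z\sqrt N}\big(1+O(N^{-1/2})\big)$ for $N$ large.

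For the remainder $(II)$ the discrete case is actually easier than the continuous one. By Definition \ref{DefQDisc}, for $z\in[s,t]$ and $\epsilon_{s,t}\le|y|\le\pi$ the bracketed integrand has modulus at most $q_{s,t}^N$ with $q_{s,t}\in(0,1)$, so integrating over a set of length at most $2\pi$ gives $|(II)|\le e^{NG_z(u_z)}q_{s,t}^N$, which for $N$ large is negligible compared with $\frac{e^{NG_z(u_z)}}{2\pi\sigma_z\sqrt N}$ and is absorbed into the error $\delta_1(z,N)=O(N^{-1/2})$. Adding $(I)$ and $(II)$ yields \eqref{S4S1E3}. Because the integration domain is compact, there is no need for an analogue of Definition \ref{DefK} --- the polynomial decay of $M$ along vertical lines, i.e.\ Assumption C4 --- and this is precisely why $N_0$ and the implied constant depend on $p_X$ only through $\delta_{s,t}$, $m_{s,t}$ and $q_{s,t}$.

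I expect the main obstacle to be, somewhat ironically, the only step that is genuinely new relative to the continuous argument: the contour shift in the first step, and in particular the careful justification of the cancellation of the horizontal connectors, which rests on the $2\pi i$-periodicity of $M(u)^N e^{-ul}$ for $l\in\mathbb{Z}$ combined with the analyticity strip from Lemma \ref{S2S2L1}. Everything after that is a transcription of the proof of Proposition \ref{S3S1P1} with $\mathbb{R}$-integrals replaced by $[-\pi,\pi]$-integrals and the tail estimates correspondingly shortened.
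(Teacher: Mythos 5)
Your proposal is correct and follows essentially the same route as the paper: the lattice inversion formula on $[-\pi,\pi]$, the contour shift to the vertical segment through $u_z$ justified by the cancellation of the two horizontal connectors (the paper phrases this via a rectangular contour whose top and bottom sides give equal contributions with opposite orientation, which is the same periodicity observation), the central estimate $(I)$ transcribed from Proposition \ref{S3S1P1}, and the bound $|(II)|\le e^{NG_z(u_z)}\,q_{s,t}^N$ from Definition \ref{DefQDisc} in place of any analogue of Definition \ref{DefK}. No gaps; this matches the paper's proof.
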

\begin{proof}
To simplify the notation, we drop the dependence on $X$. For any $l \in \mathbb{Z}$ and $N \geq 1$ we have
$$p_N (l)  =  \frac{1}{2\pi} \int_{-\pi}^{\pi} e^{-it l} \cdot \left( \phi(t) \right)^N dt.$$
Performing the change of variables $u = it$ we see that 
\begin{equation}\label{S4P1E1}
p_N(l)= \frac{1}{2\pi i} \int_{-i \pi}^{ i \pi} M^N(u) e^{ - ul} du.
\end{equation}

Consider the rectangular contour $R$ consisting of straight segments connecting $-i\pi$ to $u_z - i\pi$, to $u_z + i\pi$, to $i\pi$ back to $-i\pi$ with a positive orientation. It follows by Lemma \ref{S2S2L1} that $ M^N(u) e^{ - ul}$ is analytic in a neighborhood enclosing that rectangle and so by Cauchy's theorem the integral over $R$ vanishes. In addition, the integral over the top segment and the bottom segment are equal and hence their sum vanishes (as they have opposite orientation). The conclusion is
\begin{equation}\label{S4P1E2}
p_N(l) = \frac{e^{NG_z(u_z)}}{2\pi i} \int_{u_z-i \pi}^{u_z + i \pi}  M(u)^N e^{-uNz}e^{-NG_z(u_z)} du.
\end{equation}

For the given $s,t$ as in the statement of the proposition we define $\delta_{s,t},  m_{s,t}, \epsilon_{s,t}$ and $q_{s,t}$ as in Definitions \ref{DefDeltaDisc} and \ref{DefQDisc}. To ease notation we will drop $s,t$ from the notation for these quantities. We will also denote by $C_{s,t}$ the supremum of $|\log(M(u))| $ as $u$ varies over $\overline{D_\delta}$. Notice that by construction we have
$$\epsilon < \delta/2 \mbox{ and } \epsilon \cdot 8C_{s,t} \cdot \delta^{-3} < m.$$

From (\ref{S4P1E2}) we have $p_N(l) = (I) + (II), \mbox{ where }$
\begin{equation}\label{S4P1E3}
\begin{split}
&(I) = \frac{e^{NG_z(u_z)}}{2\pi i} \int_{u_z -i \epsilon}^{u_z + i \epsilon} e^{N[ G_z(u) - G_z(u_z)]} du, (II) = \frac{e^{NG_z(u_z)}}{2\pi i} \int_{u_z -i \pi}^{u_z - i \epsilon}\left[M(u)e^{-uz}e^{-G_z(u_z)} \right]^N \hspace{-2mm}du  \\
&+ \frac{e^{NG_z(u_z)}}{2\pi i} \int^{u_z  + i \pi}_{u_z + i \epsilon}\left[M(u)e^{-uz}e^{-G_z(u_z)} \right]^N  \hspace{-2mm} du.
\end{split}
\end{equation}

Arguing as in the proof of Proposition \ref{S3S1P1}, we have for $N_0$ sufficiently large and $N \geq N_0$ 
\begin{equation}\label{S4P1E7}
(I) =\frac{e^{NG_z(u_z)}}{2\pi \sigma_z \sqrt{N}} \left[1 + O \left( \frac{1}{\sqrt{N}} \right) \right],
\end{equation}
where the constant in the big O notation depends on the constants in this proposition.

We next forcus on estimating (II). Suppose that $\pm y \in [\epsilon, \pi]$. Then by definition we have 
$$\left| M(u_z + iy)  e^{-z(u_z+iy)} e^{G_z(u_z)} \right| \leq q.$$
The above implies that
\begin{equation}\label{S4P1E8}
|(II)| \leq \frac{e^{NG_z(u_z)}}{2\pi }\cdot 2\pi q^{N}\leq \frac{e^{NG_z(u_z)}}{2\pi \sigma_z N },
\end{equation}
where the last inequality holds provided $N_0$ is sufficiently large and $N \geq N_0$. Combining (\ref{S4P1E7}) and (\ref{S4P1E8}) yields (\ref{S4S1E3}).
\end{proof}

%

\subsection{Asymptotics of $p_{n,m}(\cdot|l)$}\label{Section4.2}

We start with a useful definition.
\begin{definition}\label{DefM34Disc}
Suppose that $p_X(\cdot)$ satisfies Assumptions D1-D3 and that $\beta > t > s > \alpha$ are given. Then in view of Lemmas \ref{S2S2L1} and \ref{S2S2L2} we know that $F(z):= G_z(u_z)$ is smooth on $(\alpha, \beta)$ and so for each $k \geq 0$ exists $M^{(k)}_{s,t} > 0$ such that $| F^{(k)}(z)| \leq M^{(k)}_{s,t}$ for all $z \in [s,t]$. 
\end{definition}

 We have the following asymptotic estimate for $p_{n,m}(\cdot|l)$. 
\begin{proposition}\label{S4S1P2} Suppose that $p_X$ satisfies Assumptions D1-D3. Fix $s,t$ such that $\beta > t > s > \alpha$ and let $N_0$ be as in the statement of Proposition \ref{S4S1P1}. Then there exists $M > 0$ such that the following holds. Suppose that $m ,n \geq N_0$ are such that $|m-n| \leq 1$ and denote $N = n + m$. In addition, let $k, l \in \mathbb{Z}$ be such that if $z:= l/N$ and $x := k/N$, then $z, xN/n$ and $(z-x)N/m \in [s, t]$. Then
\begin{equation}\label{S4S1E11}
p_{n,m}(k|l) = \frac{2}{\sqrt{2\pi N} \sigma_z} \cdot \exp \left(- N\cdot \frac{4}{2\sigma_z^2}\left[x-\frac{z}{2}\right]^2 + \delta_2(N,x,z) \right),
\end{equation}
where
\begin{equation}
\left| \delta_2(N,x,z) \right| \leq M \cdot \left(\frac{1}{\sqrt{N}} + N\left|x-\frac{z}{2}\right|^3 \right).
\end{equation}
The constant $M$ depends on $s,t$ and also on $p_X(\cdot)$, where the dependence on the latter is only through the constants in the statement of Proposition \ref{S4S1P1} and $M^{(3)}_{s,t}, M^{(4)}_{s,t}$ in Definition \ref{DefM34Disc}.
\end{proposition}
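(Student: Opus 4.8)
The plan is to mirror the proof of Proposition \ref{S3S1P2} almost verbatim, with Proposition \ref{S4S1P1} now playing the role of Proposition \ref{S3S1P1}. First I would set $\phi = m/n$ and $\psi = n/m$ and record the exact decomposition: since $S_n$ and $S_{n+m}-S_n$ are independent with mass functions $p_n$ and $p_m$, for integers $k,l$ and $x=k/N$, $z=l/N$ we have
\[
p_{n,m}(k|l) = \frac{p_n(k)\,p_m(l-k)}{p_N(l)} = \frac{p_n\!\left(n\cdot \tfrac{xN}{n}\right)\, p_m\!\left(m\cdot \tfrac{(z-x)N}{m}\right)}{p_N(Nz)}.
\]
The hypotheses place $xN/n$, $(z-x)N/m$ and $z$ in $[s,t]$, so Proposition \ref{S4S1P1} applies to each of the three factors; substituting the resulting expansions gives, exactly as in (\ref{S3S1P2E1}),
\[
p_{n,m}(k|l) = \exp\!\left[N\left(\tfrac{F(x(1+\phi))}{1+\phi} + \tfrac{F((z-x)(1+\psi))}{1+\psi} - F(z)\right)\right]\cdot \frac{2\sigma_z}{\sqrt{2\pi N}\,\sigma_{x(1+\phi)}\,\sigma_{(z-x)(1+\psi)}}\cdot \exp\!\left[O\!\left(N^{-1/2}\right)\right],
\]
where $F(z) = G_z(u_z)$ and the implicit constant depends only on the constants in Proposition \ref{S4S1P1}.

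Next I would expand the exponent exactly as in Proposition \ref{S3S1P2}. By Lemmas \ref{S2S2L1} and \ref{S2S2L2}, $F$ is smooth on $(\alpha,\beta)$ with $F'(z) = -u_z$ and $F''(z) = -1/\sigma_z^2 < 0$ (the same computation as in the continuous case, using $\Lambda'(u_z)=z$ and $\partial_z u_z = 1/\Lambda''(u_z)$). Writing $x = \tfrac{z}{1+\phi}+r$ and
\[
h(r) = \frac{F(z+(1+\phi)r)}{1+\phi} + \frac{F(z-(1+\psi)r)}{1+\psi} - F(z),
\]
one has $h(0)=h'(0)=0$, $h''(0) = -\tfrac{2+\phi+\psi}{\sigma_z^2}$, and $|h'''(r)|\le K$ on the range where all arguments of $F$ lie in $[s,t]$, with $K$ depending only on $M^{(3)}_{s,t}$ of Definition \ref{DefM34Disc}. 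Taylor's theorem then gives $\exp[Nh(r)] = \exp\big(-N\tfrac{2+\phi+\psi}{2\sigma_z^2}r^2 + O(N|r|^3)\big)$. Since $|m-n|\le 1$ one has $\tfrac{2+\phi+\psi}{2} = 2 + O(N^{-1})$ and $\tfrac{z}{1+\phi} = zn/N = \tfrac z2 + O(N^{-1})$, so replacing $r$ by $x-\tfrac z2$ and $\tfrac{2+\phi+\psi}{2}$ by $2$ alters the exponent by at most an absolute constant times $N^{-1/2} + N|x-\tfrac z2|^3$ (using the elementary bound $u^2/N \le N^{-1/2} + N|u|^3$); this produces the Gaussian factor $\exp(-N\tfrac{4}{2\sigma_z^2}[x-\tfrac z2]^2)$ of (\ref{S4S1E11}) up to an error of the asserted size.

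For the ratio of variances I would introduce $g(r) = \log[-F''(z+(1+\phi)r)] + \log[-F''(z-(1+\psi)r)] - 2\log[-F''(z)]$; then $g(0)=0$, $g'(0) = (\psi-\phi)\tfrac{F'''(z)}{F''(z)}$, and since $|F''(z)|\ge 1/M_{s,t}$ while $|F'''|,|F^{(4)}|$ are bounded by $M^{(3)}_{s,t},M^{(4)}_{s,t}$, we get $|g'(0)| = O(N^{-1})$ and $|g''(r)| = O(1)$ on the relevant range, hence $|g(r)| = O(N^{-1}|r| + |r|^2)$ and $\tfrac{\sigma_z^2}{\sigma_{x(1+\phi)}\sigma_{(z-x)(1+\psi)}} = \exp[O(N^{-1/2} + N|x-\tfrac z2|^3)]$. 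Combining the three displays yields (\ref{S4S1E11}) with $|\delta_2(N,x,z)| \le M(N^{-1/2} + N|x-\tfrac z2|^3)$, and tracking where each constant came from gives the claimed dependence of $M$ on $s,t$ and on the constants of Proposition \ref{S4S1P1} together with $M^{(3)}_{s,t},M^{(4)}_{s,t}$. I do not expect a genuinely new obstacle here: all the analytic content (contour deformation and saddle-point estimate) is already packaged in Proposition \ref{S4S1P1}, and the only thing requiring care is the bookkeeping of the $O(N^{-1})$ discrepancies coming from $\phi,\psi\ne 1$ and from $\tfrac{z}{1+\phi}\ne\tfrac z2$, and checking that each of them is dominated by $M(N^{-1/2} + N|x-\tfrac z2|^3)$.
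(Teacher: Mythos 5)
Your proposal is correct and follows essentially the same route as the paper: the paper's proof of Proposition \ref{S4S1P2} simply applies Proposition \ref{S4S1P1} to the three factors in $p_{n,m}(k|l)=p_n(k)p_m(l-k)/p_N(l)$ and then repeats verbatim the Taylor-expansion argument (the functions $h$ and $g$) from the continuous-case Proposition \ref{S3S1P2}, which is exactly what you do. Your explicit bookkeeping of the $O(N^{-1})$ discrepancies from $\phi,\psi\neq 1$ and $\tfrac{z}{1+\phi}\neq\tfrac{z}{2}$ is a point the paper leaves implicit, and your treatment of it is sound.
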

\begin{proof}
Set $\phi = \frac{m}{n}$ and $\psi = \frac{n}{m}$. From Proposition \ref{S4S1P1} we know that for $m,n \geq N_0$ we have
\begin{equation}\label{S4S1P2E1}
\begin{split}
&p_{n,m}(k|l) = \frac{p_n(k)p_m(l - k)}{p_N ( l)} = \\
& e^{N \left( \frac{F[x(1+\phi)] }{1 + \phi}+ \frac{F[ (z-x)(1 + \psi)]}{1 + \psi} -  F(z)\right)} \cdot \frac{2 \sigma_z }{\sqrt{2 \pi N} \sigma_{x(1 + \phi)} \cdot \sigma_{(z-x)(1 + \psi)}} \cdot \exp \left[ O \left(\frac{1}{\sqrt{N}}\right) \right],
\end{split}
\end{equation}
where the constant in the big $O$ notation depends on $s,t$ and the constants in the statement of Proposition \ref{S4S1P1}. From here the proof of the proposition follows the same arguments as in the proof of Proposition \ref{S3S1P2}.
\end{proof}

%
\subsection{Tails of $p_{n,m}(\cdot|l)$}\label{Section4.3}
In this section we will further assume that $p_X(\cdot)$ satisfies Assumption D4 and use that to deduce tail estimates for $p_{n,m}(\cdot|l)$. We start with a couple of lemmas.

\begin{lemma}\label{S4S1L1} Suppose that $p_X$ satisfies Assumption D4. Then for all $N \geq 1$ and $x \in \mathbb{Z}$
$$p_N(x) \leq \begin{cases} W^N e^{-dN^{-1} x^2} \mbox{ for all $x \geq 0$ if D4.1 holds and } \\
                                       W^N  e^{-d N^{-1} x^2} \mbox{ fro all $x \leq 0$ if D4.2 holds,}
                       \end{cases}$$
where $W = D\frac{\sqrt{\pi}}{\sqrt{d}} + 1 + 2D $.
\end{lemma}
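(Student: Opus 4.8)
The proof of Lemma \ref{S4S1L1} is the direct discrete analogue of Lemma \ref{S3S1L1}, and the plan is to run the same induction on $N$, replacing integrals by sums. By the symmetry $X \leftrightarrow -X$ it suffices to treat the case where D4.1 holds, i.e. $p_X(x) \le De^{-dx^2}$ for all $x \ge 0$. The base case $N=1$ is immediate since $p_1 = p_X$ and $W = D\sqrt{\pi}/\sqrt{d} + 1 + 2D > D$. For the inductive step one writes $p_{N+1}(y) = \sum_{x \in \mathbb{Z}} p_N(y-x)\, p_X(x)$ and, fixing $y \ge 0$, splits the sum according to the signs of $x$ and $y-x$ exactly as in Lemma \ref{S3S1L1}: the three surviving regions are $\{x \ge 0,\ y-x \ge 0\}$, $\{x \ge 0,\ y-x < 0\}$, and $\{x < 0,\ y-x \ge 0\}$ (the region $\{x<0,\ y-x<0\}$ is empty for $y\ge 0$).

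The first region contributes at most $D \cdot W^N \sum_{0 \le x \le y} e^{-dx^2} e^{-dN^{-1}(y-x)^2}$; here I would bound the sum by the corresponding integral plus a correction. The cleanest route is the standard estimate $\sum_{x \in \mathbb{Z}} e^{-c(x-a)^2} \le 1 + \int_{\mathbb{R}} e^{-c(x-a)^2}\,dx = 1 + \sqrt{\pi/c}$, valid uniformly in $a$, applied after completing the square in the exponent $-dx^2 - dN^{-1}(y-x)^2$, whose quadratic part has leading coefficient $d(1 + N^{-1}) = d(N+1)/N$ and whose minimum value over $x \in \mathbb{R}$ is $-\frac{d}{N+1}y^2$ (the same Gaussian-convolution computation as in (\ref{GSNconv})). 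This gives a bound of the form $D W^N e^{-d(N+1)^{-1}y^2}\bigl(1 + \sqrt{\pi N/(d(N+1))}\bigr) \le D W^N e^{-d(N+1)^{-1}y^2}\bigl(1 + \sqrt{\pi/d}\bigr)$, which accounts for the $D\sqrt{\pi/d} + D$ piece of $W$ (the extra $+D$ compared with the continuous constant absorbs the ``$1$'' from the sum-vs-integral comparison). For the second and third regions I would use that $p_X$ and $p_N$ are probability mass functions, so $\sum_{x} p_X(x) \le 1$ and $\sum_x p_N(x) \le 1$; combined with the Gaussian bound on the other factor and the elementary inequality $e^{-dx^2} \le e^{-dN^{-1}x^2} \le e^{-d(N+1)^{-1}x^2}$ for the relevant ranges, each of these two regions contributes at most $W^N e^{-d(N+1)^{-1}y^2}$ and $D e^{-d(N+1)^{-1}y^2}$ respectively, as in the continuous case. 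Summing the three contributions yields $p_{N+1}(y) \le W^N\bigl(D\sqrt{\pi/d} + 1 + 2D\bigr) e^{-d(N+1)^{-1}y^2} = W^{N+1} e^{-d(N+1)^{-1}y^2}$, completing the induction.

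The only genuinely new point compared with Lemma \ref{S3S1L1} is the discretization: one must control $\sum_{x} e^{-c(x-a)^2}$ by something uniform in the shift $a$, which is where the slightly larger constant $W = D\sqrt{\pi/d} + 1 + 2D$ (versus $D\sqrt{\pi/d} + 1 + D$ in the continuous case) comes from. I expect this sum-versus-integral bookkeeping — making sure the ``$+1$'' corrections in each region are absorbed into exactly the stated $W$ without accumulating a factor that grows with $N$ — to be the main, though still routine, obstacle; everything else is a line-by-line transcription of the argument for Lemma \ref{S3S1L1}.
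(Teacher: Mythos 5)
Your proposal is correct and follows essentially the same route as the paper's proof: the same induction on $N$, the same three-region sign decomposition of the convolution, the bounds via $\sum p_N \le 1$, $\sum p_X \le 1$ together with the pointwise Gaussian bound on the other factor, and the same sum-versus-integral-plus-peak estimate (the paper phrases it via the unique maximum of $f(x)=e^{-dN^{-1}x^2}e^{-d(x-y)^2}$ on $[0,y]$ and (\ref{GSNconv}), you via completing the square, which is the same computation). Your accounting of how the extra $+D$ in $W$ absorbs the discretization correction matches the paper's bookkeeping exactly.
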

\begin{proof}By symmetry it is clearly enough to consider the case when Assumption D4.1 holds. We proceed by induction on $N$ with base case $N = 1$ being true by assumption. Suppose the result holds true for $N$ and let $y \geq 0$. Then we have
$$p_{N+1}( y) = \sum_{x = 0}^y p_N( x) p_1(y - x) + \sum_{x = y}^\infty p_N(x)p_1(  y-x) +   \sum_{x = y}^\infty p_N(y - x) p_1( x). $$ 
By induction hypothesis and Assumption D4.1 we have
$$ \sum_{x = y}^\infty p_N(x) p_1(y-x) \leq W^{N} e^{-dN^{-1} y^2} \mbox{ and }\sum_{x = y}^\infty p_N(y - x) p_1( x) \leq D e^{-d y^2}.$$
Denote $f(x) =  e^{-d N^{-1}x^2} e^{-d(x-y)^2}$ and note that the function has a unique maximum on $[0, y]$, given by $x_{\max} = \frac{Ny}{N+1}$, and $f(x_{\max}) = e^{-d (N+1)^{-1} y^2}$. We thus have
$$\sum_{x = 0}^{y} f(x) \leq \int_0^y f(u) du + e^{-d (N+1)^{-1} y^2} \leq \left( \frac{\sqrt{\pi}}{\sqrt{d + d/N}} + 1\right) \cdot e^{-d (N+1)^{-1} y^2} ,$$
where in the last inequality we used (\ref{GSNconv}). The latter implies that 
$$ \sum_{x = 0}^y p_N(x) p_1( y - x)  \leq W^N D \left( \frac{\sqrt{\pi}}{\sqrt{d + d/N}} + 1\right) \cdot e^{-d (N+1)^{-1} y^2}.$$
Combining all of the above we see that 
$$p_{N+1}( y) \leq W^N e^{-d (N+1)^{-1} y^2} \left[ 1 + DW^{-N} + D\left( \frac{\sqrt{\pi}}{\sqrt{d + d/N}} + 1\right)  \right] \leq W^{N+1}  e^{-d (N+1)^{-1} y^2}.$$
This proves the case $N+1$ and the general result follows by induction.
\end{proof}

We next summarize a couple of parameter choices for future use.

\begin{definition}\label{DefN2Disc} Suppose that $p_X(\cdot)$ satisfies Assumptions D1-D4. Fix $t,s$ such that $\beta > t > s > \alpha$. Then in view of Proposition \ref{S4S1P1} we can find $C_1 > 1$ sufficiently large depending on the constants in that proposition and $M^{(0)}_{s,t}$ in Definition \ref{DefM34Disc} so that
$$C_1^{-N} \leq p_N(z)$$
for all $z \in [s,t] \cap \mathbb{Z}$ and $N \geq N_0$ (where $N_0$ is as in the statement of Proposition \ref{S4S1P1}). 

We can also find $\epsilon_1 > 0$ sufficiently small so that $s \geq \alpha + 3\epsilon_1$ and $t \leq \beta - 3\epsilon_1$.

We can also find $R_1 > 1$ sufficiently large so that 
$$[s,t] \subset [ - R_1, R_1 ] \mbox{ and } W C_1 e^{-d R_1^2/2} \leq 1,$$
where $W = D \frac{\sqrt{\pi}}{\sqrt{d}} + 1 + 2D$ with $D,d$ as in Assumption D4.

Finally, given the above choice of $\epsilon_1$ and $R_1$ we can define the variables $\hat{s}, \hat{t}$ as follows:
\begin{itemize}
\item $\hat{s} = \alpha + \epsilon_1$ and $\hat{t} = \beta - \epsilon_1$ if $\alpha > -\infty$ and $\beta < \infty$;
\item $\hat{s} = \alpha + \epsilon_1$ and $\hat{t} = 3\max(t,0) - \alpha - \epsilon_1$ if $\alpha > -\infty$ and $\beta = \infty$; 
\item $\hat{s} = 3 \min(0, s) - \beta+ \epsilon_1$ and $\hat{t} = \beta - \epsilon_1$ if $\alpha = -\infty$ and $\beta < \infty$; 
\item $\hat{s} = -6R_1$ and $\hat{t} = 6R_1$ if $\alpha = -\infty$ and $\beta =\infty$.
\end{itemize}  
\end{definition}

\begin{definition}\label{DefParDisc}  Suppose that $p_X(\cdot)$ satisfies Assumptions D1-D4. Fix $t,s$ such that $\beta > t > s > \alpha$ and let $C_1, \epsilon_1, R_1, \hat{s}$ and $\hat{t}$ be as in Definition \ref{DefN2Disc}. For future reference we summarize the following list of constants:
\begin{enumerate}
\item the constants in Assumptions D1 and D4;
\item $C_1, \epsilon_1, R_1, \hat{t}, \hat{s}$ as in Definition \ref{DefN2Disc};
\item $M_{\hat{s},\hat{t}}, m_{\hat{s},\hat{t}}$, $\delta_{\hat{s}, \hat{t}}$ as in Definition \ref{DefDeltaDisc};
\item $q_{\hat{s},\hat{t}}$ as in Definition \ref{DefQDisc};
\item $M_{\hat{s}, \hat{t}}^{(0)}, M_{\hat{s}, \hat{t}}^{(1)}, M_{\hat{s}, \hat{t}}^{(2)}, M_{\hat{s}, \hat{t}}^{(3)}, M^{(4)}_{\hat{s}, \hat{t}}$ from Definition \ref{DefM34Disc}.
\end{enumerate}
\end{definition}

We can now prove the following complement to Proposition \ref{S4S1P2}, which establishes tail estimates for the midpoint density of a discrete random walk bridge. 
\begin{proposition}\label{S4S1P3}
Suppose that $p_X$ satisfies Assumptions D1-D4. Fix $s,t$ such that $\beta > t > s > \alpha$. There exist constants $A,a$ and $N_1 \in \mathbb{N}$ such that the following holds. Suppose that $m ,n \geq 1$ are such that $|m-n| \leq 1$ and denote $N = n + m,$. In addition, let $l \in \mathbb{Z}$ be such that $z:= l/N \in [s,t]$. Then for any $k \in \mathbb{Z}$ and $x = k/N$ we have
\begin{equation}\label{S4S1E12}
p_{n,m}( k| l) \leq A \cdot \exp \left(-  aN \left[x-\frac{z}{2}\right]^2  \right).
\end{equation}
The constants $a,A$ and $N_1$ depend on the values $s,t$ and the function $p_X(\cdot)$, where the dependence on the latter is through the constants in Definition \ref{DefParDisc}.
\end{proposition}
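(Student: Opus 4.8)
The strategy is to mimic the proof of the continuous analogue, Proposition \ref{S3S1P3}, replacing $f_N(Nz)$ by $p_N(l)$ and densities by mass functions. As in (\ref{S4S1P2E1}) one starts from $p_{n,m}(k|l) = p_n(k)\,p_m(l-k)/p_N(l)$, so that Definition \ref{DefN2Disc} gives $p_{n,m}(k|l) \leq C_1^N p_n(k) p_m(l-k)$ for $N \geq N_0$. Two regimes are immediate. When $N$ is bounded (recall the discrete statement allows $m,n\geq 1$, not only $m,n\geq N_1$), one simply uses $p_{n,m}(k|l)\leq 1$: on the support of $p_{n,m}(\cdot|l)$ the quantity $(x-z/2)^2$ is controlled, either because $\alpha,\beta$ are finite, or because Lemma \ref{S4S1L1} and Assumption D4 force Gaussian decay of one of the two factors when $|x|$ is large, so (\ref{S4S1E12}) holds after enlarging $A$. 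Likewise, the far tails $|x|\geq R_1$ (equivalently, one of $x$, $z-x$ large) are disposed of exactly as in Case 2.B of Proposition \ref{S3S1P3}, via Lemma \ref{S4S1L1} and the choice of $R_1$ in Definition \ref{DefN2Disc}. After these reductions we may assume $N$ large and $x$, $z-x$ confined to a bounded interval determined by $R_1$ (when $\alpha=-\infty$ or $\beta=+\infty$) or by $[\alpha,\beta]$.

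In the ``bulk'' regime, where $xN/n$ and $(z-x)N/m$ both lie in $[\hat s,\hat t]$ (the auxiliary interval of Definition \ref{DefN2Disc}, chosen precisely so that Proposition \ref{S4S1P1} applies simultaneously at $n$, $m$ and $N$ in each of the cases $\alpha>-\infty$/$\alpha=-\infty$ and $\beta<\infty$/$\beta=\infty$), I would apply Proposition \ref{S4S1P1} as in (\ref{S4S1P2E1}) to get $p_{n,m}(k|l)\leq C_2\exp\big(Nh(x-zn/N)\big)$, where, with $F(z)=G_z(u_z)$ as in Definition \ref{DefM34Disc} and using $F''=-1/\sigma_z^2$ together with $\sigma_z^2\leq M_{\hat s,\hat t}$ from Definition \ref{DefDeltaDisc}, the function $h$ satisfies $h(0)=h'(0)=0$ and $h''\leq -d_2<0$ on the relevant range. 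Hence $p_{n,m}(k|l)\leq C_2 e^{-(d_2/2)N(x-zn/N)^2}$, and since $|zn/N-z/2|=O(1/N)$ by $|m-n|\leq 1$, the centering can be shifted to $z/2$ at the cost of the errors already present in Proposition \ref{S4S1P2}, giving (\ref{S4S1E12}) here.

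The genuine point of departure --- and where I expect the real work --- is the edge regime $xN/n\in[\alpha,\alpha+\epsilon_1]$ (and, symmetrically, $xN/n$ near $\beta$ when $\beta<\infty$; the case $\alpha=-\infty,\beta=+\infty$ needs no edge estimate at all, only the tail bound above). In the continuous setting this is handled by Lemma \ref{S3S1L2}, whose \emph{super}-exponential bound $f_n\leq L(L\epsilon_1 e)^{n-1}$ beats the crude factor $C_1^N$ once $\epsilon_1$ is small. There is no discrete analogue of Lemma \ref{S3S1L2}: near $n\alpha$ the mass function $p_n$ decays only exponentially, at the fixed rate $\Lambda^*(\alpha)=-\log p_X(\alpha)$ (Lemma \ref{S2S2L2}), which need not dominate $C_1^N$. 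Instead I would abandon Definition \ref{DefN2Disc} here and use sharp two-sided large-deviation estimates: the Chernoff bound $p_n(k)\leq e^{-n\Lambda^*(k/n)}$ (valid for all $k$, since $\inf_u(\Lambda(u)-u\,k/n)=-\Lambda^*(k/n)$), together with the lower bound $p_N(l)\geq c\,N^{-1/2}e^{-N\Lambda^*(z)}$ supplied by Proposition \ref{S4S1P1}, where one uses $G_z(u_z)=\Lambda(u_z)-zu_z=-\Lambda^*(z)$ from Lemma \ref{S2S2L2}. These combine to
$$p_{n,m}(k|l)\leq c^{-1}\sqrt{N}\,\exp\Big(-\big[\,n\Lambda^*(k/n)+m\Lambda^*\big((l-k)/m\big)-N\Lambda^*(z)\,\big]\Big),$$
and the bracket is nonnegative by convexity of $\Lambda^*$, since $\frac{n}{N}\cdot\frac{k}{n}+\frac{m}{N}\cdot\frac{l-k}{m}=z$.

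To extract the quadratic gain one needs $\Lambda^*$ to be \emph{uniformly} strictly convex on the bounded range of $k/n$ and $(l-k)/m$: this follows from $(\Lambda^*)''=1/\Lambda''>0$ on $(\alpha,\beta)$ (Lemma \ref{S2S2L1}), the divergence of $(\Lambda^*)''$ at any finite endpoint of $[\alpha,\beta]$, and compactness, and it upgrades the bracket to $\geq \mu' N(x-z/2)^2$ for some $\mu'>0$. Since in the edge regime $x$ is bounded away from $z/2$ (indeed $z/2-x\geq \epsilon_1/2$ for $N$ large, as $xN/n\leq\alpha+\epsilon_1<s\leq z$), the harmless prefactor $\sqrt N$ is absorbed into the exponential by taking $a$ slightly below $\mu'$, and (\ref{S4S1E12}) follows with $A$ large and $a$ small. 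Assembling the tail, bulk and edge estimates --- and tracking, as in Proposition \ref{S3S1P3}, that every application of Proposition \ref{S4S1P1} is with admissible parameters --- proves the proposition, with all constants depending on $p_X$ only through the list in Definition \ref{DefParDisc}. The main obstacle is exactly this edge estimate: replacing the missing Lemma \ref{S3S1L2} by the convexity-of-$\Lambda^*$ argument while keeping the dependence of constants as prescribed.
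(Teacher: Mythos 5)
Your proposal is correct and is essentially the paper's own proof: the same case decomposition (bounded $N$, far tails via Lemma \ref{S4S1L1} and the choice of $R_1$, bulk via Proposition \ref{S4S1P1} and strict concavity of $F$), and the edge step you present as your genuine departure --- bounding $p_n(k)\le e^{nF(k/n)}=e^{-n\Lambda^*(k/n)}$ by the Chernoff/Fourier argument and dividing by the lower bound on $p_N(l)$ from Proposition \ref{S4S1P1} --- is precisely what the paper does in (\ref{S4P3E3})--(\ref{S4P3E4}). The only real difference is how convexity is quantified at the edge: instead of your uniform strong convexity of $\Lambda^*$ up to a finite endpoint (true, but requiring an extra tilted-variance argument for $(\Lambda^*)''$ near $\alpha$ whose constants are not obviously among those of Definition \ref{DefParDisc}), the paper integrates $h''$ using plain concavity of $F$ near the endpoint and strict concavity only on a fixed subinterval inside $[\hat{s},\hat{t}]$, obtaining a constant exponential rate in $N$ which suffices because $|x-z/2|$ is bounded in that regime and $a$ may be taken small.
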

\begin{proof}
Denote $\phi = \frac{m}{n}$ and $\psi = \frac{n}{m}$. For clarity we split the proof into several cases.

{\bf \raggedleft Case 1.} Suppose first that $\alpha > -\infty$. From the first line of (\ref{S4S1P2E1}) we know that 
\begin{equation}\label{S4S1P3E1}
p_{n,m}(k|l) =  \frac{p_n(k) \cdot p_m(l - k)}{p_N(l)},
\end{equation}
and the latter expression is zero unless $k \geq n\alpha$ and $l- k \geq m\alpha$. We will assume that $k$ satisfies these inequalities as otherwise (\ref{S4S1E12}) trivially holds for any $A, a > 0$. From Definition \ref{DefN2Disc} we know that for all $N \geq N_0$ we have
\begin{equation}\label{S4P3E2}
p_{n,m}(k|l) \leq  C_1^N p_n(k) \cdot p_m(l) \leq C_1^N.
\end{equation}
The latter implies that (\ref{S4S1E12}) is true for all small $N \geq N_0$ by choosing $A$ sufficiently large and $a \leq 1$. We will thus focus on showing (\ref{S4S1E12}) for sufficiently large $N \geq N_0$. \\

Recall that $F(z) = G_z(u_z) = - \Lambda_X^*(z) $ is defined for $z \in (\alpha, \beta)$ but by Lemma \ref{S2S2L2} we can continuously extend it to $\alpha$ (and to $\beta$ provided $\beta < \infty$) by setting $F(\alpha) = \log p_X(\alpha)$ (and $F(\beta) = \log p_X(\beta)$ if $ \beta < \infty$). We next observe that for any $m, n \geq 1$, $ n\beta \geq k \geq n\alpha$ and $m \beta \geq l - k \geq m\alpha $
\begin{equation}\label{S4P3E3}
\begin{split}
p_n(k) \leq e^{nF(k/n)} \mbox{ and } p_m(l - k)\leq e^{m F((k-l)/m)}.
\end{split}
\end{equation}
Indeed, focusing on the first inequality, the statement is true for $k \neq \alpha n$ and $k \neq \beta n$ from (\ref{S4P1E1}) and the fact that the integrand in that equation is bounded in absolute value by $1$ as shown in Definition \ref{DefQDisc}. The statement is also true for $k = \alpha n$ and $k = \beta n$ by our extension of $F$ above.

Suppose that $Nx \leq  n\alpha + n\epsilon_1,$ where $\epsilon_1$ is as in Definition \ref{DefN2Disc}. From (\ref{S4P3E3}) and Proposition \ref{S4S1P1} we know that there is a $C > 0$, depending on $m_{\hat{s}, \hat{t}}$, such that for $m,n \geq N_0$
\begin{equation}\label{S4P3E4}
\begin{split}
& p_{n,m}(k |l) \leq C \sqrt{N} \cdot \exp\left[N \left( \frac{F[x(1+\phi)] }{1 + \phi}+ \frac{F[ (z-x)(1 + \psi)]}{1 + \psi} -  F(z)\right) \right].
\end{split}
\end{equation}
Similarly to the proof of Proposition \ref{S3S1P2} we write $x = \frac{z}{1+ \phi} + r_x$ and denote 
$$h(r) = \frac{F[z + (1+\phi)r] }{1 + \phi}+ \frac{F[ z- r(1 + \psi)]}{1 + \psi} -  F(z).$$
Notice that since $k \leq n \alpha + n \epsilon_1$ we have that $r_x \geq \frac{2 \epsilon_1}{1 + \phi} \geq \frac{2\epsilon_1}{3}$. 
In addition, we have
$$h''(r) = (1+\phi) F''(z + (1 + \phi)r) + (1+\psi)F''(z +(1+\psi)r) \leq 0$$
for all $r \in [0, r_x]$ and so by the continuity of $F$ and its smoothness on $(\alpha, \beta)$ we conclude
$$\frac{F[x(1+\phi)] }{1 + \phi}+ \frac{F[ (z-x)(1 + \psi)]}{1 + \psi} -  F(z) = \int_0^{r_x} \int_0^{y} h''(r) dr dy \leq \int_0^{\epsilon_1/3}\int_0^y h''(r) dr dy $$
$$\leq\int_0^{\epsilon_1/3}\int_0^y \left[-\frac{2}{M_{\hat{s}, \hat{t}}}\right]dr dy = -\frac{\epsilon_1^2}{9 M_{\hat{s},\hat{t}}}.$$
Applying the above in (\ref{S4P3E4}) we conclude
\begin{equation}\label{S4P3E5}
\begin{split}
&p_{n,m}(k|l)  \leq C \sqrt{N} \cdot \exp\left(-\frac{\epsilon_1^2N}{9 M_{\hat{s},\hat{t}}} \right).
\end{split}
\end{equation}
On the other hand, for $N_1$ sufficiently large  depending on $\alpha$ and $N \geq N_1$ we have
$$A \cdot \exp \left(-  aN \left[x-\frac{z}{2}\right]^2  \right) \geq  A \cdot  \exp \left( - a N \frac{\epsilon_1^2}{4} \right).$$
It follows from the above inequalities that (\ref{S4S1E12}) holds provided we take $A = 1$, $a$ sufficiently small, $N_1$ sufficiently large and $Nx \in [n\alpha, n\alpha + n\epsilon_1]$ for $m,n \geq N_1$. Analogous arguments applied to $z-x$ in place of $x$ show that for the same $A$ and $a$ we have (\ref{S4S1E12}) provided that $N(z-x) \in [m\alpha, m\alpha + m\epsilon_1]$. We may thus assume that $Nx\geq n\alpha + n\epsilon_1$ and $N(z-x) \geq m\alpha + m\epsilon_1$.\\

We next consider the cases $\beta = \infty$ and $\beta < \infty$ separately starting with the former. 

{\bf \raggedleft Case 1.A.} If $\beta = \infty$ then we let $N_1$ be sufficiently large so that 
 $N_1 \geq N_0$, where $N_0$ is as in the statement of Proposition \ref{S4S1P1} for the values $\hat{s} = \alpha + \epsilon_1$ and $\hat{t} = 3\max(t,0) - \alpha - \epsilon_1$.

Then from Proposition \ref{S4S1P1} (see also equation (\ref{S4S1P2E1})) we know that we have for $m,n \geq N_1$ and $Nx\geq n\alpha + n\epsilon_1$ and $N(z-x) \geq m\alpha + m\epsilon_1$ that
\begin{equation}\label{S4P3E6}
\begin{split}
&p_{n,m}(k|l) \leq C_2 \exp\left[N \left( \frac{F(x(1+\phi)) }{1 + \phi}+ \frac{F ((z-x)(1 + \psi))}{1 + \psi} -  F(z)\right) \right],
\end{split}
\end{equation}
where the constant $C_2$ depends on $m_{\hat{s}, \hat{t}}$ and $M_{\hat{s}, \hat{t}}$ as in Definition \ref{DefDeltaDisc} for the values $\hat{s} = \alpha + \epsilon_1$ and $\hat{t} = 3\max(t,0) - \alpha - \epsilon_1$. From here the proof continues as that of Case 1.A. in Proposition \ref{S3S1P3}.

{\bf \raggedleft Case 1.B.} We suppose that $\beta < \infty$. As before we know that (\ref{S4S1E12}) holds for any $A, a > 0$ if $Nx > n\beta$ or $N(z-x) > m \beta$ and so we may assume that $Nx \leq n \beta$ and $N(z-x) \leq m\beta$.

 Suppose $Nx \geq n\beta - n\epsilon_1$. We can repeat  our arguments from before and see that (\ref{S4P3E5}) holds in this case as well.
On the other hand, for $N \geq N_1$ with $N_1$ sufficiently large depending on $\beta$ we have
$$A \cdot \exp \left(-  aN \left[x-\frac{z}{2}\right]^2  \right) \geq  A \exp \left( - a N \frac{\epsilon_1^2}{4} \right).$$
It follows from the above inequalities that (\ref{S4S1E12}) holds provided we take $A = 1$, $a$ sufficiently small, $N_1$ sufficiently large and $Nx \in [m\beta, m\beta - m\epsilon_1]$ for $m,n \geq N_1$. Analogous arguments applied to $z-x$ in place of $x$ show that for the same $A$ and $a$ we have (\ref{S4S1E12}) provided that $N(z-x) \in [ m\beta - m\epsilon_1, m\beta]$. We may thus assume that $Nx \in [n\alpha + n \epsilon_1, n\beta - n \epsilon_1]$ and $N(z-x) \in [ m\alpha + m \epsilon_1, m\beta - m\epsilon_1].$

We let $N_1$ be sufficiently large so that $N_1 \geq N_0$, where $N_0$ is as in the statement of Proposition \ref{S4S1P1} for the values $\hat{s} = \alpha + \epsilon_1$ and $\hat{t} = \beta - \epsilon_1$.

 Then from Proposition \ref{S4S1P1} (see also equation (\ref{S4S1P2E1}) ) we know that for $m,n \geq N_1$ and  $Nx \in [n\alpha + n \epsilon_1, n\beta - n \epsilon_1]$ and $N(z-x) \in [ m\alpha + m \epsilon_1, m\beta - m\epsilon_1]$ that
\begin{equation}\label{S4S1P3E4}
\begin{split}
&p_{n,m}(k|l) \leq  C_2 \exp\left[N \left( \frac{F(x(1+\phi)) }{1 + \phi}+ \frac{F ((z-x)(1 + \psi))}{1 + \psi} -  F(z)\right) \right],
\end{split}
\end{equation}
where the constant $C_2$ depends on $m_{\hat{s}, \hat{t}}$ and $M_{\hat{s}, \hat{t}}$ as in Definition \ref{DefDeltaDisc} for the values $\hat{s} = \alpha + \epsilon_1$ and $\hat{t} = \beta - \epsilon_1$. From here the proof continues as that of Case 1.B. in Proposition \ref{S3S1P3}. Overall, we conclude (\ref{S4S1E12}) under the condition that $\alpha > -\infty$. 

{\bf \raggedleft Case 2.} Suppose now that $\alpha = -\infty$. 

{\bf \raggedleft Case 2.A.} If $\beta < \infty$ then we can conclude (\ref{S4S1E12}) by the same arguments as those in Case 1.A.

{\bf \raggedleft Case 2.B.} Suppose that $\beta = \infty$. By symmetry it suffices to consider the case when Assumption D4.1 holds. Let $R_1$ be as in Definition \ref{DefN2Disc}. Then from Lemma \ref{S4S1L1} and (\ref{S4P3E2}) we know that for $x \geq R_1$  and $N \geq N_0$ 
$$p_{n,m}(k|l) \leq  C_1^N \cdot W^n e^{-d  N x^2} \leq  e^{-dNx^2/2},$$
while 
$$A \cdot \exp \left(-  aN \left[x-\frac{z}{2}\right]^2  \right) \geq  A \exp \left( - a N [ x + R_1/2 ]^2 \right).$$
It follows from the above inequalities that (\ref{S4S1E12}) holds provided we take $A = 1$, $a$ sufficiently small (say $a \leq d/8$) and $x \geq R_1$. Analogous arguments applied to $z-x$ in place of $x$ show that for the same $A$ and $a$ we have (\ref{S4S1E12}) provided that $z-x \geq R_1$. We may thus assume that $x, z-x \in [ -2R_1, 2R_1]$. 

We let $N_1$ be sufficiently large so that  $N_1 \geq N_0$, where $N_0$ is as in the statement of Proposition \ref{S4S1P1} for the values $\hat{s} = -6R_1$ and $\hat{t} = 6R_1$.
Then from Proposition \ref{S4S1P1} (see also equation (\ref{S4S1P2E1}))  we know that for $m,n \geq N_1$ and $x \in[-2R_1, 2R_1]$
\begin{equation*}
\begin{split}
&p_{n,m}(k|l) \leq  C_2 \exp\left[N \left( \frac{F(x(1+\phi)) }{1 + \phi}+ \frac{F ((z-x)(1 + \psi))}{1 + \psi} -  F(z)\right) \right],
\end{split}
\end{equation*}
where the constant $C_2$ depends on $m_{\hat{s}, \hat{t}}$ and $M_{\hat{s}, \hat{t}}$ as in Definition \ref{DefDeltaDisc} for the values $\hat{s} = -6R_1$ and $\hat{t} = 6R_1$. From here the proof proceeds as that of Case 2.B. in Proposition \ref{S3S1P3}.

\end{proof}

\section{Gaussian coupling}\label{Section5}

In this section we isolate some results about the quantile coupling of random variables with certain estimates on their probabilities to Gaussian random variables. We start by isolating some results about Gaussian random variables. We denote by $\Phi(x)$ and $\phi(x)$ the cumulative distribution function and density of a standard normal random variable. The following two lemmas can be found in \cite[Section 4.2]{MZ}. 
\begin{lemma}\label{LemmaI1}
There is a constant $c > 1$ such that for all $x \geq 0$ we have
\begin{equation}\label{LI2}
 \frac{1}{c(1+x)} \leq \frac{1 - \Phi(x)}{\phi(x)} \leq \frac{c}{1 +x},
\end{equation}
\end{lemma}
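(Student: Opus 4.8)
The plan is to reduce the statement to classical estimates on the Mills ratio. Set $R(x) := (1-\Phi(x))/\phi(x)$; since $1-\Phi(x) > 0$ and $\phi(x) > 0$ for every $x$, the function $R$ is continuous and strictly positive on $[0,\infty)$, with $R(0) = \sqrt{\pi/2}$, and the assertion is precisely that $(1+x)R(x)$ is bounded above and bounded away from $0$ on $[0,\infty)$. I would split $[0,\infty) = [0,1] \cup [1,\infty)$: on the compact interval $[0,1]$ the bound is automatic from continuity and positivity of $R$, while on $[1,\infty)$ it follows from the two-sided pointwise bound
\[
\frac{x}{1+x^2} \;\le\; R(x) \;\le\; \frac{1}{x}, \qquad x > 0.
\]

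To prove the upper half I would use $\phi'(t) = -t\phi(t)$ to write, for $x>0$,
\[
1 - \Phi(x) = \int_x^\infty \phi(t)\, dt \le \int_x^\infty \frac{t}{x}\, \phi(t)\, dt = \frac{\phi(x)}{x}.
\]
For the lower half I would introduce the auxiliary function $g(x) := (1+x^2)(1-\Phi(x)) - x\phi(x)$ and compute, again using $\phi' = -t\phi$, that $g'(x) = 2x(1-\Phi(x)) - 2\phi(x) = 2\bigl(x(1-\Phi(x)) - \phi(x)\bigr) \le 0$, the inequality being the upper bound just established multiplied through by $x$. Moreover $-x\phi(x) \le g(x) \le \phi(x)/x$, so $g(x) \to 0$ as $x \to \infty$. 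A non-increasing function that tends to $0$ at infinity is nonnegative, hence $g \ge 0$ on $(0,\infty)$, which rearranges to $R(x) \ge x/(1+x^2)$.

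Finally I would assemble the pieces and choose $c$. For $x \ge 1$ one has $1+x^2 \le 2x^2$ and $1+x \le 2x$, so the pointwise bound yields $\tfrac{1}{2(1+x)} \le \tfrac{1}{2x} \le R(x) \le \tfrac1x \le \tfrac{2}{1+x}$; thus $c = 2$ works on $[1,\infty)$. For $x \in [0,1]$, put $m_0 := \min_{[0,1]} R > 0$ and $M_0 := \max_{[0,1]} R < \infty$; since $\tfrac12 \le \tfrac{1}{1+x} \le 1$ there, we get $R(x) \le M_0 \le \tfrac{2M_0}{1+x}$ and $\tfrac{1}{c(1+x)} \le \tfrac1c \le m_0 \le R(x)$ as soon as $c \ge 1/m_0$. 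Taking $c := \max\{2,\, 2M_0,\, 1/m_0\}$, which is automatically $> 1$, gives both inequalities for all $x \ge 0$.

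The argument presents no real obstacle — each step is a routine Gaussian tail manipulation — but the point worth emphasizing is that the sharp asymptotic $R(x) \sim 1/x$ degenerates as $x \downarrow 0$, so uniformity of the \emph{lower} bound near the origin cannot be read off from a tail estimate and genuinely requires the compactness argument on $[0,1]$. For this reason I would keep the split into $[0,1]$ and $[1,\infty)$ explicit rather than hunting for a single closed-form bound valid for all $x \ge 0$.
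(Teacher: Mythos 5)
Your proof is correct. Note, however, that the paper does not actually prove this lemma: it is stated with the remark that it "can be found in \cite[Section 4.2]{MZ}", so there is no in-paper argument to compare against. What you supply is the standard self-contained derivation of the two-sided Mills-ratio bound $x/(1+x^2)\le (1-\Phi(x))/\phi(x)\le 1/x$ for $x>0$ (the upper bound via $\int_x^\infty \phi(t)\,dt\le \int_x^\infty (t/x)\phi(t)\,dt=\phi(x)/x$, the lower bound via monotonicity of $g(x)=(1+x^2)(1-\Phi(x))-x\phi(x)$ together with $g(x)\to 0$ at infinity), and all the steps check out: the computation $g'(x)=2\bigl(x(1-\Phi(x))-\phi(x)\bigr)\le 0$ is right, the sandwich $-x\phi(x)\le g(x)\le \phi(x)/x$ justifies the limit, and the final assembly with the compactness argument on $[0,1]$ and $c=\max\{2,\,2M_0,\,1/m_0\}$ is sound. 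Your closing observation is also apt: the asymptotic bound $R(x)\sim 1/x$ alone does not give the stated inequality uniformly down to $x=0$ (where $1-\Phi$ and $\phi$ are both of order one), so the explicit split into $[0,1]$ and $[1,\infty)$, or some equivalent device, is genuinely needed; this is exactly why the lemma is phrased with the factor $1+x$ rather than $x$.
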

\begin{lemma}\label{LemmaI3}
For all $A > 0$, $n \geq 64 A^2$ and $0 \leq x \leq \frac{1}{8A}$ we have 
\begin{equation}\label{LI5}
\log \left( \frac{\Phi(-\sqrt{n} x + u)}{\Phi( - \sqrt{n}x)}\right) = \log\left(\frac{1 - \Phi(\sqrt{n} x - u)}{1 - \Phi(\sqrt{n}x)} \right) \geq A(nx^3 + n^{-1/2})
\end{equation}
and
\begin{equation}\label{LI6}
\log \left( \frac{\Phi(-\sqrt{n} x - u)}{\Phi( - \sqrt{n}x)}\right) = \log\left(\frac{1 - \Phi(\sqrt{n} x + u)}{1 - \Phi(\sqrt{n}x)} \right) \leq -A(nx^3 + n^{-1/2}),
\end{equation}
where $u = 2A (\sqrt{n}x^2 + n^{-1/2})$.
\end{lemma}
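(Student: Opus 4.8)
\noindent The two forms displayed on each line coincide via $\Phi(-y)=1-\Phi(y)$, so the content is the inequality, which I would prove in the ``$1-\Phi$'' form. Write $t=\sqrt n\,x\ge 0$ and recall $u=2A(\sqrt n x^{2}+n^{-1/2})$, noting $nx^{3}=t\cdot\sqrt n x^{2}$. Let $R(y)=(1-\Phi(y))/\phi(y)$ be the Mills ratio. The plan is to start from the exact identity
\[
\log\frac{1-\Phi(t-u)}{1-\Phi(t)}=\int_{t-u}^{t}\frac{ds}{R(s)},
\]
which follows from $\frac{d}{ds}\log(1-\Phi(s))=-1/R(s)$, and then to lower-bound the integral by $u/R(t-u)$. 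For that I would first establish that $R$ is strictly decreasing on all of $\mathbb R$, from the formula $R'(y)=yR(y)-1$ together with the elementary tail bound $1-\Phi(y)\le\phi(y)/y$ for $y>0$ (which forces $R'\le0$ on $(0,\infty)$) and the trivial $R'(y)\le-1$ for $y\le0$. The analogous starting point for the companion inequality is $\log\frac{1-\Phi(t)}{1-\Phi(t+u)}=\int_{t}^{t+u}ds/R(s)\ge u/R(t)$.

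The next step is to use the hypotheses to see that $t-u$ cannot drop far below $0$. From $x\le\frac1{8A}$ one gets $A\sqrt n x^{2}=(Ax)(\sqrt n x)\le\tfrac18 t$, and from $n\ge 64A^{2}$ one gets $An^{-1/2}\le\tfrac18$, so $u\le\tfrac14 t+\tfrac14$ and $t-u\ge\tfrac14(3t-1)\ge-\tfrac14$. Since $R$ is decreasing, this bounds $R(t-u)$ above by the absolute constant $R(-1/4)$, and when $t-u\ge0$ also by $\min\{R(0),1/(t-u)\}$ with $R(0)=\sqrt{\pi/2}$. I would then split into two regimes. If $t\ge1$ then $u\le\tfrac12 t$, hence $t-u\ge\tfrac12 t>0$ and
\[
\frac{u}{R(t-u)}\ge u(t-u)\ge\tfrac12 ut=A\bigl(t\sqrt n x^{2}+t\,n^{-1/2}\bigr)\ge A\bigl(nx^{3}+n^{-1/2}\bigr).
\]
If $0\le t<1$ then $nx^{3}=t\sqrt n x^{2}\le\sqrt n x^{2}$ and, using $R(t-u)\le R(-1/4)<2$,
\[
\frac{u}{R(t-u)}\ge\frac{2A}{R(-1/4)}\bigl(\sqrt n x^{2}+n^{-1/2}\bigr)\ge A\bigl(\sqrt n x^{2}+n^{-1/2}\bigr)\ge A\bigl(nx^{3}+n^{-1/2}\bigr).
\]
The second inequality of the lemma is handled by the same scheme and is strictly easier: the relevant argument is $t\ge0$, so one only needs $R(t)\le\min\{\sqrt{\pi/2},1/t\}$ and the two-regime split goes through with no negative-argument complication.

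The part that needs genuine care — and the main obstacle — is the constant bookkeeping: one must verify that the factor $2$ hard-wired into $u=2A(\sqrt n x^{2}+n^{-1/2})$ really does dominate the constants $\sqrt{\pi/2}$ and $R(-1/4)$ produced by the Gaussian tail estimates, \emph{uniformly} over the entire admissible range of $(n,x,A)$, and in particular in the small-$t$ regime where $1/R$ does not grow and the estimate survives only because of the $n^{-1/2}$ term. All the underlying facts about $R$ are of exactly the type recorded in Lemma \ref{LemmaI1}, and the statement can in fact be quoted directly from \cite[Section~4.2]{MZ}; the sketch above is how one would reconstruct it.
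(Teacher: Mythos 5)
Your argument is correct, and it is a genuine proof rather than the paper's treatment: the paper does not prove this lemma at all, it simply quotes it (together with Lemma \ref{LemmaI1}) from \cite[Section 4.2]{MZ}. Your reconstruction via the Mills ratio is sound at every step: the identity $\log\frac{1-\Phi(t-u)}{1-\Phi(t)}=\int_{t-u}^{t}ds/R(s)$ is exact, $R'(y)=yR(y)-1$ together with $1-\Phi(y)\le\phi(y)/y$ (for $y>0$) does give that $R$ is nonincreasing on $\mathbb R$, the hypotheses $Ax\le\tfrac18$ and $An^{-1/2}\le\tfrac18$ do yield $u\le\tfrac14 t+\tfrac14$, hence $t-u\ge-\tfrac14$, and the two-regime split closes correctly: for $t\ge1$ one has $t-u\ge t/2$ and $u(t-u)\ge\tfrac12 ut=A(nx^{3}+t\,n^{-1/2})\ge A(nx^{3}+n^{-1/2})$, while for $t<1$ the estimate reduces to $2/R(t-u)\ge1$, which holds since $R(0)=\sqrt{\pi/2}<2$ and $R(-1/4)<2$. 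The only place you lean on an unproved numerical value is $R(-1/4)<2$; this is easy to certify without tables, e.g.\ for $y\in[-\tfrac14,0]$ one has $-R'(y)=1-yR(y)\le 1+\tfrac14R(-\tfrac14)$, so $R(-\tfrac14)\le\bigl(\sqrt{\pi/2}+\tfrac14\bigr)/\bigl(1-\tfrac1{16}\bigr)<2$, and with that remark your proof is fully self-contained. Compared with the paper's citation, your route has the advantage of making explicit exactly which Gaussian facts are used (only the monotone Mills ratio, i.e.\ the same circle of facts as Lemma \ref{LemmaI1}) and of showing where the factor $2$ in the definition of $u$ is actually spent, namely in the small-$t$ regime where the bound survives solely through the $n^{-1/2}$ term.
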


From Rolle's theorem one deduces the following simple result.
\begin{lemma}\label{LemmaI2}
Let $R > 0$ be given. There exists a positive constant $c_1$ such that for $x, y \in [-R, R]$
\begin{equation}\label{LI4}
\left|\Phi(x) - \Phi(y)\right| \leq c_1|x-y|.
\end{equation}
\end{lemma}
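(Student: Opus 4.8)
The plan is to use the fact that $\Phi$ is continuously differentiable with $\Phi' = \phi$, where $\phi(x) = \tfrac{1}{\sqrt{2\pi}} e^{-x^2/2}$ is the standard normal density, and to convert this into a Lipschitz bound via the mean value theorem (which is the standard consequence of Rolle's theorem).

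Concretely, first I would fix $x, y \in [-R,R]$ and assume without loss of generality that $x < y$ (the case $x = y$ being trivial). Since $\Phi$ is differentiable on all of $\mathbb{R}$, Rolle's theorem applied to the function $t \mapsto \Phi(t) - \Phi(x) - \frac{\Phi(y)-\Phi(x)}{y-x}(t-x)$ on $[x,y]$ produces a point $\xi \in (x,y)$ with $\Phi'(\xi) = \frac{\Phi(y)-\Phi(x)}{y-x}$, i.e.
\begin{equation*}
\Phi(y) - \Phi(x) = \phi(\xi)\,(y-x).
\end{equation*}
Second, since $\xi \in (x,y) \subset [-R,R]$ and $\phi$ is continuous on the compact interval $[-R,R]$, it attains a finite maximum there; set $c_1 := \max_{t \in [-R,R]} \phi(t)$ (one may even take $c_1 = \phi(0) = 1/\sqrt{2\pi}$ since $\phi$ is globally bounded by this value, but the $R$-dependent choice suffices). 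Taking absolute values then gives $|\Phi(y)-\Phi(x)| = \phi(\xi)|y-x| \leq c_1 |x-y|$, which is exactly \eqref{LI4}, and $c_1 > 0$ since $\phi$ is strictly positive.

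There is no genuine obstacle here: the only mild points to be careful about are handling the degenerate case $x = y$ and noting that the intermediate point $\xi$ stays inside $[-R,R]$ so that the maximum of $\phi$ over that interval indeed dominates $\phi(\xi)$. The result is purely a statement of local Lipschitz continuity of a smooth function with bounded derivative.
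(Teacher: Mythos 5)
Your proof is correct and follows exactly the route the paper indicates: the paper simply remarks that the lemma is deduced from Rolle's theorem (i.e., the mean value theorem applied to $\Phi$ with derivative $\phi$ bounded on $[-R,R]$, or indeed globally by $1/\sqrt{2\pi}$), which is precisely your argument. No issues.
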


The following is an analogue of \cite[Lemma 6.9]{LF}. We include it here for the sake of completeness.
\begin{lemma}\label{L2}
Let $M_0 > 0$, $\epsilon_0 > 0$, $\tilde{c}  \in (0,1)$, $b' > 0$ and $c' > 0$ be given. Then we can find constants $c_2, \epsilon_2 > 0$, $N_2 \in \mathbb{N}$ such that the following holds for every positive integer $n \geq N_2$ and every $\sigma^2 \in[ \tilde{c}, \tilde{c}^{-1}]$. Suppose that $X$ is an integer random variable and for all $x \in \{y: y \in \mathbb{Z}, |y| \leq n \epsilon_0\}$
\begin{equation}\label{S2E1}
\mathbb{P}(X = x) = \frac{1}{\sqrt{2\pi \sigma^2 n}} \exp \left( - \frac{x^2}{2n \sigma^2} + \delta(x) \right),
\end{equation}
where 
\begin{equation}\label{S2E2}
|\delta(x)| \leq  M_0 \left[ \frac{1}{\sqrt{n}} + \frac{|x|^3}{n^2} \right].
\end{equation}
Assume additionally that for any $m \in \mathbb{Z}$ 
\begin{equation}\label{S2E2v2}
\mathbb{P}(X = m) \leq c' e^{-b' m^2 / n}.
\end{equation}
Then for any $|x| \leq \epsilon_2 n$ we have
\begin{equation}\label{S2E3}
F \left(x - c_2 \left(1 + \frac{x^2}{n} \right) \right) \leq \mathbb{P}(X \leq x -1) \leq \mathbb{P}(X \leq x+ 1) \leq F \left(x + c_2 \left(1 + \frac{x^2}{n} \right) \right),
\end{equation} 
where $F(x)$ is the cumulative distribution function of a $N(0, \sigma^2 n)$ random variable.
\end{lemma}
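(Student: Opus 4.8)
The plan is to adapt the argument of \cite[Lemma 6.9]{LF}: compare the distribution function of $X$ with $F$ through an elementary Euler--Maclaurin (Riemann-sum) estimate, truncate the far tail using (\ref{S2E2v2}), and absorb the resulting errors into a shift of the argument of $F$ by means of the Gaussian estimates in Lemmas \ref{LemmaI1} and \ref{LemmaI3}. Since the hypotheses (\ref{S2E1})--(\ref{S2E2v2}) are invariant under $X\mapsto -X$, it suffices to construct $c_2,\epsilon_2,N_2$ so that $\mathbb P(X\le x+1)\le F(x+c_2(1+x^2/n))$ for all integer $X$ satisfying the hypotheses and all $|x|\le\epsilon_2 n$; applying this to $-X$ and using $F(-t)=1-F(t)$ gives $\mathbb P(X\le x-2)\ge F(x-c_2(1+x^2/n))$, which together with the trivial monotonicity of $x\mapsto\mathbb P(X\le x)$ yields the two remaining inequalities of (\ref{S2E3}) after enlarging $c_2$. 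Write $g(t)=\tfrac{1}{\sqrt{2\pi\sigma^2 n}}\exp(-t^2/(2\sigma^2 n))$ for the $N(0,\sigma^2 n)$-density, so $F'=g$, $F(t)=\Phi(t/(\sigma\sqrt n))$, and, by (\ref{S2E1}), $\mathbb P(X=m)=g(m)e^{\delta(m)}$ for all integers $|m|\le\epsilon_0 n$. I would fix a large parameter $A=A(M_0,\tilde c,\epsilon_0,b',c')$, then choose $\epsilon_2$ smaller than $\min\{\epsilon_0,\ \tilde c/(12M_0),\ \sqrt{\tilde c}/(16A)\}$ and small enough that $\epsilon_2^2/(2\tilde c)$ is below the tail-decay rate arising in Step 1 below, then $c_2$ large depending on $A,M_0,\tilde c$, and finally $N_2\ge 64A^2$ large.

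\emph{Step 1 (comparison).} I describe the case $x\le 0$; the case $x\ge 0$ is its mirror image, obtained by replacing $F$ with $1-F$. On $(-\infty,0]$ the density $g$ is increasing, so $\int_{m-1}^m g\le g(m)\le\int_m^{m+1}g$ for all integers $m\le-1$, which gives $\sum_{m\le x}g(m)\le F(x+1)$. For the multiplicative errors $e^{\delta(m)}$ one truncates the sum at $\theta_0:=\min\{\epsilon_0 n,\ n/(4M_0\sigma^2)\}$, a threshold of order $n$ chosen so that $M_0|m|^3/n^2\le m^2/(4\sigma^2 n)$ on $\{|m|\le\theta_0\}$: beyond $\theta_0$ one uses (\ref{S2E2v2}) to see $\sum_{|m|>\theta_0}\mathbb P(X=m)\le C e^{-cn}$ for constants $c,C>0$ depending only on $M_0,\tilde c,b',c',\epsilon_0$, while for $|m|\le\theta_0$ one expands $e^{\delta(m)}-1$ and uses the bound (\ref{S2E2}) on $|\delta(m)|$ together with the elementary moment estimate $\sum_m g(m)|m|^3\lesssim(\sigma^2 n)^{3/2}$ and its one-sided refinement $\sum_{m\le x}g(m)|m|^3\lesssim|x|^3\sum_{m\le x}g(m)$ valid for $|x|\gtrsim\sqrt n$. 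Combining these with Lemma \ref{LemmaI1} (to compare $\sum_{m\le x}g(m)$ with $F(x+2)$ up to a bounded factor and to bound $g(x)/F(x)$) produces a constant $C_0=C_0(M_0,\tilde c,\epsilon_0,b',c')$ with
\[
\mathbb P(X\le x+1)\ \le\ \exp\!\big(C_0(n^{-1/2}+|x|^3 n^{-2})\big)\,F(x+2),\qquad x\le 0,
\]
and symmetrically $\mathbb P(X\ge x+2)\ \ge\ \exp\!\big(-C_0(n^{-1/2}+|x|^3 n^{-2})\big)\,(1-F(x+2))$ for $x\ge 0$.

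\emph{Step 2 (absorbing the error).} For $x\le-4$ one applies Lemma \ref{LemmaI3}, inequality (\ref{LI5}), with its parameter equal to $(|x|-2)/(\sigma n)$ — legitimate because $|x|\le\epsilon_2 n$ and $\epsilon_2\le\sqrt{\tilde c}/(16A)$ force this to lie in $[0,1/(8A)]$ — and, writing $F(t)=\Phi(t/(\sigma\sqrt n))$ and using $\sigma\sqrt n\,u\le\tfrac{2A}{\sigma}\tfrac{x^2}{n}+2A\sigma\le c_2(1+x^2/n)-2$, obtains $F(x+c_2(1+x^2/n))\ge F(x+2)\exp\!\big(A(c\,|x|^3 n^{-2}+n^{-1/2})\big)$ for a fixed $c=c(\tilde c)>0$. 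Combining with Step 1 and taking $A$ so large that $A\min\{c,1\}\ge C_0$, together with $c_2\ge C'M_0/\tilde c$ for a suitable absolute constant $C'$ (this ``rate-matching'' condition is what makes the argument work in the regime $|x|\asymp n$, where the error factor $e^{C_0|x|^3 n^{-2}}$ is genuinely exponential and the polynomial prefactors are swamped), gives $\mathbb P(X\le x+1)\le F(x+c_2(1+x^2/n))$. For $x\ge 4$ one argues symmetrically with inequality (\ref{LI6}) to get $1-F(x+2)\ge e^{A(c|x|^3 n^{-2}+n^{-1/2})}(1-F(x+c_2(1+x^2/n)))$ and concludes from the lower bound on $\mathbb P(X\ge x+2)$. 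Finally the bounded window $|x|\le 4$ is handled directly: here both $\mathbb P(X\le x+1)$ and the relevant values of $F$ equal $\tfrac12+O(n^{-1/2})$ and, by Lemma \ref{LemmaI2}, shifting the argument of $F$ by $c_2$ changes it by at least $\sim c_2 n^{-1/2}$, which dominates the $O(n^{-1/2})$ discrepancy once $c_2$ is large.

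\emph{Main obstacle.} The crux is the uniform bookkeeping of the errors in Step 1, for which three features must be reconciled uniformly over $|x|\le\epsilon_2 n$ and $\sigma^2\in[\tilde c,\tilde c^{-1}]$. First, the pointwise bound $|m|^3/n^2$ on $|\delta(m)|$ is \emph{not} uniformly small over the whole range $|m|\le\epsilon_0 n$ of validity of (\ref{S2E1}); this is why one truncates at $\theta_0\asymp n$ and invokes (\ref{S2E2v2}) beyond it, and why one cannot simply bound $e^{|\delta(m)|}$ by a constant but must exploit that the \emph{$g$-weighted} sum $\sum_m g(m)|\delta(m)|$ is $O(n^{-1/2})$ and, restricted to $\{m\le x\}$, is $\lesssim(n^{-1/2}+|x|^3 n^{-2})F(x)$. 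Second, the Euler--Maclaurin error is only $O(n^{-1/2}+|x|/n)$ in logarithmic scale and one must check it is dominated by the Step 2 gain $A(c|x|^3 n^{-2}+n^{-1/2})$ — true because $|x|/n\le n^{-1/2}$ for $|x|\le\sqrt n$ and $|x|/n\le|x|^3 n^{-2}$ for $|x|\ge\sqrt n$. Third, the far-tail remainder $Ce^{-cn}$ must be negligible against $F(x)$ (resp.\ $1-F(x)$) for all $|x|\le\epsilon_2 n$, which holds precisely because $\epsilon_2$ was chosen small relative to $\epsilon_0,b'$ and $\tilde c$, so that $F(-\epsilon_2 n)\ge e^{-\epsilon_2^2 n/\tilde c}$ still dominates $e^{-cn}$. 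Carrying this out with all constants depending only on $M_0,\epsilon_0,\tilde c,b',c'$ is exactly what the statement requires and is the main technical content.
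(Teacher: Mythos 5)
Your overall route (compare the distribution function of $X$ with $F$, truncate the far tail via (\ref{S2E2v2}), and absorb the discrepancy into a shift of the argument of $F$ through Lemmas \ref{LemmaI1}, \ref{LemmaI2} and \ref{LemmaI3}) is the same as the paper's, and your Step 2, the symmetry reduction, and the treatment of the bounded window are fine in outline. The genuine gap is in Step 1, in the regime where $M_0|x|^3/n^2$ is not small, i.e.\ roughly $n^{2/3}\ll|x|\le\epsilon_2 n$. There you still need the multiplicative bound $\sum_{m\le x}g(m)e^{\delta(m)}\le e^{C_0(n^{-1/2}+|x|^3n^{-2})}F(x+2)$, but the tools you invoke cannot produce it. The inequality $|e^{\delta}-1|\le|\delta|e^{|\delta|}$ carries the factor $e^{|\delta(m)|}$, and for $m$ well below $x$ (but above $-\theta_0$) the only control your truncation gives is $g(m)e^{|\delta(m)|}\le Cn^{-1/2}e^{-m^2/(4\sigma^2 n)}$ (this is exactly what the choice $M_0|m|^3/n^2\le m^2/(4\sigma^2 n)$ buys). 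Summing this, with or without the extra factor $|\delta(m)|$, yields at best a Gaussian tail of \emph{doubled} variance, of size about $\frac{\sqrt n}{|x|}e^{-x^2/(4\sigma^2 n)}$, whereas the target $e^{C_0|x|^3/n^2}F(x)\asymp e^{C_0|x|^3/n^2}\frac{\sqrt n}{|x|}e^{-x^2/(2\sigma^2 n)}$ is smaller by the factor $e^{x^2/(4\sigma^2 n)-C_0|x|^3/n^2}$, which is exponentially large throughout $n^{2/3}\ll|x|\le\epsilon_2 n$ once $\epsilon_2$ is small (as you have chosen it). The refinement $\sum_{m\le x}g(m)|m|^3\lesssim|x|^3\sum_{m\le x}g(m)$ does not rescue this: it controls $\sum g|\delta|$, not $\sum g|\delta|e^{|\delta|}$, and the linearization of $e^{\delta}-1$ is invalid exactly where $\delta$ is of order $|x|^3/n^2\gg1$.

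The missing idea is the localization the paper uses in its large-$x$ case ($n^{5/8}\le x\le\epsilon_2 n$): put the cubic error into the exponent, $h_\pm(z)=-z^2/(2\sigma^2)\pm2M_0z^3/\sqrt n$, choose the cutoff $\epsilon_1$ so small that $h_\pm$ is decreasing and $h_+\le-z^2/(4\sigma^2)$ on $(0,\epsilon_1\sqrt n]$, compare the sum with $\int e^{h_+}$, and then cut the integral at $2x$: on the window $[x-2,2x]$ the cubic term is uniformly at most $16M_0x^3/n^2$, so this piece is $\le e^{16M_0x^3/n^2}\,\bar F(x-2)$ with $\bar F=1-F$, while the piece beyond $2x$ is dominated, since $x\ge n^{5/8}\gg\sqrt{n\log n}$, by a unit slice of the full-variance Gaussian just above $x$ and can be discarded. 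In other words, for large $|x|$ you must bound $e^{\delta(m)}$ \emph{uniformly} by $e^{C|x|^3/n^2}$ on a window of the form $m\in[x,2x]$ and prove separately that the complement contributes negligibly relative to $F(x)$ (resp.\ $\bar F(x)$); a single truncation at $\theta_0\asymp n$ plus linearization cannot do this. Relatedly, note that the paper splits its moderate regime at $n^{5/8}$ (not at $n^{2/3}$) precisely so that the $e^{-cn^{1/3}}$ error from truncating at level $n^{2/3}$ stays negligible against $\bar F(x)$; your bookkeeping needs the same care at the interface between the moderate and large regimes. With this windowing device inserted, the rest of your plan (Step 2 absorption via Lemma \ref{LemmaI3} and the symmetry argument) goes through.
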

\begin{proof}
For convenience we denote $G(x) = \mathbb{P}(X \leq x)$, $\bar{F}(x) = 1 - F(x)$, $f(x) = F'(x) = \frac{e^{-x^2/(2\sigma^2 n)}}{\sqrt{2\pi n \sigma^2}}$ and $\bar{G}(x) = 1 - G(x)$. Throughout $C,c$ will stand for generic constants that depend on $M_0, \tilde{c}, \epsilon_0, b', c'$ unless otherwise specified.

 By symmetry we can assume $x \geq 0$. It suffices to prove (\ref{S2E3}) only for integer values of $x$ and for $n$ sufficiently large. In particular, we assume that $N_2$ is sufficiently large so that $ \epsilon_0 n \geq n^{5/8} \geq  \sqrt{3 \tilde{c} } \cdot n^{1/2} \geq 1$ for all $n \geq N_2$. We prove (\ref{S2E3}) in three cases depending on the size of $|x|$. \\

We first consider the case $x \leq \sqrt{3 \tilde{c} } \cdot n^{1/2}$. We then have
\begin{equation}\label{L2E1}
 \bar{F}(x) =  \sum_{j > x} f(j) + \sum_{j > x} \left[ \mathbb{P}(X = j) - f(j) \right] = \int_x^\infty f(x) dx +\sum_{j > x} \left[ \mathbb{P}(X = j) - f(j) \right] + O\left( \frac{1}{\sqrt{n}} \right),
\end{equation}
where in the last equality we used that $f(x)$ is decreasing for $x \geq 0$ and its integral over any unit interval is at most $\frac{1}{\sqrt{2\pi n \sigma^2}}$. Using that $f(x)$ is decreasing for all $x \geq 0$ we get
\begin{equation}\label{L2E2}
\sum_{j > x} \left| \mathbb{P}(X = j) - f(j) \right|  \leq \sum_{j = x+1}^{\lfloor n^{2/3} \rfloor} \frac{e^{-\frac{j^2}{2 n \sigma^2}}}{\sqrt{2\pi \sigma^2 n}} |e^{\delta(j)} - 1| + \mathbb{P}(X \geq n^{2/3}) + \int_{n^{2/3} - 1}^\infty f(x) dx
\end{equation}
We next increase $N_2$ so that $N_2^{-1/3}M_0 \leq \frac{\tilde{c}}{4} \leq  \frac{1}{4\sigma^2}$ and use the inequality $|e^x - 1| \leq |x| e^{|x|}$ to estimate
\begin{equation}\label{L2E3}
\sum_{j = x+1}^{\lfloor n^{2/3} \rfloor} \frac{e^{-\frac{j^2}{2 n \sigma^2}}}{\sqrt{2\pi \sigma^2 n}} |e^{\delta(j)} - 1| \leq \hspace{-1.5mm} \sum_{j = x+1}^{\lfloor n^{2/3} \rfloor} \frac{e^{-\frac{j^2}{2 n \sigma^2}}}{\sqrt{2\pi \sigma^2 n}}|\delta(j)|e^{|\delta(j)|} \leq \hspace{-1.5mm}  \sum_{j = x+1}^{\lfloor n^{2/3} \rfloor} \frac{e^{-\frac{j^2}{4 n \sigma^2} + \frac{M_0}{\sqrt{n}}}}{\sqrt{2\pi \sigma^2 n}}\left[ \frac{M_0}{\sqrt{n}} + \frac{M_0 |j|^3}{n^2} \right].
\end{equation}
Since $f(x)$ is decreasing for all $x \geq 0$
\begin{equation}\label{L2E4}
\begin{split}
 &\sum_{j = x+1}^{\lfloor n^{2/3} \rfloor} \frac{e^{-\frac{j^2}{4 n \sigma^2} }}{\sqrt{2\pi \sigma^2 n}}\left[\frac{M_0}{\sqrt{n}} \right] \leq \frac{\sqrt{2}M_0}{\sqrt{n}} \cdot \int_{0}^\infty \frac{e^{-\frac{u^2}{4 n \sigma^2} }}{\sqrt{4\pi \sigma^2 n}}du =  \frac{M_0}{\sqrt{2n}}.
\end{split}
\end{equation}
Analogously, by using that $x^3e^{-x^2/2}$ is decreasing for all $x \geq \sqrt{3}$ we have
\begin{equation}\label{L2E5}
\begin{split}
&\sum_{j = x+1}^{\lfloor n^{2/3} \rfloor} \frac{e^{-\frac{j^2}{4 n \sigma^2} }}{\sqrt{2\pi \sigma^2 n}}\left[\frac{M_0|j|^3}{n^2} \right] = \sum_{j = x+1}^{\lfloor \sqrt{3 \tilde{c}n} \epsilon  + 1\rfloor} \frac{e^{-\frac{j^2}{4 n \sigma^2} }}{\sqrt{2\pi \sigma^2 n}}\left[\frac{M_0|j|^3}{n^2} \right] + \sum_{j = \lfloor \sqrt{3 \tilde{c}n} \rfloor +2}^{\lfloor n^{2/3} \rfloor} \frac{e^{-\frac{j^2}{4 n \sigma^2} }}{\sqrt{2\pi \sigma^2 n}}\left[\frac{M_0|j|^3}{n^2} \right] \\
&\leq \frac{M_0 [2 \sqrt{3 \tilde{c}} \epsilon]^3}{\sqrt{2n}} + \frac{M_0}{n^2} \int_{0}^\infty  \frac{u^3e^{-\frac{u^2}{4 n \sigma^2} }}{\sqrt{2\pi \sigma^2 n}}du =  \frac{M_0 [2 \sqrt{3 \tilde{c}} \epsilon]^3}{\sqrt{2n}} + \frac{2(\sqrt{2\sigma^2 n})^3M_0}{\sqrt{\pi} n^2}.
\end{split}
\end{equation}

Finally, we have that by taking $N_2$ larger we can ensure using (\ref{LI2})  and (\ref{S2E2v2}) that 
\begin{equation}\label{L2E6}
\mathbb{P}(X \geq n^{2/3}) \leq \frac{c' e^{-b' \lfloor n^{2/3} \rfloor^2/n}}{1 - e^{-b' \lfloor n^{2/3} \rfloor / n}}  \leq Ce^{-cn^{1/3}} \mbox{, } \int_{n^{2/3} - 1}^\infty \hspace{-4mm}f(x) dx = 1 - \Phi\left( \frac{ n^{2/3} - 1}{\sigma^2 \sqrt{n}}  \right) \leq C e^{-c n^{1/3}}.
\end{equation}
Combining (\ref{L2E1}), (\ref{L2E2}), (\ref{L2E3}), (\ref{L2E4}), (\ref{L2E5}) and (\ref{L2E6}) we conclude for $|x| \leq \sqrt{3 \tilde{c}} n^{1/2}$ and $n$ large
$$\left| G(x) - F(x)\right| = \left| \bar{G}(x) - \bar{F}(x) \right| \leq \frac{C}{\sqrt{n}} ,$$
which implies (\ref{S2E3}) in view of Lemma \ref{LemmaI2}.\\

Next we consider the case $n^{5/8} \geq x \geq  \sqrt{3 \tilde{c} } \cdot n^{1/2}$. In this case we have
\begin{equation}\label{L2E1v2}
 \bar{G}(x) =  \sum_{j > x} f(j) + \sum_{j > x} \left[ \mathbb{P}(X = j) - f(j) \right] = \bar{F}(x) +\sum_{j > x} \left[ \mathbb{P}(X = j) - f(j) \right] +  O\left( \frac{e^{-\frac{x^2}{2 n \sigma^2} }}{\sqrt{2\pi \sigma^2 n}} \right),
\end{equation}
where in the last equality we used that $f(y)$ is decreasing on $[ x, \infty)$ and its integral over any unit interval is at most $\frac{e^{-\frac{x^2}{2 n \sigma^2} }}{\sqrt{2\pi \sigma^2 n}}$. Notice that for $x+1 \leq j \leq n^{2/3}$ we have $|\delta(j)| \leq C |j|^3/ n^2 \leq C$, where $C = M_0 \cdot [1 + (3 \tilde{c})^{-3/2}]$. This means that $\left|e^{\delta(j)} - 1\right| \leq e^C |\delta(j)| \leq C|j|^3/n^2$ and so
\begin{equation}\label{L2E3v2}
\sum_{j = x+1}^{\lfloor n^{2/3} \rfloor} \frac{e^{-\frac{j^2}{2 n \sigma^2}}}{\sqrt{2\pi \sigma^2 n}} |e^{\delta(j)} - 1| \leq C \sum_{j = x+1}^{\lfloor n^{2/3} \rfloor} \frac{e^{-\frac{j^2}{2 n \sigma^2} }}{\sqrt{2\pi \sigma^2 n}}\left[  \frac{j^3}{n^2} \right] \leq \frac{C}{n^2}\int_x^\infty   \frac{u^3e^{-\frac{u^2}{2 n \sigma^2} }}{\sqrt{2\pi \sigma^2 n}}du \leq \frac{Cx^2}{n^{3/2}} e^{-\frac{x^2}{2n \sigma^2}}.
\end{equation}

From (\ref{L2E6}) we know that by possibly making $N_2$ larger we can ensure
\begin{equation}\label{L2E6v2}
\mathbb{P}(X \geq n^{2/3}) \leq Ce^{-cn^{1/3}} \leq \frac{1}{\sqrt{n}} \cdot e^{-\frac{x^2}{2n \sigma^2}} \mbox{ and } \int_{n^{2/3} - 1}^\infty f(x) dx \leq C e^{-c n^{1/3}} \leq \frac{1}{\sqrt{n}} \cdot e^{-\frac{x^2}{2n \sigma^2}}.
\end{equation}
Combining (\ref{L2E1v2}), (\ref{L2E2}), (\ref{L2E3v2}) and (\ref{L2E6v2}) we conclude for  $n^{5/8} \geq x \geq  \sqrt{3 \tilde{c} } \cdot n^{1/2}$ and all large $n$
$$\left| G(x) - F(x)\right| = \left| \bar{G}(x) - \bar{F}(x) \right| \leq C\left[ 1 + \frac{x^2}{n^{3/2}} \right]\cdot  \frac{e^{-\frac{x^2}{2 n \sigma^2} }}{\sqrt{2\pi \sigma^2 n}} \leq C \cdot \frac{x^3}{n^2} \cdot \bar{F}(x),$$
where in the last inequality we used (\ref{LI2}). The above inequality implies that for all large $n$
\begin{equation}\label{L2E7v2}
\begin{split}
&\bar{G}(x) \leq \left [1 + C\frac{x^3}{n^2}  \right] \bar{F}(x) \leq e^{Cx^3/n^2}  \bar{F}(x) \leq \bar{F} \left( x - C \left[1 + \frac{x^2}{n} \right] \right)\\
&\bar{G}(x) \geq \left [1 - C\frac{x^3}{n^2}  \right] \bar{F}(x) \geq e^{-Cx^3/n^2}  \bar{F}(x) \geq \bar{F} \left( x + C \left[1 + \frac{x^2}{n} \right] \right),
\end{split}
\end{equation}
where the right most inequalities used Lemma \ref{LemmaI3}. From (\ref{L2E7v2}) we conclude (\ref{S2E3}) for some large $c_0$ and all $n^{5/8} \geq x \geq  \sqrt{3 \tilde{c} } \cdot n^{1/2}$ provided $n$ is large enough.

We finally consider the case $n \epsilon_2 \geq x \geq n^{5/8}$, where $\epsilon_2$ is to be chosen sufficiently small as follows. Consider the functions $h_{\pm}(z) = - \frac{z^2}{2\sigma^2} \pm 2M_0\frac{z^3}{\sqrt{n}}$. Then 
$$h_{\pm}'(z) = - \frac{z}{\sigma^2} \pm 6M_0 \frac{z^2}{\sqrt{n}} \leq - \tilde{c} z \pm 6M_0 \frac{z^2}{\sqrt{n}} \leq z \left[ \pm 6M_0 \frac{z}{\sqrt{n}} - \tilde{c} \right]$$
 and we can choose $\epsilon_1 \leq \min (\epsilon_0,1)$ sufficiently small (depending on $M_0$  and $\tilde{c}$) such that the functions $h_{\pm}(z)$ are decreasing and moreover $-\frac{3z^2}{2  \sigma^2} \leq h_-(z) \leq h_+(z) \leq -  \frac{z^2}{4 \sigma^2}$  for $0 < z \leq \epsilon_1 \sqrt{n}$. We next pick $\epsilon_2> 0$ (depending on $\tilde{c}$, $M_0$, $b'$ and $c'$) so that $\epsilon_2 \leq \epsilon_1/2$ and for all $n \geq \epsilon_2^{-6}$
\begin{equation}\label{L2E6v3}
\begin{split}
\mathbb{P}(X \geq n \epsilon_1) \leq \frac{c' e^{-b' \lfloor n \epsilon_1 \rfloor^2/n}}{1 - e^{-b' \lfloor n  \epsilon_1 \rfloor / n}}  \leq  \frac{e^{h_+(\sqrt{n}\epsilon_2)}}{\sqrt{2\pi \sigma^2 n}} \leq  \frac{e^{h_+(x/\sqrt{n})}}{\sqrt{2\pi \sigma^2 n}}.
\end{split}
\end{equation}
Using the inequality $e^{\delta(j)} \leq \exp \left(2M_0 \frac{j^3}{n^2}\right)$ for $x+ 1 \leq j \leq \epsilon_1 n $ and the fact that $h_+(z)$ is decreasing on $0 < z \leq \epsilon_1 \sqrt{n}$ by our choice of $\epsilon_1$ we see that
\begin{equation}\label{L2E1v3}
\begin{split}
 \bar{G}(x) =  \sum_{j = x+1}^{\lfloor n \epsilon_1 \rfloor} f(j) e^{\delta(j)}  + \mathbb{P}(X \geq n \epsilon_1) \leq \int_{x/ \sqrt{n}}^{\sqrt{n}\epsilon_1} \frac{e^{h_+(u)}du}{\sqrt{2\pi \sigma^2}} + \mathbb{P}( X \geq n \epsilon_1) \leq \int_{x-1/ \sqrt{n}}^{\sqrt{n}\epsilon_1} \frac{e^{h_+(u)}du}{\sqrt{2\pi \sigma^2}},
\end{split}
\end{equation}
where in the last inequality we used (\ref{L2E6v3}). Using that $2x \leq 2\epsilon_2 n \leq \epsilon_1 n$ we have 
$$ \int_{x-1/\sqrt{n}}^{\sqrt{n} \epsilon_1} \frac{e^{h_+(u)}du}{\sqrt{2\pi \sigma^2}}  = \int_{(x - 2)/\sqrt{n}}^{2x/\sqrt{n}}  \frac{e^{h_+(u)}du}{\sqrt{2\pi \sigma^2}} + \int_{2x/ \sqrt{n}}^{\sqrt{n}\epsilon_1}   \frac{e^{h_+(u)}du}{\sqrt{2\pi \sigma^2}}   - \int_{(x - 2)/\sqrt{n}}^{x-1/\sqrt{n}}  \frac{e^{h_+(u)}du}{\sqrt{2\pi \sigma^2}} \leq $$
$$\int_{(x - 2)/\sqrt{n}}^{2x/\sqrt{n}}  \frac{e^{h_+(u)}du}{\sqrt{2\pi \sigma^2}} + \int_{2x/ \sqrt{n}}^{\sqrt{n}\epsilon_1}   \frac{e^{-u^2/4\sigma^2 }du}{\sqrt{2\pi \sigma^2}} - \int_{(x - 2)/\sqrt{n}}^{x-1/\sqrt{n}}  \frac{e^{ - u^2/ 2\sigma^2}du}{\sqrt{2\pi \sigma^2}} \leq \int_{(x - 2)/\sqrt{n}}^{2x/\sqrt{n}}  \frac{e^{h_+(u)}du}{\sqrt{2\pi \sigma^2}},$$
where the last inequality holds provided $n$ is sufficiently large in view of (\ref{LI2}). Combining the above with Lemma \ref{LemmaI3} we see that by possibly making $\epsilon_2$ smaller and $N_2$ larger we can ensure that
 \begin{equation}\label{L2E2v3}
 \bar{G}(x)  \leq \int_{(x - 2)/\sqrt{n}}^{2x/\sqrt{n}}  \frac{e^{h_+(u)}du}{\sqrt{2\pi \sigma^2}} \leq e^{16 M_0 x^3/n^2} \cdot \bar{F}(x - 2) \leq  \bar{F} \left( x - C \left[1 + \frac{x^2}{n} \right] \right).
\end{equation}

To get the lower bound notice that $2x \leq 2\epsilon_2 n \leq \epsilon_1 n$ and so
\begin{equation}\label{L2E3v3}
\begin{split}
 \bar{G}(x) \geq  \sum_{j = x+1}^{\lfloor n \epsilon_1 \rfloor} f(j) e^{\delta(j)} \geq \int_{\frac{x+1}{\sqrt{n}}}^{\frac{2x}{ \sqrt{n}}} \frac{e^{h_-(u)}du}{\sqrt{2\pi \sigma^2}} \geq  e^{-16 M_0 x^3/n^2} \cdot \bar{F}(x +1)  \geq  \bar{F} \left( x + C \left[1 + \frac{x^2}{n} \right] \right).
\end{split}
\end{equation}
  From (\ref{L2E2v3}) and (\ref{L2E3v3}) we conclude (\ref{S2E3}) for some large $c_2$ and all $\epsilon_2 n \geq x \geq n^{5/8}$ provided $n \geq N_2$ with $N_2$ large enough and $\epsilon_2$ small enough. This suffices for the proof.
\end{proof}

As an immediate corollary to the above lemma we have the following statement.
\begin{corollary}\label{C2}
Let $M_0 > 0$, $\epsilon_0 > 0$, $\tilde{c}  \in (0,1)$, $b' > 0$ and $c' > 0$ be given. Then we can find constants $c_2, \epsilon_2 > 0$, $N_2 \in \mathbb{N}$ such that the following holds for every positive integer $n \geq N_2$ and every $\sigma^2 \in[ \tilde{c}, \tilde{c}^{-1}]$. Suppose that $X$ is a continuous random variable with density $g$ and for all $x \in \{y: y \in \mathbb{R}, |y| \leq n \epsilon_0\}$
\begin{equation}\label{S3E1}
g(x) = \frac{1}{\sqrt{2\pi \sigma^2 n}} \exp \left( - \frac{x^2}{2n \sigma^2} + \delta(x) \right),
\end{equation}
where 
\begin{equation}\label{S3E2}
|\delta(x)| \leq  M_0 \left[ \frac{1}{\sqrt{n}} + \frac{|x|^3}{n^2} \right].
\end{equation}
Assume additionally that for any $x \in \mathbb{R}$ 
\begin{equation}\label{S3E2v2}
g(x) \leq c' e^{-b' x^2/ n}.
\end{equation}
Then for any $|x| \leq \epsilon_2 n$ we have
\begin{equation}\label{S3E3}
F \left(x - c_2 \left(1 + \frac{x^2}{n} \right) \right) \leq \mathbb{P}(X \leq x) \leq F \left(x + c_2 \left(1 + \frac{x^2}{n} \right) \right),
\end{equation} 
where $F(x)$ is the cumulative distribution function of a $N(0, \sigma^2 n)$ random variable.
\end{corollary}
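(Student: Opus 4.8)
\emph{Proof proposal.} Since this is billed as an immediate corollary, the plan is to deduce it from Lemma \ref{L2} by rounding rather than to reprove it from scratch. Set $Y := \lfloor X \rfloor$, which is an integer valued random variable; the first step is to verify that $Y$ satisfies the hypotheses of Lemma \ref{L2} with constants depending only on $M_0, \epsilon_0, \tilde c, b', c'$. For $m \in \mathbb{Z}$ with $|m| \le n \epsilon_0 - 1$ the interval $[m, m+1]$ lies inside $[-n\epsilon_0, n\epsilon_0]$, so by \eqref{S3E1}
\[
\mathbb{P}(Y = m) = \int_m^{m+1} g(u)\, du = \frac{e^{-m^2/(2n\sigma^2)}}{\sqrt{2\pi \sigma^2 n}} \int_m^{m+1} \exp\Bigl( -\tfrac{u^2 - m^2}{2n\sigma^2} + \delta(u) \Bigr)\, du .
\]
On $[m,m+1]$ one has $|u^2 - m^2| \le 2|m| + 1$ and $|\delta(u)| \le M_0 (n^{-1/2} + (|m|+1)^3 n^{-2})$ by \eqref{S3E2}; combining these with $\sigma^2 \ge \tilde c$ and the elementary inequality $|m|/n \le n^{-1/2} + |m|^3 n^{-2}$ (consider $|m| \le \sqrt n$ and $|m| > \sqrt n$ separately), the exponent in the integrand is bounded in absolute value by $M_0'(n^{-1/2} + |m|^3 n^{-2})$ for a constant $M_0' = M_0'(M_0, \tilde c)$, and since the interval has length one this gives $\mathbb{P}(Y = m) = (2\pi\sigma^2 n)^{-1/2}\exp(-m^2/(2n\sigma^2) + \tilde\delta(m))$ with $|\tilde\delta(m)| \le M_0'(n^{-1/2} + |m|^3 n^{-2})$. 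Likewise, from \eqref{S3E2v2} and $\inf_{u \in [m,m+1]} u^2 \ge m^2/4$ for $|m| \ge 2$ one gets $\mathbb{P}(Y = m) \le \int_m^{m+1} c' e^{-b' u^2/n}\, du \le c'' e^{-b'' m^2/n}$ with $b'' = b'/4$ and $c'' = c' e^{b'}$. Hence $Y$ obeys \eqref{S2E1}--\eqref{S2E2} with $\epsilon_0' := \epsilon_0 / 2$ and \eqref{S2E2v2} with $b'', c''$, all with constants depending only on the data of the corollary.

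The second step is to invoke Lemma \ref{L2} for $Y$: there are $c_2', \epsilon_2' > 0$ and $N_2' \in \mathbb{N}$ such that for every $n \ge N_2'$, every $\sigma^2 \in [\tilde c, \tilde c^{-1}]$, and every real $w$ with $|w| \le \epsilon_2' n$,
\[
F\bigl( w - c_2'(1 + w^2/n) \bigr) \le \mathbb{P}(Y \le w - 1) \le \mathbb{P}(Y \le w + 1) \le F\bigl( w + c_2'(1 + w^2/n) \bigr).
\]
The third step is to sandwich $X$ between translates of $Y$. Monotonicity of $\lfloor \cdot \rfloor$ gives $\{Y \le \lfloor x \rfloor - 1\} \subseteq \{X \le x\} \subseteq \{Y \le \lfloor x \rfloor\}$, so applying the previous display with $w = \lfloor x \rfloor$ (for the lower bound) and with $w = \lfloor x \rfloor + 1$ (for the upper bound) yields
\[
F\bigl( \lfloor x \rfloor - c_2'(1 + \lfloor x \rfloor^2/n) \bigr) \le \mathbb{P}(X \le x) \le F\bigl( \lfloor x \rfloor + 1 + c_2'(1 + (\lfloor x \rfloor + 1)^2 / n) \bigr),
\]
valid whenever $|\lfloor x \rfloor| + 1 \le \epsilon_2' n$. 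Since $\lfloor x \rfloor \in [x-1, x]$ and $(|x| + 2)^2 \le 9(x^2 + 1)$, both arguments of $F$ above lie within $c_2 (1 + x^2/n)$ of $x$ for a suitable $c_2 = c_2(c_2')$, and as $F$ is nondecreasing we obtain \eqref{S3E3} for all $|x| \le \epsilon_2 n$, provided $\epsilon_2 \le \epsilon_2'/2$, $N_2 \ge N_2'$, and $N_2$ is taken large enough that $|x| \le \epsilon_2 n$ forces $|\lfloor x\rfloor| + 1 \le \epsilon_2' n$ and that $n \epsilon_0 - 1 \ge \epsilon_0' n$ (so that \eqref{S3E1} covers all the $m$ used in the first step).

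\emph{Main obstacle.} There is no genuine difficulty; the only delicate point is the bookkeeping in the first step, i.e. checking that $Y = \lfloor X \rfloor$ inherits the three hypotheses of Lemma \ref{L2} with constants depending only on the original data and uniform over $\sigma^2 \in [\tilde c, \tilde c^{-1}]$. The key elementary input is the inequality $|m|/n \le n^{-1/2} + |m|^3 n^{-2}$, which lets the error coming from the variation of the Gaussian density across a unit interval be absorbed into a correction of the form allowed by \eqref{S2E2}. Alternatively, one can simply repeat the three-case argument in the proof of Lemma \ref{L2} verbatim, with the density $g$ and its integrals in place of the mass function and its sums; in the continuous setting this is in fact slightly shorter, since the Riemann-sum discretization errors disappear.
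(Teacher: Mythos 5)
Your proposal is correct and is essentially the paper's own argument: the paper proves the corollary in two lines by noting that $W=\lfloor X\rfloor$ satisfies the hypotheses of Lemma \ref{L2} and then using $\mathbb{P}(W\leq x-1)\leq \mathbb{P}(X\leq x)\leq \mathbb{P}(W\leq x+1)$ together with (\ref{S2E3}). The only difference is that you spell out the verification that $\lfloor X\rfloor$ inherits (\ref{S2E1})--(\ref{S2E2v2}) with adjusted constants, which the paper leaves implicit, and your bookkeeping there is sound.
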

\begin{proof} By our assumptions we know that $W = \lfloor X \rfloor$ is an integer valued random variable that satisfies the conditions of Lemma \ref{L2}. The result now follows from (\ref{S2E3}) and the fact that $\mathbb{P}(W \leq x - 1) \leq \mathbb{P}(X \leq x) \leq \mathbb{P}(W \leq x+1).$
\end{proof}
\section{Strong coupling} \label{Section6}
 We formulate quantified refinements of Theorems \ref{S2ContKMT} and \ref{S2DiscKMT} as Theorems \ref{ContKMTA} and \ref{KMTA}, respectively, below and present their proof. As usual we split our discussion depending on whether our random walk bridge has continuous or discrete jumps.

%
\subsection{Continuous case}\label{Section6.1} 

We use the same notation as in Sections \ref{Section2.1} and \ref{Section3}.  

\begin{lemma}\label{S6S1L1}
Suppose that $f_X$ satisfies Assumptions C1-C5 and fix $p \in (\alpha ,\beta)$. Let $s = p - \epsilon'$ and $t = p + \epsilon'$, where $\epsilon' > 0$ is sufficiently small so that $\alpha < s < t < \beta$. Then there exists $ \epsilon_3 \in (0, \epsilon')$ and $N_3 \in \mathbb{N}$  such that for every $b_1 > 0$ there exist constants $0 < c_1, a_1 < \infty$ such that the following holds. Suppose that $m, n$ are integers such that $m,n \geq N_3$ with $|m-n|\leq 1$, set $N = m+ n$. We can define a probability space on which are defined a standard normal random variable $\xi$ and a collection of random variables $W = W^{(m,n,z)}$ for all $z \in \{ x \in L_N: |x - pN| \leq \epsilon_3 N \}$ such that the law of $W^{(m,n,z)}$  is the same as that of $S_n^{(N,z)}$ and such that we have almost surely
\begin{equation}\label{S6S1E1}
\mathbb{E} \left[e^{a_1 |Z - W|} \Big{|} W \right] \leq c_1 \cdot  \exp \left(b_1 \frac{(W - pn)^2 + (z - pN)^2}{N}\right),
\end{equation}
where 
$$Z = Z^{(m,n,z)} = \frac{z}{2} + \frac{\sqrt{N} \sigma_p}{2} \cdot \xi, \mbox { so that } Z \sim N\left( \frac{z}{2}, \frac{\sigma^2_p N}{4}  \right).$$
The constants $\epsilon_3$ and $N_3$ depend on the values $p$, $s,t$ and the function $f_X(\cdot)$, where the dependence on the latter is through the constants in Definition \ref{DefParC}.
\end{lemma}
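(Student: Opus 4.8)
The plan is to realise $W^{(m,n,z)}$ as the monotone (quantile) coupling of $S^{(N,z)}_n$ to the fixed standard normal $\xi$ and to bound $|Z-W|$ in two regimes, using Propositions \ref{S3S1P2} and \ref{S3S1P3} for the local Gaussian profile and the global Gaussian tail of the midpoint density, and Corollary \ref{C2} for the quantile comparison. Throughout write $N=m+n$ and $\zeta=z/N$; since $|m-n|\le1$ we have $2n=N+O(1)$ and $2pn=pN+O(1)$. First fix a small $\epsilon_0>0$, then choose $\epsilon_3\in(0,\epsilon')$ so small -- depending only on $s,t,p$ and the constants in Definition \ref{DefParC} -- that whenever $|z-pN|\le\epsilon_3 N$ and $|v-z/2|\le\epsilon_0 N$ one has $\zeta,\,v/n,\,(z-v)/m\in[s,t]$, and also $\epsilon_3<\epsilon_2$ with $\epsilon_2$ the constant below. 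For such $z$, $\sigma_\zeta^2=\Lambda''(u_\zeta)$ stays in a fixed $[\tilde c,\tilde c^{-1}]\subset(0,\infty)$ since $\zeta\mapsto\sigma_\zeta^2$ is smooth and positive on $[s,t]$ (Lemmas \ref{S2S1L1}--\ref{S2S1L2}). Rewriting \eqref{S3S1E11} with $x=v/N$ shows the density $g$ of $Y:=S^{(N,z)}_n-z/2$ has, for $|w|\le\epsilon_0 N$, the form \eqref{S3E1} with $n\rightsquigarrow N$, $\sigma^2\rightsquigarrow\sigma_\zeta^2/4$ and error constant $M$ from Proposition \ref{S3S1P2}, while Proposition \ref{S3S1P3} gives $g(w)\le A e^{-a w^2/N}$ for all $w$. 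Thus Corollary \ref{C2} applies (with $b'\rightsquigarrow a$, $c'\rightsquigarrow A$) and produces $c_2,\epsilon_2,N_2$, independent of $z$, with
\[
F_\zeta\!\big(w-c_2(1+\tfrac{w^2}{N})\big)\ \le\ \mathbb{P}(Y\le w)\ \le\ F_\zeta\!\big(w+c_2(1+\tfrac{w^2}{N})\big),\qquad |w|\le\epsilon_2 N,
\]
where $F_\zeta$ is the c.d.f. of $N(0,\sigma_\zeta^2 N/4)$. Put $N_3=\max(N_0,N_1,N_2)$, to be enlarged finitely often below.

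Now set $U=\Phi(\xi)$ and, for each admissible $z$, $W=W^{(m,n,z)}=G_z^{-1}(U)$, where $G_z$ is the c.d.f. of $S^{(N,z)}_n$; since $f_X$ is continuous and positive on $(\alpha,\beta)$, $G_z$ is continuous and strictly increasing on its support, so $W\overset{d}{=}S^{(N,z)}_n$, $G_z(W)=U$, and $\xi=\Phi^{-1}(G_z(W))$ is a deterministic function of $W$. Let $Z_\zeta=\tfrac z2+\tfrac{\sqrt N\sigma_\zeta}{2}\xi$ be the quantile coupling of $N(\tfrac z2,\sigma_\zeta^2N/4)$ to $\xi$. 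On $\{|W-\tfrac z2|\le\epsilon_2 N\}$ the displayed sandwich holds at $x=W$ and, together with $U=G_z(W)$, gives $|W-Z_\zeta|\le c_2(1+\tfrac{(W-z/2)^2}{N})$. Hence $\tfrac{\sqrt N\sigma_\zeta}{2}|\xi|=|Z_\zeta-\tfrac z2|\le|W-\tfrac z2|+c_2(1+\tfrac{(W-z/2)^2}{N})$, and since $Z-Z_\zeta=\tfrac{\sqrt N}{2}(\sigma_p-\sigma_\zeta)\xi$ with $|\sigma_p-\sigma_\zeta|\le C_L|\zeta-p|=C_L|z-pN|/N$ ($C_L$ a Lipschitz constant for $\sigma_\cdot$ on $[s,t]$), we get, using $\sigma_\zeta^2\ge\tilde c$, $|z-pN|/N\le\epsilon_3$, $ab\le\tfrac12(a^2+b^2)$ and $(W-\tfrac z2)^2\le 3(W-pn)^2+(z-pN)^2+O(1)$, a constant $C_\ast$ depending only on $s,t,p$ and Definition \ref{DefParC} with
\[
|Z-W|\ \le\ C_\ast\Big(1+\tfrac{(W-pn)^2}{N}+\tfrac{(z-pN)^2}{N}\Big)\qquad\text{on }\{|W-\tfrac z2|\le\epsilon_2 N\}.
\]

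On the complementary (super-exponentially rare) event $\{|W-\tfrac z2|>\epsilon_2 N\}$, integrating the tail bound $g(w)\le A e^{-aw^2/N}$ of Proposition \ref{S3S1P3} gives $\min(\Phi(\xi),1-\Phi(\xi))=\mathbb{P}\big(\operatorname{sgn}(W-\tfrac z2)\,Y>|W-\tfrac z2|\big)\le C e^{-a(W-z/2)^2/(2N)}$, so Lemma \ref{LemmaI1} yields $|\xi|\le C(1+|W-\tfrac z2|/\sqrt N)$, hence $|Z-\tfrac z2|\le C(\sqrt N+|W-\tfrac z2|)$ and $|Z-W|\le C(\sqrt N+|W-pn|+|z-pN|+1)$ after enlarging $N_3$. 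In this regime $|W-pn|\ge|W-\tfrac z2|-|\tfrac z2-pn|\ge\epsilon_2 N-\tfrac{\epsilon_3}{2}N-O(1)\ge\tfrac{\epsilon_2}{2}N$, so $(W-pn)^2/N\ge\tfrac{\epsilon_2^2}{4}N$ dominates, for $N$ large, each of $\sqrt N$, $|W-pn|$ and $|z-pN|\le\epsilon_3 N$ up to constants, and the previous display holds here too after enlarging $C_\ast$. Consequently $|Z-W|\le C_\ast(1+\tfrac{(W-pn)^2}{N}+\tfrac{(z-pN)^2}{N})$ almost surely. Given $b_1>0$, take $a_1=b_1/C_\ast$ and $c_1=e^{b_1}$; since $Z$ is a deterministic function of $W$, the left side of \eqref{S6S1E1} equals $e^{a_1|Z-W|}\le e^{a_1C_\ast}\exp\!\big(b_1\tfrac{(W-pn)^2+(z-pN)^2}{N}\big)=c_1\exp\!\big(b_1\tfrac{(W-pn)^2+(z-pN)^2}{N}\big)$, which is \eqref{S6S1E1}; the constants $\epsilon_3,N_3$ are those fixed above and do not depend on $b_1$.

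The main obstacle is the variance mismatch: the target Brownian bridge carries the reference variance $\sigma_p^2$, while Proposition \ref{S3S1P2} only identifies the midpoint density of the bridge to $(N,z)$ as locally Gaussian with variance $\sigma_\zeta^2$, $\zeta=z/N$. One therefore cannot feed the target Gaussian directly into Corollary \ref{C2}; instead one couples $W$ to the correct-variance Gaussian $Z_\zeta$ and pays the cost $\tfrac{\sqrt N}{2}|\sigma_p-\sigma_\zeta|\,|\xi|$, which is controllable only because $|\sigma_p-\sigma_\zeta|=O(|z-pN|/N)$ and the slack $\exp(b_1(z-pN)^2/N)$ on the right of \eqref{S6S1E1} absorbs it once $\epsilon_3$ and $a_1$ are chosen small. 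Secondary care is needed in matching the $n,m,N$ scalings so that Propositions \ref{S3S1P2}, \ref{S3S1P3} and Corollary \ref{C2} can be invoked with constants uniform in $z$, and in the extreme-tail regime $\{|W-\tfrac z2|>\epsilon_2 N\}$, which is dispatched via the Mills-ratio estimate of Lemma \ref{LemmaI1}.
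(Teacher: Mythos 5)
Your central-window argument is essentially the paper's (quantile coupling against the cdf of $S_n^{(N,z)}$, the sandwich from Corollary \ref{C2} fed by Propositions \ref{S3S1P2} and \ref{S3S1P3}, and a Lipschitz bound $|\sigma_p-\sigma_{z/N}|\le C|z-pN|/N$ to pay for the variance mismatch), and that part is sound. The gap is in the tail regime $\{|W-z/2|>\epsilon_2 N\}$, and it is fatal for the coupling you chose. Because you use the \emph{pure} quantile coupling, $Z$ is a deterministic function of $W$, so \eqref{S6S1E1} reduces to a pointwise inequality that must hold almost surely; your claimed pointwise bound there rests on the step ``$\min(\Phi(\xi),1-\Phi(\xi))\le Ce^{-a(W-z/2)^2/(2N)}$, so Lemma \ref{LemmaI1} yields $|\xi|\le C(1+|W-z/2|/\sqrt N)$'', and this runs in the wrong direction. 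Proposition \ref{S3S1P3} gives an \emph{upper} bound on the tail $\mathbb{P}(Y>y)$ of the midpoint; via $1-\Phi(\xi)=\mathbb{P}(Y>W-z/2)$ this forces $1-\Phi(\xi)$ to be small, i.e. it yields a \emph{lower} bound on $\xi$ (equivalently $|W-z/2|\le C\sqrt N(1+|\xi|)$), not an upper bound on $|\xi|$ in terms of $W$. To bound $\xi=\Phi^{-1}(G_z(W))$ from above given $W$ you would need a \emph{lower} bound on the midpoint tail, which Assumptions C1--C5 do not provide and which is in fact false: for compactly supported $f_X$ (precisely the setting of Theorem \ref{S1ContKMT}) the law of $S_n^{(N,z)}$ has bounded support and its tail vanishes super-exponentially near the edge, so as $W$ approaches its essential supremum (an event of positive probability for every fixed $N$), $G_z(W)\to 1$ far faster than any Gaussian, $\xi$ and hence $|Z-W|$ blow up, while the right-hand side of your pointwise bound stays $O(N)$. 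Consequently no choice of $a_1,c_1$ makes \eqref{S6S1E1} hold almost surely for the pure quantile coupling.

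This is exactly the reason the paper does not quantile-couple globally: it couples $W$ to $\xi$ only on the central window $|W-z/2|\le 2\epsilon_3 n$ (where your argument and the paper's coincide), and outside it draws $W$ from the conditioned midpoint law \emph{independently} of $\xi$, using auxiliary uniforms. Then, conditionally on a tail value of $W$, $Z$ is a truncated Gaussian rather than a deterministic huge number, $\mathbb{E}[e^{|W-Z|}\mid W]\le \exp(cN+|W-z/2|)$, and this $e^{O(N)}$ factor is absorbed after a Jensen step because on that event $(W-pn)^2/N\gtrsim \epsilon_3^2 N$. To repair your proof you must modify the coupling in the tails along these lines; the tail estimate cannot be salvaged within the pointwise/deterministic framework you set up.
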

\begin{proof}
Notice that we only need to prove the lemma for $N$ sufficiently large. In order to simplify the notation we will assume that $n =m = N/2$ (the other cases can be handled similarly). \\

 We apply Propositions \ref{S3S1P2} and \ref{S3S1P3} for the variables $s$ and $t$. This implies that provided $N_3 \geq \max (N_0, N_1)$ as in the statements of those propositions and $n \geq N_3$ we have that the random variable $ S^{(N,z)}_n - z/2 $ satisfies the conditions of Corollary \ref{C2} for $M_0 = M$ as in Proposition \ref{S3S1P2}, $\epsilon_0 = \epsilon'$ as in the statement of this proposition, $\tilde{c} = (1/2) \cdot \min ( m_{\hat{s},\hat{t}}, M_{\hat{s},\hat{t}}^{-1})$ as in Definition \ref{DefDelta} for the variables $\hat{s}, \hat{t}$ as in Definition \ref{DefN2}, $b' = a$ and $c' = A$ as in the statement of Proposition \ref{S3S1P3}. We consequently, let $c_2, N_2, \epsilon_2$ be as in the statement of that corollary for the above constants.

In what follows we fix $\epsilon_3 \leq 4^{-1}\min (\epsilon_2, \epsilon' )$ sufficiently small so that $\epsilon_3 M \leq 1/M_{\hat{s},\hat{t}}$ where $M$ is as in the statement of Proposition \ref{S3S1P2} and $M_{\hat{s},\hat{t}}$ is as in Definition \ref{DefDelta}  for the variables $\hat{s}, \hat{t}$ as in Definition \ref{DefN2}. Observe that the choice of $\epsilon_3$ implies that $\epsilon_3 M \leq 1/ \sigma_{z/N}^2$ for all $|z -pN| \leq N \epsilon_3$. We also set $N_3 = \max (N_0, N_1, N_2)$.

We denote by $\Phi$ the cumulative distribution function of a normal random variable with mean $0$ and variance $1$. Let $G_{n,m, z}$ denote the cumulative distribution function of $S_n^{(N,z)}$. In addition, let $G^{\epsilon_3, +}_{n,m, z}$ and $G^{\epsilon_3, -}_{n,m, z}$ denote the cumulative distribution function of $S_n^{(N,z)}$ conditioned on $\{ S_n^{(N,z)} > z/2 + 2\epsilon_3 n\}$ and $\{S_n^{(N,z)} < z/2 - 2\epsilon_3 n\}$ respectively. For convenience we let $A < B$ be the unique real numbers such that  
$$1 - \Phi(B) = \mathbb{P}( S_n^{(N,z)} > z/2 + 2\epsilon_3 n), \hspace{3mm} \Phi(A) = \mathbb{P}( S_n^{(N,z)} < z/2 + 2\epsilon_3 n).$$

We now turn to defining our probability space. We let $U_1, U_2, U_3$ be three independent uniform $(0,1)$ random variables and set $\xi = \Phi^{-1}(U_1)$. In addition, we set $W_+ = \left(G^{\epsilon_3, +}_{n,m, z} \right)^{-1}(U_2)$ and $W_- = \left(G^{\epsilon_3, -}_{n,m, z}\right)^{-1} (U_3)$. Given a realization of $\xi, W_-$ and $W_+$ we define a random variable $W$ as follows
\begin{itemize}
\item if $A \leq \xi \leq B$ we set $W = \left( G_{n,m,z} \right)^{-1}( U_1)$;
\item if $\xi > B$ we set $W = W_+$;
\item if $\xi < A$ we set $W = W_-$.
\end{itemize}
It is easy to see that as defined $W$ indeed has the same distribution as $ S^{(N,z)}_n$. In words, $W$ is {\em quantile coupled} to $\xi$ near $0$ and independent from it for large values. 

We denote
$$Z = Z_{n,z} = z/2 +  \frac{\sigma_p \sqrt{N}}{2} \cdot \xi,  \hspace{5mm} \hat Z = \hat Z_{n,z} = z/2 +   \frac{\sigma_{z/N} \sqrt{N}}{2}\cdot \xi.$$
 and write $F = F_{n,z}$ for the cumulative distribution function of $\hat Z$. It is easy to check that our construction satisfies the following property. If $y \in [z/2 - 2n \epsilon_3, z/2 + 2n \epsilon_3]$ and $x > 0$ is fixed and 
$$F(y-x) \leq  G_{n,m,z}(y)  \leq F(y + x),$$
then
\begin{equation}\label{S6S1E2}
|\hat Z -   W  |  \leq x  \hspace{5mm} \mbox{ on the event $A \leq \xi \leq B$ } .
\end{equation}
By our choice of $\epsilon_3, N_3$ and $c_2$ and Corollary \ref{C2} applied to $ S_n^{(N,z)} - z/2 $ we have that for all $y \in [z/2 - 2n \epsilon_3, z/2 + 2n \epsilon_3]$
\begin{equation}\label{S6S1E3}
F\left( y- c_2\left[1 + \frac{(y- z/2)^2}{n}  \right]\right) \leq  G_{n,m,z}(y) \leq F\left( y + c_2\left[1 + \frac{(y- z/2)^2}{n}  \right]\right).
\end{equation}
Combining (\ref{S6S1E2}) and (\ref{S6S1E3}) we get 
\begin{equation}\label{S6S1E4}
 |\hat Z -   W   | \leq c_2\left[1 + \frac{(  W   - z/2)^2}{n}\right] \mbox{ almost surely on the event $A \leq \xi \leq B$},
\end{equation}
for all $n \geq N_3$, provided that $|z - pN| \leq \epsilon_3 N$, $|W - z/2| \leq 2\epsilon_3 n$. \\

We next claim that $|A| = O(\sqrt{N})$ and $|B| = O(\sqrt{N})$. To see the latter notice that 
$$\mathbb{P}( \xi  \geq B) = \mathbb{P} (W \geq z/2 + 2n \epsilon_3) = 1 - \mathbb{P}( W - z/2 \leq 2n \epsilon_3) \geq 1 - \mathbb{P}\left( \hat{Z} - \frac{z}{2} \leq 2n \epsilon_3 + c_2 \left(1 + \frac{4 n^2 (\epsilon_3)^2}{n} \right)  \right)$$
$$ = \mathbb{P} \left( \frac{\sigma_{z/N} \sqrt{N}}{2} \cdot \xi \geq 2n \epsilon_3 + c_2[1 + 4 n (\epsilon_3)^2] \right) \geq \mathbb{P}( \xi \geq \tilde{C} \sqrt{N}),$$
for some positive constant $\tilde{C}$. The inequality in the first line follows from Corollary \ref{C2} applied to $W - z/2$. The above implies that $B \leq \tilde{C}\sqrt{N}$ and an anologous argument shows that $A \geq - \tilde{C}\sqrt{N}$ for some possibly larger $\tilde{C}$. We conclude that there is a constant $\tilde{C} > 0$ such that $|\xi| \leq \tilde{C} \sqrt{N}$ on the event $A \leq \xi \leq B$. 

The latter implies that almost surely on the event $A \leq \xi \leq B$ we have
$$\mathbb{E}\left[ e^{|Z - \hat Z|} \Big{|} W\right] \leq \mathbb{E}\left[ e^{|\xi| | \sigma_p -  \sigma_{z/N}|}   \Big{|} W \right]\leq  \mathbb{E}\left[ e^{ \tilde{C} \sqrt{N} | \sigma_p -  \sigma_{z/N}|}   \Big{|} W \right].$$

From Lemma \ref{S2S1L1} we know that we can find a constant $c_p > 0$, that depends on $m_{\hat{s}, \hat{t}}$ and $M_{\hat{s},\hat{t}}$ as in Definition \ref{DefDelta} as well as $M^{(3)}_{\hat{s}, \hat{t}}$ as in Definition \ref{DefM34} for the variables $\hat{s}, \hat{t}$ as in Definition \ref{DefN2}, such that $|\sigma_p - \sigma_{z/N}|^2  \leq  c_p | p - z/N|^2$ for all $|z - pN| \leq \epsilon_3 N$. Combining the latter with the Cauchy-Schwarz inequality, (\ref{S6S1E4}) and the triangle inequality we conclude that there are constants $C, c > 0$ such that if $|W - z/2| \leq  2\epsilon_3 n$ and $|z - pN| \leq \epsilon_3 N$ then 
\begin{equation*}
\mathbb{E} \left[e^{|W - Z|} \Big{|}W \right] \leq \mathbb{E} \left[e^{|W - \hat Z| + |Z - Z|} \Big{|}W  \right] \leq C\exp\left(\frac{c_p(z-pN)^2}{N} + \frac{  c(W - z/2)^2}{n}\right).
\end{equation*}
Applying Jensen's inequality to the above we have for any $v \in \mathbb{N}$ that
\begin{equation*}
 \mathbb{E} \left[e^{(1/v)|W - Z|} \Big{|}W  \right] \leq \mathbb{E} \left[e^{|W - Z|} \Big{|}W \right]^{1/v} \leq  C^{1/v} \exp \left( \frac{c_p(z-pN)^2}{Nv} +  \frac{c(W - z/2)^2}{nv}\right),
\end{equation*}
and if we further use that $(x +y )^2 \leq 2x^2 + 2y^2$ above we see that
\begin{equation}\label{S6S1E6}
 \mathbb{E} \left[e^{(1/v)|W - Z|} \Big{|}W \right] \leq  C^{1/v} \cdot \exp \left( \frac{[c_p + c](z-pN)^2}{Nv} + \frac{4c(W - pn)^2}{Nv}\right),
\end{equation}
provided $ n \geq N_3$, $|w - z/2| \leq 2\epsilon_3 n$ and $|z - pN| \leq \epsilon_3 N$.\\

Suppose now that $b_1$ is given, and let $v$ be sufficiently large so that 
$$\frac{c_p + c}{v} \leq b_1 \mbox{ and } \frac{4c}{v} \leq b_1.$$
If $a_1 \leq 1/v$ we see from (\ref{S6S1E6}) that
\begin{equation}\label{S6S1E7}
 \mathbb{E} \left[e^{a_1|W - Z|} \Big{|}W \right] \leq  C \cdot \exp \left( \frac{b_1(z-pN)^2}{N} + \frac{b_1(w - pn)^2}{N}\right),
\end{equation}
provided $ n \geq N_3$, $|w - z/2| \leq 2\epsilon_3 n$ and $|z - pN| \leq \epsilon_3 N$.

Suppose now that $|W - z/2| > 2\epsilon_3 n$ and suppose for concreteness that $W - z/2 \geq 2\epsilon_3 n$. On the event $\{W > z/2 + 2\epsilon_3 n\}$ we have that $W$ and $Z$ are independent with $Z$ having the distribution of a normal random variable with mean $z/2$ and variance $\frac{\sigma_p^2 N}{4}$ conditioned on being larger than $s := z/2 + \frac{\sigma_p \sqrt{N}}{2} \cdot B$. It follows that almost surely on $\{W > z/2 + 2\epsilon_3 n\}$
$$ \mathbb{E} \left[e^{|W - Z|} \Big{|}W  \right]  \leq e^{ |W - z/2|} \cdot \int_{B}^\infty \frac{e^{ \frac{\sigma_p \sqrt{N}}{2} |y|} e^{-y^2/2}}{\sqrt{2\pi}} \cdot \left(1 - \Phi(B) \right)^{-1}.$$
From our earlier work we know that $B \leq \tilde{C} \sqrt{N}$ for some $\tilde{C} > 0$. This implies that
$$ 1 - \Phi(B)  \geq e^{-c N\epsilon_3^2},$$
for some sufficiently large $ c > 0$. Combining the last two inequalities gives for some new $c > 0$
$$ \mathbb{E} \left[e^{|W - Z|} \Big{|}W \right]  \leq \exp \left(c N +  |W - z/2| \right) \leq \exp \left( (c + 5/4)N + \frac{(z-pN)^2}{N} + \frac{(W - pn)^2}{N}\right),$$ 
where the last inequality uses the triangle inequality and the fact that $\sqrt{ab} \leq a + b$ for $a,b \geq 0$. 
Applying Jensen's inequality to the above we have for any $v \in \mathbb{N}$ that
\begin{equation*}
 \mathbb{E} \left[e^{(1/v)|W - Z|} \Big{|}W \right] \leq \mathbb{E} \left[e^{|W - Z|} \Big{|}W  \right]^{1/v}\hspace{-3mm} \leq  \exp \left( \frac{(c + 5/4)N}{v} + \frac{(z-pN)^2}{vN} + \frac{(W - pn)^2}{vN}\right).
\end{equation*}
In particular, suppose that $v$ is sufficiently large so that 
$$ \frac{1}{v} \leq \frac{b_1}{2} \mbox{ and } \frac{c+5/4}{v} \leq \frac{b_1 \epsilon_3^2}{8}$$
and $a_1 \leq 1/v$. We then have from the above inequality that
$$ \mathbb{E} \left[e^{a_1|W - Z|} \Big{|}W \right] \leq \exp \left(  \frac{b_1 \epsilon_3^2}{16} N + \frac{b_1(z-pN)^2}{2N} + \frac{b_1(W - pn)^2}{2N}\right) \leq \exp \left( \frac{b_1(z-pN)^2}{N} + \frac{b_1(W - pn)^2}{N}\right),$$
where in the last inequality we used that $|W - z/2| \geq 2\epsilon_3 n$ and $|z/2 - pn| \leq \epsilon_3 n$. We conclude that (\ref{S6S1E7}) holds even when $W - z/2 > 2\epsilon_3 n$ . An analogous argument shows that (\ref{S6S1E7}) also holds when $W - z/2 < - 2\epsilon_3 n$, and so almost surely for all $W$. This suffices for the proof.

\end{proof} 

We also isolate for future use the following statement.
\begin{lemma}\label{S6S1L2}
Assume the same notation as in Lemma \ref{S6S1L1}. There exist positive constants $b_2, c_2, N_4$ such that for every integers $m,n \geq N_4$, $N = m+n$ such that $|m - n| \leq 1$, every $z$ such that $| z - pN|\leq \epsilon' N$ and $w \in \mathbb{R}$,
$$f_{m,n}(w|z) \leq c_2  N^{-1/2} \exp \left( -b_2 \frac{(w - (z/2))^2}{N}\right).$$
\end{lemma}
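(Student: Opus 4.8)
The plan is to upgrade Proposition \ref{S3S1P3} by a factor $N^{-1/2}$ via a split on the distance of $w$ from the mode $z/2$: in a ``bulk'' window $|w-z/2|\le \epsilon_4 N$ we invoke the precise local asymptotics of Proposition \ref{S3S1P2}, whose Gaussian prefactor already carries the $N^{-1/2}$, while off that window the Gaussian tail bound of Proposition \ref{S3S1P3} is of order $e^{-cN}$ and therefore trivially absorbs an $N^{-1/2}$. The one delicate choice is the crossover scale: $\epsilon_4 N$ must be a \emph{small fixed fraction} of $N$ so that the cubic remainder $N|x-z/2|^3$ in Proposition \ref{S3S1P2} is dominated by the quadratic term there.

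\textbf{Set-up.} First I would fix auxiliary reals $s_0,t_0$ with $\alpha<s_0<s=p-\epsilon'$ and $t=p+\epsilon'<t_0<\beta$, and apply Propositions \ref{S3S1P1}, \ref{S3S1P2}, \ref{S3S1P3} with $s_0,t_0$ in place of $s,t$. Let $M$ be the constant of Proposition \ref{S3S1P2}, let $A,a,N_1$ be the constants of Proposition \ref{S3S1P3}, let $N_0$ be as in Proposition \ref{S3S1P1}, and recall from Definition \ref{DefDelta} that $m_{s_0,t_0}\le \Lambda''(y)\le M_{s_0,t_0}$ on $[(\Lambda')^{-1}(s_0),(\Lambda')^{-1}(t_0)]$; since $\zeta:=z/N$ lies in $[s,t]\subset[s_0,t_0]$ whenever $|z-pN|\le\epsilon' N$, this gives $m_{s_0,t_0}\le \sigma_\zeta^2\le M_{s_0,t_0}$. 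Next I would pick $\epsilon_4\in(0,\epsilon')$ with $[s-3\epsilon_4,\,t+3\epsilon_4]\subset[s_0,t_0]$ and $M\epsilon_4\le M_{s_0,t_0}^{-1}$, and finally take $N_4$ large (at least $2N_0+1$ and $2N_1+1$, and large enough for the two estimates below). Note that $|m-n|\le 1$ forces $\max(|N/n-2|,|N/m-2|)\le C_0/N$ for an absolute constant $C_0$, and that to match the orientation $f_{m,n}(\,\cdot\,|z)$ (density of $S_n$ conditioned on $S_{m+n}=z$) one applies Propositions \ref{S3S1P2} and \ref{S3S1P3} with the labels $m,n$ interchanged.

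\textbf{Bulk regime $|w-z/2|\le \epsilon_4 N$.} I would apply Proposition \ref{S3S1P2} with its scaled variables $x=w/N$ and $z=\zeta$. Its hypotheses ask that the relevant ratios $w/m$, $(z-w)/n$ (and $\zeta$ itself) lie in $[s_0,t_0]$; since $w$ and $z-w$ are within $\epsilon_4 N+O(1)$ of $z/2=\zeta N/2$, these ratios lie within $2\epsilon_4+C_0/N\le 3\epsilon_4$ of $\zeta$, hence in $[s_0,t_0]$ once $N\ge N_4$. Using $w/N-\zeta/2=(w-z/2)/N$, Proposition \ref{S3S1P2} gives
\begin{equation*}
f_{m,n}(w|z)=\frac{2}{\sqrt{2\pi N}\,\sigma_\zeta}\exp\!\left(-\frac{2}{\sigma_\zeta^2}\cdot\frac{(w-z/2)^2}{N}+\delta_2\right),\qquad |\delta_2|\le \frac{M}{\sqrt N}+M\,\frac{|w-z/2|^3}{N^2}.
\end{equation*}
Since $|w-z/2|\le\epsilon_4 N$ one has $|w-z/2|^3/N^2\le\epsilon_4(w-z/2)^2/N$, so the coefficient of $(w-z/2)^2/N$ in the exponent is at most $-2\sigma_\zeta^{-2}+M\epsilon_4\le-\sigma_\zeta^{-2}\le-M_{s_0,t_0}^{-1}$ by the choice of $\epsilon_4$, while $\delta_2$ contributes at most $M/\sqrt N\le M$. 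Using $\sigma_\zeta^2\ge m_{s_0,t_0}$ this yields
\begin{equation*}
f_{m,n}(w|z)\le\frac{2e^{M}}{\sqrt{2\pi m_{s_0,t_0}}}\,N^{-1/2}\exp\!\left(-\frac{1}{M_{s_0,t_0}}\cdot\frac{(w-z/2)^2}{N}\right).
\end{equation*}

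\textbf{Far regime $|w-z/2|>\epsilon_4 N$.} Here $(w-z/2)^2/N>\epsilon_4^2 N$, and Proposition \ref{S3S1P3} (with $z=\zeta\in[s_0,t_0]$, $x=w/N$) gives $f_{m,n}(w|z)\le A\exp(-a(w-z/2)^2/N)$. Splitting the exponent and using $e^{-(a/2)(w-z/2)^2/N}\le e^{-(a\epsilon_4^2/2)N}\le N^{-1/2}$ for $N\ge N_4$ (valid since $\sqrt N\le e^{(a\epsilon_4^2/2)N}$ eventually) gives $f_{m,n}(w|z)\le A\,N^{-1/2}\exp(-\tfrac a2(w-z/2)^2/N)$. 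Taking $c_2:=\max\!\big(2e^{M}/\sqrt{2\pi m_{s_0,t_0}},\,A\big)$ and $b_2:=\min\!\big(M_{s_0,t_0}^{-1},\,a/2\big)$, the two regimes together give the claimed bound for all $w\in\mathbb{R}$.

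\textbf{Main obstacle.} There is no real difficulty beyond calibration. The only genuinely delicate point is that $\epsilon_4$ must be a small \emph{constant}: small enough that the cubic remainder in Proposition \ref{S3S1P2} is swallowed by the quadratic term on $|w-z/2|\le\epsilon_4 N$, yet this very smallness forces $(w-z/2)^2/N\gtrsim N$ off that window, which is exactly what lets the crude bound of Proposition \ref{S3S1P3} pick up the extra $N^{-1/2}$. Everything else is bookkeeping — translating between the scaled variables of Propositions \ref{S3S1P2}--\ref{S3S1P3} and the unscaled pair $(w,z)$, and absorbing the $O(1/N)$ gap between $N/n$, $N/m$ and $2$.
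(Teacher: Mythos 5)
Your proposal is correct and takes essentially the same route as the paper, which disposes of this lemma by declaring it an immediate corollary of Propositions \ref{S3S1P2} and \ref{S3S1P3}; your bulk/tail split at a small constant fraction $\epsilon_4 N$ (cubic remainder absorbed by the quadratic term near $z/2$, the factor $N^{-1/2}$ absorbed by the $e^{-a\epsilon_4^2 N}$ tail far from $z/2$) is exactly the calibration that the paper leaves implicit. The enlargement from $[s,t]$ to $[s_0,t_0]$ so that the ratios $w/n$ and $(z-w)/m$ stay in the admissible window is the right way to make the hypotheses of Proposition \ref{S3S1P2} checkable, so no gaps remain.
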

The constants $b_2, c_2, N_4$ depend on $s, t, p$ and the constants in Definition \ref{DefParC}.
\begin{proof}
This is an immediate corollary of Propositions \ref{S3S1P2} and \ref{S3S1P3}.
\end{proof}

We now turn to the main theorem of this section.
\begin{theorem}\label{ContKMTA}
Suppose that $f_X$ satisfies Assumptions C1-C6 and fix $p \in (\alpha ,\beta)$. Let $s = p - \epsilon'$ and $t = p + \epsilon'$, where $\epsilon' > 0$ is sufficiently small so that $\alpha < s < t < \beta$. For every $b > 0$, there exist constants $0 < C, a, \alpha' < \infty$ such that for every positive integer $n$, there is a probability space on which are defined a Brownian bridge $B^\sigma$ with variance $\sigma^2 = \sigma^2_p$ and the family of processes $S^{(n,z)} $ for $z \in L_n$ such that
\begin{equation}\label{ContKMTeqA}
\mathbb{E}\left[ e^{a \Delta(n,z)} \right] \leq C e^{\alpha' (\log n)} e^{b|z- p n|^2/n},
\end{equation}
where $\Delta(n,z) = \Delta(n,z,B^{\sigma}, S^{(n,z)})=  \sup_{0 \leq t \leq n} \left| \sqrt{n} B^\sigma_{t/n} + \frac{t}{n}z - S^{(n,z)}_t \right|.$ The constants $C, a, \alpha'$ depend on $b$ as well as $s,t, p$ and $f_X$ through the constants in Definition \ref{DefParC} and the functions in Assumption C6.
\end{theorem}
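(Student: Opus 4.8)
The plan is to build the coupling by the classical dyadic scheme and to prove \eqref{ContKMTeqA} by induction on $n$, the induction hypothesis being the conclusion of the theorem for all strictly smaller lengths (with the same constants). Since enlarging $b$ only weakens \eqref{ContKMTeqA}, I first reduce to the case where $b$ is as small as needed, in particular $b<b_2/4$ with $b_2$ as in Lemma \ref{S6S1L2}. Throughout, a bridge on an interval $[l,r]$ is split at $\lfloor(l+r)/2\rfloor$, so the two pieces have lengths differing by at most $1$; this is exactly why the midpoint estimates of Section \ref{Section3} and the coupling lemmas of Section \ref{Section6.1} were stated under the hypothesis $|m-n|\le1$. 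Write $g(n,z):=\mathbb{E}[e^{a\Delta(n,z)}]$.

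\noindent\textbf{Base case and the far-from-slope regime.} I fix a threshold $N^\ast$ exceeding $N_3$, $N_4$ and the $N_2$ of Corollary \ref{C2} for the relevant parameters, and handle by one crude estimate both $n\le2N^\ast$ and the range $|z-pn|>\epsilon_3 n$ (where the fine coupling is unavailable): there I couple $S^{(n,z)}$ and $B^\sigma$ by independent randomness and bound $\Delta(n,z)\le\max_{1\le k\le n}|S^{(n,z)}_k|+\sqrt n\,\sup_{0\le u\le1}|B^\sigma_u|+|z|$. By independence $\mathbb{E}[e^{a\Delta}]$ factorises; the Brownian factor is $\le C_0e^{C_0a^2n}$, the term $e^{a|z|}$ is elementary, and Assumption C6 bounds the walk factor by $\hat C(\hat b)e^{\hat b(n+z^2/n)}$ once $a\le\hat a(\hat b)$. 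In the far regime both $n$ and $z^2/n$ are $\le C_1(z-pn)^2/n$, so choosing $a$ and $\hat b$ small (depending on $b,\epsilon_3,p$) makes the product $\le C\,e^{b(z-pn)^2/n}$; in the base case $n$ is bounded, and the same choices together with a large $C$ give \eqref{ContKMTeqA}. This is the only place Assumption C6 is used.

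\noindent\textbf{Inductive step.} For $n>2N^\ast$ with $|z-pn|\le\epsilon_3n$ the plan is: realise $B^\sigma$; set $M=\lfloor n/2\rfloor$ and let $\xi$ be the standard Gaussian encoding the conditional law of the bridge at $M$ via $B_M=\tfrac z2+\tfrac{\sqrt n\sigma_p}{2}\xi$ (when $n$ is odd an elementary extra coupling absorbs the $O(1/n)$ discrepancy between this Gaussian and the true conditional law of $B_M$, yielding the same kind of estimate); apply Lemma \ref{S6S1L1} with this $\xi$ to couple, simultaneously for all admissible $z$, the midpoint $S^{(n,z)}_M$ to $B_M$; and, conditionally on $S^{(n,z)}_M=y$, couple each of the two halves of $S^{(n,z)}$ (independent bridges of lengths $M$ and $n-M$, from $0$ to $y$ and from $0$ to $z-y$) to the corresponding half of $B^\sigma$ by the induction hypothesis, driven by the left/right Brownian data of $B^\sigma$. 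Two Brownian bridges over a common interval built from the same dyadic Gaussians differ by an affine function, so one obtains $\Delta(n,z)\le|S^{(n,z)}_M-B_M|+\max(\Delta_{\mathrm{L}},\Delta_{\mathrm{R}})$, where $\Delta_{\mathrm{L}},\Delta_{\mathrm{R}}$ are the errors of the two sub-bridges against their matching-endpoint Brownian bridges; conditionally on $y$ these are distributed as $\Delta(M,y)$ and $\Delta(n-M,z-y)$ for the induction-hypothesis couplings, and $B_M$ is conditionally independent of them. Taking expectations, using $e^{a\max(u,v)}\le e^{au}+e^{av}$, conditioning on $y$, bounding $\mathbb{E}[e^{a|S^{(n,z)}_M-B_M|}\mid y]$ by Lemma \ref{S6S1L1} (for $a\le a_1$, with $b_1$ at my disposal), the induction hypothesis for $g(M,y),g(n-M,z-y)$, and the midpoint density bound of Lemma \ref{S6S1L2}, I get
\begin{equation*}
g(n,z)\ \le\ 2c_1C\,e^{\alpha'\log\lceil n/2\rceil}\,e^{b_1(z-pn)^2/n}\int_{\mathbb{R}}\exp\!\Bigl(\tfrac1n\bigl[\beta(y-c)^2-b_2\bigl(y-\tfrac z2\bigr)^2\bigr]\Bigr)\,c_2n^{-1/2}\,dy,
\end{equation*}
with $\beta=b_1+O(b)<b_2$ (using $b<b_2/4$ and $b_1$ small) and $c$ an explicit point lying near $pn/2$ or near $z-pn/2$ depending on which half is estimated. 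Since $\beta<b_2$ the Gaussian integral converges and completing the square bounds it by $c_3\,e^{\kappa(z-pn)^2/n}$ with $\kappa=\kappa(b,b_1,b_2)$ satisfying $\kappa\to\tfrac{bb_2}{2(b_2-2b)}<b$ as $b_1\to0$; fixing $b_1$ small enough that $b_1+\kappa\le b$ gives $g(n,z)\le2c_1c_3C\,e^{\alpha'\log\lceil n/2\rceil}e^{b(z-pn)^2/n}$, and since $\log n-\log\lceil n/2\rceil$ is bounded below by a positive constant for $n>2N^\ast$ (enlarging $N^\ast$ if needed), a suitable choice of $\alpha'$ makes the right side $\le C\,e^{\alpha'\log n}e^{b(z-pn)^2/n}$, closing the induction; $C$ is then pinned down by the base case, and $\sigma^2=\sigma_p^2$ is as in Lemma \ref{S6S1L1}.

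\noindent\textbf{Main obstacle.} The delicate point is precisely the last step: propagating the quadratic weight $e^{b(z-pn)^2/n}$ through the recursion without letting its coefficient grow. This forces $b$ (hence $b_1$) to be small relative to the midpoint-density tail rate $b_2$ of Lemma \ref{S6S1L2}, requires the completion of square to keep the new coefficient $\le b$, and requires the per-level multiplicative constant $2c_1c_3$ to be absorbed into the polynomial factor $n^{\alpha'}$ by the right choice of $\alpha'$; matching the far-from-slope regime, where Assumption C6 replaces the fine coupling, to the \emph{same} target exponent is the other place where the freedom to take $a$ and $\hat b$ small is essential.
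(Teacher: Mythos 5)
Your proposal is correct and follows essentially the same route as the paper's proof: the same reduction to small $b$, the same crude coupling plus Assumption C6 for the base case and the regime $|z-pn|\ge \epsilon_3 n$, and the same dyadic induction in which Lemma \ref{S6S1L1} couples the midpoint to the Gaussian $\xi$ and Lemma \ref{S6S1L2} controls the midpoint density, with the per-level multiplicative constant absorbed into the factor $e^{\alpha'\log n}$. The only (harmless) differences are cosmetic: you estimate the midpoint integral by completing the square under $b<b_2/4$, whereas the paper splits into the events $|W^z-z/2|\gtrless|z-pn|/6$ under $b<b_2/37$, and you track the geometric growth of constants directly through $n^{\alpha'}$ rather than via the paper's explicit $A^{s}$ bookkeeping.
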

\begin{proof}It suffices to prove the theorem when $b$ is sufficiently small. For the remainder we fix $b > 0$ such that $b < b_2/ 37$, where $b_2$ is the constant from Lemma \ref{S6S1L2}. Let $\epsilon_3$ and $N_3$ be as in Lemma \ref{S6S1L1} and $N_4$ as in Lemma \ref{S6S1L2} for our choice of $s,t$ and put $N_5 = \max (N_3, N_4)$.

 In this proof, by an {\em $n$-coupling} we will mean a probability space on which are defined a Brownian bridge $B^\sigma$ and the family of processes $\{ S^{(n,z)}: z \in L_n\}$. Notice that for any $n$-coupling if $z \in L_n$, $S_t = S^{(n,z)}_t$ then 
$$\Delta(n,z) =  \sup_{0 \leq t \leq n} \left| \sqrt{n} B^\sigma_{t/n} + \frac{t}{n}z - S^{(n,z)}_t \right| \leq |z| +  \max_{0 \leq k \leq n} |S^{(n,z)}_k| + \sup_{0 \leq t \leq n} |\sqrt{n} B^\sigma_{t/n}|$$ 
which implies
$$\mathbb{E}\left[ e^{a \Delta(n,z)} \right]  \leq \mathbb{E}\left[\exp \left(3a \sup_{0 \leq t \leq 1 }  \sqrt{n} |B^\sigma_t| \right) \right] + \exp( 3a|z|) + 
\mathbb{E} \left[ \exp \left(3a \max_{1 \leq k \leq n} |S_k| \right) \Big{|} S_n = z\right].$$
Note that if $|z - pn| \geq \epsilon_3 n$ we have 
$$b |z - pn|^2/n \geq  \frac{b \epsilon_3^2n}{2}  + \frac{b |z - pn|^2}{2n} \geq \frac{b \epsilon_3^2n}{2} + b \kappa \frac{z^2}{n},$$ 
where $\kappa$ is sufficiently small so that 
$$\kappa < 1/2, \hspace{5mm} \frac{p}{1 - 2\kappa} \in [p - \epsilon_3, p+\epsilon_3], \mbox{ and } \epsilon_3/2 - \kappa (\pm \epsilon_3 + p)^2 > 0 .$$ 
In view of the above and Assumption C6 there exists $\hat{a}$  small enough and $\hat{C}$ large enough depending on $b$ such that if $a < \hat{a}$ we can ensure that
$$\exp( 3a|z|) + 
\mathbb{E} \left[ \exp \left(3a \max_{1 \leq k \leq n} |S_k| \right) \Big{|} S_n = z\right] \leq \hat{C} e^{b|z- p n|^2/n},$$
provided that $|z - pn| \geq \epsilon_3n$. 

Further we know that there exist positive constants $\tilde c$ and $u$ such that $\mathbb{E}\left[\exp \left( \sup_{0 \leq t \leq 1 } y|B^\sigma_t| \right) \right] \leq \tilde c e^{uy^2}$ for any $y > 0$ (see e.g. (6.5) in \cite{LF}). Clearly, there exists $\hat{a}_2$ (depending on $b$) such that if $0 < a < \hat{a}_2$ then $18 ua^2 \leq b \epsilon_3^2$. This implies that if $a < a_0 := \min(\hat{a}, \hat{a}_2)$ then 
$$\mathbb{E}\left[\exp \left(3a \sup_{0 \leq t \leq 1 }  \sqrt{n} |B^\sigma_t| \right) \right] + \exp( 3a|z|) + 
\mathbb{E} \left[ \exp \left(3a \max_{1 \leq k \leq n} |S_k| \right) \Big{|} S_n = z\right] \leq  [\hat{C} + \tilde{c}] e^{b|z- p n|^2/n},$$
provided that $|z - pn| \geq \epsilon_3n$. 

The latter has the following implication. Firstly, (\ref{ContKMTeqA}) will hold for any $n$-coupling with $C = \hat{C}_1 := \tilde c + \hat{C}$, $\alpha' = 0$ and $a \in (0,a_0)$ if $z \in L_n$ satisfies $|z - pn| \geq \epsilon_3n$. Moreover, we can find a constant $\hat{C}_2 > 1$ such that if $a < a_0$, $|z - pn| \leq \epsilon_3n$ and $n \leq 4N_5$ then
$$\mathbb{E}\left[\exp \left(3a \sup_{0 \leq t \leq 1 }  \sqrt{n} |B^\sigma_t| \right) \right] + \exp( 3a|z|) + 
\mathbb{E} \left[ \exp \left(3a \max_{1 \leq k \leq n} |S_k| \right) \Big{|} S_n = z\right] \leq  \hat{C}_2.$$

For the remainder of the proof we take $b_1 = b/20$ and let $a_1, c_1$ be as in Lemma \ref{S6S1L1} for this value of $b_1$. We will take $a = (1/2) \cdot \min(a_0, a_1)$ and $C = \max(\hat{C}_1, \hat{C}_2)$ as above and show how to construct the $n$-coupling so that (\ref{ContKMTeqA}) holds for some $\alpha'$. \\

We will show that for every positive integer $s$, there exist $n$-couplings for all $n \leq 2^s$ such that
\begin{equation}\label{S42E7}
\mathbb{E}\left[ e^{a \Delta(n,z)} \right] e^{-b|z- p n|^2/n} \leq A^{s-1} \cdot C, \hspace{5mm} \forall{z \in L_n},
\end{equation}
where $A =  1 + 2c_1 ( 1 + 8c_2 b^{-1/2}) $. The theorem clearly follows from this claim.\\

We proceed by induction on $s$ with base case $s = 1$ being true by our choice of $C$ above. We suppose our claim is true for $s$ and let $2^s < n \leq 2^{s+1}$. We will show how to construct a probability space on which we have a Brownian bridge and a family of processes $\{S^{(n,z)}: |z -p n| \leq \epsilon_3 n \}$, which satisfy (\ref{S42E7}). Afterwards we can adjoin (after possibly enlarging the probability space) the processes for $|z| > n \epsilon_3$. Since $C \geq \hat{C}_1$ and $a < a_0$ we know that (\ref{S42E7}) will continue to hold for these processes as well. Hence, we assume that $|z - pn | \leq \epsilon_3 n$. 

If $2^{s+1} \leq 4N_5$ then by our choice of $C \geq \hat{C}_2$ and the fact that $A > 1$ we will have that (\ref{S42E7}) holds for any coupling provided $|z - pn | \leq \epsilon_3 n$. We may thus assume that $2^s > 2N_5$. For simplicity we assume that $n = 2k$, where $k \geq N_5$ is an integer such that $2^{s-1} < k \leq 2^s$ (if $n$ is odd we write $n = k + (k+1)$ and do a similar argument).

We define the $n$-coupling as follows:
\begin{itemize}
\item Choose two independent $k$-couplings 
$$\left( \{S^{1 (k,z))}\}_{z \in L_k}, B^1 \right), \hspace{5mm} \left( \{S^{2 (k,z))}\}_{z \in L_k}, B^2 \right), \mbox{ satisfying (\ref{S42E7})}.$$
Such a choice is possible by the induction hypothesis.
\item We let $W^z$ and $\xi$ be as in the statement of Lemma \ref{S6S1L1}, and set $Z^z = \frac{z}{2} + \frac{\sqrt{n}  \sigma_p}{2} \cdot \xi$. Assume, as we may, that all of these random variables are independent of the two $k$-couplings chosen above. Observe that by our choice of $a$ and $k \geq N_5$ we have that
\begin{equation}\label{S42Q4}
\mathbb{E} \left[e^{a |Z^z - W^z|} \Big{|} W^z \right] \leq c_1  \cdot  \exp \left(\frac{b}{20} \cdot \frac{(W^z - kp)^2 + (z - np)^2}{n}\right).
\end{equation}
\item Let
\begin{equation}\label{S42Q5}
B_t = \begin{cases} 2^{-1/2} B_{2t}^1 + t\sigma \xi & 0 \leq t \leq 1/2,\\2^{-1/2} B_{2(t-1/2)}^2 + (1-t) \sigma\xi & 1/2 \leq t \leq 1. \end{cases}
\end{equation}
By Lemma 6.5 in \cite{LF}, $B_t$ is a Brownian bridge with variance $\sigma^2$.
\item Let $S^{(n,z)}_k = W^z$, and 
$$S_{m}^{(n,z)} = \begin{cases}S^{1 (k,W^z)}_m &0 \leq m \leq k, \\ W^z + S_{m-k}^{2 (k,z-W^z)}, &k \leq m \leq n.\end{cases}$$
What we have done is that we first chose the value of $S_k^{(n,z)}$ from the conditional distribution of $S_k$, given $S_n = z$. Conditioned on the midpoint $S^{(n,z)}_k = W^z$ the two halves of the random walk bridge are independent and upto a trivial shift we can use $S^{1 (k,W^z)}$ and $S^{2 (k,z -W^z)}$ to build them.
\end{itemize}
The above defines our coupling and what remains to be seen is that it satisfies (\ref{S42E7}) with $s + 1$.

Note that 
$$\Delta(n,z,S^{(n,z)}, B) \leq |Z^z - W^z| + \max \left(\Delta(k, W^z, S^{1 (k,W^z)}, B^1),\Delta(k, z- W^z, S^{2 (k,z - W^z)}, B^2)  \right)$$
and therefore almost surely
$$\mathbb{E}\left[ e^{a\Delta(n,z)} \Big{|} W^z \right] \leq \mathbb{E}\left[ e^{a|Z^z - W^z|} \Big{|} W^z \right] \times  C A^{s-1}\left( e^{b|W^z - kp|^2/k} + e^{b|z- W^z - kp|^2/k}\right).$$
In deriving the last expression we used that our two $k$-couplings satisfy (\ref{S42E7}) and the simple inequality $\mathbb{E}[e^{\max (Z_1,Z_2)}] \leq\mathbb{E}[e^{Z_1}]  + \mathbb{E}[e^{Z_2}]  $. Taking expectation on both sides above we see that
\begin{equation}\label{S42Q6}
\mathbb{E}\left[ e^{a\Delta(n,z)} \right] \leq C\cdot (2c_1) \cdot A^{s-1} \mathbb{E} \left[ \exp \left(\frac{9}{4} \cdot \frac{b\max(|W^z - kp|^2,|z-W^z - kp|^2) }{n} \right) \right] .
\end{equation}
In deriving the last expression we used (\ref{S42Q4}) and the simple inequality $x^2 + y^2 \leq 5 \max (x^2, (x-y)^2)$ as well as that $k = n/2$.\\

We finally estimate the expectation in (\ref{S42Q6}) by splitting it over $W^z$ such that $|W^z - z/2| > |z-pn|/6$ and $|W^z - z/2| \leq |z-pn|/6$; we call the latter events $E_1$ and $E_2$ respectively. Notice that if $|W^z  - z/2| \leq |z-pn|/6$ we have $\max( |W^z - pk|^2,|z-W^z - pk|^2) \leq (2|z - pn|/3)^2$; hence
\begin{equation}\label{S42Q7}
\mathbb{E} \left[ \exp \left(\frac{9}{4} \cdot \frac{\max( |W^z - kp|^2,|z-W^z - kp|^2) }{n} \right) \cdot {\bf 1} \{ E_2 \}\right] \leq \exp \left(\frac{|z-pn|^2}{n} \right).
\end{equation}

To handle the case $|W^z - z/2| > |z-pn|/6$  we use Lemma \ref{S6S1L2}, from which we know that 
$$f_{m,n}(W_z|z)  \leq c_2 n^{-1/2} \exp \left(- b_2 \frac{(W^z - (z/2))^2}{n} \right).$$
Using the latter together with the fact that for $|W^z - z/2| > |z-pn|/6$ we have that $(W^z- z/2)^2 > \frac{1}{16} \max \left( (W^z - kp)^2 , |z-W^z - kp|^2 \right)$ we see that
\begin{equation}\label{S42Q8} 
\begin{split}
&\mathbb{E} \left[ \exp \left(\frac{9}{4} \cdot \frac{b \max( |W^z - kp|^2,|z-W^z - kp|^2) }{n} \right) \cdot {\bf 1} \{E_1 \} \right] \leq \\
& c_2 n^{-1/2} \int_{\mathbb{R}}  \exp \left(- \frac{b}{16} \cdot \frac{(y - kp)^2}{n} \right) dy=   c_2 n^{-1/2} 4  \frac{\pi^{1/2} n^{1/2}}{b^{1/2}} \leq 8c_2  b^{-1/2}.
\end{split}
\end{equation}
Combining the above estimates we see that
$$\mathbb{E}\left[ e^{a\Delta(n,z)} \right] \leq C\cdot (2c_1) \cdot A^{s-1} \left[ \exp \left(\frac{|z-pn|^2}{n} \right) + 8c_2 b^{-1/2} \right] \leq C \cdot A^{s} \exp \left(\frac{|z-pn|^2}{n} \right) .$$
The above concludes the proof.

\end{proof}

%
%

\subsection{Discrete case}
We use the same notation as in Sections \ref{Section2.2} and \ref{Section4}.

\begin{lemma}\label{S6S2L1}
Suppose that $p_X$ satisfies Assumptions D1-D4 and fix $p \in (\alpha ,\beta)$. Let $s = p -\epsilon'$ and $t = p + \epsilon'$, where $\epsilon' > 0$ is sufficiently small so that $\alpha < s < t < \beta$. Then there exists $\epsilon_3 \in (0, \epsilon')$ and $N_3 \in \mathbb{N}$ such that for every $b_1 > 0$ there exist constants $0 < c_1, a_1 < \infty$ such that the following holds. Suppose that $m,n$ are integers such that $m,n \geq N_3$ with $|m-n| \leq 1$, set $N = m + n$. We can define a probability space on which are defined a standard normal random variable $\xi$ and a collection of random variables $W = W^{(m,n,z)}$ for all $z \in \{ x \in L_N: |x - pN| \leq \epsilon_3N \}$ such that the law of $W^{(m,n,z)}$ is given by $p_{n,m}(\cdot|z)$ and such that we have almost surely 
\begin{equation}\label{S6S2E1}
\mathbb{E} \left[e^{a_1 |Z - W|} \Big{|} W \right] \leq c_1 \cdot  \exp \left(b_1 \frac{(W - pn)^2 + (z - pN)^2}{N}\right),
\end{equation}
where 
$$Z = Z^{(m,n,z)} = \frac{z}{2} + \frac{\sqrt{N} \sigma_p}{2} \cdot \xi, \mbox { so that } Z \sim N\left( \frac{z}{2}, \frac{\sigma^2_p N}{4}  \right).$$
The constants $\epsilon_3$ and $N_3$ depend on the values $p$, $s,t$ and the function $p_X(\cdot)$, where the dependence on the latter is through the constants in Definition \ref{DefParDisc}.
\end{lemma}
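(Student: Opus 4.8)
The plan is to run the proof of Lemma~\ref{S6S1L1} essentially verbatim in the discrete setting, substituting Propositions~\ref{S4S1P2} and~\ref{S4S1P3} for Propositions~\ref{S3S1P2} and~\ref{S3S1P3}, and using the integer quantile coupling of Lemma~\ref{L2} in place of Corollary~\ref{C2}. As in that proof it is enough to treat $N$ large, and for notational ease I would take $n=m=N/2$. First I would fix $s=p-\epsilon'$, $t=p+\epsilon'$ and invoke Propositions~\ref{S4S1P2} and~\ref{S4S1P3}: for $N_3\geq \max(N_0,N_1)$ as in the statements of those propositions, the integer-valued variable obtained by recentring $S^{(N,z)}_n$ by an integer within distance $1$ of $z/2$ satisfies the local Gaussian expansion (\ref{S2E1})--(\ref{S2E2}) and the Gaussian upper bound (\ref{S2E2v2}) required by Lemma~\ref{L2}, with $M_0=M$ from Proposition~\ref{S4S1P2}, $\epsilon_0=\epsilon'$, $\tilde c=\tfrac12\min(m_{\hat s,\hat t},M_{\hat s,\hat t}^{-1})$, with $\hat s,\hat t$ and $m_{\hat s,\hat t},M_{\hat s,\hat t}$ as in Definitions~\ref{DefN2Disc} and~\ref{DefDeltaDisc}, and $b'=a$, $c'=A$ from Proposition~\ref{S4S1P3}; the half-integer shift present when $z$ is odd only adds to the effective error term $\delta$ an amount that stays within its allowed budget. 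This yields constants $c_2,\epsilon_2,N_2$ from Lemma~\ref{L2}, after which I would pick $\epsilon_3\leq \tfrac14\min(\epsilon_2,\epsilon')$ small enough that $\epsilon_3 M\leq 1/M_{\hat s,\hat t}\leq 1/\sigma_{z/N}^2$ whenever $|z-pN|\leq \epsilon_3 N$, and set $N_3=\max(N_0,N_1,N_2)$.

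Next I would build the probability space exactly as in Lemma~\ref{S6S1L1}: take three independent uniform $(0,1)$ variables $U_1,U_2,U_3$, put $\xi=\Phi^{-1}(U_1)$, let $W_+$ and $W_-$ be the $U_2$- and $U_3$-quantiles of the law of $S^{(N,z)}_n$ conditioned to exceed $z/2+2\epsilon_3 n$, respectively to be below $z/2-2\epsilon_3 n$, and declare $W$ to equal the $U_1$-quantile of $p_{n,m}(\cdot|z)$ when $\xi\in[A,B]$ and to equal $W_\pm$ when $\xi$ lies above $B$ or below $A$, where $A<B$ are chosen so that $\Phi(A)$ and $1-\Phi(B)$ match the two tail masses. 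Then $W$ has law $p_{n,m}(\cdot|z)$, and it is quantile coupled to $\xi$ near the center and independent of it far out. Writing $\hat Z=z/2+\tfrac{\sigma_{z/N}\sqrt N}{2}\xi$ with cumulative distribution function $F$, the conclusion (\ref{S2E3}) of Lemma~\ref{L2} applied to the recentred $S^{(N,z)}_n$ gives, for $y$ in the central window, the two-sided bound $F(y-c_2[1+(y-z/2)^2/n])\leq \mathbb{P}(S^{(N,z)}_n\leq y)\leq F(y+c_2[1+(y-z/2)^2/n])$ up to the $\pm1$ lattice correction, whence $|\hat Z-W|\leq c_2(1+(W-z/2)^2/n)$ almost surely on $\{A\leq \xi\leq B\}$, with the $\pm1$ absorbed into $c_2$; a comparison with Lemma~\ref{L2} in the reverse direction shows $|A|,|B|=O(\sqrt N)$, so $|\xi|=O(\sqrt N)$ on that event.

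Finally, to pass from $\hat Z$ to $Z=z/2+\tfrac{\sigma_p\sqrt N}{2}\xi$ I would use that $z\mapsto\sigma_z$ is smooth on $[\hat s,\hat t]$ (Lemma~\ref{S2S2L1}), so that $|\sigma_p-\sigma_{z/N}|\leq c_p^{1/2}|p-z/N|$ and hence $|Z-\hat Z|=O(\sqrt N\,|p-z/N|)$ on $\{A\leq\xi\leq B\}$. Combining this with the previous bound via the triangle inequality, Cauchy--Schwarz and $(x+y)^2\leq 2x^2+2y^2$ bounds $\mathbb{E}[e^{|W-Z|}\mid W]$ by $C\exp(c'(z-pN)^2/N+c''(W-pn)^2/N)$ whenever $|W-z/2|\leq 2\epsilon_3 n$; a Jensen step with a large integer $v$ chosen so that the exponents drop below $b_1$, together with $a_1\leq 1/v$, delivers (\ref{S6S2E1}) in the central regime. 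On the complementary event $|W-z/2|>2\epsilon_3 n$, $W$ and $Z$ are independent with $Z$ a truncated Gaussian, and $B\leq\tilde C\sqrt N$ gives $1-\Phi(B)\geq e^{-cN\epsilon_3^2}$, so $\mathbb{E}[e^{|W-Z|}\mid W]\leq \exp(cN+|W-z/2|)$; a final Jensen step with $v$ large enough that $1/v\leq b_1/2$ and $(c+5/4)/v\leq b_1\epsilon_3^2/8$ absorbs the $cN$ term using $|W-z/2|\geq 2\epsilon_3 n$, giving (\ref{S6S2E1}) for all $W$. None of these steps is individually difficult — the substantive analytic content lives in Propositions~\ref{S4S1P2}, \ref{S4S1P3} and Lemma~\ref{L2} — so the main obstacle is bookkeeping: ensuring $\epsilon_3,N_3,c_1,a_1$ depend on $p_X$ only through the constants catalogued in Definition~\ref{DefParDisc}, and verifying that the one genuinely discrete feature, namely the $\pm1$ in the quantile coupling of Lemma~\ref{L2} together with the half-integer recentring when $z$ is odd, is harmless.
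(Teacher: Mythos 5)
Your proposal is correct and follows essentially the same route as the paper's own proof: invoke Propositions \ref{S4S1P2} and \ref{S4S1P3} to feed Lemma \ref{L2}, quantile-couple $W$ to $\xi$ in the window $|W - z/2|\leq 2\epsilon_3 n$ and make them independent outside it, pass from $\hat Z$ to $Z$ via the smoothness of $z\mapsto\sigma_z$, and finish with Jensen in both regimes using $|r|=O(\sqrt N)$ for the thresholds. Your formulation of the coupling through uniforms and generalized inverses is equivalent to the paper's explicit enumeration of lattice points $\hat a_j$ with thresholds $r_{j-},r_j$, and your remark about absorbing the half-integer recentring into the error term $\delta$ correctly handles a detail the paper leaves implicit.
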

\begin{proof}
Notice that we only need to prove the lemma for $N$ sufficiently large. In order to simplify the notation we will assume that $n =m = N/2$ (the other cases can be handled similarly). \\

 We apply Propositions \ref{S4S1P2} and \ref{S4S1P3} for the variables $s$ and $t$. This implies that provided $N_3 \geq \max (N_0, N_1)$ as in the statements of those propositions and $n \geq N_3$ we have that the random variable $ S^{(N,z)}_n - z/2 $ satisfies the conditions of Lemma \ref{L2} for $M_0 = M$ as in Proposition \ref{S4S1P2}, $\epsilon_0 = \epsilon'$ as in the statement of this proposition, $\tilde{c} = (1/2) \cdot \min ( m_{\hat{s},\hat{t}}, M_{\hat{s},\hat{t}}^{-1})$ as in Definition \ref{DefDeltaDisc} for the variables $\hat{s}, \hat{t}$ as in Definition \ref{DefN2Disc}, $b' = a$ and $c' = A$ as in the statement of Proposition \ref{S4S1P3}. We consequently, let $c_2, N_2, \epsilon_2$ be as in the statement of that corollary for the above constants.

In what follows we fix $\epsilon_3 \leq 4^{-1}\min (\epsilon_2, \epsilon' )$ sufficiently small so that $\epsilon_3 M \leq 1/M_{\hat{s},\hat{t}}$ where $M$ is as in the statement of Proposition \ref{S4S1P2} and $M_{\hat{s},\hat{t}}$ is as in Definition \ref{DefDeltaDisc}  for the variables $\hat{s}, \hat{t}$ as in Definition \ref{DefN2Disc}. Observe that the choice of $\epsilon_3$ implies that $\epsilon_3 M \leq 1/ \sigma_{z/N}^2$ for all $|z -pN| \leq N \epsilon_3$. We also set $N_3 = \max (N_0, N_1, N_2)$.

Let $\hat{A} = \{ x \in \mathbb{Z}: x \in[z/2 - 2\epsilon_3 n , z/2 + 2\epsilon_3 n] \}$ and let $\hat{a}_1, \dots, \hat{a}_k$ be an enumeration of the elements in $\hat{A}$ in increasing order. Let $G = G_{n,z}$ denote the cumulative distribution function of $S^{(N,z)}_n$. In addition, we let $\Phi$ denote the cumulative distribution function of a standard normal random variable. Since $\Phi$ is strictly increasing and $p_{n,m}(\hat{a}|z) > 0$ for all $\hat{a} \in \hat{A}$ we can define the unique real numbers $r_{j-}$ and $r_{j}$ for $j = 1, \dots, k$ that satisfy
$$\Phi(r_{j-}) = G(\hat{a}_j -), \hspace{5mm} \Phi(r_j) = G(\hat{a}_j).$$

Suppose that we have a probability space that supports three independent variables $W_{-},W_{+}$ and $\xi$, where $\xi$ is a standard normal random variable, $W_{-}$ has the distribution of $S^{(N,z)}_n$ conditioned on being less than $\hat{a}_1$ and $W_{+}$ has the distribution of $S^{(N,z)}_n$ conditioned on being larger than $\hat{a}_k$. 
Set
$$Z = Z_{n,z} = z/2 +  \frac{\sigma_p \sqrt{N}}{2} \cdot \xi,  \hspace{5mm} \hat Z = \hat Z_{n,z} = z/2 +   \frac{\sigma_{z/N} \sqrt{N}}{2}\cdot \xi.$$
Given a realization of $\xi$, $W_{-}$ and $W_{+}$ we define a random variable $W$ as follows.
\begin{itemize}
\item if $ r_{j-} < \xi \leq r_j$ we set $W = \hat{a}_j$;
\item if $ \xi \leq r_{1-}$ we set $W = W_-$;
\item if $\xi \geq r_k$ we set $W = W_+$. 
\end{itemize}
It is easy to see that as defined $W$ indeed has the same distribution as $S^{(N,z)}_n$. In words, $W$ is {\em quantile coupled} to $\xi$ near $0$ and independent from it for large values. 

We denote
$$Z = Z_{n,z} = z/2 +  \frac{\sigma_p \sqrt{N}}{2} \cdot \xi,  \hspace{5mm} \hat Z = \hat Z_{n,z} = z/2 +   \frac{\sigma_{z/N} \sqrt{N}}{2}\cdot \xi.$$
 and write $F = F_{n,z}$ for the distribution function of $\hat Z$. It is easy to check that our construction satisfies the following property. If $j = 1, \dots, k$ and 
$$F(\hat{a}_j-x) \leq G(\hat{a}_j-) < G(\hat{a}_j) \leq F(\hat{a}_j + x),$$
then
\begin{equation}\label{S6S2L1E1}
|\hat Z - W| = |\hat Z - \hat{a}_j| \leq x \hspace{5mm} \mbox{ on the event } \{W = \hat{a}_j \} \mbox{ for $j = 1, \dots, k$}.
\end{equation}
By our choice of $\epsilon_3, N_3$ and $c_2$ and Lemma \ref{L2} we have that for all $j = 1, \dots, k$ and $n \geq N_3$
\begin{equation}\label{S6S2L1E2}
F\left( \hat{a}_j - c_2\left[1 + \frac{(\hat{a}_j- z/2)^2}{n}  \right]\right) \leq G(\hat{a}_j) \leq F\left(\hat{a}_j + c_2\left[1 + \frac{(\hat{a}_j- z/2)^2}{n}  \right]\right).
\end{equation}
Combining (\ref{S6S2L1E1}) and (\ref{S6S2L1E2}) we get 
\begin{equation}\label{S6S2L1E3}
 |\hat Z - W| \leq c_2\left[1 + \frac{(W- z/2)^2}{n}\right] \mbox{ on the event $W \in \hat{A}$},
\end{equation}
for all $n \geq N_3$, provided that $|z - pN| \leq \epsilon_3 N$, $|W - z/2| \leq 2\epsilon_3 n$. 

We next claim that $|r_{1-}| = O(\sqrt{N})$ and $|r_k| = O(\sqrt{N})$. To see the latter notice that 
$$\mathbb{P}( \xi \geq r_k) = \mathbb{P} (W \geq z/2 + 2n \epsilon_3) = 1 - \mathbb{P}( W - z/2 \leq 2n \epsilon_3) \geq 1 - \mathbb{P}\left( \hat{Z} - \frac{z}{2} \leq 2n \epsilon_3 + c_2 \left(1 + \frac{4 n^2 (\epsilon_3)^2}{n} \right)  \right)$$
$$ = \mathbb{P} \left( \frac{\sigma_{z/N} \sqrt{N}}{2} \cdot \xi \geq 2n \epsilon_3 + c_2[1 + 4 n (\epsilon_3)^2] \right) \geq \mathbb{P}( \xi \geq \tilde{C} \sqrt{N}),$$
for some positive constant $\tilde{C}$. The inequality in the first line follows from Lemma \ref{L2} applied to $W - z/2$. The above implies that $r_k \leq \tilde{C}\sqrt{N}$ and an anologous argument shows that $r_{1-} \geq - \tilde{C}\sqrt{N}$ for some possibly larger $\tilde{C}$. We conclude that there is a constant $\tilde{C} > 0$ such that $|\xi| \leq \tilde{C} \sqrt{N}$ on the event $W \in \hat{A}$. 

The latter implies that almost surely on the event $W \in \hat{A}$ we have
$$\mathbb{E}\left[ e^{|Z - \hat Z|} \Big{|} W\right] \leq \mathbb{E}\left[ e^{|\xi| | \sigma_p -  \sigma_{z/N}|}   \Big{|} W \right]\leq  \mathbb{E}\left[ e^{ \tilde{C} \sqrt{N} | \sigma_p -  \sigma_{z/N}|}   \Big{|} W \right].$$

From Lemma \ref{S2S2L1} we know that we can find a constant $c_p > 0$, that depends on $m_{\hat{s}, \hat{t}}$ and $M_{\hat{s},\hat{t}}$ as in Definition \ref{DefDeltaDisc} as well as $M^{(3)}_{\hat{s}, \hat{t}}$ as in Definition \ref{DefM34Disc} for the variables $\hat{s}, \hat{t}$ as in Definition \ref{DefN2Disc}, such that $|\sigma_p - \sigma_{z/N}|^2  \leq  c_p | p - z/N|^2$ for all $|z - pN| \leq \epsilon_3 N$. Combining the latter with the Cauchy-Schwarz inequality, (\ref{S6S2L1E3}) and the triangle inequality we conclude that there are constants $C, c > 0$ such that if $|W - z/2| \leq  2\epsilon_3 n$ and $|z - pN| \leq \epsilon_3 N$ then 
\begin{equation*}
\mathbb{E} \left[e^{|W - Z|} \Big{|}W \right] \leq \mathbb{E} \left[e^{|W - \hat Z| + |Z - Z|} \Big{|}W  \right] \leq C\exp\left(\frac{c_p(z-pN)^2}{N} + \frac{  c(W - z/2)^2}{n}\right).
\end{equation*}
Applying Jensen's inequality to the above we have for any $v \in \mathbb{N}$ that
\begin{equation*}
 \mathbb{E} \left[e^{(1/v)|W - Z|} \Big{|}W  \right] \leq \mathbb{E} \left[e^{|W - Z|} \Big{|}W \right]^{1/v} \leq  C^{1/v} \exp \left( \frac{c_p(z-pN)^2}{Nv} +  \frac{c(W - z/2)^2}{nv}\right),
\end{equation*}
and if we further use that $(x +y )^2 \leq 2x^2 + 2y^2$ above we see that
\begin{equation}\label{S6S2L1E6}
 \mathbb{E} \left[e^{(1/v)|W - Z|} \Big{|}W \right] \leq  C^{1/v} \cdot \exp \left( \frac{[c_p + c](z-pN)^2}{Nv} + \frac{4c(W - pn)^2}{Nv}\right),
\end{equation}
provided $ n \geq N_3$, $|w - z/2| \leq 2\epsilon_3 n$ and $|z - pN| \leq \epsilon_3 N$.\\

Suppose now that $b_1$ is given, and let $v$ be sufficiently large so that 
$$\frac{c_p + c}{v} \leq b_1 \mbox{ and } \frac{4c}{v} \leq b_1.$$
If $a_1 \leq 1/v$ we see from (\ref{S6S2L1E6}) that
\begin{equation}\label{S6S2L1E7}
 \mathbb{E} \left[e^{a_1|W - Z|} \Big{|}W \right] \leq  C \cdot \exp \left( \frac{b_1(z-pN)^2}{N} + \frac{b_1(w - pn)^2}{N}\right),
\end{equation}
provided $ n \geq N_3$, $|w - z/2| \leq 2\epsilon_3 n$ and $|z - pN| \leq \epsilon_3 N$.

Suppose now that $|W - z/2| > 2\epsilon_3 n$ and suppose for concreteness that $W - z/2 \geq 2\epsilon_3 n$. On the event $\{W > z/2 + 2\epsilon_3 n\}$ we have that $W$ and $Z$ are independent with $Z$ having the distribution of a normal random variable with mean $z/2$ and variance $\frac{\sigma_p^2 N}{4}$ conditioned on being larger than $s := z/2 + \frac{\sigma_p \sqrt{N}}{2} \cdot r_k$. It follows that almost surely on $\{W > z/2 + 2\epsilon_3 n\}$
$$ \mathbb{E} \left[e^{|W - Z|} \Big{|}W  \right]  \leq e^{ |W - z/2|} \cdot \int_{r_k}^\infty \frac{e^{ \frac{\sigma_p \sqrt{N}}{2} |y|} e^{-y^2/2}}{\sqrt{2\pi}} \cdot \left(1 - \Phi(r_k) \right)^{-1}.$$
From our earlier work we know that $r_k  \leq \tilde{C} \sqrt{N}$ for some $\tilde{C} > 0$. This implies that
$$ 1 - \Phi(r_k)  \geq e^{-c N\epsilon_3^2},$$
for some sufficiently large $ c > 0$. Combining the last two inequalities gives for some new $c > 0$
$$ \mathbb{E} \left[e^{|W - Z|} \Big{|}W \right]  \leq \exp \left(c N +  |W - z/2| \right) \leq \exp \left( (c + 5/4)N + \frac{(z-pN)^2}{N} + \frac{(W - pn)^2}{N}\right),$$ 
where the last inequality uses the triangle inequality and the fact that $\sqrt{ab} \leq a + b$ for $a,b \geq 0$. 
Applying Jensen's inequality to the above we have for any $v \in \mathbb{N}$ that
\begin{equation*}
 \mathbb{E} \left[e^{(1/v)|W - Z|} \Big{|}W \right] \leq \mathbb{E} \left[e^{|W - Z|} \Big{|}W  \right]^{1/v}\hspace{-3mm} \leq  \exp \left( \frac{(c + 5/4)N}{v} + \frac{(z-pN)^2}{vN} + \frac{(W - pn)^2}{vN}\right).
\end{equation*}
In particular, suppose that $v$ is sufficiently large so that 
$$ \frac{1}{v} \leq \frac{b_1}{2} \mbox{ and } \frac{c+5/4}{v} \leq \frac{b_1 \epsilon_3^2}{8}$$
and $a_1 \leq 1/v$. We then have from the above inequality that
$$ \mathbb{E} \left[e^{a_1|W - Z|} \Big{|}W \right] \leq \exp \left(  \frac{b_1 \epsilon_3^2}{16} N + \frac{b_1(z-pN)^2}{2N} + \frac{b_1(W - pn)^2}{2N}\right) \leq \exp \left( \frac{b_1(z-pN)^2}{N} + \frac{b_1(W - pn)^2}{N}\right),$$
where in the last inequality we used that $|W - z/2| \geq 2\epsilon_3 n$ and $|z/2 - pn| \leq \epsilon_3 n$. We conclude that (\ref{S6S2L1E7}) holds even when $W - z/2 > 2\epsilon_3 n$ . An analogous argument shows that (\ref{S6S2L1E7}) also holds when $W - z/2 < - 2\epsilon_3 n$, and so almost surely for all $W$. This suffices for the proof.

\end{proof} 

We also isolate for future use the following statement.
\begin{lemma}\label{S6S2L2}
Assume the same notation as in Lemma \ref{S6S2L1}. There exist positive constants $b_2, c_2, N_4$ such that for every integers $m,n \geq N_4$, $N = m+n$ such that $|m - n| \leq 1$, every $z \in \{ x \in L_N: |x - pN| \leq \epsilon_3N \}$ and $w \in \mathbb{Z}$,
$$p_{m,n}(w|z) \leq c_2  N^{-1/2} \exp \left( -b_2 \frac{(w - (z/2))^2}{N}\right).$$
\end{lemma}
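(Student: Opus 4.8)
\textit{Proof proposal.} This lemma is the discrete counterpart of Lemma~\ref{S6S1L2}, and just as there the plan is to read it off directly from the midpoint local limit estimate of Proposition~\ref{S4S1P2} and the midpoint tail estimate of Proposition~\ref{S4S1P3}, applied with the $s,t$ fixed in Lemma~\ref{S6S2L1}. As usual one reduces to $m=n=N/2$ (the case $|m-n|=1$ is handled in the same way), writes $r=w-z/2$, and recalls from the hypothesis ``same notation as in Lemma~\ref{S6S2L1}'' that $\epsilon_3\in(0,\epsilon')$ was chosen small enough that $\epsilon_3 M\le 1/M_{\hat s,\hat t}$ (hence $\epsilon_3 M\le 1/\sigma_{z/N}^2$ whenever $|z-pN|\le\epsilon_3 N$), where $M$ is the constant of Proposition~\ref{S4S1P2} and $M_{\hat s,\hat t},m_{\hat s,\hat t}$ are the bounds on $\Lambda''$ from Definition~\ref{DefDeltaDisc}. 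Note that the target quantity is $(w-z/2)^2/N=r^2/N$.

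\textbf{Near regime $|r|\le\epsilon_3 N$.} Here the slopes $2w/N$ and $2(z-w)/N$ both lie within $2\epsilon_3\le\epsilon'/2$ of $z/N$, hence (since $|z/N-p|\le\epsilon_3$) inside $(s,t)$, so Proposition~\ref{S4S1P2} applies and gives
$$p_{n,m}(w|z)=\frac{2}{\sqrt{2\pi N}\,\sigma_{z/N}}\exp\!\Bigl(-\frac{2}{\sigma_{z/N}^2}\,\frac{r^2}{N}+\delta_2\Bigr),\qquad |\delta_2|\le M\Bigl(\frac{1}{\sqrt N}+\frac{|r|^3}{N^2}\Bigr).$$
Since $|r|/N\le\epsilon_3$ and $\epsilon_3 M\le 1/\sigma_{z/N}^2$, the cubic error satisfies $M|r|^3/N^2\le(1/\sigma_{z/N}^2)(r^2/N)$, so the exponent is at most $-(1/\sigma_{z/N}^2)(r^2/N)+M/\sqrt N$. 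Using $m_{\hat s,\hat t}\le\sigma_{z/N}^2\le M_{\hat s,\hat t}$ this yields the claimed bound with, say, $b_2=1/(2M_{\hat s,\hat t})$ and $c_2$ depending only on $M$ and $m_{\hat s,\hat t}$.

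\textbf{Far regime $|r|>\epsilon_3 N$.} Here Proposition~\ref{S4S1P3} (applied with $s,t$) gives $p_{n,m}(w|z)\le A\exp(-a\,r^2/N)$. Shrinking $b_2$ so that $2b_2\le a$ one writes
$$A\,e^{-a r^2/N}=A\,e^{-b_2 r^2/N}\cdot e^{-(a-b_2)r^2/N}\le A\,e^{-b_2 r^2/N}\cdot e^{-(a-b_2)\epsilon_3^2 N},$$
and since $\sup_{N\ge 1}N^{1/2}e^{-(a-b_2)\epsilon_3^2 N}<\infty$, this is $\le c_2 N^{-1/2}e^{-b_2 r^2/N}$ after enlarging $c_2$. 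Taking the larger $c_2$ and setting $N_4=\max(N_0,N_1)$ from Propositions~\ref{S4S1P1} and~\ref{S4S1P3} finishes the proof.

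I do not anticipate a genuine obstacle: as in the continuous case the statement is essentially an immediate corollary of the two midpoint estimates. The only points requiring (routine) care are checking that the slopes $2w/N$ and $2(z-w)/N$ stay in $[s,t]$ in the near regime so that Proposition~\ref{S4S1P2} is legitimately applicable, and keeping the bookkeeping so that $b_2,c_2,N_4$ depend on $p,s,t$ and $p_X$ only through the observables collected in Definition~\ref{DefParDisc}. If anything is at all delicate it is ensuring the cubic remainder in $\delta_2$ is dominated by the quadratic main term uniformly over the near regime, which is exactly what the choice $\epsilon_3 M\le 1/M_{\hat s,\hat t}$ in Lemma~\ref{S6S2L1} guarantees.
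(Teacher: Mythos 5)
Your proposal is correct and follows exactly the route the paper takes: the paper's proof simply declares the lemma an immediate corollary of Propositions \ref{S4S1P2} and \ref{S4S1P3}, and your near/far splitting at $|w-z/2|\le\epsilon_3 N$, with the choice $\epsilon_3 M\le 1/M_{\hat s,\hat t}$ absorbing the cubic error, is the intended (and valid) way to spell that out.
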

The constants $b_2, c_2, N_4$ depend on $s, t, p$ and the constants in Definition \ref{DefParDisc}.
\begin{proof}
This is an immediate corollary of Propositions \ref{S4S1P2} and \ref{S4S1P3}.
\end{proof}

We now turn to the main theorem of this section.
\begin{theorem}\label{KMTA}
Suppose that $p_X$ satisfies Assumptions D1-D5 and fix $p \in (\alpha ,\beta)$. Let $s = p - \epsilon'$ and $t = p + \epsilon'$, where $\epsilon' > 0$ is sufficiently small so that $\alpha < s < t < \beta$. For every $b > 0$, there exist constants $0 < C, a, \alpha' < \infty$ such that for every positive integer $n$, there is a probability space on which are defined a Brownian bridge $B^\sigma$ with variance $\sigma^2 = \sigma^2_p$ and the family of processes $S^{(n,z)} $ for $z \in L_n$ such that
\begin{equation}\label{DiscKMTeqA}
\mathbb{E}\left[ e^{a \Delta(n,z)} \right] \leq C e^{\alpha' (\log n)} e^{b|z- p n|^2/n},
\end{equation}
where $\Delta(n,z) = \Delta(n,z,B^{\sigma}, S^{(n,z)})=  \sup_{0 \leq t \leq n} \left| \sqrt{n} B^\sigma_{t/n} + \frac{t}{n}z - S^{(n,z)}_t \right|.$ The constants $C, a, \alpha'$ depend on $b$ as well as $s,t, p$ and $p_X$ through the constants in Definition \ref{DefParDisc} and the functions in Assumption D5.
\end{theorem}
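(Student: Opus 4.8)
The plan is to run the dyadic induction from the proof of Theorem \ref{ContKMTA} essentially verbatim, with every continuous ingredient replaced by its discrete counterpart: Lemma \ref{S6S2L1} in place of Lemma \ref{S6S1L1}, Lemma \ref{S6S2L2} in place of Lemma \ref{S6S1L2}, Propositions \ref{S4S1P2} and \ref{S4S1P3} in place of Propositions \ref{S3S1P2} and \ref{S3S1P3}, Assumption D5 in place of Assumption C6, and Definition \ref{DefParDisc} in place of Definition \ref{DefParC}. First I would reduce to small $b$, say $b < b_2/37$ with $b_2$ as in Lemma \ref{S6S2L2}, fix $\epsilon_3, N_3$ from Lemma \ref{S6S2L1}, $N_4$ from Lemma \ref{S6S2L2}, and set $N_5 = \max(N_3, N_4)$.

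Next I would dispose of the regime $|z - pn| \ge \epsilon_3 n$: using a decomposition $b|z-pn|^2/n \ge b\epsilon_3^2 n/2 + b\kappa z^2/n$ for a suitably small $\kappa$ (chosen so that $p/(1-2\kappa) \in [p-\epsilon_3, p+\epsilon_3]$ and $\epsilon_3/2 - \kappa(\pm\epsilon_3 + p)^2 > 0$), Assumption D5 controls the conditional exponential moment of $\max_{1\le k \le n}|S_k|$, while a standard Gaussian bound $\mathbb{E}[\exp(y\sup_{[0,1]}|B^\sigma|)] \le \tilde c\, e^{uy^2}$ (e.g.\ (6.5) in \cite{LF}) controls the Brownian part; combined with $\Delta(n,z) \le |z| + \max_{k}|S_k^{(n,z)}| + \sup_t|\sqrt n B^\sigma_{t/n}|$ this gives (\ref{DiscKMTeqA}) with $\alpha' = 0$, $C = \hat C_1$, for all $a$ below some $a_0$, and a uniform constant $\hat C_2$ handling the case $n \le 4N_5$. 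Hence I may assume $|z - pn| \le \epsilon_3 n$ and $n$ large.

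The core is the inductive claim: for every $s \ge 1$ there exist $n$-couplings for all $n \le 2^s$ with $\mathbb{E}[e^{a\Delta(n,z)}]\,e^{-b|z-pn|^2/n} \le A^{s-1}C$, where $A = 1 + 2c_1(1 + 8c_2 b^{-1/2})$, $b_1 = b/20$, $a_1, c_1$ from Lemma \ref{S6S2L1}, $a = \tfrac12\min(a_0, a_1)$, $C = \max(\hat C_1, \hat C_2)$. For the induction step with $2^s < n \le 2^{s+1}$ I would write $n = 2k$ (or $k + (k+1)$), take two independent $k$-couplings, sample the midpoint $W^z \sim p_{k,k}(\cdot|z)$ quantile-coupled to a standard normal $\xi$ as in Lemma \ref{S6S2L1}, set $Z^z = z/2 + \tfrac{\sqrt n \sigma_p}{2}\xi$, glue the two Brownian bridges through $\xi$ exactly as in Lemma 6.5 of \cite{LF} to obtain a Brownian bridge of variance $\sigma_p^2$, and glue the two halves of the walk bridge via the Markov property that, conditioned on $S_k^{(n,z)} = W^z$, the two halves are independent discrete walk bridges (up to the trivial shift by $W^z$). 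Then $\Delta(n,z) \le |Z^z - W^z| + \max\big(\Delta(k, W^z, \cdot), \Delta(k, z - W^z, \cdot)\big)$; taking conditional expectations given $W^z$, invoking the inductive bounds for the two $k$-couplings together with (\ref{S6S2E1}), and then splitting the resulting expectation over $\{|W^z - z/2| > |z - pn|/6\}$ — where the pointwise bound $p_{k,k}(W^z|z) \le c_2 n^{-1/2}e^{-b_2(W^z - z/2)^2/n}$ of Lemma \ref{S6S2L2} and $(W^z - z/2)^2 > \tfrac1{16}\max(|W^z - kp|^2, |z - W^z - kp|^2)$ bound the integral by $8c_2 b^{-1/2}$ — and its complement — where $\max(|W^z - kp|^2, |z - W^z - kp|^2) \le (2|z - pn|/3)^2$ gives a factor $e^{|z-pn|^2/n}$ — yields $\mathbb{E}[e^{a\Delta(n,z)}] \le A^s C\, e^{|z - pn|^2/n}$, which completes the induction and the theorem.

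I do not expect a genuine obstacle, since the continuous argument is already in hand; the only structural difference is that the midpoint $W^z$ is integer-valued, which is already built into Lemma \ref{S6S2L1} (which uses $p_{n,m}(\cdot|z)$ and Lemma \ref{L2} directly rather than Corollary \ref{C2}) and into the gluing step being a genuine conditioning rather than a density identity. The one point warranting a line of care is verifying that the choices of the auxiliary constants $\kappa$, $a_0$, $\hat a$, $\hat C$ in the "far from the slope" regime go through with $p_X$ in place of $f_X$; this is routine given Assumption D5 and Lemma \ref{S2S2L2}.
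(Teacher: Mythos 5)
Your proposal follows the paper's proof of Theorem \ref{KMTA} essentially verbatim: the same reduction to small $b < b_2/37$, the same treatment of the regime $|z-pn|\geq \epsilon_3 n$ via Assumption D5 and the Gaussian supremum bound, and the same dyadic induction built on the midpoint quantile coupling of Lemma \ref{S6S2L1}, the tail bound of Lemma \ref{S6S2L2}, and the Brownian-bridge gluing of \cite{LF}. The only (cosmetic) difference is that in the discrete case the final expectation over the event $E_1$ is a sum over $\mathbb{Z}$ rather than an integral, so the comparison gives $c_2(8b^{-1/2}+2)$ instead of $8c_2 b^{-1/2}$ and the paper accordingly takes $A = 1 + 2c_1\bigl(1 + c_2(8b^{-1/2}+2)\bigr)$; this merely enlarges the constant and does not affect the argument.
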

\begin{proof}It suffices to prove the theorem when $b$ is sufficiently small. For the remainder we fix $b > 0$ such that $b < b_2/ 37$, where $b_2$ is the constant from Lemma \ref{S6S2L2}. Let $\epsilon_3$ and $N_3$ be as in Lemma \ref{S6S2L1} and $N_4$ as in Lemma \ref{S6S2L2} for our choice of $s,t$ and put $N_5 = \max (N_3, N_4)$.

 In this proof, by an {\em $n$-coupling} we will mean a probability space on which are defined a Brownian bridge $B^\sigma$ and the family of processes $\{ S^{(n,z)}: z \in L_n\}$. Notice that for any $n$-coupling if $z \in L_n$, $S_t = S^{(n,z)}_t$ then 
$$\Delta(n,z) =  \sup_{0 \leq t \leq n} \left| \sqrt{n} B^\sigma_{t/n} + \frac{t}{n}z - S^{(n,z)}_t \right| \leq |z| +  \max_{0 \leq k \leq n} |S^{(n,z)}_k| + \sup_{0 \leq t \leq n} |\sqrt{n} B^\sigma_{t/n}|$$ 
which implies
$$\mathbb{E}\left[ e^{a \Delta(n,z)} \right]  \leq \mathbb{E}\left[\exp \left(3a \sup_{0 \leq t \leq 1 }  \sqrt{n} |B^\sigma_t| \right) \right] + \exp( 3a|z|) + 
\mathbb{E} \left[ \exp \left(3a \max_{1 \leq k \leq n} |S_k| \right) \Big{|} S_n = z\right].$$
Note that if $|z - pn| \geq \epsilon_3 n$ we have 
$$b |z - pn|^2/n \geq  \frac{b \epsilon_3^2n}{2}  + \frac{b |z - pn|^2}{2n} \geq \frac{b \epsilon_3^2n}{2} + b \kappa \frac{z^2}{n},$$ 
where $\kappa$ is sufficiently small so that 
$$\kappa < 1/2, \hspace{5mm} \frac{p}{1 - 2\kappa} \in [p - \epsilon_3, p+\epsilon_3], \mbox{ and } \epsilon_3/2 - \kappa (\pm \epsilon_3 + p)^2 > 0 .$$ 
In view of the above and Assumption D5 there exists $\hat{a}$  small enough and $\hat{C}$ large enough depending on $b$ such that if $a < \hat{a}$ we can ensure that
$$\exp( 3a|z|) + 
\mathbb{E} \left[ \exp \left(3a \max_{1 \leq k \leq n} |S_k| \right) \Big{|} S_n = z\right] \leq \hat{C} e^{b|z- p n|^2/n},$$
provided that $|z - pn| \geq \epsilon_3n$. 

Further we know that there exist positive constants $\tilde c$ and $u$ such that $\mathbb{E}\left[\exp \left( \sup_{0 \leq t \leq 1 } y|B^\sigma_t| \right) \right] \leq \tilde c e^{uy^2}$ for any $y > 0$ (see e.g. (6.5) in \cite{LF}). Clearly, there exists $\hat{a}_2$ (depending on $b$) such that if $0 < a < \hat{a}_2$ then $18 ua^2 \leq b \epsilon_3^2$. This implies that if $a < a_0 := \min(\hat{a}, \hat{a}_2)$ then 
$$\mathbb{E}\left[\exp \left(3a \sup_{0 \leq t \leq 1 }  \sqrt{n} |B^\sigma_t| \right) \right] + \exp( 3a|z|) + 
\mathbb{E} \left[ \exp \left(3a \max_{1 \leq k \leq n} |S_k| \right) \Big{|} S_n = z\right] \leq  [\hat{C} + \tilde{c}] e^{b|z- p n|^2/n},$$
provided that $|z - pn| \geq \epsilon_3n$. 

The latter has the following implication. Firstly, (\ref{DiscKMTeqA}) will hold for any $n$-coupling with $C = \hat{C}_1 := \tilde c + \hat{C}$, $\alpha' = 0$ and $a \in (0,a_0)$ if $z \in L_n$ satisfies $|z - pn| \geq \epsilon_3n$. Moreover, we can find a constant $\hat{C}_2 > 1$ such that if $a < a_0$, $|z - pn| \leq \epsilon_3n$ and $n \leq 4N_5$ then
$$\mathbb{E}\left[\exp \left(3a \sup_{0 \leq t \leq 1 }  \sqrt{n} |B^\sigma_t| \right) \right] + \exp( 3a|z|) + 
\mathbb{E} \left[ \exp \left(3a \max_{1 \leq k \leq n} |S_k| \right) \Big{|} S_n = z\right] \leq  \hat{C}_2.$$

For the remainder of the proof we take $b_1 = b/20$ and let $a_1, c_1$ be as in Lemma \ref{S6S2L1} for this value of $b_1$. We will take $a = (1/2) \cdot \min(a_0, a_1)$ and $C = \max(\hat{C}_1, \hat{C}_2)$ as above and show how to construct the $n$-coupling so that (\ref{DiscKMTeqA}) holds for some $\alpha'$. \\

We will show that for every positive integer $s$, there exist $n$-couplings for all $n \leq 2^s$ such that
\begin{equation}\label{S6S2T1E7}
\mathbb{E}\left[ e^{a \Delta(n,z)} \right] e^{-b|z- p n|^2/n} \leq A^{s-1} \cdot C, \hspace{5mm} \forall{z \in L_n},
\end{equation}
where $A =  1 + 2c_1 ( 1 + c_2(8 b^{-1/2} + 2)) $. The theorem clearly follows from this claim.\\

We proceed by induction on $s$ with base case $s = 1$ being true by our choice of $C$ above. We suppose our claim is true for $s$ and let $2^s < n \leq 2^{s+1}$. We will show how to construct a probability space on which we have a Brownian bridge and a family of processes $\{S^{(n,z)}: |z -p n| \leq \epsilon_3 n \}$, which satisfy (\ref{S6S2T1E7}). Afterwards we can adjoin (after possibly enlarging the probability space) the processes for $|z| > n \epsilon_3$. Since $C \geq \hat{C}_1$ and $a < a_0$ we know that (\ref{S6S2T1E7}) will continue to hold for these processes as well. Hence, we assume that $|z - pn | \leq \epsilon_3 n$. 

If $2^{s+1} \leq 4N_5$ then by our choice of $C \geq \hat{C}_2$ and the fact that $A > 1$ we will have that (\ref{S6S2T1E7}) holds for any coupling provided $|z - pn | \leq \epsilon_3 n$. We may thus assume that $2^s > 2N_5$. For simplicity we assume that $n = 2k$, where $k \geq N_5$ is an integer such that $2^{s-1} < k \leq 2^s$ (if $n$ is odd we write $n = k + (k+1)$ and do a similar argument).

We define the $n$-coupling as follows:
\begin{itemize}
\item Choose two independent $k$-couplings 
$$\left( \{S^{1 (k,z))}\}_{z \in L_k}, B^1 \right), \hspace{5mm} \left( \{S^{2 (k,z))}\}_{z \in L_k}, B^2 \right), \mbox{ satisfying (\ref{S42E7})}.$$
Such a choice is possible by the induction hypothesis.
\item We let $W^z$ and $\xi$ be as in the statement of Lemma \ref{S6S2L1}, and set $Z^z = \frac{z}{2} + \frac{\sqrt{n}  \sigma_p}{2} \cdot \xi$. Assume, as we may, that all of these random variables are independent of the two $k$-couplings chosen above. Observe that by our choice of $a$ and $k \geq N_5$ we have that
\begin{equation}\label{S6S2T1E8}
\mathbb{E} \left[e^{a |Z^z - W^z|} \Big{|} W^z \right] \leq c_1  \cdot  \exp \left(\frac{b}{20} \cdot \frac{(W^z - kp)^2 + (z - np)^2}{n}\right).
\end{equation}
\item Let
\begin{equation}\label{S6S2T1E9}
B_t = \begin{cases} 2^{-1/2} B_{2t}^1 + t\sigma \xi & 0 \leq t \leq 1/2,\\2^{-1/2} B_{2(t-1/2)}^2 + (1-t)\sigma\xi & 1/2 \leq t \leq 1. \end{cases}
\end{equation}
By Lemma 6.5 in \cite{LF}, $B_t$ is a Brownian bridge with variance $\sigma^2$.
\item Let $S^{(n,z)}_k = W^z$, and 
$$S_{m}^{(n,z)} = \begin{cases}S^{1 (k,W^z)}_m &0 \leq m \leq k, \\ W^z + S_{m-k}^{2 (k,z-W^z)}, &k \leq m \leq n.\end{cases}$$
What we have done is that we first chose the value of $S_k^{(n,z)}$ from the conditional distribution of $S_k$, given $S_n = z$. Conditioned on the midpoint $S^{(n,z)}_k = W^z$ the two halves of the random walk bridge are independent and upto a trivial shift we can use $S^{1 (k,W^z)}$ and $S^{2 (k,z -W^z)}$ to build them.
\end{itemize}
The above defines our coupling and what remains to be seen is that it satisfies (\ref{S6S2T1E7}) with $s + 1$.

Note that 
$$\Delta(n,z,S^{(n,z)}, B) \leq |Z^z - W^z| + \max \left(\Delta(k, W^z, S^{1 (k,W^z)}, B^1),\Delta(k, z- W^z, S^{2 (k,z - W^z)}, B^2)  \right)$$
and therefore almost surely
$$\mathbb{E}\left[ e^{a\Delta(n,z)} \Big{|} W^z \right] \leq \mathbb{E}\left[ e^{a|Z^z - W^z|} \Big{|} W^z \right] \times  C A^{s-1}\left( e^{b|W^z - kp|^2/k} + e^{b|z- W^z - kp|^2/k}\right).$$
In deriving the last expression we used that our two $k$-couplings satisfy (\ref{S6S2T1E7}) and the simple inequality $\mathbb{E}[e^{\max (Z_1,Z_2)}] \leq\mathbb{E}[e^{Z_1}]  + \mathbb{E}[e^{Z_2}]  $. Taking expectation on both sides above we see that
\begin{equation}\label{S6S2T1E10}
\mathbb{E}\left[ e^{a\Delta(n,z)} \right] \leq C\cdot (2c_1) \cdot A^{s-1} \mathbb{E} \left[ \exp \left(\frac{9}{4} \cdot \frac{b\max(|W^z - kp|^2,|z-W^z - kp|^2) }{n} \right) \right] .
\end{equation}
In deriving the last expression we used (\ref{S6S2T1E8}) and the simple inequality $x^2 + y^2 \leq 5 \max (x^2, (x-y)^2)$ as well as that $k = n/2$.\\

We finally estimate the expectation in (\ref{S6S2T1E10}) by splitting it over $W^z$ such that $|W^z - z/2| > |z-pn|/6$ and $|W^z - z/2| \leq |z-pn|/6$; we call the latter events $E_1$ and $E_2$ respectively. Notice that if $|W^z  - z/2| \leq |z-pn|/6$ we have $\max( |W^z - pk|^2,|z-W^z - pk|^2) \leq (2|z - pn|/3)^2$; hence
\begin{equation}\label{S6S2T1E11}
\mathbb{E} \left[ \exp \left(\frac{9}{4} \cdot \frac{\max( |W^z - kp|^2,|z-W^z - kp|^2) }{n} \right) \cdot {\bf 1} \{ E_2 \}\right] \leq \exp \left(\frac{|z-pn|^2}{n} \right).
\end{equation}

To handle the case $|W^z - z/2| > |z-pn|/6$  we use Lemma \ref{S6S2L2}, from which we know that 
$$p_{m,n}(W^z|z)  \leq c_2 n^{-1/2} \exp \left(- b_2 \frac{(W^z - (z/2))^2}{n} \right).$$
Using the latter together with the fact that for $|W^z - z/2| > |z-pn|/6$ we have that $(W^z- z/2)^2 > \frac{1}{16} \max \left( (W^z - kp)^2 , |z-W^z - kp|^2 \right)$ we see that
\begin{equation}\label{S6S2T1E12} 
\begin{split}
&\mathbb{E} \left[ \exp \left(\frac{9}{4} \cdot \frac{b \max( |W^z - kp|^2,|z-W^z - kp|^2) }{n} \right) \cdot {\bf 1} \{E_1 \} \right] \leq \\
& c_2 n^{-1/2} \sum_{y \in \mathbb{Z}}  \exp \left(- \frac{b}{16} \cdot \frac{(y - kp)^2}{n} \right) \leq    c_2 n^{-1/2} \left[2 + 4  \frac{\pi^{1/2} n^{1/2}}{b^{1/2}} \right] \leq c_2(8  b^{-1/2} + 2).
\end{split}
\end{equation}
Combining the above estimates we see that
$$\mathbb{E}\left[ e^{a\Delta(n,z)} \right] \leq C\cdot (2c_1) \cdot A^{s-1} \left[ \exp \left(\frac{|z-pn|^2}{n} \right) + c_2(8  b^{-1/2} + 2) \right] \leq C \cdot A^{s} \exp \left(\frac{|z-pn|^2}{n} \right) .$$
The above concludes the proof.

\end{proof}

\section{Assumptions D5 and C6}\label{Section7}

%
\subsection{Strongly unimodal distributions}\label{Section7.1}
In this section we give sufficient conditions for the technical Assumptions D5 and C6 to hold.

\subsubsection{Continuous case} The goal of this section is to give general conditions under which a distribution satisfying Assumptions C1-C5 will also satisfy Assumption C6. We use the same notation as in Sections \ref{Section2.1} and \ref{Section3}.

Let us introduce some useful notation. Let $f$ be a continuous probability density function on $\mathbb{R}$. We say that $f$ is {\em unimodal} if there exists at least one real number $M$ such that 
$$f(x) \leq f(y) \mbox{ for all $ x \leq y \leq M,$ and } f(x) \leq f(y) \mbox{ for all $ x \geq y \geq M$.}$$
We further say that $f(\cdot)$ is {\em strongly unimodal} if the convolution of $f(\cdot)$ with any unimodal distribution function $h(\cdot)$ on $\mathbb{R}$ is again unimodal. In \cite{Ibr56}, the author proved that $f(\cdot)$ is strongly unimodal if and only if it is log-concave, i.e.
$\log f$ is concave.

\begin{definition}\label{ExpBC}
Suppose that $f_X$ satisfies Assumptions C1-C5 and $\alpha = -\infty$, $\beta = \infty$. It follows from Assumption C2 that $X$ has all finite moments and we let $\mu = \mathbb{E}[X]$. In addition, we have $\Lambda'(0) = \frac{M'_X(0)}{M_X(0)} = \mu$ and so $u_\mu = (\Lambda')^{-1}( \mu)= 0$ and $G_\mu(u_\mu) = \Lambda(u_\mu) - u_\mu \cdot \mu = 0$. The latter and  Proposition \ref{S3S1P1} imply that there is a constant $\Delta > 0$ such that for all $n \geq 1$ we have
$$ \inf_{x \in [-1,1]} f_n(n\mu +  x) \geq n^{-1/2}\Delta .$$
Indeed, the latter is obvious from (\ref{S3S1E3}) for all large $n$ and for small $n$ we can deduce it from the continuity and positivity of $ f_n(n\mu +  x)$ on the interval $[-1, 1]$ from Assumption C1. The above implies that we can find a constant $R > 0$ such that $R > |\mu| + 1 + \Delta^{-1}$. 

In view of Proposition \ref{S3S1P1} applied to $s = -2R$ and $t = 2R$ we also deduce that there are positive constants $C_R$ and $c_R$ such that for all $n \geq 1$ and $z \in [-2R, 2R]$
$$f_n(nz) \geq C_Rn^{-1/2} e^{-c_R n}.$$
As before the above follows from Proposition \ref{S3S1P1} provided $n$ is sufficiently large, while for small $n$ it follows from the continuity and positivity of $f_n(nz)$ on $[-2R, 2R]$.

Finally, given the above constants, $\lambda$ as in Assumption C2 and $L$ as in Assumption C5, we can find constants $\hat{C}_R$ and $\hat{c}_R$ such that for all $n \geq 1$ we have
$$ \mathbb{E}[ e^{\lambda |X|}]^n \left[ \frac{4 n^{3/2}}{\Delta}  + LC^{-1}_R \sqrt{n} e^{c_R n}  \right] \leq \hat{C}_R \cdot e^{\hat{c}_R n}.$$
\end{definition}

The main result of the section is as follows.

\begin{lemma}\label{Section7L1}
Suppose that $f_X$ satisfies Assumptions C1-C5. Then it will also satisfy Assumption C6 if any of the following hold
\begin{itemize}
\item $\alpha > -\infty$;
\item $\beta < \infty$;
\item $\alpha = -\infty$, $\beta = \infty$ and the density function $f(x)$ of $X$ is a strongly unimodal function.
\end{itemize} 
Moreover, if $\alpha > -\infty$ then we can take $\hat{a}(\hat{b}) =  \frac{\hat{b}}{1 + \hat{b} + |\alpha|}$ and $\hat{C}(\hat{b}) = 1$; if $\beta < \infty$ then we can take $\hat{a}(\hat{b}) = \frac{\hat{b}}{1 + \hat{b} + |\beta|}$ and $\hat{C}(\hat{b}) = 1$. If $\alpha = -\infty$ and $\beta = \infty$ then we can choose $\hat{a}(\hat{b}) = \lambda v^{-1}$ and $\hat{C}(\hat{b}) = \hat{C}_R^{1/v}$, where $v$ is a large enough integer such that $c_R v^{-1} \leq \hat{b}/2$ and $\lambda v^{-1} \leq \hat{b}/2$ with $c_R, \hat{C}_R$ as in Definition \ref{ExpBC} and  $\lambda$ as in Assumption C2.
\end{lemma}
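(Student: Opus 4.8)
The plan is to establish Assumption C6 separately in the three listed situations, the first two by a deterministic argument on the partial sums and the third by reducing to the estimates assembled in Definition \ref{ExpBC}.

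\emph{The cases $\alpha>-\infty$ and $\beta<\infty$.} Suppose $\alpha>-\infty$. Since every jump is $\ge\alpha$, on the event $\{S_n=z\}$ one has $k\alpha\le S_k\le z-(n-k)\alpha$ for all $1\le k\le n$, hence $\max_{1\le k\le n}|S_k|\le n|\alpha|+|z|$ almost surely. Using the elementary inequality $n|\alpha|+|z|\le(1+|\alpha|)(n+z^2/n)$ (which follows from $|z|\le\tfrac12(n+z^2/n)$) we get $\hat a(\hat b)\max_k|S_k|\le\hat a(\hat b)(1+|\alpha|)(n+z^2/n)$, so with $\hat a(\hat b)=\hat b/(1+\hat b+|\alpha|)\le\hat b/(1+|\alpha|)$ the left side of \eqref{S2S1E3} is at most $\exp(\hat b(n+z^2/n))$ and $\hat C(\hat b)=1$ suffices. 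The case $\beta<\infty$ is the mirror image, using $S_k\le k\beta$ and $S_k\ge z-(n-k)\beta$.

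\emph{The case $\alpha=-\infty$, $\beta=\infty$, $f_X$ log-concave.} Two preliminary facts: $\|f_m\|_\infty\le L$ for all $m\ge1$ (convolve $\|f_X\|_\infty\le L$ with $f_{m-1}$); and, since $e^{(\lambda/2)|\cdot|}f_X\in L^2$ (from $\|f_X\|_\infty\le L$ and Assumption C2), the uniform bound $\bigl(e^{(\lambda/2)|\cdot|}f_X\bigr)^{*m}(y)\le C\,\mathbb{E}[e^{\lambda|X|}]^m$ for $m\ge2$, via $\|g^{*2}\|_\infty\le\|g\|_2^2$. The deterministic key inequality is $\max_{1\le k\le n}|S_k|\le\tfrac12\bigl(|z|+\sum_{i=1}^n|X_i|\bigr)$, which follows from $|S_k|\le\min\bigl(\sum_{i\le k}|X_i|,\,|z|+\sum_{i>k}|X_i|\bigr)$ and $\min(a,b)\le\tfrac12(a+b)$. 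Hence it suffices to bound $\mathbb{E}\bigl[e^{(\lambda/2)\sum_i|X_i|}\mid S_n=z\bigr]=\bigl(e^{(\lambda/2)|\cdot|}f_X\bigr)^{*n}(z)/f_n(z)$: the numerator is $\le C\,\mathbb{E}[e^{\lambda|X|}]^n$ by the above, and for $|z|\le2Rn$ the lower bound $f_n(z)\ge C_Rn^{-1/2}e^{-c_Rn}$ from Definition \ref{ExpBC} (itself a consequence of Proposition \ref{S3S1P1}) closes this range with a bound of the form $\hat C_Re^{\hat c_Rn}$.

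\emph{The range $|z|>2Rn$ is the crux}, because there $f_n(z)$ may be far smaller than $e^{-\hat b z^2/n}$, so one cannot keep a factor $f_n(z)^{-1}$. Here I would sharpen the deterministic bound: on $\{S_n=z\}$ one has $\sum_i|X_i|=|z|+2\sum_iX_i^{\circ}$, where $X^{\circ}$ is the part of $X$ of sign opposite to $z$, giving $\max_k|S_k|\le|z|+\sum_iX_i^{\circ}$ and hence $\mathbb{E}[e^{\lambda\max_k|S_k|}\mid S_n=z]\le e^{\lambda|z|}\,m^{*n}(z)/f_n(z)$ with $m=e^{\lambda(\cdot)^{\circ}}f_X=f_X+r$, $r\ge0$ supported on the half-line opposite to $z$ and $\|r\|_1\le\mathbb{E}[e^{\lambda|X|}]$. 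Expanding $m^{*n}=\sum_j\binom{n}{j}f_{n-j}*r^{*j}$ and invoking log-concavity of $f_{n-j}$ (unimodal, with mode within $O(\sqrt n)$ of $(n-j)\mu$, so that by the choice $R>|\mu|+1+\Delta^{-1}$ the point $z$ lies beyond that mode and $f_{n-j}$ is monotone on the relevant ray) one gets $(f_{n-j}*r^{*j})(z)\le f_{n-j}(z)\|r\|_1^j$; moreover $f_n(z)=(f_{n-j}*f_j)(z)\ge f_{n-j}(z)\,\mathbb{P}\bigl(0\le S_j\le z-\mathrm{mode}(f_{n-j})\bigr)$, and an elementary large-deviation lower bound gives the last probability $\ge c\,e^{-c'j}$ with $c,c'$ depending only on $\mu$ and the rate function. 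Summing the binomial series bounds $m^{*n}(z)/f_n(z)$ by $\hat Ce^{\hat c n}$. Combining the two ranges, absorbing $e^{\lambda|z|}$ through $\lambda|z|\le\tfrac\lambda2(n+z^2/n)$, and finally raising to the power $1/v$ (conditional Jensen) with $v$ so large that $c_R/v\le\hat b/2$ and $\lambda/v\le\hat b/2$ yields \eqref{S2S1E3} with $\hat a(\hat b)=\lambda/v$ and $\hat C(\hat b)=\hat C_R^{1/v}$. The two monotonicity/positivity claims in the non-central range — that $z$ is past every mode of $f_{n-j}$, and that $\mathbb{P}(0\le S_j\le z-\mathrm{mode}(f_{n-j}))$ is bounded below by $e^{-c'j}$ uniformly in $n,j$ and $z$ — are exactly where the precise choice of $R$ and the shape of Definition \ref{ExpBC} are used, and are where I expect the real work to lie.
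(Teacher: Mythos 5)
Your treatment of the cases $\alpha>-\infty$ and $\beta<\infty$ is the same as the paper's (deterministic bound $\max_k|S_k|\le n|\alpha|+|z|$, then absorb $|z|$ via $|z|\le n+z^2/n$), and it is correct. In the doubly infinite, log-concave case you take a genuinely different route. The paper bounds $\mathbb{E}[e^{\lambda|S_m|}\mid S_n=z]$ for each fixed $m$, splitting the midpoint integral into $|t|\le|z|+Rn$ and the two far ranges, and in the far ranges compares $f_m$ (or $f_{n-m}$) at the far point with its value on a unit interval around the mean, where Definition \ref{ExpBC} gives the lower bound $\Delta m^{-1/2}$; unimodality of every $f_m$ (from strong unimodality of $f_X$) justifies the comparison, and a union bound over $m$ finishes. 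You instead use the pathwise inequality $\max_k|S_k|\le|z|+\sum_iX_i^{\circ}$, expand $\bigl(f_X+r\bigr)^{*n}$ binomially, push the mass of $r^{*j}$ (supported on the half-line opposite $z$) to $z$ by unimodality of $f_{n-j}$, and control $f_{n-j}(z)/f_n(z)$ by a positivity lower bound on $\mathbb{P}(0\le S_j\le z-\mathrm{mode}(f_{n-j}))$. Both arguments use exactly the same two inputs (unimodality of all convolution powers, and the central lower bounds of Definition \ref{ExpBC}), and your central-range argument (sup-norm bound on the tilted convolution over $f_n(z)\ge C_Rn^{-1/2}e^{-c_Rn}$) even avoids the paper's union bound over $m$. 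So the scheme works and is a legitimate alternative.

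Two points need repair or at least more care. First, the mode localization: what you actually need is that the mode $M_{n-j}$ of $f_{n-j}$ satisfies $M_{n-j}\le Rn<|z|$ for every $1\le n-j\le n$, uniformly in $n$. Your suggested route (``mode within $O(\sqrt n)$ of $(n-j)\mu$'') requires a mean--mode inequality for unimodal laws plus a comparison of that $O(\sqrt n)$ constant (essentially $\sqrt3\,\sigma$) with $R$, which the stated choice $R>|\mu|+1+\Delta^{-1}$ does not obviously provide for small $n$. The clean way — and the paper's — is to deduce $|M_m|\le Rm$ directly from Definition \ref{ExpBC}: if $M_m>Rm$ then $f_m(t+m\mu)\ge f_m(m\mu)\ge\Delta m^{-1/2}$ on an interval of length $(1+\Delta^{-1})m$, contradicting total mass $1$. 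Second, your lower bound $\mathbb{P}(0\le S_j\le z-M_{n-j})\ge c\,e^{-c'j}$ does hold (e.g.\ it is at least $\mathbb{P}(X\in[0,1])^j$, since $z-M_{n-j}\ge Rn\ge j$ and $f_X>0$ on $\mathbb{R}$), but it introduces an observable of $f_X$ (a pointwise mass lower bound) that is not among the constants of Definition \ref{ExpBC}; consequently your argument establishes Assumption C6, but not literally with the ``Moreover'' constants $\hat a(\hat b)=\lambda v^{-1}$, $\hat C(\hat b)=\hat C_R^{1/v}$ tied to $\hat C_R,c_R$ of Definition \ref{ExpBC}. This matters for the quantified use of the lemma (uniformity in an external parameter, as in Section \ref{Section8.3}), so you should either track this extra observable explicitly or adopt the paper's comparison-at-the-mean step, which stays within the Definition \ref{ExpBC} constants.
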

\begin{proof}
Assume first that $\alpha > -\infty$. Then we have for any $k \in \{1, \dots, n\}$ and $z \in L_n$ that 
$$S_k \geq -k \alpha \mbox{ and } S_n - S_k \geq - (n-k) \alpha \mbox{ almost surely.} $$
The latter implies that 
$$|z| + n |\alpha| \geq |S_k|,$$
which means using the ineqiality $|xy| \leq x^2 + y^2$ that
$$\mathbb{E} \left[ \exp \left(\hat{a} \max_{1 \leq k \leq n} |S_k| \right) \Big{|} S_n = z\right] \leq \exp \left( \hat{a}|z| + \hat{a} |\alpha| n\right) \leq \exp \left( \hat{a}|z|^2/n + \hat{a}n + \hat{a} |\alpha| n\right).$$
Thus if we choose $\hat{C} = 1$ and $\hat{a} = \frac{\hat{b}}{1 + \hat{b} + |\alpha|}$ we would obtain (\ref{S2S1E3}). An analogous argument establishes (\ref{S2S1E3}) when $\beta < \infty$. \\

In the remainder we focus on the last case. Notice that by assumption $f_m(x)$ are unimodal functions for any $m \geq 1$. For future use we call $\mu = \mathbb{E}[X]$ and for $|t| \leq \lambda$ as in Assumption C2 we set $M_{|X|}(t) = \mathbb{E} \left[e^{t |X|} \right]$. We also let $\Delta$, $R$, $C_R$, $c_R$, $\hat{C}_R$ and $\hat{c}_R$ be as in Definition \ref{ExpBC}.

By definition we have for $m \geq 1$ that
$$\inf_{x \in [-1, 1]} f_m\left( m \mu + x \right) \geq m^{-1/2} \cdot \Delta,$$
The latter implies that if $M_m$ is any real number such that $f_m(x) \leq f_m(y)$ for all $ x \leq y \leq M_m$ and $f_m(x) \leq f_m(y) $ for all $ x \geq y \geq M_m$, we then have $|M_m |\leq R m $. Indeed, if we suppose for example that $M_m > R m$ then this would mean that $f_m(t + m\mu) \geq f_m( m\mu)$ for all $t\in [0, (1 + \Delta^{-1}) m]$, so
$$\int_{0}^{(1 + \Delta^{-1}) m } f_m(t +m \mu  ) \geq \left((1 + \Delta^{-1}) m + 1\right) \cdot f_m( m \mu) \geq (1 + \Delta^{-1}) m^{1/2} \cdot \Delta > 1,$$
which is impossible. One rules out the case $M_m < - Rm$ in a similar fashion.\\

Let us now fix $n \geq 1$, $1 \leq m < n$, $|z| > 2Rn$ and $\lambda > 0$ as in Assumption C2. We then have that 
\begin{equation}
\begin{split}
& \mathbb{E} \left[ e^{\lambda|S_m|}\Big{|} S_n = z\right] = (I) + (II) + (III), \mbox{ where } (I) = \frac{\int_{|t| \leq |z| + Rn } f_m(t) f_{n-m}(z - t)  e^{\lambda |t|} dt }{\int_{\mathbb{R}} f_m(t) f_{n-m}(z - t) dt},\\
&(II) = \frac{\int_{t > |z| + Rn } f_m(t) p_{n-m}(z - t)  e^{\lambda |t|} dt }{\int_{\mathbb{R}} f_m(t) f_{n-m}(z - t)dt }, (III) =  \frac{\int_{t < -|z| - Rn } f_m(t) f_{n-m}(z - t)  e^{\lambda |t|} dt }{\int_{\mathbb{R}} f_m(t) f_{n-m}(z - t)dt }.
\end{split}
\end{equation}
Firstly, we have the trivial bound 
\begin{equation}\label{S73EQ1}
(I) \leq  e^{\lambda Rn + \lambda|z|}.
\end{equation}
In addition, if $z < -2Rn$ then by the unimodality of the density function $f_{n-m}(\cdot)$ we get
$$(II) \leq \frac{\int_{t >|z| + Rn } f_m(t) f_{n-m}(z - t)  e^{\lambda |t|} dt  }{\int_{m \mu}^{m \mu+1} f_m(t) f_{n-m}(z - t) dt} \leq  \frac{\sqrt{n}}{c} \cdot \int_{t >|z|+  Rn } f_m(t) e^{\lambda |t|} dt  \leq  \frac{\sqrt{n}}{\Delta} \mathbb{E} \left[ e^{\lambda |S_m|} \right] \leq  \frac{\sqrt{n}}{\Delta} M_{|X|}(\lambda)^n  .$$
On the other hand, if $z > 2Rn$ we have by the unimodality of $f_{m}(\cdot)$ that
$$(II) \leq \frac{\int_{t > Rn + |z| } f_m(t) f_{n-m}(z - t)  e^{\lambda|t|} dt }{ \int^{\mu (n-m) + 1}_{\mu (n-m)}f_{m}(z - t) f_{n-m}(t)dt} \leq  \frac{\sqrt{n}}{\Delta} \cdot \int_{t > Rn + |z| } f_{n-m}(z-t) e^{\lambda |t| dt}  \leq  \frac{\sqrt{n}}{\Delta} e^{\lambda z} M_{|X|}(\lambda)^n  .$$
Applying the same arguments to $(III)$ and combining the cases $z > 2Rn$ and $z < -2Rn$ we conclude that if $|z| > 2Rn$ we have
\begin{equation}\label{S73EQ2}
(II) + (III) \leq \frac{4 \sqrt{n}}{\Delta} e^{\lambda |z|} M_{|X|}(\lambda)^n 
\end{equation}
Combining (\ref{S73EQ1}) and (\ref{S73EQ2}) and the inequality
\begin{equation*}\label{S73EQ2.5}
 \mathbb{E} \left[ \exp \left(\lambda \max_{1 \leq k \leq n} |S_k| \right) \Big{|} S_n = z\right] \leq \sum_{m = 1}^n\mathbb{E} \left[ e^{\lambda|S_m|}\Big{|} S_n = z\right],
\end{equation*}
we conclude that if $|z| > 2Rn$ then
\begin{equation}\label{S73EQ3}
 \mathbb{E} \left[ \exp \left(\lambda \max_{1 \leq k \leq n} |S_k| \right) \Big{|} S_n = z\right]  \leq \frac{4 n^{3/2}}{\Delta} e^{\lambda |z|} M_{|X|} (\lambda)^n.
\end{equation}
Suppose now that $|z| \leq 2Rn$. Then by definition we have
\begin{equation*}
\begin{split}
&\mathbb{E} \left[ e^{\lambda|S_m|}\Big{|} S_n = z\right]  = \frac{\int_{\mathbb{R} } f_m(t) f_{n-m}(z - t)  e^{\lambda |t|} dt }{f_n(z)} \leq C^{-1}_R\sqrt{n} e^{c_R n} \int_{ \mathbb{R} } f_m(t) f_{n-m}(z - t)  e^{\lambda |t|} dt \\
 &\leq LC^{-1}_R \sqrt{n} e^{c_R n} \int_{\mathbb{R} } f_m(t)   e^{\lambda |t|}dt \leq LC^{-1}_R \sqrt{n} e^{c_R n}  M_{|X|}(\lambda)^n,
\end{split}
\end{equation*}
where $L$ is as in Assumption C5. 
 
Combining the latter with (\ref{S73EQ3}) we conclude that for any $z \in \mathbb{R}$ we have
 \begin{equation}\label{S73EQ4}
 \mathbb{E} \left[ \exp \left(\lambda \max_{1 \leq k \leq n} |S_k| \right) \Big{|} S_n = z\right]  \leq  \left[ \frac{4 n^{3/2}}{\Delta}  + LC^{-1}_R \sqrt{n} e^{c_R n}  \right] \cdot M_{|X|}(\lambda)^n \cdot e^{\lambda |z|} \leq \hat{C}_R \cdot e^{\hat{c}_R n + \lambda |z|}.
\end{equation}

From Jensen's inequality and (\ref{S73EQ4})  we know that for any $v \in \mathbb{N}$
\begin{equation}\label{S71EQ4}
 \mathbb{E} \left[ \exp \left(\lambda v^{-1} \max_{1 \leq k \leq n} |S_k| \right) \Big{|} S_n = z\right]  \leq\hat{C}^{1/v}_R \cdot e^{v^{-1}\hat{c}_R n + v^{-1}\lambda |z|}.
\end{equation}

Suppose now that $\hat{b} > 0$ is given. Then we can choose $v$ sufficiently large so that $\lambda/ v \leq  \hat{b}/2$ and $c_R/v \leq  \hat{b}/2$. Consequently, if we set $\hat{a}  = \lambda v^{-1}$ and $\hat{C}= \hat{C}^{1/v}_R$ we would have in view of (\ref{S71EQ4})
$$ \mathbb{E} \left[ \exp \left(\hat{a}\max_{1 \leq k \leq n} |S_k| \right) \Big{|} S_n = z\right]  \leq \hat{C} \cdot e^{(\hat{b}/2) (|z| + n)}  \leq \hat{C} \cdot e^{\hat{b} (n + z^2/n) },$$
where we used that $|z|/2 \leq z^2/n + n/2$ as follows by the Cauchy-Schwarz inequality.
\end{proof}

\subsubsection{Discrete case} In this section we give general conditions under which a distribution satisfying Assumptions D1-D4 will also satisfy Assumption D5. We use the same notation as in Sections \ref{Section2.2} and \ref{Section4}.

We first introduce some useful notation. Let $p(n)$ be a probability mass function on $\mathbb{Z}$. We say that $p$ is {\em unimodal} if there exists at least one integer $M$ such that 
$$p(n) \geq p(n-1) \mbox{ for all $n \leq M,$ and } p(n+1) \leq p(n) \mbox{ for all $n \geq M$.}$$
We further say that $p(\cdot)$ is {\em strongly unimodal} if the convolution of $p(\cdot)$ with any unimodal distribution function $h(\cdot)$ on $\mathbb{Z}$ is again unimodal. In \cite[Theorem 3]{KG71}, inspired by the classical work of \cite{Ibr56}, the authors proved that $p(\cdot)$ is strongly unimodal if and only if 
\begin{equation}\label{S71E1}
p(n)^2 \geq p(n-1) p(n+1) \mbox{ for all $n \in \mathbb{Z}$}.
\end{equation}

\begin{definition}\label{ExpBD}
Suppose that $p_X$ satisfies Assumptions D1-D4 and $\alpha = -\infty$, $\beta = \infty$. It follows from Assumption D2 that $X$ has all finite moments and we let $\mu = \mathbb{E}[X]$. In addition, we have $\Lambda'(0) = \frac{M'_X(0)}{M_X(0)} = \mu$ and so $u_\mu = (\Lambda')^{-1}( \mu)= 0$ and $G_\mu(u_\mu) = \Lambda(u_\mu) - u_\mu \cdot \mu = 0$. The latter and  Proposition \ref{S4S1P1} imply that there is a constant $\Delta > 0$ such that for all $n \geq 1$ we have
$$  p_n( \lfloor \mu m \rfloor ) \geq n^{-1/2}\Delta .$$
Indeed, the latter is obvious from (\ref{S4S1E3}) for all large $n$ and for small $n$ we can deduce it from the positivity of $ p_n(\lfloor \mu n \rfloor )$ from Assumption C1. The above implies that we can find a constant $R > 0$ such that $R > |\mu| + 1 + \Delta^{-1}$. 

In view of Proposition \ref{S4S1P1} applied to $s = -2R$ and $t = 2R$ we also deduce that there are positive constants $C_R$ and $c_R$ such that for all $n \geq 1$ and $z \in [-2R, 2R] \cap L_n$
$$p_n(z) \geq C_Rn^{-1/2} e^{-c_R n}.$$
As before the above follows from Proposition \ref{S4S1P1} provided $n$ is sufficiently large, while for small $n$ it follows from the positivity of $p_n(z)$ on $[-2R, 2R] \cap L_n$.

Finally, given the above constants, we can find constants $\hat{C}_R$ and $\hat{c}_R$ such that for all $n \geq 1$ we have
$$ \mathbb{E}[ e^{\lambda |X|}]^n \left[ \frac{4 n^{3/2}}{\Delta}  + LC^{-1}_R \sqrt{n} e^{c_R n}  \right] \leq \hat{C}_R \cdot e^{\hat{c}_R n}.$$
\end{definition}

The main result of the section is as follows.

\begin{lemma}\label{Section7L2}
Suppose that $p_X$ satisfies Assumptions D1-D4. Then it will also satisfy Assumption D5 if any of the following hold
\begin{itemize}
\item $\alpha > -\infty$;
\item $\beta < \infty$;
\item $\alpha = -\infty$, $\beta = \infty$ and $p_X(n)$ is a strongly unimodal function.
\end{itemize} 
Moreover, if $\alpha > -\infty$ then we can take $\hat{a}(\hat{b}) =  \frac{\hat{b}}{1 + \hat{b} + |\alpha|}$ and $\hat{C}(\hat{b})  = 1$; if $\beta < \infty$ then we can take $\hat{a}(\hat{b})  = \frac{\hat{b}}{1 + \hat{b} + |\beta|}$ and $\hat{C}(\hat{b})  = 1$. If $\alpha = -\infty$ and $\beta = \infty$ then we can choose $\hat{a}(\hat{b})  = \lambda v^{-1}$ and $\hat{C}(\hat{b})  = \hat{C}_R^{1/v}$, where $v$ is a large enough integer such that $c_R v^{-1} \leq \hat{b}/2$ and $\lambda v^{-1} \leq \hat{b}/2$ with $c_R, \hat{C}_R$ as in Definition \ref{ExpBD} and  $\lambda$ as in Assumption D2.
\end{lemma}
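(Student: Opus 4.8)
The proof I propose is the discrete transcription of the proof of Lemma \ref{Section7L1}, replacing the densities $f_m$ by the mass functions $p_m$ and integrals by sums; I describe the plan and flag the points where the discrete setting requires care.

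I would first dispose of the two boundary cases. If $\alpha > -\infty$, then conditionally on $S_n = z$ one has $S_k \geq k\alpha$ and $S_n - S_k = \sum_{i = k+1}^{n} X_i \geq (n-k)\alpha$ almost surely for every $1 \leq k \leq n$, hence $|S_k| \leq |z| + n|\alpha|$, and so, using $|z| \leq z^2/n + n$,
\[
\mathbb{E}\left[\exp\left(\hat{a}\max_{1 \leq k \leq n}|S_k|\right)\Big| S_n = z\right] \leq \exp\left(\hat{a}|z| + \hat{a}|\alpha|n\right) \leq \exp\left(\hat{a}z^2/n + \hat{a}(1+|\alpha|)n\right).
\]
Taking $\hat{a}(\hat{b}) = \hat{b}/(1 + \hat{b} + |\alpha|)$ and $\hat{C}(\hat{b}) = 1$ yields (\ref{S2S2E3}); the case $\beta < \infty$ is identical after reflection, with $|\beta|$ in place of $|\alpha|$.

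For the genuine case $\alpha = -\infty$, $\beta = \infty$ with $p_X$ strongly unimodal, I would begin by recording that, by \cite[Theorem 3]{KG71}, strong unimodality is equivalent to condition (\ref{S71E1}), and that the convolution of a strongly unimodal mass function with a unimodal one is unimodal; since $p_X$ is itself unimodal, it follows by induction that every convolution power $p_m = p_X^{*m}$, $m \geq 1$, is unimodal. Using $\Delta, R, C_R, c_R, \hat{C}_R, \hat{c}_R$ from Definition \ref{ExpBD}, the bound $p_m(\lfloor \mu m \rfloor) \geq m^{-1/2}\Delta$ forces any mode $M_m$ of $p_m$ to satisfy $|M_m| \leq Rm$: were, say, $M_m > Rm$, monotonicity of $p_m$ to the left of its mode would give $p_m(j) \geq p_m(\lfloor \mu m \rfloor)$ for all integers $j$ in an interval of length $\lfloor(1 + \Delta^{-1})m\rfloor$ to the right of $\lfloor \mu m \rfloor$, and summing these contradicts $\sum_j p_m(j) = 1$; the possibility $M_m < -Rm$ is excluded the same way. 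Next, for fixed $n$, $1 \leq m < n$ and $|z| > 2Rn$, I would split $\mathbb{E}[e^{\lambda|S_m|}\mid S_n = z]$ into three sums $(I) + (II) + (III)$ over $|t| \leq |z| + Rn$, over $t > |z| + Rn$, and over $t < -|z| - Rn$, with normalizing denominator $p_n(z) = \sum_t p_m(t)p_{n-m}(z - t)$. The term $(I)$ is at most $e^{\lambda R n + \lambda|z|}$ trivially; for $(II)$ and $(III)$ I would bound the denominator below by a single summand near the mode of $p_m$ (resp.\ of $p_{n-m}$, using its unimodality to shift the mode), which by Definition \ref{ExpBD} is at least $m^{-1/2}\Delta$ times a positive mass-function value, and bound the numerator sums by $\sum_t p_m(t)e^{\lambda|t|} \leq \mathbb{E}[e^{\lambda|X|}]^n$, obtaining $(II) + (III) \leq (4\sqrt{n}/\Delta)e^{\lambda|z|}\mathbb{E}[e^{\lambda|X|}]^n$. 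For $|z| \leq 2Rn$ I would instead use $p_n(z) \geq C_R n^{-1/2}e^{-c_R n}$ together with the trivial bound $p_{n-m} \leq 1$ to get $\mathbb{E}[e^{\lambda|S_m|}\mid S_n = z] \leq C_R^{-1}\sqrt{n}e^{c_R n}\mathbb{E}[e^{\lambda|X|}]^n$. Summing over $m$ and invoking the definition of $\hat{C}_R, \hat{c}_R$ then gives, for all $z$,
\[
\mathbb{E}\left[\exp\left(\lambda\max_{1 \leq k \leq n}|S_k|\right)\Big| S_n = z\right] \leq \hat{C}_R\, e^{\hat{c}_R n + \lambda|z|}.
\]
Jensen's inequality yields the bound $\hat{C}_R^{1/v}\, e^{v^{-1}\hat{c}_R n + v^{-1}\lambda|z|}$ for any integer $v$; given $\hat{b} > 0$ I would pick $v$ so large that $\lambda v^{-1} \leq \hat{b}/2$ and $c_R v^{-1} \leq \hat{b}/2$, set $\hat{a}(\hat{b}) = \lambda v^{-1}$ and $\hat{C}(\hat{b}) = \hat{C}_R^{1/v}$, and finish using $|z|/2 \leq z^2/n + n/2$.

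The step I expect to be the main obstacle is the control of the mode of the convolution powers $p_m$ and the accompanying bookkeeping in the $(II)$, $(III)$ estimates: one must ensure the mode-shifting argument selects a denominator term genuinely bounded below by Definition \ref{ExpBD} and that the factor ($p_m$ or $p_{n-m}$) being exploited is the unimodal one; in the present case the supports are all of $\mathbb{Z}$, so no issue of $\lfloor\mu m\rfloor$ landing outside the support arises. Everything else is a routine translation of the continuous argument, with sums replacing integrals and the bound $p_X \leq 1$ playing the role of $f_X \leq L$ from Assumption C5.
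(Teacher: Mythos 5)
Your proposal is correct and follows essentially the same route as the paper: the boundary cases via the deterministic bound $|S_k|\leq |z|+n|\alpha|$ (resp.\ $|\beta|$), and in the doubly infinite case the unimodality of the convolution powers $p_m$, the mode bound $|M_m|\leq Rm$ forced by $p_m(\lfloor \mu m\rfloor)\geq \Delta m^{-1/2}$, the three-way split $(I)+(II)+(III)$ with the mode term of the denominator cancelling the tail sums, the separate bound $p_n(z)\geq C_R n^{-1/2}e^{-c_R n}$ for $|z|\leq 2Rn$, summation over $m$, and Jensen with a large $v$. The only differences are cosmetic (e.g.\ your explicit remark that $p_X\leq 1$ replaces the constant $L$ of Assumption C5), so no further changes are needed.
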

\begin{proof}
The cases $\alpha > -\infty$ and $\beta < \infty$ can be handled exactly the same as in the proof of Lemma \ref{Section7L1}. We focus on the case $\alpha = -\infty$ and $\beta = \infty$ in the remainder.

Notice that by assumption $p_m(n)$ are unimodal functions for any $m \geq 1$. For future use we call $\mu = \mathbb{E}[X]$ and for $|t| \leq \lambda$ as in Assumption D2 we set $M_{|X|}(t) = \mathbb{E} \left[e^{t |X|} \right]$.

By definition we have for $m \geq 1$ that
$$p_m\left( \lfloor m \mu \rfloor \right) \geq m^{-1/2} \cdot \Delta,$$
The latter implies that if $M_m$ is any integer such that $p_m(x) \leq p_m(y)$ for all $ x \leq y \leq M_m$ and $p_m(x) \leq p_m(y) $ for all $ x \geq y \geq M_m$, we then have $|M_m |\leq R m $.  Indeed, if we suppose for example that $M_m > R m$ then $p_m(n + \lfloor m\mu \rfloor) \geq p_m(\lfloor m\mu \rfloor)$ for all $n = 0, \dots, \lfloor (1 + \Delta^{-1}) m \rfloor$ and so
$$\sum_{n = 0}^{\lfloor (1 + \Delta^{-1}) m \rfloor} p_m(n + \lfloor m \mu \rfloor ) \geq (\lfloor (1 + \Delta^{-1}) m \rfloor + 1) \cdot p_m(\lfloor m \mu \rfloor) \geq (1 + \Delta^{-1}) m \frac{\Delta}{\sqrt{m}} > 1,$$
which is impossible. One rules out the case $M_m < - Rm$ in a similar fashion.\\

Let us now fix $n \geq 1$, $1 \leq m < n$, $|z| > 2Rn$ and $\lambda > 0$ as in Assumption D2. We then have that 
\begin{equation}
\begin{split}
& \mathbb{E} \left[ e^{\lambda|S_m|}\Big{|} S_n = z\right] = (I) + (II) + (III), \mbox{ where } (I) = \frac{\sum_{|k| \leq |z| + Rn } p_m(k) p_{n-m}(z - k)  e^{\lambda |k|}  }{\sum_{k \in \mathbb{Z}} p_m(k) p_{n-m}(z - k) },\\
&(II) = \frac{\sum_{k > |z| + Rn } p_m(k) p_{n-m}(z - k)  e^{\lambda |k|}  }{\sum_{k \in \mathbb{Z}} p_m(k) p_{n-m}(z - k) }, (III) =  \frac{\sum_{k < -|z| - Rn } p_m(k) p_{n-m}(z - k)  e^{\lambda |k|}  }{\sum_{k \in \mathbb{Z}} p_m(k) p_{n-m}(z - k) }.
\end{split}
\end{equation}
Firstly, we have the trivial bound 
\begin{equation}\label{S72EQ1}
(I) \leq e^{\lambda Rn + \lambda|z|}.
\end{equation}
In addition, we have that if $z < -2Rn$ then by the unimodality of the sequence $p_{n-m}(\cdot)$ we get
$$(II) \leq \frac{\sum_{k > Rn + |z| } p_m(k) p_{n-m}(z - k)  e^{\lambda |k|}  }{ p_m(M_m) p_{n-m}(z - M_m)} \leq  \frac{\sqrt{n}}{c} \cdot \sum_{k > Rn + |z| } p_m(k) e^{\lambda |k|}  \leq  \frac{\sqrt{n}}{c} \mathbb{E} \left[ e^{\lambda |S_m|} \right] \leq  \frac{\sqrt{n}}{c} M_{|X|}(\lambda)^n  .$$
On the other hand, if $z > 2Rn$ we have by the unimodality of $p_{m}(\cdot)$ that
$$(II) \leq \frac{\sum_{k > Rn + |z| } p_m(k) p_{n-m}(z - k)  e^{\lambda|k|}  }{ p_{m}(z - M_{n-m}) p_{n-m}(M_{n-m})} \leq  \frac{\sqrt{n}}{c} \cdot \sum_{k > Rn + |z| } p_{n-m}(z-k) e^{\lambda |k|}  \leq  \frac{\sqrt{n}}{c} e^{\lambda z} M_{|X|}(\lambda)^n  .$$
Applying the same arguments to $(III)$ and combining the cases $z > 2Rn$ and $z < -2Rn$ we conclude that if $|z| > 2Rn$ we have
\begin{equation}\label{S72EQ2}
(II) + (III) \leq \frac{4 \sqrt{n}}{c} e^{\lambda |z|} M_{|X|}(\lambda)^n 
\end{equation}
Combining (\ref{S72EQ1}) and (\ref{S72EQ2}) and the inequality
\begin{equation}\label{S72EQ2.5}
 \mathbb{E} \left[ \exp \left(\lambda \max_{1 \leq k \leq n} |S_k| \right) \Big{|} S_n = z\right] \leq \sum_{m = 1}^n\mathbb{E} \left[ e^{\lambda|S_m|}\Big{|} S_n = z\right],
\end{equation}
we conclude that if $|z| > 2Rn$ then
\begin{equation}\label{S72EQ3}
 \mathbb{E} \left[ \exp \left(\lambda \max_{1 \leq k \leq n} |S_k| \right) \Big{|} S_n = z\right]  \leq \frac{4 n^{3/2}}{c} e^{\lambda |z|} M_{|X|} (\lambda)^n \cdot e^{\lambda Rn},
\end{equation}
where we apply inequality $e^x + e^y \leq e^{x+y}$ for $x,y \geq 1$.

Suppose now that $|z| \leq 2Rn$. Then by definition we have
\begin{equation*}
\begin{split}
&\mathbb{E} \left[ e^{\lambda|S_m|}\Big{|} S_n = z\right]  = \frac{ \sum_{k \in \mathbb{Z}} p_m(k) p_{n-m}(z - k)  e^{\lambda |k|}  }{p_n(z)} \leq C^{-1}_R\sqrt{n} e^{c_R n} \sum_{k \in \mathbb{Z}} p_m(k) p_{n-m}(z - k)  e^{\lambda |k|} \\
 &\leq C^{-1}_R \sqrt{n} e^{c_R n} \sum_{k \in \mathbb{Z}} p_m(k)   e^{\lambda |k|} = C^{-1}_R \sqrt{n} e^{c_R n}  M_{|X|}(\lambda)^n.
\end{split}
\end{equation*}
 
Combining the latter with (\ref{S72EQ3}) we conclude that for any $z \in \mathbb{R}$ we have
 \begin{equation}\label{S72EQ4}
 \mathbb{E} \left[ \exp \left(\lambda \max_{1 \leq k \leq n} |S_k| \right) \Big{|} S_n = z\right]  \leq  \left[ \frac{4 n^{3/2}}{\Delta}  +C^{-1}_R \sqrt{n} e^{c_R n}  \right] \cdot M_{|X|} (\lambda)^n \cdot e^{\lambda |z|} \leq \hat{C}_R \cdot e^{\hat{c}_R n + \lambda |z|}.
\end{equation}
From here the proof proceeds as that of Lemma \ref{Section7L1}.
\end{proof}

%
\subsection{Insufficiency of Assumptions D1-D4}\label{Section7.2}
In this section we construct a probability distribution $p_X$, which satisfies Assumptions D1-D4, but for which the statement of Theorem \ref{S2DiscKMT} does not hold. The example illustrates that in general one needs further assumptions on $p_X$ in order to ensure the strong coupling of random walk bridges with step distribution $p_X$ and Brownian bridges of fixed variance. 

We will use the same notation as in Section \ref{Section2.1}. Suppose that $A = \{ x \in \mathbb{Z}: x = 3^n + n \mbox{ for some $n \in \mathbb{N}$} \}$ and $B =  \{ x \in \mathbb{Z}: x = -3^n  \mbox{ for some $n \in \mathbb{N}$} \}$. For convenience we denote $a_n = 3^n + n$ and $b_n = - 3^n$ for $n \geq 1$ and note that these are distinct integers. We define a weight function $w$ as follows
\begin{equation}\label{S7S2E1}
w(x) = \begin{cases} \exp( - x^2 ) \mbox{ if $x \in A\cup B$} , \\ \exp( - g(x)) \mbox{ if $\not \in A \cup B$, where $g(x) = 10^{10^{|x|}}$} \end{cases}
\end{equation}
Observe that $w(x) > 0$ for all $x \in \mathbb{Z}$ and $w(x) \leq e^{-x^2}$ for all $x \in \mathbb{Z}$. This means that $Z:= \sum_{x \in \mathbb{Z}} w(x) < \infty$ and the function
\begin{equation}\label{S7S2E2}
p_X(x) := w(x) \cdot Z^{-1}
\end{equation}
defines a probability mass function on $\mathbb{Z}$. We note that $p_X$ satisfies Assumption D1, with $\alpha = -\infty$ and $\beta = \infty$; Assumption D2 with any $\lambda > 0$, in particular we have $\mathcal{D}_{\Lambda} = \mathbb{R}$ and so by Lemma \ref{S2S2L1} we know that $\Lambda_X$ is continuous on $\mathbb{R}$ so that Assumption D3 is also satisfied. Finally, by definition $p_X(x) \leq Z^{-1} e^{-x^2}$ and so Assumption D4 is satisfied with $D = Z^{-1}$ and $d = 1$. Overall, we see that $p_X$ satisfies Assumptions D1-D4. \\

Suppose now that $S^{(n,z)}$ is a random walk bridge whose steps size is $p_X$. We want to show that for any $a,c ,C > 0$ and $\sigma > 0$ and any coupling of $S^{(2,z)}$ with a Brownian bridge $B^{\sigma}$ of variance $\sigma^2$  there exists a $z \in \mathbb{Z}$ such that
\begin{equation}\label{S7DiscKMTBreak}
\mathbb{E}\left[ e^{a \Delta(2,z)} \right] \geq C e^{c|z|^2},
\end{equation}
where $\Delta(n,z) = \Delta(n,z,B^{\sigma}, S^{(n,z)})=  \sup_{0 \leq t \leq n} \left| \sqrt{n} B^\sigma_{t/n} + \frac{t}{n}z - S^{(n,z)}_t \right|.$ 
The latter statement implies that we cannot couple the bridge of size two to any fixed variance Brownian bridge uniformly in the endpoint $z$, which means that Theorem \ref{S2DiscKMT} fails to hold for this bridge.
 
\begin{remark}\label{S7S2R1}
Let us heuristically explain why the above example breaks the coupling. The distribution in (\ref{S7S2E2}) satisfies the condition that it has spikes at the points in $A$ and $B$ and is extremely small away from those sets. The latter means that for certain large enough $z$, we will have that conditional on $X_1 + X_2 = z$, with overwhelming probability $X_1 = 3^z + z$ and $X_2 = -3^z$ or $X_1 = -3^z$ and $X_2 = 3^z + z$. The latter implies that the midpoint of the bridge is essentially a Bernoulli variable that takes the values $3^z + z$ and $-3^z$ with equal probability. This makes its variance increase as we increase $z$, which makes a close coupling to a Brownian bridge of fixed variance impossible. 

The main take-away point is that while $p_X$ may be an extremely well-behaved distribution, the conditional distribution of the midpoint of a Bridge with step size $p_X$ can become quite singular in the presence of spikes in $p_X$. This means that one needs better control of the conditional distribution, and one way to achieve this is to assume $p_X$ has no spikes. This is one reason behind our introduction of the strongly log-concave distributions in Section \ref{Section7.1} above.     
\end{remark}

In the remainder we prove (\ref{S7DiscKMTBreak}). We will prove that there are large enough $z$ such that
\begin{equation*}
\mathbb{E}\left[ e^{a |S_1^{(2,z)} - \sqrt{2} B^{\sigma}_{1/2} - z/2| } \right] \geq C e^{c|z|^2},
\end{equation*}
which certainly implies (\ref{S7DiscKMTBreak}). Using that $e^{a |x - y|} \geq e^{a|x| - a|y|} \geq e^{(a/2) |x| } - e^{a|y|}$ we see that
$$\mathbb{E}\left[ e^{a |S_1^{(2,z)} - \sqrt{2} B^{\sigma}_{1/2} - z/2| } \right] \geq \mathbb{E}\left[ e^{(a/2) |S_1^{(2,z)}|} \right] + \mathbb{E} \left[ e^{a|\sqrt{2} B^{\sigma}_{1/2}| + |az/2| } \right] =\mathbb{E}\left[ e^{(a/2) |S_1^{(2,z)}|} \right] - \mathbb{E} \left[ e^{a|\sqrt{2} B^{\sigma}_{1/2}| + |az/2| } \right].$$
Furthermore we have
$$\mathbb{E} \left[ e^{a|\sqrt{2} B^{\sigma}_{1/2}| + |az/2| } \right] \leq e^{|az|/2} \cdot \left[ \mathbb{E} \left[ \exp \left( a\sqrt{2}B^{\sigma}_{1/2}  \right) \right] + \mathbb{E} \left[ \exp \left( -a\sqrt{2}B^{\sigma}_{1/2}  \right) \right] \right] = 2e^{|az|/2 +  a^2 \sigma^2/4} .$$
Combining the above statements we see that to prove (\ref{S7DiscKMTBreak}) it is enough to show that for any fixed $a, c,C > 0$ we can find large enough $z$ so that
\begin{equation}\label{S7S2E3}
\mathbb{E}\left[ e^{a |S_1^{(2,z)}| } \right] \geq C e^{c|z|^2}.
\end{equation}
This is the statement we will establish.

We claim that if $z = 2 \cdot 3^m$ with $m$ sufficiently large we have
\begin{equation}\label{S7S2E4}
3 p_1(a_z) p_1(b_z)  \geq p_2(z).
\end{equation}
If true the above would imply
\begin{equation*}
\mathbb{E}\left[ e^{a |S_1^{(2,z)}| } \right]  = \sum_{k \in \mathbb{Z}} \frac{p_1(k) p_1(z- k) e^{ak}}{p_2(z)} \geq \frac{p_1(a_z) p_1(b_z)e^{a a_z}}{p_2(z)} \geq \frac{1}{3} \cdot e^{a(3^z+z)}, 
\end{equation*}
which certainly implies (\ref{S7S2E3}). We thus focus on (\ref{S7S2E4}).

We have for all $m \geq 2$ that
\begin{equation}\label{S7S2E5}
\begin{split}
&p_2(z) \leq (I) + (II), \mbox{ where } (I) = 2\sum_{r = 1}^\infty {\bf 1} \{k \not \in A, z - k \not \in A\}  p_1(k) p_1(z-k), \\
& (II) = 2\sum_{ r  = 1 }^\infty  p_1(a_r) p_1(z-a_r).
\end{split}
\end{equation} 

If $r \leq m$ then we have $4 \leq  a_r \leq 3^{m} + m $ and so $3^{m+1} + m \geq  2 \cdot 3^m - 4 \geq z - a_r \geq  2 \cdot 3^m - 3^{m} - m \geq 3^{m-1} + m.$ This means that $z- a_r \not \in A\cup B$ and so
\begin{equation}\label{S7S2E6}
\sum^{m}_{r = 1} p_1(a_r) p_1(z-a_r) \leq \sum^{m}_{r = 1} p_1(z-a_r)  \leq  Z^{-1} \cdot m \cdot \exp \left( - g(3^{m-1} + m)  \right). 
\end{equation}
If $ m < r < 2 \cdot 3^m $ then we have $z - a_r = 2 \cdot 3^m - 3^r - r$ and so
$$ -3^{r-1} > z - a_r > -3^{r} .$$
This means that $z - a_r \not \in A \cup B$ and so
\begin{equation}\label{S7S2E7}
\sum^{z - 1}_{r = m+1} p_1(a_r) p_1(z-a_r) \leq \sum^{z-1}_{r = m+1} p_1(z-a_r)  \leq  Z^{-1} \cdot (z - m) \cdot \exp \left( - g(3^m) \right). 
\end{equation}
If $2 \cdot 3^m < r$ then 
$$ - 3^{r} > z - a_r = 2 \cdot 3^m - 3^r - r > - 3^{r+1}.$$
This means that $z - a_r \not \in A \cup B$ and so
\begin{equation}\label{S7S2E8}
\sum^{\infty}_{r = z+1} p_1(a_r) p_1(z-a_r) \leq \sum^{\infty}_{r = z+1} p_1(z-a_r)  \leq  Z^{-1} \cdot  \sum^{\infty}_{r = z+1}  \exp \left( - g(3^r)  \right). 
\end{equation}
Combining (\ref{S7S2E6}), (\ref{S7S2E7}) and (\ref{S7S2E8}) we have
\begin{equation}\label{S7S2E9}
(II) - 2  p_1(a_z) \cdot p_1(z - a_z) \leq Z^{-1} \cdot  \sum^{\infty}_{r = z+1}  \exp \left( - g(3^r)  \right) + Z^{-1} \cdot z \cdot \exp( - g(z/6)) \leq e^{ - g(z/10)},
\end{equation}
where the last inequality holds provided $m$ (and hence $z$) is sufficiently large. On the other hand,
$$p_1(a_z) \cdot p_1(z - a_z) = \exp( - a_z^2) \cdot \exp ( -b_z^2) = \exp \left( - (3^z + z)^2 - 3^{2z} \right) \geq 10 \cdot  e^{ - g(z/10)},$$
for all large enough $m$ and so we conclude that for all large $m$ and $z = 2 \cdot 3^m$ we have
\begin{equation}\label{S7S2E10}
(II) \leq  (2.2) \cdot  p_1(a_z) \cdot p_1(z - a_z).
\end{equation}
We next focus on $(I)$. Notice that if $k \leq 3^m$ then $z - k \geq 3^m$ and so
$$\sum_{r = 1}^{z/2} {\bf 1} \{k \not \in A, z - k \not \in A\}  p_1(k) p_1(z-k) \leq \sum_{r = 1}^{z/2} {\bf 1} \{k \not \in A, z - k \not \in A\}  p_1(z-k) \leq (z/2) \cdot \exp(-g(z/2)). $$
In addition, we have
$$\sum_{r = z/2 + 1}^{\infty} {\bf 1} \{k \not \in A, z - k \not \in A\}  p_1(k) p_1(z-k) \leq $$
$$\sum_{r = z/2 + 1}^{\infty}  {\bf 1} \{k \not \in A, z - k \not \in A\}  p_1(k) \leq \sum_{r = z/2 + 1}^\infty \exp ( - g(r)) \leq \exp(- g(z/3)), $$
 where the last inequality holds for all large enough $m$. Combining the latter we get for all large $m$
\begin{equation}\label{S7S2E11}
(I) \leq   z \cdot \exp(-g(z/2)) + 2\cdot  \exp(- g(z/3)) \leq \exp(-g(z/10)) \leq (0.1) \cdot  p_1(a_z) \cdot p_1(z - a_z).
\end{equation}
Combining (\ref{S7S2E10}) and (\ref{S7S2E11}) we conclude (\ref{S7S2E4}), which concludes our proof.

\section{Examples}\label{Section8}
In this section we present several examples of distributions that satisfy Assumptions C1-C6 in Section \ref{Section8.1} and Assumptions D1-D5 in Section \ref{Section8.2}. The goal is to illustrate how to verify that a given distribution satisfies the assumptions and in particular prove Theorems \ref{S1ContKMT} and \ref{S1DiscKMT}. In Section \ref{Section8.3} we discuss an example with the log-gamma distribution with parameter $\gamma > 0$. The log-gamma distribution is of interest to us due to connections to integrable probability and the example we consider is the principal one that motivated our quantified Theorem \ref{ContKMTA}. This example will benefit the future work \cite{CNW}.

%
\subsection{Examples: continuous jumps}\label{Section8.1} 

We continue with the notation from Section \ref{Section2.1}.\\

{\bf \raggedleft Example 1.} We consider the distributions in Theorem \ref{S1ContKMT}. By assumption we know that $X$ is a continuous random variable with density $f_X(\cdot)$, which has a compact interval of support $[\alpha, \beta]$ and which is continuously differentiable and positive on $(\alpha, \beta)$ with a bounded derivative. Since the derivative of $f_X$ is bounded and continuous on $(\alpha, \beta)$ we conclude that $f_X$ can be continuously extended to $[\alpha, \beta]$ and so Assumption C1 is satisfied. In addition, since $X$ is uniformly bounded, we see that Assumption C2 is satisfied for any $\lambda >0$ and so $\mathcal{D}_\Lambda = \mathbb{R}$. The latter and Lemma \ref{S2S1L1} imply that $\Lambda(\cdot)$ is continuous on $\mathbb{R}$ and so Assumption C3 holds.

We next observe using integration by parts that if $z \in \mathbb{C}$ and $z \neq 0 $ we have
$$\int_\alpha^{\beta} f_X(x) e^{x z} dx = f_X(\beta) \cdot \frac{e^{\beta z}}{z}   -  f_X(\alpha) \cdot \frac{e^{\alpha z}}{z} - \int_{\alpha}^\beta f'_X(x) \cdot \frac{e^{x z}}{z}dx.$$
Let us fix $s,t \in \mathbb{R}$ with $\alpha <  s < t < \beta$ and suppose that $z = u + i v$ with $u \in [s,t]$. Then the boundedness of $f_X(\cdot)$ and $f'_X(\cdot)$ and the above equation imply that 
$$ \left|\int_\alpha^{\beta} f_X(x) e^{x z} dx  \right| \leq \frac{K_1(s,t)}{1 + |v|},$$
for some sufficiently large constant $K_1(s,t)$ and so Assumption C4 holds with $p(s,t) = 1$.

As $f_X(\cdot)$ has compact support and is bounded, Assumption C5 holds as well. In view of Lemma \ref{Section7L1} Assumption C6 also holds. Overall, we conclude that $f_X$ satisfies Assumptions C1-C6 and so by Theorem \ref{S2ContKMT} we conclude Theorem \ref{S1ContKMT}.\\

The above example illustrates that our strong coupling result holds for essentially any compactly supported density with a bounded continuous derivative. We next illustrate a case when the support is not compact using the usual exponential distribution.\\

{\bf \raggedleft Example 2.} Suppose that $X$ has exponential distribution with parameter $\mu > 0$, i.e. $f_{X}(x) = {\bf 1}\{x > 0 \} \cdot \mu e^{-\mu x}$. In this case Assumption C1 holds trivially with $\alpha = 0$ and $\beta = \infty$. In addition, we have $M_X(t) = \frac{\mu}{\mu - t}$ and so Assumption C2 holds with any $0 < \lambda < \mu$. Next, we have that $\Lambda(x) =   \log (\mu) - \log (\mu -t)$ is lower semi-continuous on $\mathcal{D}_\lambda = (-\infty, \mu)$ and Assumption C3 holds. 

Let us fix $s,t \in \mathbb{R}$ with $0 <  s < t < \infty$ and suppose that $z = u + i v$ with $u \in [s,t]$. Then we have
$$\left| M_X(z)  \right| = \left|\frac{\mu}{\mu - z} \right| \leq  \frac{K_1(s,t)}{1 + |v|}$$
for some sufficiently large constant $K_1(s,t)$ and so Assumption C4 holds with $p(s,t) = 1$. Assumption C5 holds trivially as $f_X(x) = 0$ for $ x \leq 0 $ and Assumption C6 is satisfied in view of Lemma  \ref{Section7L1}. Overall, we conclude that $f_X$ satisfies Assumptions C1-C6 and so Theorem \ref{S2ContKMT} holds for random walk bridges with exponential jumps.

%
\subsection{Examples: discrete jumps}\label{Section8.2}

We continue with the notation from Section \ref{Section2.2}.\\

{\bf \raggedleft Example 1.} We consider the distributions in Theorem \ref{S1DiscKMT}. By assumption we know that $X$ is an integer valued random variable with probability mass function $p_X(\cdot)$ such that $p_X(x) > 0$ for all $x \in \mathbb{Z} \cap [\alpha, \beta]$ and $\mathbb{P}(X \in [\alpha, \beta ]) = 1$. The latter iplies that $p_X$ satisfies Assumption D1. In addition, since $X$ is uniformly bounded, we see that Assumption D2 is satisfied for any $\lambda >0$ and so $\mathcal{D}_\Lambda = \mathbb{R}$. The latter and Lemma \ref{S2S2L1} imply that $\Lambda(\cdot)$ is continuous on $\mathbb{R}$ and so Assumption D3 holds. As $p_X(\cdot)$ is compactly supported and bounded, Assumption D4 holds as well. In view of Lemma \ref{Section7L2} Assumption D5 also holds. Overall,  we conclude that $p_X$ satisfies Assumptions D1-D5 and so by Theorem \ref{S2DiscKMT} we conclude Theorem \ref{S1DiscKMT}.\\

The above example illustrates that our strong coupling result holds for essentially any integer valued variable with a single compact (integer) interval of support. We next illustrate a case when the support is not compact using the usual geometric distribution.\\

{\bf \raggedleft Example 2.} Suppose that $X$ has geometric distribution with parameter $q \in (0,1)$, i.e. $p_{X}(n) = q \cdot (1-q)^n$ for $n \geq 0$. In this case Assumption D1 holds trivially with $\alpha = 0$ and $\beta = \infty$. In addition, we have $M_X(t) = \frac{q}{1 - (1-q)e^t}$ and so Assumption D2 holds with any $0 < \lambda < - \log (1- q)$. Next, we have that $\Lambda(x) =   \log (q) - \log (1 - (1-q)e^t)$ is lower semi-continuous on $\mathcal{D}_\lambda = (-\infty, - \log (1- q) )$ and Assumption D3 holds. 

Assumption D4 holds trivially as $p_X(x) = 0$ for $ x < 0 $ and Assumption D5 is satisfied in view of Lemma  \ref{Section7L2}. Overall, we conclude that $p_X$ satisfies Assumptions D1-D5 and so Theorem \ref{S2DiscKMT} holds for random walk bridges with geometric jumps.

%
\subsection{Example: log-gamma distribution}\label{Section8.3}

The log-gamma density function with parameter $\gamma > 0$ is given by
\begin{equation}\label{S83E1}
f_\gamma(x)=\frac{1}{\Gamma(\gamma)}\exp\left( \gamma x-e^{x} \right) \mbox{ for $x \in \mathbb{R}$}.
\end{equation}
If $\xi$ is a random variable with density $f_\gamma$ one readily observes that
\begin{equation}\label{S83E2}
M_\xi(t) = \frac{\Gamma(\gamma + t)}{\Gamma(\gamma)}, \mbox{ and so } M_\xi(t) < \infty \mbox{ for $t > - \gamma$}.
\end{equation}
The above formula also implies that 
\begin{equation}\label{S83E3}
\mathbb{E}[\xi] = m_\gamma = \psi^{(0)}(\gamma) \mbox{ and } Var(\xi) = \sigma_\gamma^2 = \psi^{(1)}(\gamma), 
\end{equation}
where $\psi^{(k)}$ denote the polygamma functions given by
\begin{equation}\label{S83E4}
\psi^{(-1)}(z) = \log \Gamma (z) \mbox{ and } \psi^{(k)}(z) = \frac{d^{k+1}}{dz^{k+1}} \psi^{(-1)}(z), \mbox{ for $k \geq 0$}.
\end{equation}

We consider in this section random walk bridges as in the setup of Section \ref{Section2.1}, whose jump has distribution $X = \frac{\xi - m_{\gamma}}{\sigma_\gamma}$. To indicate the dependence of the bridges on $\gamma$ we write $S^{(n,z)}_\gamma$ to denote a process whose law is given by that of a random walk bridge with step distribution $X$ and which is condititioned to end at $z$ after $n$ steps. The main result we wish to establish is the following.
\begin{corollary}\label{CorLogGamma}
 For any $b > 0 $ and $\gamma_0 > 0$ there exist constants $0<C,a,\alpha'<\infty$ such that for every positive integer $n$ and $\gamma \geq \gamma_0$, there is a probability space on which are defined a Brownian bridge $B^{\sigma}$ with $\sigma = 1$ and a family of processes $S^{(n,z)}_\gamma$ for $z\in\mathbb{R}$ such that
\begin{equation}\label{S83E5}
\mathbb{E}[e^{a \Delta(n,z)}]\leq Ce^{\alpha'(\log n)} e^{bz^2/n},
\end{equation}
where $\Delta(n,z)=\sup_{0\leq t\leq n} |\sqrt{n}B_{t/n}+\frac{t}{n}z-{S}^{(n,z)}_{\gamma,t}|.$
\end{corollary}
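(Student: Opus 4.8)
The strategy is to apply the quantified continuous-jump theorem, Theorem \ref{ContKMTA}, to the normalized log-gamma jump $X=(\xi-m_\gamma)/\sigma_\gamma$ and then check two things: first, that $X$ satisfies Assumptions C1--C6 for every $\gamma\geq\gamma_0$, and second --- crucially --- that all the constants appearing in Definition \ref{DefParC} and in the functions $\hat a,\hat C$ of Assumption C6 can be chosen \emph{uniformly in $\gamma\geq\gamma_0$}. Since $X$ has variance $1$ by construction, its reference slope $p=0$ gives $\sigma_p^2=1$, so the coupling Brownian bridge has variance $1$ as claimed. Once uniformity of the constants is established, Theorem \ref{ContKMTA} (applied with, say, $\epsilon'=1$, $s=-1$, $t=1$, $p=0$) produces for each fixed $\gamma$ a probability space and constants $C(\gamma),a(\gamma),\alpha'(\gamma)$; the uniformity lets us take $C=\sup_\gamma C(\gamma)<\infty$, etc., yielding \eqref{S83E5}.

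\textbf{Step 1: verifying C1--C5 for $X$.} The density of $X$ is $f_X(x)=\sigma_\gamma f_\gamma(\sigma_\gamma x+m_\gamma)$, supported on all of $\mathbb{R}$ with $\alpha=-\infty$, $\beta=\infty$; it is smooth and positive, so C1 holds. From \eqref{S83E2}, $M_X(t)=e^{-tm_\gamma/\sigma_\gamma}\Gamma(\gamma+t/\sigma_\gamma)/\Gamma(\gamma)$ is finite for $t>-\gamma\sigma_\gamma$, giving C2 with any $\lambda<\gamma_0\sigma_{\gamma_0}$ (using monotonicity of $\gamma\mapsto\gamma\sigma_\gamma$, or simply a crude uniform lower bound on $\gamma\sigma_\gamma$ for $\gamma\geq\gamma_0$). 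Since $\mathcal D_\Lambda$ is an interval and $\Lambda$ is finite and smooth on its interior and blows up at the finite endpoint $A_\Lambda=-\gamma\sigma_\gamma$ (because $\Gamma$ has a pole at $0$), lower semicontinuity C3 holds. Assumption C4 (polynomial decay of $|M_X|$ on vertical strips) follows from the classical bound $|\Gamma(\sigma+it)|\leq C_\sigma e^{-\pi|t|/2}(1+|t|)^{\sigma-1/2}$, which actually gives exponential decay --- far more than needed --- and the dependence on the strip is explicit. For C5, one writes $f_\gamma(x)=\Gamma(\gamma)^{-1}\exp(\gamma x-e^x)$ and checks that for $x$ large and negative the term $e^x$ is negligible so $f_\gamma(x)\approx\Gamma(\gamma)^{-1}e^{\gamma x}$ decays but only exponentially, whereas for $x$ large and positive $e^x$ dominates and $f_\gamma$ has a super-Gaussian tail; hence one uses alternative 1 of C5 (Gaussian bound for $x\geq 0$), and after the affine normalization the constants $L,D,d$ can be bounded uniformly in $\gamma\geq\gamma_0$ using $m_\gamma=\psi^{(0)}(\gamma)\sim\log\gamma$ and $\sigma_\gamma^2=\psi^{(1)}(\gamma)\sim 1/\gamma\to 0$, together with Stirling's formula for $\Gamma(\gamma)$.

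\textbf{Step 2: C6 and uniformity.} Since $\alpha=\beta=\pm\infty$, Assumption C6 is not automatic; I would invoke Lemma \ref{Section7L1}, which reduces C6 to strong unimodality (log-concavity) of $f_X$. But $\log f_\gamma(x)=-\log\Gamma(\gamma)+\gamma x-e^x$ has second derivative $-e^x<0$, so $f_\gamma$ is log-concave, hence so is its affine image $f_X$, and C6 holds with the explicit $\hat a,\hat C$ from Lemma \ref{Section7L1}. The remaining --- and genuinely the main --- obstacle is to track every constant in Definition \ref{DefParC} and in Definition \ref{ExpBC} (which feeds the $\alpha=\beta=\infty$ case of Lemma \ref{Section7L1}) and show each admits a bound depending only on $\gamma_0$. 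This requires uniform-in-$\gamma$ control of: the analyticity strip width and the lower bound $m_{\hat s,\hat t}$ on $\Lambda''$ (equivalently a uniform lower bound on the variance of suitably tilted versions of $X$, obtainable from the convexity of $\psi^{(1)}$ and monotonicity estimates for polygamma functions); the constants $K_{\hat s,\hat t},p_{\hat s,\hat t}$ from the Gamma decay bound; the constant $q_{\hat s,\hat t}$; the bounds $M^{(k)}_{\hat s,\hat t}$ on derivatives of $F(z)=G_z(u_z)=-\Lambda^*(z)$ for $k\leq 4$ (these are expressible through $\psi^{(j)}$ evaluated at points bounded away from $0$ uniformly in $\gamma\geq\gamma_0$, hence uniformly bounded); and the constants $\Delta,R,C_R,c_R,\hat C_R,\hat c_R$ of Definition \ref{ExpBC}, for which one uses the local CLT estimate of Proposition \ref{S3S1P1} together with the explicit moment generating function. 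I expect the technical heart of the proof to be exactly this bookkeeping: showing that the normalization $X=(\xi-m_\gamma)/\sigma_\gamma$ ``stabilizes'' the log-gamma family so that, although $f_\gamma$ itself degenerates as $\gamma\to\infty$ (it concentrates and, after centering, converges to a Gaussian), the relevant observables of $f_X$ stay in a compact range. Once this is done, the conclusion \eqref{S83E5} is immediate from Theorem \ref{ContKMTA} with $p=0$, $\sigma_p=1$, and $b$ as given.
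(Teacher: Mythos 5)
Your proposal follows essentially the same route as the paper's own proof: apply Theorem \ref{ContKMTA} with $p=0$, $\epsilon'=1$ after checking Assumptions C1--C5 directly for the normalized density, obtain C6 from log-concavity of $f_\gamma$ via Lemma \ref{Section7L1}, and reduce the whole matter to showing the constants in Definitions \ref{DefParC} and \ref{ExpBC} are uniform in $\gamma\geq\gamma_0$ through polygamma estimates and the local CLT of Proposition \ref{S3S1P1}. The uniform-in-$\gamma$ bookkeeping you defer is exactly what occupies the bulk of the paper's argument (its Steps 3--13), and your outline of how it would proceed is consistent with it, the only minor divergence being your appeal to a Stirling-type decay bound for $\Gamma$ in verifying C4, where the paper instead uses the digamma series to get a bound whose uniformity in $\gamma$ is more immediate.
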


In the remainder of this section we provide the proof of Corollary \ref{CorLogGamma}. The goal is to show that the density
\begin{equation}\label{S83E6}
f_X(x) = \frac{\sigma_\gamma}{\Gamma(\gamma)}\exp\left( \gamma(\sigma_\gamma x+m_\gamma )-e^{\sigma_\gamma x+m_\gamma} \right)
\end{equation}
satisfies Assumptions C1-C6 and that the constants in Definition \ref{DefParC} and the functions in Assumption 6 can be chosen uniformly in $\gamma \geq \gamma_0$. If true then Corollary \ref{CorLogGamma} will follow from Theorem \ref{ContKMTA} applied to $p = 0$ and $\epsilon' = 1$. For clarity we split the proof into several steps and use the same notation as in Section \ref{Section2.1}.
.\\

{\bf \raggedleft Step 1.} In this step we summarize several statements that we will need throughout the proof. 

From (\ref{S83E2}) we have 
\begin{equation}\label{S83MGF}
M_X(t) = e^{- m_\gamma t/ \sigma_\gamma}\frac{\Gamma(\gamma + t/ \sigma_\gamma)}{\Gamma(\gamma)} \mbox{ and } \Lambda(t) = \log [ M_X(t)] = \psi^{(-1)}\left(\gamma+\frac{t}{\sigma_\gamma}\right)-\psi^{(-1)}(\gamma)- \frac{m_\gamma t}{\sigma_\gamma},
\end{equation}
Using (\ref{S83E6}) we have
\begin{equation}\label{S83L2E1}
\frac{d}{dx}\log f_X(x) =\sigma_\gamma \left( \gamma-e^{ \sigma_\gamma x+m_\gamma} \right) \mbox{ and } \frac{d^2}{dx^2}\log f_X(x)=-\sigma_\gamma^2e^{m_\gamma}\cdot e^{\sigma_\gamma x}.
\end{equation}

From \cite[Lemma 3]{GQ} we have for $x > 0$
\begin{equation}\label{S83EIneq}
\begin{split}
\log(x) - \frac{1}{x} \leq &\psi^{(0)}(x) \leq \log(x) - \frac{1}{2x} \\
 \frac{(k-1)!}{x^k}+ \frac{k!}{2x^{k+1}} \leq &\psi^{(k)}(x) \leq \frac{(k-1)!}{x^k}+ \frac{k!}{x^{k+1}}  \mbox{ for $k \in \mathbb{N}$}.
\end{split}
\end{equation}
Using (\ref{S83EIneq}) and \cite[(6.3.18)]{AS} we know that 
\begin{equation}\label{S83EAsympt}
\begin{split}
\sigma_\gamma = \gamma^{-1/2} + O(\gamma^{-1}) \mbox{ and } m_\gamma = \log \gamma - \frac{1}{2\gamma} + O(\gamma^{-2}) \mbox{ as $\gamma \rightarrow \infty$}.
\end{split}
\end{equation}
We have the following series representation for $\psi^{(0)}(z)$ for $z \neq 0, -1, -2, \dots$, see e.g. \cite[6.3.16]{AS}, 
\begin{equation}\label{S83L1E1}
\psi^{(0)}(z) = - \gamma_E + \sum_{n = 0}^\infty \left[ \frac{1}{n+1} - \frac{1}{n+z} \right],
\end{equation}
where $\gamma_E$ is the Euler constant. \\

{\bf \raggedleft Step 2.} In this step we demonstrate that $f_X(\cdot)$ satisfies Assumptions C1-C5. 

From (\ref{S83E6}) we know that Assumption C1 holds with $\alpha = -\infty$ and $\beta = \infty$. In addition, from (\ref{S83MGF}) we know that Assumption C2 holds for any $0 < \lambda < \sigma_\gamma \cdot \gamma$, in particular it holds when $\lambda = 2^{-1} \cdot  \sigma_\gamma \cdot \gamma$. We have $\mathcal{D}_\Lambda = (- \gamma \sigma_\gamma, \infty)$ and $\Lambda(\cdot)$ is lower semi-continuous on $\mathbb{R}$. This verifies Assumption C3.

We isolate the verification of Assumption C4 in the following lemma.
\begin{lemma}\label{log_gamma_C4}
For any $\gamma >0$  and $- \sigma_\gamma \cdot \gamma <S<T<\infty$ there is a $K_1(S,T,\gamma) > 0$ such that 
\begin{equation}\label{S83E7}
|M_X (z)|\leq \frac{K_1}{1+|v|}, \mbox{ where $z = u + i v$ with $s \leq u \leq t$. }
\end{equation}
\end{lemma}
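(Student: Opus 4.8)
The plan is to extract the claimed decay in $|v|$ from the single factor $w^{-1}$ produced by one application of the functional equation $\Gamma(w)=\Gamma(w+1)/w$, after which everything else reduces to a uniform boundedness statement on a compact set. Recall from (\ref{S83MGF}) that the (analytically continued) moment generating function is
\[
M_X(z)=e^{-m_\gamma z/\sigma_\gamma}\,\frac{\Gamma(\gamma+z/\sigma_\gamma)}{\Gamma(\gamma)},\qquad \mathrm{Re}(z)>-\sigma_\gamma\gamma .
\]
Given $-\sigma_\gamma\gamma<S<T<\infty$, I would write $z=u+iv$ with $u\in[S,T]$ and set $w=\gamma+z/\sigma_\gamma$, so that $\mathrm{Re}(w)=\gamma+u/\sigma_\gamma$ ranges over the compact interval $[a,b]$ with $a:=\gamma+S/\sigma_\gamma>0$ and $b:=\gamma+T/\sigma_\gamma$, while $\mathrm{Im}(w)=v/\sigma_\gamma$. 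Since $\mathrm{Re}(w)>0$, the point $w$ is never a nonpositive integer and $w\neq0$, so $\Gamma(w)=\Gamma(w+1)/w$ is valid.

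First I would bound the numerator: because $\mathrm{Re}(w)>-1$, the integral representation $\Gamma(w+1)=\int_0^\infty t^{w}e^{-t}\,dt$ holds, and the triangle inequality gives $|\Gamma(w+1)|\le\int_0^\infty t^{\mathrm{Re}(w)}e^{-t}\,dt=\Gamma(\mathrm{Re}(w)+1)\le M_1:=\sup_{x\in[a+1,b+1]}\Gamma(x)<\infty$, the supremum being attained by continuity of $\Gamma$ on the compact interval $[a+1,b+1]\subset(0,\infty)$. For the denominator I would use $|w|\ge\max(\mathrm{Re}(w),|\mathrm{Im}(w)|)\ge\max(a,|v|/\sigma_\gamma)$ together with the elementary inequality $\max(a,|v|/\sigma_\gamma)\ge \frac{a}{1+\sigma_\gamma a}\,(1+|v|)$ (checked separately in the cases $|v|\le\sigma_\gamma a$ and $|v|>\sigma_\gamma a$), which yields $|w|^{-1}\le \frac{1+\sigma_\gamma a}{a}\cdot\frac{1}{1+|v|}$. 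Finally the exponential prefactor satisfies $|e^{-m_\gamma z/\sigma_\gamma}|=e^{-m_\gamma u/\sigma_\gamma}\le M_2:=e^{|m_\gamma|\max(|S|,|T|)/\sigma_\gamma}$ uniformly in $u\in[S,T]$.

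Putting these together gives
\[
|M_X(z)|=\frac{|e^{-m_\gamma z/\sigma_\gamma}|}{\Gamma(\gamma)}\cdot\frac{|\Gamma(w+1)|}{|w|}\le \frac{M_1M_2(1+\sigma_\gamma a)}{a\,\Gamma(\gamma)}\cdot\frac{1}{1+|v|},
\]
so (\ref{S83E7}) holds with $K_1=K_1(S,T,\gamma):=M_1M_2(1+\sigma_\gamma a)/\big(a\,\Gamma(\gamma)\big)$. There is no real obstacle in this argument; the one point that must be respected is that the hypothesis $S>-\sigma_\gamma\gamma$ is precisely what forces $\mathrm{Re}(w)>0$, so that no pole of $\Gamma$ is met on the vertical strip $S\le\mathrm{Re}(z)\le T$ and $\Gamma(w+1)$ stays bounded there. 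Note that $\Gamma(w)$ in fact decays exponentially in $|\mathrm{Im}(w)|$, but the crude $1/(1+|v|)$ bound obtained from a single step of the functional equation is all that Assumption C4 requires. The dependence of $K_1$ on $\gamma$ is harmless for this lemma; obtaining bounds uniform in $\gamma\ge\gamma_0$ is carried out in the step that follows.
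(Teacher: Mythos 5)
Your proof is correct, but it takes a different route from the paper. The paper extracts the decay in $|v|$ from the digamma series (\ref{S83L1E1}): it writes $|M_X(z)| = |M_X(u)|\,\bigl|M_X(z)/M_X(u)\bigr|$ and integrates $\mathrm{Re}\,\Lambda'(u+iy)$ along the vertical segment from $u$ to $u+iv$, where every term of the series contributes a negative quantity $-y/\bigl[(n+\gamma+u/\sigma_\gamma)^2+y^2\bigr]$; keeping only the $n=0$ term already yields $|M_X(z)|\le |M_X(u)|/\sqrt{v^2+a^2}$ with $a=\gamma+u/\sigma_\gamma$, which gives (\ref{S83E7}). You instead get the $1/|w|$ factor from one application of the functional equation $\Gamma(w)=\Gamma(w+1)/w$, bound $|\Gamma(w+1)|\le\Gamma(\mathrm{Re}(w)+1)$ via the integral representation, and handle the exponential prefactor and $|w|^{-1}\le\frac{1+\sigma_\gamma a}{a}(1+|v|)^{-1}$ by elementary estimates; all steps check out, and the hypothesis $S>-\sigma_\gamma\gamma$ is indeed exactly what keeps $\mathrm{Re}(w)>0$ and away from the poles. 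Your argument is more elementary (no digamma series, no contour integration of $\Lambda'$), while the paper's has the advantage that its intermediate inequality (\ref{S83L1E2}), $|M_X(u+iv)|\le |M_X(u)|/\sqrt{v^2+a^2}$, keeps the prefactor in terms of $M_X(u)$ itself and is reused verbatim in Step 6 to produce the constant $K_{s,t}$ uniformly in $\gamma\ge\gamma_0$; your constant $K_1$ bounds $e^{-m_\gamma u/\sigma_\gamma}$ and $\Gamma(\gamma+u/\sigma_\gamma+1)/\Gamma(\gamma)$ separately, discarding the cancellation between them, so making it $\gamma$-uniform would require an extra Stirling-type argument — harmless for the lemma as stated, as you note, but it means the later uniformity step could not simply quote your bound.
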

\begin{proof}
From (\ref{S83L1E1}) we have
$$|M_X (z)| = |M_X(u)|  \cdot \left|\frac{M_X(z)}{M_X(u)}\right|= |M_X(u)|  \exp \left( \int_0^v \sum_{n = 0}^\infty Re \left[   \frac{i}{n+ 1 } -  \frac{i}{n+ \gamma + (u+iy)/\sigma_\gamma } \right]dy\right).$$
We observe that
$$Re \left[   \frac{i}{n+ 1 } -  \frac{i}{n+ \gamma + (u+iy)/\sigma_\gamma } \right] = \frac{-y}{(n+ \gamma + u/\sigma_\gamma)^2 + y^2}.$$
Combining the last two statements we see
\begin{equation}\label{S83L1E2}
|M_X (z)|  \leq  |M_X(u)|\cdot \exp \left( \int_0^v \frac{- y\cdot dy}{ [ a^2 + y^2]}\right) = \frac{|M_X(u)|}{\sqrt{v^2 + a^2}},
\end{equation}
where $a = \gamma + u/\sigma_\gamma$. The last line proves (\ref{S83E7}).
\end{proof}

In view of (\ref{S83E7}) we conclude that $f_X$ satisfies Assumption C4. We next verify Assumption C5.
\begin{lemma}\label{S83L2}
For any $\gamma_0 > 0$ there exist constants $L, D,d > 0$ such that  
\begin{equation}\label{S83E8}
f_X(x)\leq L \mbox{ for all $x \in \mathbb{R}$ and } f_X(x)\leq De^{-d x^2} \mbox{ for all $x \geq 0$}.
\end{equation}
\end{lemma}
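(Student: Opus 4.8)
\textbf{Proof proposal for Lemma \ref{S83L2}.}

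The plan is to reduce both bounds to the behaviour of $\log f_X$ near its unique mode. For the uniform bound $f_X(x)\le L$, note that $f_X(x)=\sigma_\gamma f_\gamma(\sigma_\gamma x+m_\gamma)$ and that the log-gamma density $f_\gamma$ is maximized where $\gamma=e^y$, i.e. at $y=\log\gamma$, with $f_\gamma(\log\gamma)=\gamma^\gamma e^{-\gamma}/\Gamma(\gamma)$. By the standard Stirling lower bound $\Gamma(\gamma)\ge \sqrt{2\pi}\,\gamma^{\gamma-1/2}e^{-\gamma}$ (valid for all $\gamma>0$), this gives $f_X(x)\le \sigma_\gamma\sqrt{\gamma}/\sqrt{2\pi}$, and since $\gamma\sigma_\gamma^2=\gamma\psi^{(1)}(\gamma)\le 1+\gamma^{-1}\le 1+\gamma_0^{-1}$ by (\ref{S83EIneq}), we may take $L=\sqrt{(1+\gamma_0^{-1})/(2\pi)}$, a constant depending only on $\gamma_0$.

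For the Gaussian tail, I would first locate the mode of $\log f_X$. By (\ref{S83L2E1}), $\tfrac{d}{dx}\log f_X(x)=\sigma_\gamma(\gamma-e^{\sigma_\gamma x+m_\gamma})$, which vanishes precisely at $x^\ast:=(\log\gamma-m_\gamma)/\sigma_\gamma$. Using $0<\log\gamma-m_\gamma=\log\gamma-\psi^{(0)}(\gamma)\le \gamma^{-1}$ and $\sigma_\gamma^2=\psi^{(1)}(\gamma)\ge \gamma^{-1}$ from (\ref{S83EIneq}), one gets $0\le x^\ast\le \gamma^{-1/2}\le \gamma_0^{-1/2}=:C_2$, uniformly in $\gamma\ge\gamma_0$. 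Next, for $x\ge x^\ast$ I would integrate the derivative: writing $s=x^\ast+r$ one has $\sigma_\gamma s+m_\gamma=\log\gamma+\sigma_\gamma r$, so $(\log f_X)'(s)=\sigma_\gamma\gamma(1-e^{\sigma_\gamma r})$, hence with $\tau=\sigma_\gamma(x-x^\ast)\ge 0$,
\[
\log f_X(x)-\log f_X(x^\ast)=\gamma\bigl(\tau-e^{\tau}+1\bigr)\le -\tfrac{\gamma\tau^2}{2}=-\tfrac{\gamma\sigma_\gamma^2}{2}(x-x^\ast)^2\le -\tfrac12(x-x^\ast)^2,
\]
where I used $e^\tau\ge 1+\tau+\tau^2/2$ and $\gamma\sigma_\gamma^2=\gamma\psi^{(1)}(\gamma)\ge 1$. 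Since $\log f_X(x^\ast)\le\log L$, this yields $f_X(x)\le L\exp(-\tfrac12(x-x^\ast)^2)$ for all $x\ge x^\ast$.

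Finally I would split $x\ge 0$ into two ranges. For $x\ge 2x^\ast$ we have $x-x^\ast\ge x/2$, so the previous display gives $f_X(x)\le L e^{-x^2/8}$. For $0\le x\le 2x^\ast$ we have $x^2\le 4C_2^2$, so the trivial bound $f_X(x)\le L$ gives $f_X(x)\le L e^{C_2^2/2}e^{-x^2/8}$. Taking $D=L e^{C_2^2/2}$ and $d=1/8$ (constants depending only on $\gamma_0$) proves (\ref{S83E8}). I do not expect any genuinely hard step here; the only point requiring care is that every estimate — the bound on $x^\ast$, the lower bound $\gamma\sigma_\gamma^2\ge 1$, and the bound on $L$ — be made uniform in $\gamma\ge\gamma_0$, which is exactly what the explicit polygamma inequalities (\ref{S83EIneq}) and Stirling's formula provide.
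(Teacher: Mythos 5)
Your proof is correct, and every estimate is uniform in $\gamma \ge \gamma_0$ as required, but your route differs from the paper's in two ways worth recording. For the uniform bound the paper also evaluates $f_X$ at its mode, but then argues boundedness of $\frac{\sigma_\gamma}{\Gamma(\gamma)}e^{\gamma\log\gamma-\gamma}$ softly, by continuity on compact $\gamma$-intervals together with the $\gamma\to\infty$ Stirling asymptotic, whereas you invoke the global lower bound $\Gamma(\gamma)\ge\sqrt{2\pi}\,\gamma^{\gamma-1/2}e^{-\gamma}$ and obtain the explicit constant $L=\sqrt{(1+\gamma_0^{-1})/(2\pi)}$; this is a mild sharpening. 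For the Gaussian tail the paper compares $f_X(x)$ with $f_X(0)$ and applies $e^a\ge 1+a+a^2/2$ to the exponent $\gamma\sigma_\gamma x-e^{\sigma_\gamma x+m_\gamma}+e^{m_\gamma}$; since $e^{m_\gamma}<\gamma$ this in fact leaves a positive linear term $\sigma_\gamma(\gamma-e^{m_\gamma})x$ that the displayed inequality silently drops (harmless, since $\gamma-e^{m_\gamma}\le 1$, but it must be absorbed into $D$ and $d$). Your choice to anchor at the mode $x^{\ast}$, where $(\log f_X)'$ vanishes by (\ref{S83L2E1}), integrate the derivative to get $\gamma(\tau-e^{\tau}+1)\le-\tfrac{\gamma\sigma_\gamma^2}{2}(x-x^{\ast})^2$, and then split $[0,\infty)$ at $2x^{\ast}$ avoids this sign issue entirely and yields clean explicit constants $d=1/8$, $D=Le^{1/(2\gamma_0)}$. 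Both arguments rest on the same ingredients, namely the explicit log-density, the polygamma inequalities (\ref{S83EIneq}), and Stirling, so the difference is one of bookkeeping, but your version is the tighter and more self-contained of the two.
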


\begin{proof}
From (\ref{S83L2E1}) we know that $f_X$ is log-concave and has a unique maximum when $x = x_c = \sigma_\gamma^{-1} \cdot [\log (\gamma) - m_\gamma ]$. In particular, this implies that 
$$f_X(x) \leq f_X(x_c) =  \frac{\sigma_\gamma}{\Gamma(\gamma)}\exp\left( \gamma \log(\gamma) -\gamma \right).$$
The right side above is uniformly bounded on $[\gamma_0, M]$ for any finite $M$, and as $\gamma \rightarrow \infty$ we have by Stirling's approximation formula (see e.g. \cite[6.1.37]{AS}) and (\ref{S83EAsympt}) that
$$\frac{\sigma_\gamma}{\Gamma(\gamma)}\exp\left( \gamma \log(\gamma) -\gamma \right) \sim \frac{1}{\sqrt{2\pi}} \mbox{ as $\gamma \rightarrow \infty$}.$$
Overall we conclude that we can find $L$ sufficiently large depending on $\gamma_0$ alone so that the left inequality in (\ref{S83E8}) holds.

We next fix $x \geq 0$. We have
$$\frac{f_X(x)}{f_X(0)} = \exp \left( \gamma \sigma_\gamma x - e^{\sigma_\gamma x+ m_\gamma} + e^{m_\gamma} \right) \leq \exp \left(- \frac{e^{m_\gamma} \sigma_\gamma^2}{2} x^2\right), $$
where in the last inequality we used that $e^a \geq 1 + a + \frac{a^2}{2}$ for $a \geq 0$. We observe by (\ref{S83EIneq}) that
$$\frac{e^{m_\gamma} \sigma_\gamma^2}{2} \geq \frac{1}{2} e^{-1/\gamma} ,$$
and so we conclude that 
$$f_X(x) \leq f_X(0) \cdot \exp\left({-e^{-1/\gamma_0} \cdot x^2/2}\right).$$
This proves the right inequality in (\ref{S83E8}) with $D = L$ and $d = e^{-1/\gamma_0}/2$.
\end{proof}

{\bf \raggedleft Step 3.} In what follows we fix $-\infty < s < t < \infty$ and set $S_\gamma = u_s = (\Lambda')^{-1}(s)$ and $T_\gamma = u_t = (\Lambda')^{-1}(t)$. We write below $C(\gamma_0, s,t)$ to mean a generic positive constant that depend on $s,t$ and $\gamma_0$, whose value may change from line to line. The goal of this step is to show 
\begin{equation}\label{S83E11}
\begin{split}
\gamma + S_\gamma \sigma_\gamma^{-1} \geq C(s,t,\gamma_0) \cdot \gamma \mbox{ and } \gamma + T_\gamma \sigma_\gamma^{-1} \leq C(s,t,\gamma_0) \cdot \gamma.
\end{split}
\end{equation}

From (\ref{S83MGF}) we know that 
\begin{equation}\label{S83E9}
\Lambda'(S_\gamma) = \frac{\psi^{(0)}( \gamma + S_\gamma \sigma_\gamma^{-1}) - \psi^{(0)}(\gamma)}{\sigma_\gamma} = s \mbox{ and }\Lambda'(T_\gamma) = \frac{\psi^{(0)}( \gamma + T_\gamma \sigma_\gamma^{-1}) - \psi^{(0)}(\gamma)}{\sigma_\gamma} =t.
\end{equation}
Combining (\ref{S83E9}) and (\ref{S83EIneq}) we conclude that
\begin{equation}\label{S83E10}
\begin{split}
&\log \left[ \gamma + S_\gamma \sigma_\gamma^{-1}  \right] - \log [\gamma] - \frac{1}{2(\gamma + S_\gamma \sigma_\gamma^{-1})} + \frac{1}{\gamma} \geq \sigma_\gamma \cdot s\\
& \log\left[ \gamma + T_\gamma \sigma_\gamma^{-1} \right] - \log [\gamma] - \frac{1}{\gamma + T_\gamma \sigma_\gamma^{-1}} + \frac{1}{2\gamma} \leq \sigma_\gamma \cdot t.
\end{split}
\end{equation}
From the first line in (\ref{S83E10}) we see that
$$\log \left[ \gamma + S_\gamma \sigma_\gamma^{-1}  \right]  \geq\log [\gamma] + \sigma_\gamma \cdot s - \frac{1}{\gamma_0}.$$
Exponentiating both sides above and using (\ref{S83EAsympt}) we get the left part of (\ref{S83E11}).

On the other hand, from the second line in (\ref{S83E10}) we have
$$\log\left[ \gamma + T_\gamma \sigma_\gamma^{-1} \right]  \leq \log [\gamma] + \sigma_\gamma \cdot t + \frac{1}{\gamma + T_\gamma \sigma_\gamma^{-1}}.$$
Using the left part of (\ref{S83E11}) we have $\gamma + T_\gamma \sigma_\gamma^{-1} \geq \gamma + S_\gamma \sigma_\gamma^{-1} \geq C(s,t,\gamma_0) \cdot \gamma$ and so if we exponentiate  both sides of the above equation we conclude the right side of (\ref{S83E11}).\\

{\bf \raggedleft Step 4.} In this step we show that we can find $\infty > M_{s,t} > m_{s,t} > 0$ that depend on $s, t$ and $\gamma_0$ alone such that if $\gamma \geq \gamma_0$ and $x \in [S_\gamma, T_\gamma]$ we have
\begin{equation}\label{S83E13}
M_{s,t} \geq \Lambda''(x) \geq m_{s,t}.
\end{equation}

From (\ref{S83EIneq}) we have that for $x \in [S_\gamma, T_\gamma]$ 
\begin{equation*}
\begin{split}
& \frac{1}{\sigma_\gamma^2}  \cdot \left[\frac{1}{\gamma + S_\gamma\sigma_\gamma^{-1}} + \frac{1}{ (\gamma + S_\gamma\sigma_\gamma^{-1})^2}\right] \geq  \Lambda''(S_\gamma) \geq \Lambda''(x) = \frac{1}{\sigma_\gamma^2} \cdot \psi^{(1)} \left( \gamma + x \sigma_\gamma^{-1} \right)\geq \\
&\Lambda''(T_\gamma) \geq  \frac{1}{\sigma_\gamma^2}  \cdot \left[ \frac{1}{\gamma + T_\gamma\sigma_\gamma^{-1}} + \frac{1}{2 (\gamma + T_\gamma\sigma_\gamma^{-1})^2} \right].
\end{split}
\end{equation*}
The above inequalities together with (\ref{S83E11}) and (\ref{S83EAsympt}) imply (\ref{S83E13}).\\

{\bf \raggedleft Step 5.} We have from (\ref{S83EAsympt}) and (\ref{S83E11}) that there is $\delta^1_{s,t} \in (0,1)$ sufficiently small depending on $s,t$ and $\gamma_0$ such that
\begin{equation}\label{S83E13.5}
\gamma + \min (S_\gamma, 0) \cdot \sigma_\gamma^{-1} \geq 2 \delta^1_{s,t} \cdot \sigma_{\gamma}^{-1}. 
\end{equation}
We fix such a $\delta_{s,t}^1$ and denote $S_\gamma' = S_\gamma - \delta_{s,t}^1$, and $T_\gamma' = T_\gamma + \delta_{s,t}^1$. Notice that if $D_{\delta_{s,t}^1}(\min (0, S_\gamma), \max(T_\gamma, 0))$ is as in Definition \ref{DefDelta} then $\overline{D}_{\delta^1_{s,t}} \subset  \{z \in \mathbb{C}: - \gamma \cdot \sigma_\gamma < Re(z) < \infty \}$. In this step we show that we can find $\hat{M}(s,t, \gamma_0) > 0$, depending on $s,t$ and $\gamma_0$, such that
\begin{equation}\label{S83E19}
\left|\Lambda(z)  \right| \leq \hat{M}_0(s,t, \gamma_0) \mbox{ for all } z\in \overline{D}_{\delta_{s,t}^1}(\min (0, S_\gamma), \max(T_\gamma, 0)).
\end{equation}

From (\ref{S83MGF}) and (\ref{S83L1E1}) we have for $x \in (-\gamma \cdot \sigma_\gamma, \infty)$ that
$$\Lambda'(x) = \frac{1}{\sigma_\gamma} \cdot \left[ \psi^{(0)}(\gamma + x \sigma_\gamma^{-1}) - \psi^{(0)}(\gamma) \right] \mbox{ and } \Lambda''(x) = \frac{1}{\sigma_\gamma^2} \cdot \sum_{n = 0}^\infty \frac{1}{(n + \gamma + x \sigma_\gamma^{-1})^2} > 0, $$
which implies that $x = 0$ is the unique minimizer of $\Lambda(x)$ and the maximum of this function on $[S_\gamma', T_\gamma']$ is obtained either when $x = S_\gamma'$ or $x = T_\gamma'$. Furthermore, it follows from (\ref{S83EAsympt}), (\ref{S83E11}) and (\ref{S83E10}) that there is a sufficiently large positive constant $\hat{C}(s,t,\gamma_0) > 0$ such that 
\begin{equation}\label{S83E14}
\hat{C}(s,t, \gamma_0)  \geq T_{\gamma}' > S_\gamma'  \geq - \hat{C}(s,t, \gamma_0).
\end{equation}
Combining (\ref{S83E14}) with (\ref{S83E13.5}) and (\ref{S83EIneq}) we conclude that there is a sufficiently large positive constant $\hat{M}_1(s,t, \gamma_0) > 0$ such that for $x \in [\min (0, S_\gamma'), \max(T_\gamma', 0)]$ we have
\begin{equation}\label{S83E15}
\left|\Lambda'(x)  \right| \leq \hat{M}_1(s,t, \gamma_0).
\end{equation}
Combining (\ref{S83E14}) and (\ref{S83E15}) with the fact that $\Lambda(0) = 0$ we conclude that there is a sufficiently large constant $\hat{M}_0(s,t, \gamma_0) > 0$ such that for $x \in [\min (0, S_\gamma'), \max(T_\gamma', 0)]$ we have
\begin{equation}\label{S83E16}
\left|\Lambda(x)  \right| \leq \hat{M}_0(s,t, \gamma_0).
\end{equation}

Now we suppose that $x \in [\min (0, S_\gamma'), \max(T_\gamma', 0)]$ and note that
\begin{equation} \label{S83E17}
 \Lambda'(x + i y) = \frac{1}{\sigma^2_\gamma} \cdot \sum_{n = 0}^\infty \frac{\sigma_\gamma^{-1} y^2 + x (n + \gamma + x \sigma_\gamma^{-1})}{(n+\gamma) [(n+\gamma + x\sigma_\gamma^{-1})^2 + \sigma_\gamma^{-2} y^2]} + \frac{i}{\sigma^2_\gamma} \sum_{n = 0}^\infty \frac{y}{(n+\gamma + x \sigma_\gamma^{-1})^2 + \sigma_\gamma^{-2}y^2}.
\end{equation}
where we used (\ref{S83L1E1}). In particular, we see that 
\begin{equation*}
\begin{split}
&\frac{1}{\sigma^2_\gamma} \cdot \sum_{n = 0}^\infty \frac{\sigma_\gamma^{-1} y^2 + |x  | (n + \gamma + x\sigma_\gamma^{-1})}{(n+\gamma) [(n+\gamma + x\sigma_\gamma^{-1})^2 + \sigma_\gamma^{-2} y^2]} \leq \frac{1}{\sigma^3_\gamma \cdot \gamma } \cdot \sum_{n = 0}^\infty \frac{ y^2 }{(n+\gamma + \sigma_\gamma^{-1} x)^2} + \\
&+ \frac{1}{\sigma^2_\gamma} \cdot \sum_{n = 0}^\infty \frac{|x| }{(n+\gamma + x\sigma_\gamma^{-1})^2} + \frac{x^2}{\sigma_\gamma^3 \cdot \gamma}  \cdot \sum_{n = 0}^\infty \frac{1 }{(n+\gamma + \sigma_\gamma^{-1} x)^2} 
\end{split}
\end{equation*}
and also 
\begin{equation*}
\begin{split}
\frac{1}{\sigma^2_\gamma} \sum_{n = 0}^\infty \frac{|y|}{(n+\gamma + x \sigma_\gamma^{-1})^2 + y^2} \leq \frac{1}{\sigma^2_\gamma} \sum_{n = 0}^\infty \frac{|y|}{(n+\gamma + x \sigma_\gamma^{-1})^2} .
\end{split}
\end{equation*}
We use that 
$$\sum_{n = 0}^\infty\frac{1}{(n+\gamma + x \sigma_\gamma^{-1})^2} \leq \frac{1}{(\gamma + x\sigma_\gamma^{-1})^2} + \int_{0}^\infty \frac{1}{(\gamma + x \sigma_\gamma^{-1} + u)^2}du = \frac{1}{(\gamma + x\sigma_\gamma^{-1})^2} +  \frac{1}{\gamma + x\sigma^{-1}_\gamma}.$$
Substituting the above inequalities into (\ref{S83E17}) we get for $x \in [\min (0, S_\gamma'), \max(T_\gamma', 0)]$
$$ | \Lambda'(x + i y) | \leq  \left[\frac{1}{(\gamma + x\sigma_\gamma^{-1})^2} +  \frac{1}{\gamma + x\sigma^{-1}_\gamma} \right] \cdot \left[\frac{y^2}{\sigma^3_\gamma \cdot \gamma} + \frac{|x|}{\sigma_\gamma^2} + \frac{x^2}{\sigma_\gamma^3 \cdot \gamma} + \frac{|y|}{\sigma^2_\gamma} \right].$$
From (\ref{S83E11}) we have $\gamma +   S_\gamma' \sigma_\gamma^{-1} \geq C(s,t,\gamma_0) \cdot \gamma$ and so the above inequality implies
$$ | \Lambda'(x + i y) | \leq \frac{C(s,t,\gamma_0)}{\gamma} \cdot \left[\frac{y^2}{\sigma^3_\gamma \cdot \gamma} + \frac{|x|}{\sigma_\gamma^2} + \frac{x^2}{\sigma_\gamma^3 \cdot \gamma} + \frac{|y|}{\sigma^2_\gamma} \right].$$
If we finally combine the latter with (\ref{S83E14}) and (\ref{S83EIneq}) we see that 
\begin{equation}\label{S83E18}
| \Lambda'(x + i y) | \leq C(s,t,\gamma_0) \cdot [ 1 + y^2].
\end{equation}
In view of  (\ref{S83E16}) and (\ref{S83E18}) we know that by possibly making $\hat{M}_0(s,t, \gamma_0)$ larger we can ensure that (\ref{S83E19}) holds.\\

{\bf \raggedleft Step 6.} In this step we show that we can choose the constants in Definitions \ref{DefDelta} and \ref{DefK} uniformly in $\gamma \geq \gamma_0$. We fix $m_{s,t}$ and $M_{s,t}$ as in (\ref{S83E13}) above. From (\ref{S83E19}) and the fact that $x = 0$ is the unique minimizer of $\Lambda(x)$ on $[\min (0, S_\gamma'), \max(T_\gamma', 0)]$ we get 
\begin{equation}\label{S83E20}
e^{\hat{M}_0(s,t, \gamma_0)} \geq  M_X(x) \geq 1.
\end{equation}
Also we have
\begin{equation*}
\begin{split}
\left| M_X(x) - M_X(x+iy)\right| = M_X(x) \cdot \left |1 - \exp\left( \int_0^y i \Lambda'(x+iu) du \right) \right| \leq C(s,t,\gamma_0) \cdot |y|.
\end{split}
\end{equation*}
The latter implies that we can pick $0 < \delta_{s,t} \leq \delta_{s,t}^1$ sufficiently small depending on $s,t$ and $\gamma_0$ so that
\begin{equation}\label{S83E21}
8 \delta_{s,t} \cdot \hat{M}_0(s,t, \gamma_0) < m_{s,t} \mbox{ and } \left| M_X(x) - M_X(x+iy)\right| < 1/2.
\end{equation}
 In particular, the latter together with (\ref{S83E19}) and (\ref{S83E20}) imply that for $z \in \overline{D}_{\delta_{s,t}}(S_\gamma, T_\gamma)$ we have $Re[M_X(z)] \geq 1/2$ and $8 \delta_{s,t} \cdot \left|\Lambda(z)  \right| < m_{s,t}$. Thus $\delta_{s,t}$ satisfies the conditions in Definition \ref{DefDelta}.\\

 Note that by (\ref{S83L1E2}) we have
\begin{equation}
|M_X (x+ i y)|  \leq  |M_X(x)|\cdot \exp \left( \int_0^v \frac{- u\cdot du}{ [ a^2 + u^2]}\right) = \frac{|M_X(x)|}{\sqrt{y^2 + a^2}},
\end{equation}
where $a = \gamma + x \cdot \sigma_\gamma^{-1}$. Combining the latter with (\ref{S83E11}) we conclude that there is $K_{s,t}$ depending on $s,t$ and $\gamma_0$ such that for all $x \in [\min (0, S_\gamma'), \max(T_\gamma', 0)]$ we have
$$ \left| M(x + iy) \cdot e^{-\Lambda'(x) \cdot (x+iy)} e^{- \Lambda(x)+ x \Lambda'(x)} \right| \leq \frac{1}{\sqrt{y^2 + (\gamma + \min (0, S_\gamma') \cdot \sigma_\gamma^{-1})^2}}  \leq \frac{K_{s,t}}{1 + |y|}.$$
This fixes $K_{s,t}$ in Definition \ref{DefK} and $p_{s,t} = 1$. \\

{\bf \raggedleft Step 7.} In this step we show that we can choose $q_{s,t}$ in Definition \ref{DefQ} uniformly in $\gamma \geq \gamma_0$. 

Let $\epsilon_{s,t}$ and $R_{s,t}$ be as in the statement of Definition \ref{DefQ} for the constants $\delta_{s,t}$ and $K_{s,t}$ in Step 6. In view of (\ref{S83E17}) we have for any $x \in [S_\gamma, T_\gamma]$ that
\begin{equation}\label{S83E22} 
\frac{d}{dy}Re [\Lambda(x+iy)] =  \frac{1}{\sigma^2_\gamma} \sum_{n = 0}^\infty \frac{-y}{(n+\gamma + x \sigma_\gamma^{-1})^2 + \sigma_\gamma^{-2} y^2},
\end{equation}
which implies that $Re[ \Lambda(x+iy)]$ is decreasing in $y$ on $[0, \infty)$ and increasing in $y$ on $(-\infty, 0)$. Let us first consider the case $y\geq \epsilon_{s,t}$. The above inequality implies  that 
\begin{equation*}
\begin{split}
&Re[\Lambda(x+ i y) ] - \Lambda(x) \leq Re[\Lambda(x+ i \epsilon_{s,t})] - \Lambda(x) \leq  \int_0^{\epsilon_{s,t}} \sum_{n = 0}^\infty \frac{-u \sigma_{\gamma}^{-2} du}{(\gamma + S_\gamma \sigma_\gamma^{-1} + n)^2  } =  \\
& =  - \frac{\epsilon_{s,t}^2}{2\sigma_\gamma^2}\sum_{n = 0}^\infty \frac{1}{(\gamma + S_\gamma \sigma_\gamma^{-1} + n)^2  }\leq  - \frac{\epsilon_{s,t}^2}{2\sigma_\gamma^2} \cdot \int_1^\infty \frac{dv}{(\gamma + S_\gamma \sigma_\gamma^{-1} + v)^2  } = \frac{-\epsilon_{s,t}^2}{2\sigma_\gamma^2 (\gamma + S_\gamma \sigma_\gamma^{-1} + 1)}.
\end{split}
\end{equation*}
Combining the latter with (\ref{S83EAsympt}) and (\ref{S83E11}) we conclude that there is $q_{s,t} \in (0,1)$ that depends on $s,t$ and $\gamma_0$ such that 
$$Re[\Lambda(x+ i y) ] - \Lambda(x) \leq  \log[ q_{s,t}],$$
In particular, exponentiating both sides we see that for $x \in [S_\gamma, T_\gamma]$ and $y \geq \epsilon_{s,t}$ we have
\begin{equation}\label{S83E23}
 \left|\frac{M_X(x+ i y)}{M_X(x)} \right| \leq q_{s,t}.
\end{equation}
Since $|M_X(x+ i y)| = |M_X(x - iy)|$ we conclude that (\ref{S83E23}) holds for $|y| \geq \epsilon_{s,t}$, which verifies that $q_{s,t}$ satisfies the conditions in Definition \ref{DefQ}.\\

{\bf \raggedleft Step 8.} In this step we show that we can choose the constants in Definition \ref{DefM34} uniformly in $\gamma \geq \gamma_0$. We first show that we can find constants $\hat{M}_{k}(s,t,\gamma_0) > 0$ for $k = 0,1,2,3,4$ such that
\begin{equation}\label{S83E24}
| \Lambda^{(k)}(x)| \leq \hat{M}_{k}(s,t, \gamma_0) \mbox{ for all $x \in [S_\gamma, T_\gamma]$}.
\end{equation}
Indeed for $k = 0$, $k = 1$ and $k = 2$ this follows from (\ref{S83E16}), (\ref{S83E15}) and (\ref{S83E13}) respectively. Next we have for $k = 3,4$ that
$$| \Lambda^{(k)}(x)| = \frac{1}{\sigma_\gamma^{k}}\left| \psi^{(k-1)} ( \gamma + S_\gamma \sigma_\gamma^{-1}) \right| \leq \frac{1}{\sigma_\gamma^{k}} \cdot \left[ \frac{(k-2)!}{[\gamma + x\sigma_\gamma^{-1}]^{k-1}}+ \frac{(k-1)!} {[\gamma + S_\gamma \sigma_\gamma^{-1}]^{k}} \right],$$
where in the last inequalitly we used (\ref{S83EIneq}). Using (\ref{S83E11}) and (\ref{S83EAsympt}) we conclude (\ref{S83E24}) for $k = 3$ and $k = 4$ as well.

Next we recall that $F(z) = G_z(u_z) = \Lambda(u_z) - u_z \cdot z$. We claim that for $k = 0, 1,2,3,4$ we can find constants $M^{(k)}_{s,t}$ that depend on $s,t$ and $\gamma_0$ such that if $z \in [s,t]$ we have
\begin{equation}\label{S83E25}
| F^{(k)}(x)| \leq M^{(k)}_{s,t} \mbox{ for all $x \in [S_\gamma, T_\gamma]$}.
\end{equation}
If $z \in [s,t]$ then $u_z \in [S_\gamma, T_\gamma]$ and then in view of (\ref{S83E19}) and (\ref{S83E14}) we can find $M^{(0)}_{s,t}$ satisfying (\ref{S83E25}). We next use that $u_z = (\Lambda')^{-1}(z)$ to get
$$F'(z) = -u_z, \hspace{2mm} F''(z) = - \frac{1}{\Lambda''(u_z)} \hspace{2mm} F^{(3)}(z) = \frac{\Lambda^{(3)}(u_z)}{[\Lambda''(u_z)]^3} \hspace{2mm} F^{(4)}(z) = \frac{\Lambda^{(4)}(u_z) \cdot \Lambda''(u_z) - 3 \Lambda^{(3)}(u_z)}{[\Lambda''(u_z)]^5}.$$
The latter equalities together with (\ref{S83E24}) and (\ref{S83E13}) prove (\ref{S83E25}). The constants in (\ref{S83E25}) satisfy the conditions in Definition \ref{DefM34}.\\

{\bf \raggedleft Step 9.} In this step we show that we can choose the constants in Definitions \ref{DefN2} and \ref{DefParC} uniformly in $\gamma \geq \gamma_0$. Observe that by Steps 6. and 7. we can choose the constant $N_0$ in Proposition \ref{S3S1P1} depending on $s,t$ and $\gamma_0$ alone and the same is true for the constant $C_1$. Since $D,d$ and $L$ in Assumption C5 were chosen uniformly in Lemma \ref{S83L2} in Step 2. we conclude that we can pick $R_1$ in Definition \ref{DefN2} depending on $s,t$ and $\gamma_0$ alone. We now let $\hat{s} = -6R_1$ and $\hat{t} = 6R_1$. Then from Steps 6. and 7. we can pick all the remaining constants in Definition \ref{DefParC} uniformly in $\gamma \geq \gamma_0$.  \\

{\bf \raggedleft Step 10.}  In this step we show that for any $r > 0$ there is a constant $\Delta_0 > 0$ that depends on $r$ and $\gamma_0$ alone such that 
\begin{equation}\label{S83E26}
\inf_{x \in [-r,r]} f_X(x) \geq  \Delta_0. 
\end{equation}
We begin by proving a useful lemma.

\begin{lemma}\label{S83L3} The function $f_\gamma(x)$ converges uniformly over compact sets to $\phi(x) = \frac{e^{-x^2/2}}{\sqrt{2\pi}}$ as $\gamma \rightarrow \infty$.
\end{lemma}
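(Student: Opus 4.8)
The statement should be read as concerning the standardized density $f_X$ of (\ref{S83E6}) (which depends on $\gamma$), and the plan is to prove the equivalent assertion that
\begin{equation*}
\log f_X(x) = \log \sigma_\gamma - \log \Gamma(\gamma) + \gamma\bigl(\sigma_\gamma x + m_\gamma\bigr) - e^{\sigma_\gamma x + m_\gamma} \longrightarrow -\tfrac{x^2}{2} - \tfrac12 \log(2\pi) = \log \phi(x)
\end{equation*}
uniformly for $x$ in a fixed compact interval $[-r,r]$. The uniform convergence of $f_X$ to $\phi$ on $[-r,r]$ then follows by exponentiating, since $\log\phi$ is bounded on $[-r,r]$ and $\exp$ is uniformly continuous on bounded sets. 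I would split $\log f_X(x)$ into its $x$-dependent part and its $x$-independent constant and treat them separately, using the asymptotics (\ref{S83EAsympt}) for $\sigma_\gamma$ and $m_\gamma$, the polygamma bounds (\ref{S83EIneq}), and Stirling's formula \cite[6.1.37]{AS}.

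For the $x$-dependent part, the first step is the Taylor expansion $e^{\sigma_\gamma x} = 1 + \sigma_\gamma x + \tfrac12\sigma_\gamma^2 x^2 + O(\sigma_\gamma^3)$, where the remainder is uniform over $x\in[-r,r]$ because $\sigma_\gamma r \to 0$; multiplying by $e^{m_\gamma}$ gives $e^{\sigma_\gamma x + m_\gamma} = e^{m_\gamma} + e^{m_\gamma}\sigma_\gamma x + \tfrac12 e^{m_\gamma}\sigma_\gamma^2 x^2 + O\bigl(e^{m_\gamma}\sigma_\gamma^3\bigr)$. Since $e^{m_\gamma} = e^{\psi^{(0)}(\gamma)} = \gamma + O(1)$ by (\ref{S83EAsympt}), the remainder is $O(\gamma^{-1/2})$ uniformly on $[-r,r]$. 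Collecting terms, the coefficient of $x$ in $\gamma(\sigma_\gamma x + m_\gamma) - e^{\sigma_\gamma x + m_\gamma}$ is $\sigma_\gamma(\gamma - e^{m_\gamma})$, and $\gamma - e^{m_\gamma} \to \tfrac12$ is bounded, so this contributes only $O(\sigma_\gamma) = O(\gamma^{-1/2})$ on $[-r,r]$; the coefficient of $x^2$ is $-\tfrac12 e^{m_\gamma}\sigma_\gamma^2 = -\tfrac12 \cdot \tfrac{e^{m_\gamma}}{\gamma}\cdot \gamma\sigma_\gamma^2$, and since $\tfrac{e^{m_\gamma}}{\gamma}\to 1$ and $\gamma\sigma_\gamma^2 = \gamma\psi^{(1)}(\gamma)\to 1$ by (\ref{S83EIneq}), this produces $-\tfrac{x^2}{2} + O(\gamma^{-1/2})$ uniformly on $[-r,r]$.

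It then remains to show the $x$-independent part $\log\sigma_\gamma - \log\Gamma(\gamma) + \gamma m_\gamma - e^{m_\gamma}$ tends to $-\tfrac12\log(2\pi)$. Substituting $\log\Gamma(\gamma) = (\gamma-\tfrac12)\log\gamma - \gamma + \tfrac12\log(2\pi) + O(\gamma^{-1})$, together with $\gamma m_\gamma = \gamma\log\gamma - \tfrac12 + O(\gamma^{-1})$ and $e^{m_\gamma} = \gamma - \tfrac12 + O(\gamma^{-1})$ from (\ref{S83EAsympt}), and $\log\sigma_\gamma = \tfrac12\log(\gamma\sigma_\gamma^2) - \tfrac12\log\gamma = -\tfrac12\log\gamma + O(\gamma^{-1})$, one checks that the $\gamma\log\gamma$ contributions cancel, the $\gamma$ contributions cancel, and the surviving constants add up to $-\tfrac12\log(2\pi)$. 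Combining with the previous step yields $\log f_X(x) = -\tfrac{x^2}{2} - \tfrac12\log(2\pi) + O(\gamma^{-1/2})$ uniformly on $[-r,r]$, which is the claim. The only delicate points are the error bookkeeping — in particular checking that the growth $e^{m_\gamma} \asymp \gamma$ is dominated by the decay $\sigma_\gamma^3 \asymp \gamma^{-3/2}$ of the cubic Taylor remainder — and the exact cancellation of the divergent $\gamma\log\gamma$ and $\gamma$ terms in the constant; both become routine once the expansions of $m_\gamma$, $\sigma_\gamma$ and $\log\Gamma(\gamma)$ are carried out to the indicated order.
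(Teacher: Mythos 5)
Your proof is correct and follows essentially the same route as the paper: both rest on Stirling's approximation for $\Gamma(\gamma)$ together with a second-order Taylor expansion of the exponent $\gamma(\sigma_\gamma x+m_\gamma)-e^{\sigma_\gamma x+m_\gamma}$, with the error controlled uniformly on compact sets via (\ref{S83EAsympt}) and (\ref{S83EIneq}). The only (cosmetic) difference is bookkeeping: the paper sandwiches $f_\gamma(x)$ between two-sided Stirling-type bounds for $\Gamma(\gamma)$, whereas you work additively with $\log f_X$ and verify the cancellation of the divergent $\gamma\log\gamma$ and $\gamma$ terms explicitly.
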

\begin{proof}
Let us fix $R > 0$ and assume $x \in [-R,R]$. The functional equation $\Gamma(z+1) = z \Gamma(z)$ and \cite[Theorem 1.6]{Batir} give 
$$ \frac{F_\gamma(x) \cdot \gamma^{- \gamma +1/2} e^{\gamma} }{\sqrt{2\pi}}\cdot \frac{\sqrt{\gamma}}{\sqrt{\gamma + 1}}\leq f_\gamma(x) \leq \frac{F_\gamma(x) \cdot \gamma^{- \gamma + 1/2} e^{\gamma} }{\sqrt{2\pi}}, $$
where $F_\gamma(x) = \sigma_\gamma \cdot \exp( \gamma(\sigma_\gamma x + m_\gamma) - e^{\sigma_\gamma x + m_\gamma}).$ In addition, we have from (\ref{S83EIneq}) that
$$\gamma(\sigma_\gamma x + m_\gamma) - e^{\sigma_\gamma x + m_\gamma} = -\frac{x^2}{2} -e^{m_\gamma} + \gamma m_\gamma + O(\gamma^{-1/2}),$$
where the constant in the big $O$ notation depends on $R$. Combining the latter with (\ref{S83EAsympt}) we see that we can find a constant $C > 0$ depending on $R$ such that
$$ \frac{e^{-x^2/2 - C \gamma^{-1/2}} \cdot \sigma_\gamma \gamma^{1/2}  }{\sqrt{2\pi}}\cdot \frac{\sqrt{\gamma}}{\sqrt{\gamma + 1}}\leq f_\gamma(x) \leq \frac{e^{-x^2/2 + C \gamma^{-1/2}} \cdot \sigma_\gamma \gamma^{1/2}  }{\sqrt{2\pi}}, $$
from which we conclude the statement of the lemma after applying (\ref{S83EAsympt}).
\end{proof}

Let us fix $r > 0$. By Lemma \ref{S83L3} we know that there is $\gamma_1 \geq \gamma_0$, depending on $r$, such that if $\gamma \geq \gamma_1$ then 
$$
\inf_{x \in [-r,r]} f_X(x) \geq \frac{1}{2 \sqrt{2\pi}} \cdot e^{-r^2/2}.
$$
 Then since $f_X(x)$ is jointly continuous in $x$ and $\gamma$ and positive on $[-r,r] \times [\gamma_0, \gamma_1]$ there exists a positive constant $\Delta_1$ depending on $\gamma_0$ and $r$ such that 
\begin{equation}\label{S83E28}
\inf_{x \in [-r,r]} f^\gamma_1(x) \geq \Delta_1,
\end{equation}
for all $\gamma \in [\gamma_0, \gamma_1]$. In particular, we deduce that (\ref{S83E26}) holds with $\Delta_0 = \min \left(\Delta_1, \frac{1}{2 \sqrt{2\pi}} \cdot e^{-r^2/2} \right).$\\

{\bf \raggedleft Step 11.} Let us denote $f_n^\gamma(\cdot)$ the density of $S_n = X_1 + \cdots + X_n$ where $X_i$ are i.i.d. with distribution $f_X$. In this step we show that there is a positive constant $\Delta$ that depends on $\gamma_0$ such that
\begin{equation}\label{S83E29}
\inf_{x \in [-1,1]} f^\gamma_n(x) \geq  \Delta \cdot n^{-1/2}. 
\end{equation}

We apply Proposition \ref{S3S1P1} to the distribution $f_X$ and for the values $s = -1$ and $t = 1$. From our work in Steps 6. and 7. we know that we can find $N_0$ and $C_0 > 0$ depending on $\gamma_0$ such that for $N \geq N_0$ we have
$$f^\gamma_N(Nz) \geq \frac{C_0}{\sqrt{2\pi N   \Lambda''(u_z)} } \cdot \exp \left( N G_z(u_z)  \right).$$
In particular, using (\ref{S83E13}), the fact that $G_z(u_0) = 0$ and (\ref{S83E25}) we conclude that there is a constant $\Delta' > 0$ depending on $\gamma_0$ such that 
\begin{equation}\label{S83E30}
\inf_{x \in [-1,1]} f^\gamma_N(x) \geq  \Delta' \cdot N^{-1/2} \mbox{ for $\gamma \geq \gamma_0$ and $N \geq N_0$}.
\end{equation}

Next, we let $\Delta_0 \in (0,1)$ be sufficiently small so that (\ref{S83E26}) holds with $r = N_0$. Then we have for $1 \leq n \leq N_0$ and $x \in [-1,1]$ that 
\begin{equation*}
\begin{split}
&f_n^\gamma(x) = \int_{\mathbb{R}} \cdots \int_{\mathbb{R}}  f_X(x_1)  \cdots f_X(x_{n-1}) \cdot f_X(x - x_1 - \cdots - x_{n-1}) dx_1 \cdots dx_n  \geq \\
& \geq  \int_{0}^1 \cdots \int_{0}^1  f_X(x_1)  \cdots f_X(x_{n-1}) \cdot f_X(x - x_1 - \cdots - x_{n-1}) dx_1 \cdots dx_n \geq (\Delta_0)^n,
\end{split}
\end{equation*}
In particular, we conclude from the latter and (\ref{S83E30}) that (\ref{S83E29}) holds for all $n \geq 1$ with $\Delta = \min (\Delta_0^{N_0}, \Delta')$. \\

{\bf \raggedleft Step 12.} In this and the next step we show that we can choose the constants in Definition \ref{ExpBC} uniformly in $\gamma \geq \gamma_0$. From (\ref{S83E29}) we can choose $\Delta > 0$ depending on $\gamma_0$ alone so that it satisfies the conditions of that definition. We also set $R = 2 + \Delta^{-1}$ in that definition. We may now apply Proposition \ref{S3S1P1} to the distribution $f_X$ for the values $s = -2R$ and $t = 2R$. From our work in Steps 6. and 7. we know that we can find $N_0(R)$ and $C_0(R) > 0$ depending on $\gamma_0$ such that for $N \geq N_0(R)$ we have
$$f_N(Nz) \geq \frac{C_0(R)}{\sqrt{2\pi N   \Lambda''(u_z)} } \cdot \exp \left( N G_z(u_z)  \right).$$
In particular, using (\ref{S83E13}) and (\ref{S83E25}) we conclude there are positive constants $C_R$ and $c_R$ such that
\begin{equation}\label{S83E31}
 f^\gamma_N(Nz) \geq  C_R \cdot N^{-1/2} e^{-c_R N} \mbox{ for $\gamma \geq \gamma_0$, $z \in [-2R,2R]$ and $N \geq N_0(R)$}.
\end{equation}
Furthermore, we can apply (\ref{S83E26}) to $r= 2R + N_0(R)$ to obtain the existence of a positive constant $\Delta_0(R) \in (0,1)$ such that
$$\inf_{x \in [-r,r]} f^\gamma_1(x) \geq \Delta_0(R).$$
Consequently, we have for $z \in [-2R, 2R]$ and $1 \leq n \leq N_0(R)$ that
\begin{equation*}
\begin{split}
&f_n^\gamma(nz) = \int_{\mathbb{R}} \cdots \int_{\mathbb{R}}  f_X(x_1)  \cdots f_X(x_{n-1}) \cdot f_X(nx - x_1 - \cdots - x_{n-1}) dx_1 \cdots dx_n  \geq \\
& \geq  \int_{0}^1 \cdots \int_{0}^1  f_X(x_1)  \cdots f_X(x_{n-1}) \cdot f_X(nx - x_1 - \cdots - x_{n-1}) dx_1 \cdots dx_n \geq (\Delta_0(R))^n.
\end{split}
\end{equation*}
The latter implies that (\ref{S83E31}) continues to hold for $1 \leq N \leq N_0(R)$ as well provided we make $C_R$ small enough (and positive) depending on $\gamma_0$. This fixes the chooice of $\Delta, C_R$ and $c_R$.\\

{\bf \raggedleft Step 13.} As we mentioned in Step 2. Assumption C2 holds for any $\lambda \in (0, \gamma \sigma_\gamma^{-1})$. Consequently, by (\ref{S83EAsympt}) we can find $\lambda_0 > 0$ depending on $\gamma_0$ such that $f_X$  satisfies Assumption C2 for $ \lambda = \lambda_0$ and $\gamma \cdot \sigma_\gamma > 2\lambda_0$. We fix this choice for $\lambda$. Notice that by (\ref{S83MGF}) and (\ref{S83EIneq}) we have for $x \in [-\lambda, \lambda]$ that $|\Lambda'(x) | \leq C(\gamma_0)$
for some $C(\gamma_0) > 0$. The latter and $\Lambda(0) = 0$ imply that  for $x \in [-\lambda, \lambda]$ we have 
\begin{equation}\label{S83E32}
|\Lambda(x)| \leq C(\gamma_0)
\end{equation}
for some possibly different $C(\gamma_0) > 0$. 

 Finally, given $\lambda$ and $\Delta$, $c_R$, $C_R$ as in Step 12. and $L$ as in Lemma \ref{S83L2} we can find positive constants $\hat{C}_R$ and $\hat{c}_R$ that depend on $\gamma_0$ alone such that for all $n\geq 1$
$$ \mathbb{E}[ e^{\lambda |X|}]^n \left[ \frac{4 n^{3/2}}{\Delta}  + LC^{-1}_R \sqrt{n} e^{c_R n}  \right] \leq \hat{C}_R \cdot e^{\hat{c}_R n}.$$
In deriving the last expression we used (\ref{S83E32}) and the simple inequality $\mathbb{E}[ e^{\lambda |X|}] \leq e^{\Lambda(\lambda)} + e^{\Lambda(-\lambda)}$. 

From the proof of Lemma \ref{S83L2} we know that $f_X(x)$ is log-concave and so Lemma \ref{Section7L1} is applicable. From that lemma we conclude that we can find functions $\hat{a}$ and $\hat{C}$ that satisfy the conditions of Assumption C6. Moreover, from the fact that $\lambda, \hat{C}_R$ and $\hat{c}_R$ are all independent of $\gamma$ provided $\gamma \geq \gamma_0$, the lemma implies that the same is true for $\hat{a}$ and $\hat{C}$. \\

Summarizing all of our work in this section, we see that $f_X$ satisfies Assumptions C1-C6 and so we can apply Theorem \ref{ContKMTA} to it. Since the constants $C,a,\alpha'$ in that theorem depend only on the parameters in Definition \ref{DefParC} and the functions in Assumption 6, and the latter can be chosen uniformly in $\gamma \geq \gamma_0$ this implies that the same is true for $C,a,\alpha'$. We conclude that Theorem \ref{ContKMTA} implies Corollary \ref{CorLogGamma}. This suffices for the proof.

\bibliographystyle{amsplain}
\bibliography{PD}

\end{document}